\documentclass[12pt]{article}

\usepackage[margin=0.9in]{geometry} 
\usepackage{amsmath,amsthm,amssymb,mathrsfs,bbm,titling,mathtools,hyperref,nameref,framed,bm,enumitem,dsfont,esint}
\usepackage{color}
\usepackage[style=numeric,backend=bibtex]{biblatex}
 \let\oldref\ref
\renewcommand{\ref}[1]{(\oldref{#1})}

\newcommand{\N}{\mathbb{N}}
\newcommand{\Z}{\mathbb{Z}}
\newcommand{\C}{\mathbb{C}}
\newcommand{\R}{\mathbb{R}}
\newcommand{\Q}{\mathbb{Q}}

\newcommand{\F}{\mathbb{F}}
\newcommand{\K}{\mathbb{K}}
\newcommand{\eps}{\varepsilon}

\renewcommand{\bf}{\mathbf}
\renewcommand{\cal}{\mathcal}
\renewcommand{\frak}{\mathfrak}

\newcommand{\llparen}{(\!(}
\newcommand{\rrparen}{)\!)}

\theoremstyle{definition}

\newtheorem{theorem}{Theorem}[section]
\newtheorem{lemma}[theorem]{Lemma}
\newtheorem{corollary}[theorem]{Corollary}

\newtheorem{definition}[theorem]{Definition}
\newtheorem{example}[theorem]{Example}

\newtheorem{proposition}[theorem]{Proposition}

\theoremstyle{remark}
\newtheorem{remark}[theorem]{Remark}

\numberwithin{equation}{section}

\begin{document}
 
% --------------------------------------------------------------
%                         Start here
% --------------------------------------------------------------

\title{Discrete nonlinear H\"older--Brascamp--Lieb inequalities: a local approach}
\author{Ben Johnsrude}
 \date{}
\maketitle
\vspace{-0.9in}

\begin{abstract}
    We study nonlinear H\"older--Brascamp--Lieb inequalities over $\Z^n$. Our method embeds the integers into a suitable compactification, over which local H\"older--Brascamp--Lieb theory may be applied (e.g.\ heat flow). Our main result is a bound for multilinear H\"older--Brascamp--Lieb functionals over suitable sublattices in $\Z^n$.
\end{abstract}

\vspace{1em}

\section{Introduction}

For $1\leq j\leq m$, let $P_j:\Z^n\to\Z^{n_j}$ be a polynomial map, whose image we may assume to not be contained in any affine subspace. For which exponents $1\leq q_1,\ldots,q_m\leq +\infty$ is it the case that, for any finite subset $\Omega\subseteq\Z^n$, we have the bound
\begin{equation}\label{ineq:intro_setbd}
    \#\Omega\lesssim\prod_{j=1}^m(\#P_j(\Omega))^{\frac{1}{q_j}},
\end{equation}
with implicit constant uniform over choices of $\Omega$?

A natural ``zooming-in'' pseudo-symmetry is relevant. Given $N\in\N$ and $\Omega\subseteq\Z^n$, we write
\begin{equation*}
    \mathrm{Dil}_N(\Omega)=N\Omega+\{0,\ldots,N-1\}^n=\{Na+b:a\in\Omega, 0\leq b_j\leq N-1\}.
\end{equation*}
Then $\#(\mathrm{Dil}_N(\Omega))=N^n\#\Omega$, and (if the $P_j$ behaves enough like a projection) one roughly expects
\begin{equation*}
    \#P_j(\mathrm{Dil}_N(\Omega))\approx N^{n_j}\# P_j(\Omega),
\end{equation*}
for most choices of $\Omega$ and $P_j$; certainly, this occurs when each $P_j$ is an affine-linear full-rank map. In such cases,~\eqref{ineq:intro_setbd} fails unless $n\leq\sum_j\frac{n_j}{q_j}$. In the case that $q_j=1$ for all $j$,~\eqref{ineq:intro_setbd} trivially follows from a mild injectivity assumption on the map $(P_1,\ldots,P_m)$. We will restrict attention to the alternate extreme, where the powers of $N$ balance on both sides of~\eqref{ineq:intro_setbd}, i.e.\ along the \emph{scaling line}
\begin{equation}\label{eq:intro_scaling}
    n=\sum_{j=1}^m\frac{n_j}{q_j}.
\end{equation}

In the case of linear maps $P_j$, a full characterization of inequalities~\eqref{ineq:intro_setbd} was achieved in~\cite{christ2015discrete}. There,~\eqref{ineq:intro_setbd} is essentially the \emph{H\"older--Brascamp--Lieb} inequality; the real versions of such estimates were fully described in~\cite{bennett2008brascamp}, where it was demonstrated that boundedness was equivalent to the \emph{rank} and \emph{scaling conditions}. Thus, for surjective linear maps $L_j$ between real vector spaces, boundedness in the sense of~\eqref{ineq:intro_setbd} (where cardinality is replaced with Lebesgue measure) with $P_j=L_j$ amounts to the scaling condition~\eqref{eq:intro_scaling} and the rank conditions
\begin{equation}\label{ineq:intro_rank}
    \hspace{12em}\dim V\leq\sum_{j=1}^m\frac{\dim (L_jV)}{q_j},\quad\forall \bf 0\neq V\subsetneq\R^n\,\,\text{linear subspace}.
\end{equation}
Returning to linear maps over the integers,~\cite{christ2015discrete} showed that finiteness of~\eqref{ineq:intro_setbd} is equivalent to the rank conditions, in the sense that the ranks of subgroups respect the inequalities~\eqref{ineq:intro_rank}.

Utilizing the rank and scaling conditions, we are able to show the following.
\begin{theorem}[Main theorem]\label{thm:int_by_padic} Suppose $\bf P=(P_j)_j$ is as above, $1\leq q_1,\ldots,q_m\leq+\infty$ satisfy~\eqref{eq:intro_scaling}, and suppose that $a\in\Z^n$ is such that the $P_j$ satisfy submersion and rank conditions:
\begin{equation*}
    \mathrm{rank}(dP_j(a))=n_j,\quad\forall 1\leq j\leq m,
\end{equation*}
\begin{equation*}
    \mathrm{rank}(V)\leq\sum_{j=1}^m\frac{\mathrm{rank}\big(dP_j(a)(V)\big)}{q_j},\quad\forall V\leq\Z^n.
\end{equation*}
    Then there is a positive integer $\Delta_a=\Delta_a(d\bf P(a))\in\N$, determinable via an algorithm as a function of $d\bf P(a)$, such that the following holds. If $p\in\N$ is prime such that $p\nmid\Delta_a$, then for any $(f_j)_{1\leq j\leq m}, f_j:\Z^{n_j}\to\C$, we have
    \begin{equation}\label{ineq:mainthm_func}
        \left|\sum_{x\in a+p\Z^n}\prod_{j=1}^mf_j(P_j(x))\right|\leq\prod_{j=1}^m\|f_j\|_{\ell^{q_j}(\Z^{n_j})}.
    \end{equation}
\end{theorem}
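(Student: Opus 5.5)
Following the abstract, the plan is to embed $\Z$ into the $p$-adic integers $\Z_p$ and prove a local Brascamp--Lieb inequality there. The sum over $a+p\Z^n$ is viewed as a sum over a dense subset of the compact neighborhood $a+p\Z_p^n$. The $P_j$, being polynomials with integer coefficients, extend continuously (indeed analytically) to maps $\Z_p^n\to\Z_p^{n_j}$.

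\textbf{Step 1: reduction to finite configurations.} By monotone convergence we may assume the $f_j$ are finitely supported and nonnegative. Only finitely many $x\in a+p\Z^n$ then contribute, since the $P_j$ jointly separate the relevant points: the rank condition with $V=\Z^n$, together with scaling, forces $\bigcap_j\ker dP_j(a)=0$. Fix a large $k$ and replace each $f_j$ by $F_j$ on $\Z_p^{n_j}$, where
\begin{equation*}
F_j(y)=\sum_{z}f_j(z)\,p^{kn_j}\,\mathbbm{1}_{z+p^k\Z_p^{n_j}}(y).
\end{equation*}
For $k$ large (depending on the supports), the balls $z+p^k\Z_p^{n_j}$ are disjoint, and $\|F_j\|_{L^{q_j}}=p^{kn_j(1-1/q_j)}\|f_j\|_{\ell^{q_j}}$. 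Similarly, the sum over $x$ is compared with $p^{kn}\int_{a+p\Z_p^n}\prod_j F_j(P_j(y))\,dy$. By scaling~\eqref{eq:intro_scaling}, the powers of $p^k$ cancel exactly. This requires that $P_j$ maps $x+p^k\Z_p^n$ onto $P_j(x)+p^k\Z_p^{n_j}$. That holds by Hensel's lemma whenever $dP_j(x)$ is surjective modulo $p$, which is where $p\nmid\Delta_a$ first enters: we put the gcd of the maximal minors of each $dP_j(a)$ into $\Delta_a$. Since $x\equiv a\pmod p$, we get $dP_j(x)\equiv dP_j(a)\pmod p$.

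\textbf{Step 2: the local $p$-adic nonlinear inequality.} We must show
\begin{equation*}
\int_{a+p\Z_p^n}\prod_j F_j(P_j(y))\,dy\le\prod_j\|F_j\|_{L^{q_j}(\Z_p^{n_j})}
\end{equation*}
with constant exactly $1$. First treat the linearized problem with $L_j=dP_j(a)$ over $\Z_p$. The rank conditions over $\Z$ transfer to $\Q_p$-subspaces provided $p$ avoids finitely many primes. These primes are where rank drops of the subspace lattice configuration mod $p$ occur, and the critical subspaces, finitely many up to the Bennett--Carbery--Christ--Tao structure, are determined algorithmically from the rational data $d\bf P(a)$. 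We include them in $\Delta_a$. Over $\Z_p$ with this good reduction, the linear inequality holds with constant $1$. To see this, use a $p$-adic heat-flow or averaging argument: replace Gaussians by indicators of $p^k\Z_p^{n_j}$ and run induction over the lattice of subgroups as in the finite field/Christ's discrete argument. Good reduction ensures that the extremizing ``Gaussians'' $\mathbbm{1}_{\Z_p^{n_j}}$ give equality, so the constant is $1$.

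\textbf{Step 3: nonlinear perturbation.} This is the main obstacle. On $a+p\Z_p^n$ write $P_j(a+py)=P_j(a)+p\,L_j y+p^2Q_j(y)$ with $Q_j$ integral. The $p$-adic inverse function theorem then gives analytic changes of variables that are measure preserving on balls, because the relevant Jacobians are units. I would try to show the nonlinear functional is dominated by an average of linear ones. To do so, decompose $a+p\Z_p^n$ into balls $b+p^r\Z_p^n$. On each ball, $P_j$ agrees with the affine map $P_j(b)+dP_j(b)(\cdot-b)$ up to an error in $p^{2r}\Z_p$, so a Hensel argument turns it into an exact measure-preserving conjugate of a linear map with matrix $\equiv L_j\pmod p$. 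Such a matrix has the same good reduction. The linear bound with constant $1$ on each piece is then combined by H\"older's inequality summed over balls, using that the scaling condition makes $\sum_{\text{balls}}\prod_j\|F_j\mathbbm{1}_{P_j(\text{ball})}\|_{q_j}\le\prod_j\|F_j\|_{q_j}$ (multilinear H\"older with $\sum 1/q_j'$ weights and the disjointness of the images at fixed scale). Verifying that this ball-by-ball localization loses no constant is the step I expect to demand the most care. If it fails, the fallback is the nonlinear heat-flow monotonicity argument performed directly on $\Z_p$, with $p\nmid\Delta_a$ again guaranteeing a unit constant.
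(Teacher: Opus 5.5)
Your Step 1 is essentially the paper's discretization. The gap is in Step 3, and Step 2 is too weak to feed the fix.

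\textbf{Step 1 (minor).} You only need the automatic inclusion $P_j(x+p^k\Z_p^n)\subseteq P_j(x)+p^k\Z_p^{n_j}$, not surjectivity. The separation of the contributing $x$ comes from $\mathbf P$ being an isometry on $a+p\Z_p^n$ once $d\mathbf P(a)$ is injective mod $p$; injectivity over $\Q$ alone does not give it.

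\textbf{Step 3 fails.} The images of your balls are far from disjoint. For $p\nmid\Delta_a$ each $P_j$ is a submersion mod $p$ on $a+p\Z_p^n$, so $P_j(b+p^r\Z_p^n)=P_j(b)+p^r\Z_p^{n_j}$. Each such target ball is the image of exactly $p^{(r-1)(n-n_j)}$ of the balls $b+p^r\Z_p^n$. Hölder over balls therefore only gives
$\sum_B\prod_j\|F_j1_{P_j(B)}\|_{q_j}\le p^{(r-1)n(\sum_jq_j^{-1}-1)}\prod_j\|F_j\|_{q_j}$.
This loss blows up as $r\to\infty$ whenever $\sum_jq_j^{-1}>1$, i.e. whenever some $n_j<n$ has $q_j<\infty$.

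Worse, the inequality you need there is circular. Set $G_j(\beta)=\|F_j1_\beta\|_{q_j}$ for balls $\beta$ of radius $p^{-r}$. Your inequality then reads $\sum_{b}\prod_jG_j\big(P_j(b)\bmod p^r\big)\le\prod_j\|G_j\|_{\ell^{q_j}}$. This is exactly the nonlinear $p$-adic inequality applied to functions constant on $p^r$-cosets.

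The local piece, by contrast, is easy and needs no Hensel conjugation. Once $2r\ge k$, $F_j\circ P_j$ agrees on $b+p^r\Z_p^n$ with $F_j$ composed with the affine approximation. An exact simultaneous linearization of all the $P_j$ is not available in general anyway.

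\textbf{What is missing.} You need a multiplicative, multi-scale way to combine local information. Your fallback ``heat flow on $\Z_p$'' is the right instinct; the paper realizes it as the nonlinear Ball inequality (Lemma~\ref{lem:nonlin_ball_ineq}).
One inserts the exact identity $1=\operatorname{BL}(d\mathbf P(u),\mathbf c)^{-1}\int\prod_j\mu_{n_j}(\varpi\delta M_j)^{-c_j}1_{\varpi\delta M_j}\big(dP_j(u)(y-x)\big)\,dy$, with $c_j=q_j^{-1}$, built from an \emph{extremizer} $M$ of the linearized problem. The averaged part is then bounded by the linear functional applied to $f_j*1_{\varpi\delta M_j}$. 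The localized part is bounded by the nonlinear functional at the next finer scale. Iterating (Corollary~\ref{cor:nonlin_quant_ineq}) until the maps are affine modulo the resolution of the $f_j$ costs nothing, because the factors $\mathcal Z\le1$ (Theorem~\ref{thm:nonlin_local_bdd}).

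\textbf{What this requires of Step 2.} It needs more than ``the linear constant at $a$ is $1$''. For \emph{every} $u\in a+p\Z_p^n$, the datum $d\mathbf P(u)$ must have $\operatorname{BL}=1$ with extremizer $\Z_p^n$. The paper gets this from Theorem~\ref{thm:lin_sub_extr} by checking that the compression factor $\Phi_0(V)=1$. That check uses the rank inequality for the reductions $\widetilde{dP_j}(a)$ on every $\F_p$-subspace.

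Avoiding rank drops of finitely many rational subspaces does not by itself control the many $\F_p$-subspaces that are not reductions of them. The paper closes this with the folio/witness transfer (Theorem~\ref{thm:rank_transfer}). It carries the critical-subspace factorization structure over to $\F_p$ and deduces constant $1$ there by factorization and interpolation. It then reads off the rank conditions by testing indicators of subspaces.
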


\eqref{ineq:mainthm_func} asserts $\ell^{p_1}\times\cdots\times\ell^{p_m}$ boundedness of a suitable multilinear functional, whereas~\eqref{ineq:intro_setbd} is the corresponding restricted weak-type bound. A ``global'' bound would take the form
\begin{equation}\label{ineq:intro_funbd}
    \left|\sum_{x\in\Z^n}\prod_{j=1}^mf_j(P_j(x))\right|\lesssim\prod_{j=1}^m\|f_j\|_{\ell^{q_j}(\Z^{n_j})};
\end{equation}
one should compare to local/global functional bounds over $\R$ such as
\begin{equation*}
    \left|\int_{B(0,1)}\prod_{j=1}^mf_j(P_j(x))\right|\lesssim\prod_{j=1}^m\|f_j\|_{L^{q_j}(\R^{n_j})},\quad\quad\left|\int_{\R^n}\prod_{j=1}^mf_j(P_j(x))\right|\lesssim\prod_{j=1}^m\|f_j\|_{L^{q_j}(\R^{n_j})}.
\end{equation*}
These latter bounds, especially in the case that the $P_j$ are affine-linear, are the usual sense of H\"older--Brascamp--Lieb inequalities.

\subsection{Discussion of the approach}

The nonlinear H\"older--Brascamp--Lieb inequality over the integers, i.e.~\eqref{ineq:intro_funbd} in complete generality, should be expected to be much more difficult than the linear one. In the case of linear maps (either over the integers or over the reals), one critically relies on ``factorization'' methods. The idea is that, if at a particular exponent tuple $(q_1,\ldots,q_m)$, one of the rank inequalities~\eqref{ineq:intro_rank} at $V$ holds with an equality, then finiteness of~\eqref{ineq:intro_funbd} is equivalent to the simultaneous finiteness of the two H\"older--Brascamp--Lieb estimates with maps $\left.L_j\right|_V:V\to L_j(V)$ and $\left.L_j\right|_{\R^n/V}:\R^n/V\to \R^{n_j}/L_j(V)$ (and analogously over the integers). On the other hand, for ``simple'' data (i.e.\ when all rank conditions are strict inequalities), a heat-flow argument demonstrates that the near-extremizing $\Omega$ are forced to be highly regular (e.g.\ something like convex neighborhoods of $0$ with low eccentricity). An obvious problem, then, for non-linear H\"older--Brascamp--Lieb is that polynomial maps are not homomorphisms, and so do not ``split'' in a useful sense along subspaces.

In the nonlinear case, there have been developments regarding \emph{local} H\"older--Brascamp--Lieb inequalities with nonlinear maps over the reals; these are generally assumed to be sufficiently regular, e.g.\ $C^2$ or $C^{1+\alpha}$. The idea is that, near a given point $x$ in the domain, the nonlinear maps $B_j$ are approximately linear. If the H\"older--Brascamp--Lieb inequality with the linearized maps $dB_j(x)$ exhibit sufficiently regular extremizers, then by averaging against local extremizers one may force the nonlinear functional to be bounded by the linearized functional. Thus,~\cite{bennett2018nonlinear,bennett2020nonlinear} showed that, near a point $x$, the H\"older--Brascamp--Lieb inequality holds with constant essentially equal to the constant arising from the linearization $dB_j(x)$.

We would like to adopt the spirit of local analysis to control nonlinear H\"older--Brascamp--Lieb functionals over the integers. The major concern is the fact that the integers are not easily localizable: the behavior near a point is exactly trivial, and gives virtually no information about the global functional. One might hope to use ``Fourier duality,'' as described in~\cite{bennett2022fourier}. We briefly sketch a difficulty. The general idea is to use Plancherel to say
\begin{equation*}
    \begin{split}
        \sum_{x\in\Z^n}\prod_{j=1}^mf_j(P_j(x))&=\big\langle f_1\otimes\cdots\otimes f_m,\delta_{\bf P(\Z^n)}\big\rangle_{\Z^{n_1}\times\cdots\times\Z^{n_m}}\\
        &=\big\langle\hat f_1\otimes\cdots\otimes\hat f_m,\hat\delta_{\bf P(\Z^n)}\big\rangle_{(\R/\Z)^{n_1}\times\cdots\times(\R/\Z)^{n_m}};
    \end{split}
\end{equation*}
here, $\delta_{\bf P(\Z^n)}$ is the pushforward of the sum of $\delta$-masses along $\Z^n$ under the map $\bf P$. In the case that each $P_j$ is linear, then $\hat\delta_{\bf P(\Z^n)}=\mu_{\bf P(\Z^n)^\top}$ is just a matching surface measure of the ``annihilator'' (working over $\R$, this is the orthocomplement); thus, our analysis gets moved to the Pontryagin dual $\widehat\Z^n=(\R/\Z)^n$, which \emph{does} permit a useful ``local'' analysis. However, if the polynomials $P_j$ are nonlinear, then the Fourier transform $\hat\delta_{\bf P(\Z^n)}$ is an immensely delicate object, which is very different from a surface measure. For this reason, we avoid Fourier duality in this nonlinear case.

Our approach is quite different. We choose to embed the lattice within a suitable space on which the local analysis of~\cite{bennett2020nonlinear} is possible. Specifically, we embed the integers $\Z$ into the $p$-adic rings $\Z_p$. The latter are compact, and structurally rich enough to permit arguments such as ``near the point $x$, the polynomial $P_j$ is approximately linear....'' Neighborhoods of points occupy a positive proportion of the measure of the overall domain, so obtaining a good functional bound on such a set is nontrivial and interesting. Finally, the scaling relation~\eqref{eq:intro_scaling} guarantees that the bounds on the compact domains $\Z_p^n$ imply matching bounds on the discrete domains $\Z^n$.

Our main results are stated for polynomial functions over the integers, but critical intermediate results deal more generally with ``$C^1$ functions'' over $p$-adic and other non-Archimedean fields. We provide the theoretical minimum for $C^k$ functions over non-Archimedean fields in the Appendix~\ref{sec:appendix}. The text~\cite{schikhof1985} is the classical reference for calculus over non-Archimedean fields; our definitions agree with the latter's in the special case of one variable, but the theory of non-Archimedean calculus in several variables is underdeveloped there. Some useful references for the basics of non-Archimedean analysis are~\cite{cassels1986local,serre2013local}.

Our workhorse in the analysis of nonlinear Brascamp--Lieb functionals over non-Archimedean fields is a variant of Ball's inequality. The upshot of that analysis is that, along the scaling line $n=\sum_{j=1}^m\frac{n_j}{p_j}$, a nonlinear Brascamp--Lieb bound is essentially equivalent to a supremal \emph{linear} Brascamp--Lieb bound, ranging over the derivatives of the nonlinear maps.

As a consequence, we will need to engage in a non-Archimedean variant of differential geometry, albeit an elementary one; all maps are only $C^1$, so in particular no second-order information appears. Nevertheless, several interesting subtleties appear that require special treatment. Namely,
\begin{enumerate}
    \item A $C^k$ map $F:\Q_p^n\to\Q_p^{n'}$ with $dF\equiv 0$ need not be constant, even locally.
    \item A nonzero vector $v\in\Q_p^n$ may have $v\cdot v=0$, for certain primes $p$ and dimensions $n$; in particular, $\|v\|^2\neq |v\cdot v|$, in general. Similarly, $A\mapsto\mathrm{tr}(AA^\top)$ and $A\mapsto\mathrm{tr}(A^\top A)$ fail to define anything like a norm on matrices $A$. In contrast to the complex case, there is no ``conjugation trick'' to create a definite form; this is essentially the fact that non-Archimedean fields do not have totally real subfields.
\end{enumerate}
In contrast to~\cite{bennett2020nonlinear}, we avoid utilizing convex analysis. The basic problem in attempting to translate the latter work to the non-Archimedean setting is that the restriction of a convex bounded-below function $\Gamma:\R^d\to\R$ to the lattice $\Z^d$, may not witness any minimizers. This leads to issues relating to the ``discrete'' nature of non-Archimedean fields; viz., that they have only a discrete set of scales.

\subsection{Intermediate results}

In order to state the intermediate results used in the proof of Theorem~\ref{thm:int_by_padic}, we briefly develop the basic terminology.

Fix a non-discrete non-Archimedean local field $\K$. One may show that $\K$ is necessarily either a finite extension of $\Q_p$ for some prime $p$, or is of the form $\mathbb{F}_{p^n}\llparen t\rrparen$. In either case, $\K$ is a LCA group, so has a Haar measure $\mu=\mu_1$. Write $|\cdot|=|\cdot|_\K:\K\to[0,+\infty)$ for the modular function of $\mu_1$; that is to say, for a compact open subset $U\subseteq\K$,
\begin{equation*}
    |\lambda|=\frac{\mu_1(\lambda\cdot U)}{\mu_1(U)},\quad\text{where}\,\,\lambda\cdot U=\{\lambda x:x\in U\}.
\end{equation*}
Thus, $|1|=1$, and $|\lambda|=0$ if and only if $\lambda=0$, for any $\lambda\in\K$. Additionally, $|\lambda\eta|=|\lambda|\cdot|\eta|$ and $|\lambda+\eta|\leq\max(|\lambda|,|\eta|)$, for any $\lambda,\eta\in\K$. The latter is referred to as the \emph{ultrametric triangle inequality}, making $\K$ an \emph{ultrametric space}. We write $\cal O_1=\{x\in\K:|x|\leq 1\}$. $\mu_1$ may be rescaled so that $\mu_1(\cal O_1)=1$; note that this does not affect the definition of $|\cdot|$. This $\cal O_1$ is the \emph{ring of integers} in $\K$.

$\cal O_1$ is a local ring, so has a unique maximal ideal $\frak m=\{x\in\cal O:|x|<1\}$. Fix once and for all a \emph{uniformizer} $\varpi$; i.e.\ $\varpi\in\K$ is such that $\varpi\cal O_1=\frak m$. It follows that, if $\lambda\in\K^\times$, then there is some $e\in\Z$ such that $|\lambda|=|\varpi^e|$.

\begin{example}
    If $\K=\Q_p$, then $\cal O_1=\Z_p$ and $\varpi$ may be chosen to be $p$. If $\K=\mathbb{F}_{p^n}\llparen t\rrparen$, then $\cal O_1$ is composed of the power series in $t$ with coefficients in $\mathbb{F}_{p^n}$; i.e.\ with no negative powers on $t$. In the latter case, $\varpi$ may be chosen to be $t$.

    If $\K$ is a finite extension of $\Q_p$, say $[\K:\Q_p]=d$, then $|p|=p^{-d}$ and $p\cal O_1=\frak m^d$. Depending on $\K$, we may have $|\varpi|=p^{-\ell}$ for any fixed $\ell|d$. The integer $\ell$ is known as the \emph{residual degree}, and the quotient $d/\ell$ the \emph{ramification index}, of the extension $\K/\Q_p$. In the special case $\ell=1$, one may imagine $\varpi$ as a suitable ``$d$th root of $p$.''
\end{example}

We will be interested in functions on $\K^k$ for $k\in\N$. Each $\K^k$ is equipped with the norm $\|(x_i)_{1\leq i\leq n}\|=\max_{1\leq i\leq k}|x_i|$. We will write $\cal O_k=\cal O_1^k=\{x\in\K^k:\|x\|\leq 1\}$. $\K^k$ will be equipped with its own Haar measure $\mu_k$, normalized so that $\mu_k(\cal O_k)=1$.

\definition[Schwartz--Bruhat functions] For each $k\in\N$, we write $\operatorname{SB}(\K^k)$ for the collection of functions $f:\K^k\to\C$ for which there are $h,\gamma\in\K^\times$ such that $f$ is supported in $\gamma\cal O_k$ and $f$ is $h\cal O_k$-invariant: that is, if $\|x-y\|\leq|h|$, then $f(x)=f(y)$. This class is known as the collection of \emph{Schwartz--Bruhat functions}. Thus, Schwartz--Bruhat functions are those that are compactly supported and locally constant.

The Brascamp--Lieb functionals of interest are defined in terms of choices of (linear) maps and (recipocal) Lebesgue exponents. It is convenient to package that data as follows.

\definition[Brascamp--Lieb datum] A \emph{Brascamp--Lieb datum} is a pair $(\bf L,\bf c)$ of tuples $\bf L=(L_j)_{1\leq j\leq m}$ and $\bf c=(c_j)_{1\leq j\leq m}$, where each $L_j$ is a linear map $L_j:\K^n\to\K^{n_j}$ for some $n_j\in\N$, and each $c_j\in[0,1]$.

For a given Brascamp--Lieb datum, we may define the associated functional.

\definition[Brascamp--Lieb constant] Let $(\bf L,\bf c)$ be a Brascamp--Lieb datum. If $\bf f=(f_j)_{1\leq j\leq m}$ is a tuple of nonnegative nonzero Schwartz--Bruhat functions, with $f_j$ defined on $\K^{n_j}$, then we write
\begin{equation*}
    \operatorname{BL}(\bf L,\bf c;\bf f)=\frac{\int_{\K^n}\prod_{j=1}^mf_j^{c_j}\circ L_j}{\prod_{j=1}^m\big(\int_{\K^{n_j}}f_j\big)^{c_j}}.
\end{equation*}
Here we have abbreviated $\bf f=(f_j)_{1\leq j\leq m}$. The \emph{Brascamp--Lieb constant} is then the quantity
\begin{equation*}
    \operatorname{BL}(\bf L,\bf c)=\sup\big\{\operatorname{BL}(\bf L,\bf c;\bf f):\bf f=(f_j)_{1\leq j\leq m},\,\,f_j:\K^{n_j}\to\R_{\geq 0}\text{ Schwartz--Bruhat, nonzero}\big\}.
\end{equation*}

Our first result concerning Brascamp--Lieb inequalities over non-Archimedean fields $\K$ is the following, concerning the \emph{linear} case.

\begin{theorem}[Non-Archimedean linear Brascamp--Lieb inequalities admit extremizers]\label{thm:main_extr_functional}
    Let $(\bf L,\bf c)$ be any Brascamp--Lieb datum such that $\bigcap_{j=1}^m\ker L_j=\{0\}$. Then $\operatorname{BL}(\bf L,\bf c)<+\infty$ if and only if there is some tuple $\bf f=(f_j)_{1\leq j\leq m}$ of nonzero Schwartz--Bruhat functions such that $\operatorname{BL}(\bf L,\bf c)=\operatorname{BL}(\bf L,\bf c;\bf f)$.

    Moreover, in this case, there is an invertible  $n\times n$ matrix $A$ over $\K$ such that the extremizer $f_j$ may be taken to be $1_{(L_j\circ A)(\cal O_n)}$.
\end{theorem}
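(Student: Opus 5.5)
The plan has three parts: the easy implication, a reduction to lattice indicators, and a finite-dimensional optimization over lattices. Throughout, write $\Lambda_j(A)=L_jA\cal O_n$ and $K_A=\{x\in\K^n: L_jx\in\Lambda_j(A)\ \forall j\}$.

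\textbf{Easy direction.} Suppose an extremizer $\bf f$ of Schwartz--Bruhat functions exists. Each $f_j$ is bounded and supported in some $\gamma_j\cal O_{n_j}$. Since $\bigcap_j\ker L_j=\{0\}$, the set $\{x: L_jx\in\gamma_j\cal O_{n_j}\ \forall j\}$ is a compact open $\cal O_1$-lattice. Hence the numerator of $\operatorname{BL}(\bf L,\bf c;\bf f)$ is finite, the denominator is positive, and $\operatorname{BL}(\bf L,\bf c)=\operatorname{BL}(\bf L,\bf c;\bf f)<+\infty$.

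\textbf{Scaling and the lattice ratio.} For the converse, first record the scaling condition. Testing $f_j(\cdot/\lambda)$ changes the ratio by $|\lambda|^{n-\sum_jc_jn_j}$, and letting $|\lambda|\to0$ and $|\lambda|\to\infty$ gives $n=\sum_jc_jn_j$ whenever $\operatorname{BL}<\infty$. Also note that $K_A$ is a lattice containing $A\cal O_n$. For $f_j=1_{\Lambda_j(A)}$ the ratio is therefore
\begin{equation*}
R(A)=\frac{\mu_n(K_A)}{\prod_j\mu_{n_j}(\Lambda_j(A))^{c_j}} .
\end{equation*}
By scaling, $R$ is invariant under $A\mapsto\lambda A$, and it is invariant under $A\mapsto Ak$ for $k\in GL_n(\cal O_1)$.

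\textbf{Step 1: reduction to lattices.} The key claim is that
\begin{equation*}
\operatorname{BL}(\bf L,\bf c;\bf f)\le\sup_{A\in GL_n(\K)}R(A)\quad\text{for every tuple } \bf f .
\end{equation*}
I would first reduce to $f_j$ that are finite nonnegative combinations of indicators of cosets of one small ball $h\cal O_{n_j}$. I would then prove a non-Archimedean Ball-type inequality. Fix a lattice $\Gamma=h\cal O_n$ and write $x=y+z$ with $z\in\Gamma$ and $y$ ranging over coset representatives. Because the $f_j$ are constant on translates of $L_j\Gamma$, the integral factors as a sum over cosets of $\mu_n(\Gamma)\prod_jf_j(L_jy)^{c_j}$. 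This lets one compare the functional with an averaged functional.

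The idea I would try first is a replacement step analogous to heat flow. Replace $f_j$ by $f_j*1_{L_jB\cal O_n}$, suitably normalized, for a lattice $B$ adapted to the level sets of $\bf f$. The goal is to show that this operation does not decrease the ratio, via Hölder on the fibres of $x\mapsto(L_jx)_j$ combined with the ultrametric property that cosets of lattices are either nested or disjoint. Iterating should terminate, since there are finitely many values and supports, at a tuple of indicators of $L_jA\cal O_n$.

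A backup route: use multilinear interpolation (restricted weak type) at the fixed endpoint $\bf c$ together with a tensor-power trick to reduce to indicators of sets. Then show that, among finite unions of cosets, the ratio is maximized by lattice images, by splitting along coordinates and inducting on $\#$(cosets).

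\textbf{Step 2: the supremum of $R$ is attained.} By the Cartan decomposition, write $A=k_1\,\mathrm{diag}(\varpi^{e_1},\dots,\varpi^{e_n})\,k_2$ with $k_i\in GL_n(\cal O_1)$ and $e\in\Z^n$. Right invariance removes $k_2$. Each $\mu_{n_j}(\Lambda_j(A))$ is the maximum over $n_j\times n_j$ minors $M$ of $|\det M(L_jA)|$, and $\mu_n(K_A)$ has a similar description. So $\log_q R$ is a piecewise-linear function of $e$, with finitely many linear pieces on rational polyhedral cones. The pieces depend on $k_1$ only through valuations of minors of $L_jk_1$, which take finitely many patterns on a compact set; since $R$ is locally constant in $k_1$, compactness reduces to finitely many $k_1$. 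On each piece, $\operatorname{BL}<\infty$ makes the linear form bounded above on the cone's lattice points. Decomposing $P\cap\Z^n$ as a finite set plus an integer monoid (Hilbert basis) shows the form is nonpositive on the monoid, so its maximum is attained at one of finitely many points. This attains the supremum at some $A$, and Step 1 then yields $\operatorname{BL}(\bf L,\bf c)=R(A)$ with extremizers $f_j=1_{L_jA\cal O_n}$.

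\textbf{Main obstacle.} I expect Step 1 to be the main difficulty. It requires proving that the ratio does not decrease under the averaging step without Gaussians or convexity, and with exact constant $1$ rather than a loss. Step 2 is bookkeeping, except that irrational $c_j$ prevent a direct discreteness argument, which is why the Hilbert-basis argument is used there.
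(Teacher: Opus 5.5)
Your easy direction and the scaling computation are fine. Both substantive steps, however, have genuine gaps, and the second is not the bookkeeping you take it to be.

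\textbf{Step 1} is the core of the theorem, and it is not proved.

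The heat-flow replacement does not work as stated. The non-Archimedean Ball identity gives
$\operatorname{BL}(\bf L,\bf c;\bf f)\le\operatorname{BL}(\bf L,\bf c;\bf f*\bf g)\cdot\sup_y\operatorname{BL}(\bf L,\bf c;\bf h_y)/\operatorname{BL}(\bf L,\bf c;\bf g)$,
where the $\bf h_y$ are localizations of $\bf f$ to cosets. Monotonicity therefore requires $\sup_y\operatorname{BL}(\bf h_y)\le\operatorname{BL}(\bf g)$, i.e.\ $\bf g$ must already be an extremizer, which is circular. For a lattice not tied to an extremizer monotonicity simply fails. Take $L_1=(1\ 0)$, $L_2=(0\ 1)$, $L_3=(1\ 1)$ and $c_j=2/3$. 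Convolving $(1_{\cal O_1})_j$ with the normalized indicators of $L_j(\cal O_1\times\varpi^{-5}\cal O_1)$ drops the ratio from $1$ to $|\varpi|^{5/3}$.

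Your backup route has the same hole. Upgrading restricted weak type to strong type at a single tuple needs bounds at neighbouring exponents, which are unavailable at extreme points of the admissible set. The claim that lattice images beat all finite unions of cosets is exactly what must be proved. Moreover it must be proved with $\cal O_1$-submodules rather than mere subgroups, which matters when the residue field is not $\F_p$.

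The missing idea is the paper's discretization:
\begin{itemize}
\item Choose $\gamma,h$ so that each $f_j$ is supported in $\gamma\cal O_{n_j}$ and is $h\cal O_{n_j}$-invariant.
\item Set $U=\bigcap_jL_j^{-1}(\gamma\cal O_{n_j})$ and $K=\bigcap_jL_j^{-1}(h\cal O_{n_j})$.
\item Then $\operatorname{BL}(\bf L,\bf c;\bf f)$ equals $\mu_n(K)|h|^{-\sum_jc_jn_j}$ times a Brascamp--Lieb functional for the finite $\cal O_1$-module maps $U/K\to\gamma\cal O_{n_j}/h\cal O_{n_j}$.
\end{itemize}
For finite modules, the functional constant equals the supremum over submodules; this is Theorem~\ref{thm:module_reduction}, following Christ. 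It is proved by induction on $\#M$, using critical submodules, the factorization lemma, the structure of extreme points of $\{\bf c:\operatorname{BL}_{\mathrm{sub}}\le\bf A\}$, and interpolation. A maximizing submodule $\Omega$ then pulls back to the regular subgroup $\Omega+K$.

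\textbf{Step 2} also has a gap.

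Your assertion that the valuations of minors of $L_jk_1$ take finitely many patterns on $\mathrm{GL}_n(\cal O_1)$ is false. A minor is a nonconstant polynomial in $k_1$ and takes nonzero values of arbitrarily small size. For example, with $L_1=(1\ 1)$ and $k_1$ having first column $(1,t-1)^\top$, the relevant minor is $t$. Equivalently, the local constancy of $R$ in $k_1$ is not uniform in $e$.

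You are therefore facing infinitely many apartment functions. Your Hilbert-basis argument does attain a maximum on each one. But these maxima, after taking $\log_{|\varpi|^{-1}}$, lie in $\Z+\sum_jc_j\Z$, which is dense when some $c_j$ is irrational. Nothing in your argument stops them from increasing to the supremum without attaining it.

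What is needed is a coercivity estimate that is uniform over all lattices. The paper obtains it for $\alpha$-simple data in the form
$\operatorname{BL}_{\mathrm{grp}}(\bf L,\bf c;U)\le|\varpi|^{e\alpha}\eps^{-n^2}$
for $U$ of eccentricity $|\varpi|^{-e}$. This combines:
\begin{itemize}
\item the subgroup Ball inequality (Proposition~\ref{prop:ball_subg});
\item the reduction to plates (Corollary~\ref{cor:subg_bd_by_subsp});
\item a uniform lower bound $\eps$ on the compression factors $\Phi_\alpha$, which comes from compactness of isometric frames together with the strict rank inequalities.
\end{itemize}
With this estimate only finitely many lattice classes compete. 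General data are then split along a maximal chain of critical subspaces, across which $\operatorname{BL}_{\mathrm{grp}}$ factorizes.
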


Our main goal in studying linear Brascamp--Lieb inequalities over $\K$ is in the application to nonlinear Brascamp--Lieb inequalities. Our nonlinear maps will be assumed to be $C^1$, in a technical sense specified in Definition~\ref{def:Ck}. Morally, a $C^1$ map is one that is well-approximated by an affine map near each point; for reasons peculiar to ultrametric geometry, it is necessary to somewhat strengthen the naive definition.

For a tuple $\bf B=(B_j)_{1\leq j\leq m}$ of $C^1$ maps, we write
\begin{equation*}
    \operatorname{BL}(\bf B,\bf c;\bf f)=\frac{\int\prod_{j=1}^mf_j^{c_j}\circ B_j}{\prod_{j=1}^m\big(\int f_j\big)^{c_j}},\quad\operatorname{BL}(\bf B,\bf c)=\sup_{\bf f}\operatorname{BL}(\bf B,\bf c;\bf f),
\end{equation*}
where the $\bf f$ ranges over tuples of nonnegative nonzero Schwartz--Bruhat functions, as in the linear case.

Our main theorem concerning these nonlinear Brascamp--Lieb functionals over local fields is the following.
\begin{theorem}\label{thm:main_K_NL}
    Suppose $(\bf B,\bf c)$ is a nonlinear Brascamp--Lieb tuple, with $\bf B$ defined on $U\subseteq\K^n$. Suppose $dB_j(x)$ is surjective whenever $c_j\neq 0$. Then there is a $\delta\in\K^\times$ depending on $(\bf B,\bf c)$ such that
    \begin{equation*}
        \operatorname{BL}(\left.\bf B\right|_{u+\delta\cal O_n},\bf c)\leq\sup_{x\in u+\delta\cal O_n}\operatorname{BL}(d\bf B(x),\bf c),\quad\forall u\in U_\delta.
    \end{equation*}
    Here $U_\delta=\{u\in U:u+\delta\cal O_n\subseteq U\}$.
\end{theorem}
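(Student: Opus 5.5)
We will use a non-Archimedean version of Ball's inequality, combined with the exact self-similarity of ultrametric balls. The key ultrametric feature is that a ball $u+\delta\cal O_n$ splits \emph{exactly} into $|\varpi|^{-n}$ disjoint balls of radius $|\varpi\delta|$, with no overlaps and no error terms. So no heat flow or convexity is needed.

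\textbf{Step 1 (uniform linearization).} By the $C^1$ hypothesis in the strengthened sense of Definition~\ref{def:Ck}, we fix $\delta$ so that on every ball $x+\delta\cal O_n\subseteq U$ each $B_j$ with $c_j\neq0$ satisfies
\begin{equation*}
B_j(x+y)-B_j(x)-dB_j(x)y\in dB_j(x)(\varpi\delta\cal O_n)\quad\text{for }y\in\delta\cal O_n .
\end{equation*}
This is possible because $dB_j(x)$ is surjective: its image of a small ball contains a ball in $\K^{n_j}$, and the remainder is $o(\|y\|)$ uniformly. With this choice, for every sub-ball $x+r\cal O_n\subseteq u+\delta\cal O_n$ we get the \emph{exact} identity
\begin{equation*}
B_j(x+r\cal O_n)=B_j(x)+dB_j(x)(r\cal O_n).
\end{equation*}
The reason is that a perturbation inside $dB_j(x)(\varpi r\cal O_n)$ does not change the coset of the $\cal O_1$-module $dB_j(x)(r\cal O_n)$. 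This is the main obstacle. One must show the remainder is small \emph{relative to the image lattice}, uniformly in $x$ and in all scales $r$ at once, which is exactly what the strengthened $C^1$ definition should supply. It also needs the lattice $dB_j(x)(r\cal O_n)$ to vary locally constantly in $x$.

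\textbf{Step 2 (localization).} Fix Schwartz--Bruhat $f_j$. Their local constancy scale is some $h$, so we choose $r$ with $|r|\le|\delta|$ small enough that every $f_j$ is constant on translates of $dB_j(x)(r\cal O_n)$ (possible since $\|dB_j\|$ is bounded). We partition $u+\delta\cal O_n$ into balls $Q_k=x_k+r\cal O_n$. On $Q_k$ the integrand $\prod_j f_j^{c_j}\circ B_j$ is constant, so
\begin{equation*}
\int\prod_j f_j^{c_j}\circ B_j=\sum_k\mu(Q_k)\prod_j f_j(B_j(x_k))^{c_j}.
\end{equation*}

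\textbf{Step 3 (compare with the linear functional on the pieces).} Let $A_{jk}=B_j(Q_k)=B_j(x_k)+dB_j(x_k)(r\cal O_n)$, and set $g_{jk}=f_j(B_j(x_k))1_{A_{jk}}$. Then $\mu(Q_k)\prod_j f_j(B_j(x_k))^{c_j}$ equals the value of the linear functional at $dB_j(x_k)$ tested against $g_{jk}$ (after translating). So it is at most
\begin{equation*}
\operatorname{BL}(d\bf B(x_k),\bf c)\prod_j\Big(\int g_{jk}\Big)^{c_j}.
\end{equation*}
Alternatively, this can be read as an exact identity: by Theorem~\ref{thm:main_extr_functional}, such lattice-indicator tuples are the extremizer type. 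Summing over $k$ gives
\begin{equation*}
\int\prod_j f_j^{c_j}\circ B_j\le S\sum_k\prod_j\Big(\int g_{jk}\Big)^{c_j},\qquad S=\sup_{x\in u+\delta\cal O_n}\operatorname{BL}(d\bf B(x),\bf c).
\end{equation*}

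\textbf{Step 4 (Ball-type recombination).} It remains to show
\begin{equation*}
\sum_k\prod_j\Big(\int g_{jk}\Big)^{c_j}\le\prod_j\Big(\int f_j\Big)^{c_j}.
\end{equation*}
Write $\int g_{jk}=\int_{A_{jk}}f_j$. For fixed $j$, the sets $A_{jk}$ are the images of the $Q_k$, so they overlap, and $\sum_k$ can be controlled by a discrete Brascamp--Lieb inequality for the maps $k\mapsto A_{jk}$. We will handle this by recursion on scales. Group the $Q_k$ into parents of radius $|\varpi^{-1}r|$ and apply the same inequality one level up, using that the children of a parent ball map into the parent's image lattice. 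In effect, this runs Ball's inequality for the linearized datum, whose finiteness constant is $\operatorname{BL}(dB(x),\bf c)$. Iterating up to scale $\delta$ telescopes to the desired bound.

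I expect Step 4 to require care. It may be cleaner to absorb it into Step 3 by directly bounding $\sum_k$ by the linear functional at a single point, with $f_j$ tested on $dB_j(x)(\delta\cal O_n)$-cosets. The exact identities from Step 1 make the two viewpoints equivalent.
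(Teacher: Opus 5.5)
There is a genuine gap, and it is in Step 4, or equivalently in the way Step 3 spends the linear constant. The recombination inequality you reduce to,
\[
\sum_k\prod_j\Big(\int g_{jk}\Big)^{c_j}\le\prod_j\Big(\int f_j\Big)^{c_j},
\]
is false in general. It already fails for linear $\mathbf B$, where the theorem itself is trivial.

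Take $n=2$, $n_j=1$, $\mathbf c=(\tfrac23,\tfrac23,\tfrac23)$, and
\[
B_1(x,y)=x,\qquad B_2(x,y)=\varpi^Ny,\qquad B_3(x,y)=x+\varpi^Ny,\qquad N\ge 1 .
\]
Let $u=0$, let $f_j=1_{\lambda\mathcal O_1}$ for all $j$ with $|\lambda\varpi^{-N}|\le|\delta|$, and let $|r|\le|\lambda|$ be arbitrary.

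\begin{itemize}
\item The balls $Q_k$ on which every $f_j\circ B_j$ equals $1$ tile $\lambda\mathcal O_1\times\lambda\varpi^{-N}\mathcal O_1$, so there are $|\lambda|^2|\varpi|^{-N}|r|^{-2}$ of them.
\item Each such ball contributes $\prod_j(\int g_{jk})^{c_j}=(|r|\cdot|\varpi^N r|\cdot|r|)^{2/3}$.
\item Hence your left side equals $|\varpi|^{-N/3}|\lambda|^2$, while the right side is $|\lambda|^2$.
\end{itemize}

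The underlying problem is this. A single ball $Q_k$ only realizes the ratio $\operatorname{BL}_{\mathrm{grp}}(d\mathbf B(x_k),\mathbf c;\mathcal O_n)$, here $|\varpi|^{-2N/3}$. These $\mathbf f$, by contrast, realize $|\varpi|^{-N}$. Charging each piece the full constant $\operatorname{BL}(d\mathbf B(x_k),\mathbf c)$ in Step 3 discards a factor that no recombination with constant $1$ can recover.

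Your remark that the $1_{A_{jk}}$ are of extremizer type by Theorem~\ref{thm:main_extr_functional} misreads that result. It gives extremizers $1_{(L_j\circ A)(\mathcal O_n)}$ for some invertible $A$, possibly very eccentric, not for $A=I$.

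Since the inequality is false, no recursion over parent balls can prove it. A Ball-type iteration that uses standard balls as the averaging shape loses up to $\operatorname{BL}/\operatorname{BL}_{\mathrm{grp}}(\,\cdot\,;\mathcal O_n)$ at each level.

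Your single-point alternative fails for a different reason. Over $u+\delta\mathcal O_n$ the linearization error $B_j(x)-B_j(u)-dB_j(u)(x-u)$ is only $o(|\delta|)$. That is typically far larger than the constancy scale of the $f_j$, so you cannot substitute the linearization inside $f_j$ at the coarse scale.

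The missing idea is to average against the extremizing shape rather than the ball. The paper fixes a regular subgroup $M$ extremizing $\operatorname{BL}_{\mathrm{grp}}(d\mathbf B(u),\mathbf c)$, which exists by Theorem~\ref{thm:lin_sub_extr}, with $M_j=dB_j(u)(M)$, and inserts
\[
1=\operatorname{BL}(d\mathbf B(u),\mathbf c)^{-1}\int_{x+\varpi\delta\mathcal O_n}\prod_j\mu_{n_j}(\varpi\delta M_j)^{-c_j}1_{\varpi\delta M_j}\big(dB_j(u)(y-x)\big)\,dy .
\]
This identity is exact because $M$ is an extremizer and the scaling condition holds.

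\begin{itemize}
\item The linearization bound lets you replace $dB_j(u)(y-x)$ by $B_j(u)+dB_j(u)(y-u)-B_j(x)$.
\item Swapping the integrals bounds the functional by a nonlinear functional on a ball of radius $|\varpi\delta|$ with localized inputs, times a factor $\mathcal Z\le 1$. This factor is a linear functional at $d\mathbf B(u)$ with convolved inputs, divided by its optimal constant (Lemma~\ref{lem:nonlin_ball_ineq}).
\item Iterating down to the constancy scale of $\mathbf f$ (Corollary~\ref{cor:nonlin_quant_ineq}), every coarse scale costs at most $1$.
\item Only the finest scale costs $\sup_x\operatorname{BL}(d\mathbf B(x),\mathbf c)$. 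That is exactly where your Step 1--2 exact linearization is legitimately used.
\end{itemize}

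A smaller point about Step 1. Your displayed condition controls the remainder only at the single scale $\delta$. To get the identity at every scale $r$ you need
\[
\|B_j(x+y)-B_j(x)-dB_j(x)y\|\le|\varpi|\,\mathrm{rad}\big(dB_j(x)(\mathcal O_n)\big)\,\|y\|\qquad\text{for all }\|y\|\le|\delta|,
\]
which strict differentiability provides. Also, only the inclusion $B_j(x+r\mathcal O_n)\subseteq B_j(x)+dB_j(x)(r\mathcal O_n)$ is actually needed.
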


Thus, at least locally, the study of nonlinear $C^1$ H\"older--Brascamp--Lieb functionals is partially reducible to linear Brascamp--Lieb functionals. A matching bound is available under an additional hypothesis:

\begin{theorem}\label{thm:main_L_by_NL}
    Suppose $(\bf B,\bf c)$ is a nonlinear Brascamp--Lieb tuple. Suppose we have the super-scaling inequality
    \begin{equation*}
        n\leq\sum_{j=1}^mc_jn_j.
    \end{equation*}
    Then $\operatorname{BL}(\bf B,\bf c)\geq\sup_{x\in U}\operatorname{BL}(d\bf B(x),\bf c)$.
\end{theorem}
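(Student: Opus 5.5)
The plan is to fix $x_0\in U$ and show $\operatorname{BL}(\bf B,\bf c)\geq\operatorname{BL}(d\bf B(x_0),\bf c)$ by testing the nonlinear functional against rescaled, recentred linear test functions; taking the supremum over $x_0$ then gives the claim. Write $L_j=dB_j(x_0)$.

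\textbf{Step 1: a near-extremal linear tuple.} Given $\eps>0$, I would choose nonnegative nonzero Schwartz--Bruhat $f_j$ on $\K^{n_j}$ with $\operatorname{BL}(\bf L,\bf c;\bf f)\geq\operatorname{BL}(\bf L,\bf c)-\eps$. If $\operatorname{BL}(\bf L,\bf c)=+\infty$, I would instead take the ratio arbitrarily large. Since everything below is a positive combination of indicators, I may take each $f_j$ to be a finite sum of indicators of balls $y+h\cal O_{n_j}$, all constant at one common scale $|h|$ and supported in a common ball $\gamma\cal O_{n_j}$.

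\textbf{Step 2: rescale.} For $\lambda\in\K^\times$ with $|\lambda|$ small, set $f_j^\lambda(y)=f_j\big(\lambda^{-1}(y-B_j(x_0))\big)$. Then $\int f_j^\lambda=|\lambda|^{n_j}\int f_j$, so the denominator scales like $|\lambda|^{\sum_jc_jn_j}$. In the numerator I substitute $x=x_0+\lambda z$ with $z$ ranging over $\lambda^{-1}(U-x_0)$, which gives
\begin{equation*}
\int_U\prod_jf_j^{c_j}\circ B_j=|\lambda|^n\int\prod_jf_j^{c_j}\big(\lambda^{-1}(B_j(x_0+\lambda z)-B_j(x_0))\big)\,dz.
\end{equation*}
The $C^1$ hypothesis (Definition~\ref{def:Ck}) should give, uniformly for $\|z\|\leq R$,
\begin{equation*}
\big\|\lambda^{-1}(B_j(x_0+\lambda z)-B_j(x_0))-L_jz\big\|\leq o(1)\cdot R\quad\text{as }\lambda\to0.
\end{equation*}
Once this error is below $|h|$, local constancy of $f_j$ at scale $|h|$ together with the ultrametric inequality makes the integrand \emph{exactly} equal to $\prod_jf_j^{c_j}(L_jz)$ on the ball $\|z\|\leq R$. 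The ball is eventually contained in $\lambda^{-1}(U-x_0)$ because $U$ is open.

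\textbf{Step 3: combine.} This gives
\begin{equation*}
\operatorname{BL}(\bf B,\bf c;\bf f^\lambda)\geq|\lambda|^{n-\sum_jc_jn_j}\,\frac{\int_{\|z\|\leq R}\prod_jf_j^{c_j}(L_jz)\,dz}{\prod_j(\int f_j)^{c_j}}.
\end{equation*}
By super-scaling, $|\lambda|^{n-\sum c_jn_j}\geq1$ for $|\lambda|\leq1$, so this is exactly where the hypothesis enters. Letting $R\to\infty$ (with $\lambda$ chosen depending on $R$) and using monotone convergence, I obtain $\operatorname{BL}(\bf B,\bf c)\geq\operatorname{BL}(\bf L,\bf c;\bf f)-o(1)$. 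Then let $\eps\to0$.

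\textbf{Main obstacle.} The delicate point is Step 2's uniform linearization on the growing ball $\|z\|\leq R$. It needs the strengthened $C^1$ notion (difference quotients converging uniformly near $x_0$), not mere differentiability, which the introduction warns can fail badly in the ultrametric setting. I also need the truncation in $R$ because $\bigcap\ker L_j$ may be nontrivial, in which case $\int\prod_jf_j^{c_j}\circ L_j$ could be infinite; taking the supremum over finite $R$ handles the infinite case too. If the paper's $C^1$ definition gives exactly this uniform first-order remainder bound on small balls, the rest is routine.
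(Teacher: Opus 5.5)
Your proposal is correct and follows the paper's own argument. You rescale the test functions about $\mathbf B(x_0)$, use the $C^1$ remainder bound together with local constancy of the $f_j$ to replace $B_j$ \emph{exactly} by $dB_j(x_0)$, and invoke super-scaling to get $|\lambda|^{n-\sum_j c_jn_j}\geq 1$.

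The only difference is in handling the domain of integration. The paper first uses Proposition~\ref{prop:ker_cond} to reduce to injective $d\mathbf B(x)$, then shows the nonlinear integrand is supported in a small set $T$ on which the linearization is exact. Your truncation to $\|z\|\leq R$ is a valid lower bound by nonnegativity, and combined with monotone convergence it sidesteps that reduction and handles the non-injective (infinite) case directly.
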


We repeat the observation that the inequality $|\sum_x\prod_jf_j(P_j(x))|\lesssim\prod_{j=1}^m\|f_j\|_{\ell^1}$ is trivial, under the assumption that $\bf P$ has finite fibers. Thus, Theorem~\ref{thm:int_by_padic} supplies the other, nontrivial endpoint result in suitable local sense. We will see in Example~\ref{ex:local_to_global_enemy} that it is too much to hope for the local bounds in Theorem~\ref{thm:int_by_padic} to imply global bounds, even away from the endpoint.

\subsection{Overview of the remainder of the manuscript}
\begin{itemize}
    \item In Section~\ref{sec:NA_geom}, we will supply the basic theory of the geometry of non-Archimedean finite-dimensional vector spaces that will be needed for the Brascamp--Lieb analysis.
    \item In Section~\ref{sec:lin_BL}, we will prove various critical lemmas about the theory of linear Brascamp--Lieb over non-Archimedean fields.
    \item In Section~\ref{sec:lin_extr}, we will prove that linear Brascamp--Lieb inequalities over non-Archimedean fields always exhibit extremizers (i.e.\,Theorem~\ref{thm:main_extr_functional}), and we will provide regularity bounds for those extremizers.
    \item In Section~\ref{sec:nonlinear_constraints}, we will provide necessary conditions for nonlinear Brascamp--Lieb inequalities over non-Archimedean fields.
    \item In Section~\ref{sec:nonlin_to_lin}, we will prove Theorems~\ref{thm:main_K_NL} and~\ref{thm:main_L_by_NL}; that is, nonlinear Brascamp--Lieb inequalities over non-Archimedean fields reduce to linear Brascamp--Lieb inequalities.
    \item In Section~\ref{sec:nonlin_nec}, we will provide necessary conditions for polynomial Brascamp--Lieb inequalities over the integers.
    \item In Section~\ref{sec:rank_transfer}, we will prove a critical theorem showing that the rank conditions over $\Z$ locally imply rank conditions over $\F_p$.
    \item In Section~\ref{sec:nonlin_suff}, we will put all the previous theory together to prove Theorem~\ref{thm:int_by_padic}.
    \item In Section~\ref{sec:applications}, we indicate applications and special cases of Theorem~\ref{thm:int_by_padic}.
    \item In Section~\ref{sec:appendix}, the Appendix, we develop the basic theory of $C^k$ mappings that is needed in various sections throughout the manuscript.
\end{itemize}

\subsection{Acknowledgments}

I am indebted to Betsy Stovall for supplying the original idea of studying nonlinear Brascamp--Lieb over non-Archimedean local fields, and for providing helpful guidance in understanding the corresponding real theory. I would also like to acknowledge Jaume de Dios Pont for suggesting and discussing the idea of using $p$-adic analysis to attack related functionals over the integers.

\section{Geometry of non-Archimedean vector spaces}\label{sec:NA_geom}

In this section, we discuss some elementary features of the geometry of finite-dimensional vector spaces over a non-Archimedean local field $\K$. Every vector space in this manuscript is taken to be equipped with an ultrametric norm, satisfying the two compatibility laws:
\begin{equation}\tag{S}\label{eq:norm_scaling}
    \|\lambda v\|_V=|\lambda|_\K\cdot\|v\|_V,\quad\forall\lambda\in\K,\,\,v\in V,
\end{equation}
\begin{equation}\tag{V}\label{eq:norm_values}
    \{\|v\|_V:v\in V\}=\{0\}\cup\{|\varpi|^n:n\in\Z\}.
\end{equation}

\begin{definition}[Unit ball]
    Suppose $V$ is a $n$-dimensional ultrametric vector space over $\K$ satisfying~\eqref{eq:norm_scaling} and~\eqref{eq:norm_values}. We write $\cal O_V=\{v\in V:\|v\|_V\leq 1\}$. If $V=\K^n$, we will usually simply write $\cal O_{\K^n}=\cal O_n$; thus, $\cal O_1$ is the ring of integers of $\K$.
\end{definition}

We have some important families of matrices over $\K$.

\begin{definition}[General linear groups]
    For a $\K$-vector space $V$, we write $\mathrm{GL}(V)$ for the linear isomorphisms of $V$. If $V$ is ultrametric and satisfies~\eqref{eq:norm_scaling} and~\eqref{eq:norm_values}, we write $\operatorname{GL}(\cal O_V)$ for the linear isometries of $V$.

    If $V=\K^n$, we will frequently write $\mathrm{GL}(\K^n)=\mathrm{GL}_n(\K)$ and $\mathrm{GL}(\cal O_{\K^n})=\mathrm{GL}_n(\cal O_1)$.
\end{definition}
The $\mathrm{GL}_n(\cal O_1)$ notation is chosen for the following reason: a linear isometry of $\K^n$ is exactly an invertible $n\times n$ matrix over the ring $\cal O_1$.

Closely related is the following non-Archimedean variant of ``orthonormal.''
\begin{definition}[Isometric tuples] A tuple of vectors $(u_1,\ldots,u_k)$ in $V$ are said to be \emph{isometric} if the map
\begin{equation*}
    \K^k\ni(\alpha_1,\ldots,\alpha_k)\mapsto\alpha_1u_1+\ldots+\alpha_ku_k
\end{equation*}
defines an isometric embedding of $\K^k$ into $V$. If they further form a basis for $V$, we describe the tuple as an \emph{isometric basis}.
\end{definition}
\begin{lemma}[$\operatorname{GL}_n(\cal O_1)$ preserves isometric tuples]\label{lem:isom_permute}
    Suppose $(u_1,\ldots,u_k)$ form an isometric tuple in an ultrametric vector space $V$ over $\K$. Suppose as well that $A=[A_{ij}]\in\mathrm{GL}_k(\cal O_1)$. Then, defining
    \begin{equation*}
        v_j=\sum_{i=1}^kA_{ji}u_i,
    \end{equation*}
    we have that $(v_1,\ldots,v_k)$ is isometric, and $\mathrm{span}_\K(u_1,\ldots,u_k)=\mathrm{span}_\K(v_1,\ldots,v_k)$.
\end{lemma}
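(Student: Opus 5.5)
The plan is to factor the map defining the $v_j$ through $A$. Let $\iota:\K^k\to V$ be the isometric embedding $\iota(\alpha)=\sum_i\alpha_iu_i$. The map for the $v_j$ is then
\begin{equation*}
    \beta\mapsto\sum_j\beta_jv_j=\sum_i\Big(\sum_j\beta_jA_{ji}\Big)u_i=\iota(A^\top\beta).
\end{equation*}
So it is the composition $\iota\circ A^\top$. Since a composition of isometries is an isometry, it suffices to show that $A^\top:\K^k\to\K^k$ is an isometry for the max norm $\|\cdot\|$. Note that $A^\top\in\mathrm{GL}_k(\cal O_1)$ as well, since transposition preserves having entries in $\cal O_1$ and commutes with inversion.

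The key step is the standard claim that any matrix $M$ with $M,M^{-1}$ having entries in $\cal O_1$ is an isometry of $(\K^k,\|\cdot\|)$. We take the paper's identification of $\mathrm{GL}_k(\cal O_1)$ with invertible matrices over $\cal O_1$, meaning the inverse is also integral. For any $\beta$, the ultrametric triangle inequality and $|M_{ij}|\leq1$ give
\begin{equation*}
    |(M\beta)_i|=\Big|\sum_jM_{ij}\beta_j\Big|\leq\max_j|M_{ij}||\beta_j|\leq\|\beta\|.
\end{equation*}
Hence $\|M\beta\|\leq\|\beta\|$. Applying the same bound to $M^{-1}$ gives $\|\beta\|=\|M^{-1}M\beta\|\leq\|M\beta\|$, so equality holds. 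With $M=A^\top$, we get $\|\sum_j\beta_jv_j\|=\|\iota(A^\top\beta)\|=\|A^\top\beta\|=\|\beta\|$, so $(v_1,\ldots,v_k)$ is isometric.

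For the span, each $v_j$ lies in $\mathrm{span}_\K(u_1,\ldots,u_k)$ by definition. Conversely, writing $B=A^{-1}$ gives $\sum_jB_{lj}v_j=\sum_i(BA)_{li}u_i=u_l$, so each $u_l$ lies in the span of the $v_j$. Alternatively, injectivity of $\iota\circ A^\top$ together with a dimension count gives the same conclusion.

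The only point I expect to need care is the meaning of $\mathrm{GL}_k(\cal O_1)$. It must mean that $A^{-1}$ also has $\cal O_1$ entries, equivalently that $\det A$ is a unit. An integral invertible matrix with non-unit determinant, such as $\varpi I$, would not be an isometry. This reading is consistent with the paper's definition of $\mathrm{GL}(\cal O_V)$ as the linear isometries, so no real obstacle remains.
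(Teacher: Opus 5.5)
Your proof is correct and follows the paper's argument: write $\sum_j\beta_jv_j=T(A^\top\beta)$, where $T$ is the isometric embedding $\alpha\mapsto\sum_i\alpha_iu_i$, and use that $A^\top$ preserves the max norm on $\K^k$. The only difference is that you justify two steps the paper treats as immediate: you check that $A^\top$ is an isometry (via integrality of $A^\top$ and $(A^\top)^{-1}$), and you prove the span equality using $A^{-1}$.
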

\begin{proof}
    The ``span'' part of the claim is trivial to see, so we just consider the ``isometric'' part. Write $T:\K^k\to V$ for the map $(\alpha_1,\ldots,\alpha_k)\mapsto\alpha_1u_1+\ldots+\alpha_ku_k$. Then, for any $\bar\beta=(\beta_1,\ldots,\beta_k)\in\K^k$,
    \begin{equation*}
        \beta_1v_1+\ldots+\beta_kv_k=\sum_{i,j=1}^k\beta_j A_{ji}u_i=T(\gamma_1,\ldots,\gamma_k),
    \end{equation*}
    where $\gamma_i=\sum_{j=1}^k\beta_j A_{ji}=(A^\top\bar\beta)_i$. Since $A\in\mathrm{GL}_k(\cal O_1)$, we have that $\|A^\top\bar\beta\|=\|\bar\beta\|$. Thus,
    \begin{equation*}
        \|\beta_1v_1+\ldots+\beta_kv_k\|_V=\|T(\gamma_1,\ldots,\gamma_k)\|_V=\|A^\top\bar\beta\|_{\K^k}=\max_i|\beta_i|,
    \end{equation*}
    and the result follows.
\end{proof}
\begin{lemma}[Non-Archimedean Gram--Schmidt]\label{lem:G-S}
    Suppose $V$ is an ultrametric vector space over $\K$ satisfying~\eqref{eq:norm_scaling} and~\eqref{eq:norm_values}. Suppose that $v_1,\ldots,v_k\in V$ are $\K$-linearly independent. Then there are $u_1,\ldots,u_k\in V$ satisfying the following.
    \begin{itemize}
        \item[(a)] $(u_1,\ldots,u_k)$ form an isometric tuple.
        \item[(b)] For each $1\leq j\leq k$, we have $\mathrm{span}_\K(v_1,\ldots,v_j)=\mathrm{span}_\K(u_1,\ldots,u_j)$.
    \end{itemize}
\end{lemma}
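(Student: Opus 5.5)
We argue by induction on $k$, constructing $u_1,\ldots,u_k$ one at a time so that at stage $j$ the tuple $(u_1,\ldots,u_j)$ is isometric and spans $W_j:=\mathrm{span}_\K(v_1,\ldots,v_j)$.

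\emph{Base case.} For $j=1$, condition~\eqref{eq:norm_values} gives $\|v_1\|_V=|\varpi|^e$ for some $e\in\Z$. Set $u_1=\varpi^{-e}v_1$; by~\eqref{eq:norm_scaling}, $\|\alpha u_1\|=|\alpha|$, so $(u_1)$ is isometric.

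\emph{Inductive step.} Suppose $(u_1,\ldots,u_{j-1})$ is isometric and spans $W_{j-1}$; let $T:\K^{j-1}\to W_{j-1}$ be the corresponding isometry. Since $v_j\notin W_{j-1}$, we first choose a best approximation $w\in W_{j-1}$ to $v_j$. The function $w\mapsto\|v_j-w\|$ on $W_{j-1}$ takes values in the discrete set $\{|\varpi|^n\}$ and is bounded below by $0$. It is positive, because $W_{j-1}$ is finite-dimensional and hence closed (it is isometric to $\K^{j-1}$, which is complete). To see that the infimum $d$ is attained, note that any $w$ with $\|v_j-w\|\le\|v_j\|$ satisfies $\|w\|\le\|v_j\|$. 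Thus we may restrict $w$ to a ball of $W_{j-1}$, which is compact since it is the image under $T$ of a ball in $\K^{j-1}$. The norm is continuous, so the infimum is attained. Moreover $d>0$: if $d=0$, then $v_j$ would lie in the closure of $W_{j-1}$, which is $W_{j-1}$ itself. By~\eqref{eq:norm_values}, $d=|\varpi|^e$, and we set $u_j=\varpi^{-e}(v_j-w)$. Then $\|u_j\|=1$ and $\|u_j-w'\|\ge 1$ for every $w'\in W_{j-1}$, because rescaling $W_{j-1}$ by $\varpi^{-e}$ leaves it invariant. Since $w\in W_{j-1}$, we also get $\mathrm{span}(u_1,\ldots,u_j)=W_j$.

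\emph{Isometry check.} Take $x=\sum_{i<j}\alpha_iu_i+\alpha_ju_j$; we must show $\|x\|=\max_i|\alpha_i|$. The ultrametric inequality gives $\|x\|\le\max(\|T\alpha'\|,|\alpha_j|)=\max_i|\alpha_i|$, where $\alpha'=(\alpha_1,\ldots,\alpha_{j-1})$. For the reverse inequality:
\begin{itemize}
\item If $\alpha_j=0$, the claim follows from the induction hypothesis.
\item If $\alpha_j\neq0$, then $\|x\|=|\alpha_j|\,\|u_j+\alpha_j^{-1}T\alpha'\|\ge|\alpha_j|$ by the best-approximation property of $u_j$. If moreover $\|T\alpha'\|>|\alpha_j|$, the ultrametric equality case (strict inequality of norms forces equality in the max) gives $\|x\|=\|T\alpha'\|$.
\end{itemize}
In either case $\|x\|\ge\max(|\alpha_j|,\|T\alpha'\|)$, which completes the induction.

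The main obstacle is the existence of the best approximation $w$, i.e.\ that the distance from $v_j$ to $W_{j-1}$ is attained and is positive. This is exactly where the discreteness of the value group (condition~\eqref{eq:norm_values}) and the local compactness of $\K$ enter; elsewhere the argument is routine. One could instead avoid compactness entirely: the value set above $0$ is discrete, so a minimizing sequence of distances stabilizes and the minimum is attained automatically. Positivity still needs $W_{j-1}$ closed, which follows from it being isometric to the complete space $\K^{j-1}$.
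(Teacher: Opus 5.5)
Your proof is correct and is essentially the paper's argument: normalize, subtract a best approximation to the new vector from the span of the previously constructed isometric vectors, rescale by a power of $\varpi$, and use minimality to obtain the isometric property. The differences favor your version. The paper reaches that final minimization through an intermediate chain of partial minimizations over $\mathrm{span}_\K(u_1,\ldots,u_{i-1})$ for $i=2,\ldots,k$, and it simply asserts that the minimizers exist. You instead minimize over all of $W_{j-1}$ in one step and justify that the minimum is attained and positive, either via compactness or via discreteness of the value set together with closedness of $W_{j-1}$.
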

\begin{proof}
    By induction on $k$. The case $k=1$ is solved by finding $\gamma\in\K^\times$ with $|\gamma|=\|v_1\|$, and then choosing $u_1=\gamma^{-1}v_1$.

    Now, assume that $k\geq 2$ and the result is known for smaller $k$. Thus, we may find $u_1,\ldots,u_{k-1}$ forming an isometric tuple such that $\mathrm{span}_\K(v_1,\ldots,v_j)=\mathrm{span}_\K(u_1,\ldots,u_j)$ for each $1\leq j\leq k-1$. As before, find $\gamma\in\K^\times$ such that $|\gamma|=\|v_k\|$, and write $ v_k^1=\gamma^{-1}v_k$.

    Next, we construct $v_k^2$; subsequently, we will have a slightly general different construction for $v_k^i$, $3\leq i\leq k$. If $(u_1,v_k^1)$ form an isometric tuple, then we simply set $v_k^2=v_k^1$. Otherwise, choose an $\alpha\in\K$ which minimizes $\|\alpha u_1+v_k^1\|_V$. Since $(u_1,v_k^1)$ are not isometric but $\|u_1\|_V=1=\|v_k^1\|_V$, we must have $|\alpha|=1$ and $\|\alpha u_1+v_k^1\|_V<1$. Since $u_1,v_k^1$ are $\K$-linearly independent, there exists $\rho\in\K^\times$ with $|\rho|=\|\alpha u_1+v_k^1\|_V$. From this, we define
    \begin{equation*}
        v_k^2=\rho^{-1}(v_k^1+\alpha u_1).
    \end{equation*}
    Note that $\mathrm{span}_\K(v_1,v_k^2)=\mathrm{span}_\K(u_1,u_k)$, and $\|v_k^2\|_V=1$. We claim that $(u_1,v_k^2)$ are isometric. It suffices to consider expressions of the form $\beta u_1+v_k^1$ with $|\beta|=1$. If $\|\beta u_1+v_k^2\|_V<1$, then
    \begin{equation*}
        \|\beta u_1+v_k^2\|_V=\|(\beta+\rho^{-1}\alpha)u_1+\rho^{-1}v_k^1\|_V=|\rho|^{-1}\|(\rho\beta+\alpha)u_1+v_k^1\|_V
    \end{equation*}
    implies that $\|(\rho\beta+\alpha)u_1+v_k^1\|_V<|\rho|$, contradicting the minimality of $|\rho|$. Thus, $\|\beta u_1+v_k^2\|_V=1=\max(|\beta|,1)$; all other cases are either trivial or reducible to this one, using~\eqref{eq:norm_scaling}.

    Next, suppose $i\geq 3$ and we have already constructed $v_k^{i-1}$, satisfying the property that $\mathrm{span}_\K(u_1,\ldots,u_{i-2},v_k^{i-1})=\mathrm{span}_\K(u_1,\ldots,u_{i-2},u_k)$, and that $(u_1,\ldots,u_{i-2},v_k^{i-1})$ is isometric. If $(u_1,\ldots,u_{i-1},v_k^{i-1})$ is isometric, then we choose $v_k^i=v_k^{i-1}$. Otherwise, suppose that $\alpha_1,\ldots,\alpha_{i-1}\in\K$ are such that
    \begin{equation*}
        \|\alpha_1u_1+\ldots+\alpha_{i-1}u_{i-1}+v_k^{i-1}\|_V
    \end{equation*}
    is minimized. Certainly then $|\alpha_t|\leq 1$ for all $t$. Define
    \begin{equation*}
        \tilde v_k^{i-1}=\alpha_1u_1+\ldots+\alpha_{i-2}u_{i-2}+v_k^{i-1};
    \end{equation*}
    by Lemma~\ref{lem:isom_permute}, we still have $(u_1,\ldots,u_{i-2},\tilde v_k^{i-1})$ is isometric, and that $\mathrm{span}_\K(u_1,\ldots,u_{i-2},\tilde v_k^{i-1})=\mathrm{span}_\K(u_1,\ldots,u_{i-2},u_k)$. Thus, we minimize
    \begin{equation*}
        \|\beta_1u_1+\ldots+\beta_{i-2}u_{i-2}+\beta_{i-1}u_{i-1}+\tilde v_k^{i-1}\|_V
    \end{equation*}
    by choosing $\beta_t=0$ for $t\leq i-2$, and $\beta_{i-1}=\alpha_{i-1}$. Since $u_1,\ldots,u_{i-2},\tilde v_k^{i-1}$ span an $i-1$-dimensional space, and $u_{i-1}\not\in\mathrm{span}_\K(u_1,\ldots,u_{i-2},u_k)=\mathrm{span}_\K(u_1,\ldots,u_{i-2},\tilde v_k^{i-1})$, we have that $(u_1,\ldots,u_{i-1},\tilde v_k^{i-1})$ are $\K$-linearly independent, and hence the above minimum is nonzero. Let $\rho\in\K^\times$ be such that
    \begin{equation*}
        |\rho|=\|\alpha_{i-1}u_{i-1}+\tilde v_k^{i-1}\|_V.
    \end{equation*}
    Finally, define $v_k^i=\rho^{-1}(\alpha_{i-1}u_{i-1}+\tilde v_k^{i-1})$. Clearly, $\mathrm{span}_\K(u_1,\ldots,u_{i-1},v_k^{i})=\mathrm{span}_\K(u_1,\ldots,u_{i-1},u_k)$ and $\|v_k^i\|_V=1$. We check the isometry property. It suffices to consider $\beta_1,\ldots,\beta_{i-1}\in\K$ with $|\beta_t|\leq 1$ for all $t$, and to suppose that for some such choice we have
    \begin{equation*}
        \|\beta_1u_1+\ldots+\beta_{i-1}u_{i-1}+v_k^i\|_V<1.
    \end{equation*}
    Recalling the definition of $v_k^i$ in terms of $\tilde v_k^{i-1}$, we conclude that
    \begin{equation*}
        \|\rho\beta_1u_1+\ldots+\rho\beta_{i-2}u_{i-2}+(\rho\beta_{i-1}+\alpha_{i-1})u_{i-1}+\tilde v_k^{i-1}\|_V<|\rho|;
    \end{equation*}
    on the other hand, this contradicts the minimality assumption with regards to $\tilde v_k^{i-1}$. Thus, $(u_1,\ldots,u_{i-1},v_k^i)$ is isometric.

    Eventually, we arrive at an isometric tuple $(u_1,\ldots,u_{k-1},v_k^{k})$, which satisfies 
    \begin{equation*}
        \mathrm{span}_\K(u_1,\ldots,u_{k-1},v_k^k)=\mathrm{span}_\K(u_1,\ldots,u_{k-1},u_k).
    \end{equation*}
    Defining $u_k=v_k^{k}$, we conclude by induction on $k$.
\end{proof}
\begin{remark}\label{rmk:isom_tuple_facts}
    An isometric tuple is necessarily $\K$-linearly independent. Using Lemma~\ref{lem:G-S}, we see that for every finite-dimensional subspace $V\subseteq W$ of an ultrametric $\K$-vector space $W$ satisfying~\eqref{eq:norm_scaling} and~\eqref{eq:norm_values}, $V$ admits a basis which is an isometric tuple; we call these \emph{isometric bases}. It is easy to show, in fact, that a tuple $(u_1,\ldots,u_k)$ in $\K^n$ is isometric if and only if (a) each $u_i\in\cal O_n$, and (b) their reductions $(u_1+\varpi\cal O_n,\ldots,u_k+\varpi\cal O_n)$ are linearly independent in the vector space $\cal O_n/\varpi\cal O_n$ over the residue field $\cal O_1/\varpi\cal O_1$.

    By virtue of the latter, an $n$-tuple $(u_1,\ldots,u_n)$ in $\K^n$ is isometric precisely when the matrix $U=[\,u_1\,\cdots\, u_n\,]$ with the $u_j$ as columns, belongs to $\operatorname{GL}_n(\cal O_1)$.
\end{remark}

\begin{remark}
    Let $u_1,\ldots,u_n$ be an isometric basis of an ultrametric $\K$-vector space $V$ satisfying~\eqref{eq:norm_scaling} and~\eqref{eq:norm_values}. In particular, the module $\cal O_1\langle u_1,\ldots,u_n\rangle$ generated by $u_1,\ldots,u_n$, i.e. the set of $\cal O_1$-linear combinations of $u_1,\ldots,u_n$, satisfies $\cal O_1\langle u_1,\ldots,u_n\rangle=\cal O_V$.
\end{remark}

The first special feature of $\K$-vector spaces, as against real vector spaces, is that metric balls are also subgroup cosets.

\begin{definition}[Regular subgroups]
    For a finite-dimensional ultrametric $\K$-vector space $V$ satisfying~\ref{eq:norm_scaling} and~\ref{eq:norm_values}, we write $\cal N(V)$ for the family of sets of the form $A.(\cal O_V)$, with $A\in\mathrm{GL}(V)$. Each $C\in\cal N(V)$ is in particular a compact open subgroup of $V$. We refer to members of $\cal N(V)$ as \emph{regular subgroups}.
\end{definition}

There are two ways of specifying the class $\cal N(V)$.
\begin{lemma}
    Let $V$ be a finite-dimensional ultrametric vector space over $\K$ satisfying~\eqref{eq:norm_scaling} and~\eqref{eq:norm_values}. Suppose $C\subseteq V$. Then the following are equivalent:
    \begin{itemize}
        \item[(a)] $C\in\cal N(V)$.
        \item[(b)] $C$ is compact, open, nonempty, and is an $\cal O_1$-module; that is, $C$ is closed under linear combinations with coefficients taken from $\cal O_1$.
    \end{itemize}
\end{lemma}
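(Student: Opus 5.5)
The implication (a)$\Rightarrow$(b) is a direct check. Write $C=A.(\cal O_V)$ with $A\in\mathrm{GL}(V)$. First fix an isometric basis $u_1,\ldots,u_n$ of $V$, which exists by Lemma~\ref{lem:G-S} and Remark~\ref{rmk:isom_tuple_facts}. The coordinate map $\K^n\to V$ is then an isometry carrying $\cal O_n$ onto $\cal O_V$. Since $\cal O_1$ is compact and open in $\K$, the set $\cal O_n=\cal O_1^n$ is compact and open, and hence so is $\cal O_V$. The map $A$ is a linear bijection of a finite-dimensional normed space over a complete field, so it is a homeomorphism. Therefore $C$ is compact, open and nonempty. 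It is an $\cal O_1$-module because $\cal O_V$ is one, by the ultrametric inequality and \eqref{eq:norm_scaling}, and $A$ is $\K$-linear.

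For (b)$\Rightarrow$(a), identify $V$ with $\K^n$ through the isometric basis, so that $\cal O_V=\cal O_n$. The plan is to show that $C$ is a free $\cal O_1$-module of rank $n$; an $\cal O_1$-basis $w_1,\ldots,w_n$ then gives $C=A.(\cal O_n)$, where $A$ sends the $i$-th standard vector to $w_i$. The argument has three steps.

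\emph{Step 1: squeeze $C$ between balls.} Since $C$ is open and contains $0$, there is a $\gamma\in\K^\times$ with $\gamma\cal O_n\subseteq C$. Since $C$ is compact, it is bounded, so there is a $\beta\in\K^\times$ with $C\subseteq\beta\cal O_n$.

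\emph{Step 2: finite generation.} Because $\beta\cal O_n/\gamma\cal O_n$ is finite (it is isomorphic to $\cal O_n/\varpi^k\cal O_n$ for some $k$, and the residue field is finite), $C$ is generated as an $\cal O_1$-module by $\gamma\cal O_n$ together with finitely many coset representatives. Hence $C$ is finitely generated.

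\emph{Step 3: freeness and rank.} $C$ is torsion-free, being a submodule of $\K^n$, and $\cal O_1$ is a discrete valuation ring, hence a PID. So $C$ is free. Its rank is $n$ because $\gamma\cal O_n\subseteq C\subseteq\beta\cal O_n$ and both of these are free of rank $n$. A free basis $w_1,\ldots,w_n$ of $C$ spans $\K^n$ over $\K$, since $C$ contains $\gamma\cal O_n$. Hence the matrix $A=[\,w_1\,\cdots\,w_n\,]$ is invertible and $A.(\cal O_n)=C$.

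If one prefers to avoid invoking the structure theorem for modules over a PID, Step 3 can instead be done by hand with a Gram--Schmidt-type procedure in the spirit of Lemma~\ref{lem:G-S}. One picks $w_1\in C$ of maximal norm, which exists because the norm takes discrete values and $C$ is bounded, and then works with the image of $C$ in $\K^n/\K w_1$. One then checks that this image is again compact, open and an $\cal O_1$-module, and that $C\cap\K w_1=\cal O_1w_1$, which follows from maximality of $|\alpha|$ with $\alpha w_1\in C$. The main obstacle in this by-hand version is lifting a basis of the quotient back to $C$ so that the lifted vectors together with $w_1$ generate $C$ exactly, rather than just up to finite index. The cleanest route is the PID structure theorem, and that is the one I would use.
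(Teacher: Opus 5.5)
Your proof is correct and follows essentially the same route as the paper: both place $C$ inside a free $\mathcal O_1$-module of rank $n$, use PID structure theory to conclude $C$ is free, use openness to get rank $n$, and send an isometric basis to the resulting $\mathcal O_1$-basis. Your Step 2 (finite generation via finiteness of $\beta\mathcal O_n/\gamma\mathcal O_n$) is valid but can be skipped, since the paper directly cites that a submodule of a finite-rank free module over a PID is free. (The lifting ``obstacle'' in your by-hand variant does not actually arise: once $C\cap\K w_1=\mathcal O_1 w_1$, any lifts to $C$ of an $\mathcal O_1$-basis of the image of $C$, together with $w_1$, generate $C$ exactly.)
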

\begin{proof}
    It is easy to see that (a) implies (b). Consider the reverse. Write $n=\dim_\K V$. By virtue of being compact, we in particular have that $C$ is a submodule of a free $\cal O_1$-module of the form $\cal O_1\langle v_1,\ldots,v_n\rangle$ for some $\K$-linearly independent vectors $v_1,\ldots,v_n$. By~\cite{rotman2010advanced}, Corollary B-4.115, we see that $C$ is a free $\cal O_1$-module (using the fact that $\cal O_1$ is a PID). Since $C$ is open, it must be rank $n$ as an $\cal O_1$-module. Thus, $C$ is of the form $C=\cal O_1\langle w_1,\ldots,w_n\rangle$ for some $\K$-linearly independent $w_1,\ldots,w_n\in V$.

    Fix now $(u_1,\ldots,u_n)$ an isometric basis of $V$, and let $T:V\to V$ be the linear map defined by $T(u_j)=w_j$. It follows that
    \begin{equation*}
        T(\cal O_V)=T(\cal O_1\langle u_1,\ldots,u_n\rangle)=\cal O_1\langle w_1,\ldots,w_m\rangle=C.
    \end{equation*}
    Since $w_1,\ldots,w_n$ are $\K$-linearly independent, $T$ is invertible. Thus, $C\in\cal N(V)$, as was to be shown.
\end{proof}

\begin{remark}
    If $\mathrm{char}(\K)\neq 2$, then we may add a third equivalent condition: $C$ is compact, open, and \emph{convex} in the sense that $x,y\in C$ and $|\lambda|\leq 1$ implies $\lambda x+(1-\lambda)y\in C$. The characteristic $2$ case fails by virtue of ``convex hyperbolae'' such as $\{(x,y)\in\K^2:|x|,|y|\leq 1,\,\,|xy|<1\}$.
\end{remark}

We may now point out that there is only one ultrametric $\K$-vector space satisfying~\eqref{eq:norm_scaling} and~\eqref{eq:norm_values} in a given dimension.

\begin{proposition}
    Let $V$ be an $n$-dimensional vector space over $\K$, equipped with an ultrametric norm $\|-\|_V$ satisfying~\eqref{eq:norm_scaling} and~\eqref{eq:norm_values}. Then there is a linear isometry $A:V\to\K^n$, with the latter equipped with the usual max-norm.
\end{proposition}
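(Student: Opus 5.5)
The plan is to use the non-Archimedean Gram--Schmidt procedure, Lemma~\ref{lem:G-S}, to produce an isometric basis of $V$, and then map it to the standard basis. Since $V$ is $n$-dimensional, fix any $\K$-basis $v_1,\ldots,v_n$ of $V$. Applying Lemma~\ref{lem:G-S} with $k=n$ gives vectors $u_1,\ldots,u_n$ forming an isometric tuple with $\mathrm{span}_\K(u_1,\ldots,u_n)=\mathrm{span}_\K(v_1,\ldots,v_n)=V$. By Remark~\ref{rmk:isom_tuple_facts}, an isometric tuple is linearly independent, so $(u_1,\ldots,u_n)$ is an isometric basis of $V$.

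By definition of an isometric tuple, the map
\begin{equation*}
    T:\K^n\to V,\qquad T(\alpha_1,\ldots,\alpha_k)=\alpha_1u_1+\ldots+\alpha_nu_n,
\end{equation*}
is an isometric embedding of $\K^n$, with its max-norm, into $V$. It is linear, and it is surjective because the $u_i$ span $V$. Hence $T$ is a bijective linear isometry. Setting $A=T^{-1}:V\to\K^n$ gives the desired linear isometry, since $\|Av\|_{\K^n}=\|T(Av)\|_V=\|v\|_V$.

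The only substantive step is the existence of the isometric basis, and Lemma~\ref{lem:G-S} supplies it. That lemma's proof uses the hypotheses~\eqref{eq:norm_scaling} and~\eqref{eq:norm_values}: they are needed to rescale vectors to norm one and to realize the minimal norms as $|\rho|$ for some $\rho\in\K^\times$. The one point I would double-check is that the minima invoked in that proof are actually attained. Over a finite-dimensional $V$ this follows from the discreteness of the value set in~\eqref{eq:norm_values}, together with the fact that the minimized quantity is positive and bounded above, so any decreasing sequence of attained values must stabilize. Granting Lemma~\ref{lem:G-S} as stated earlier, the proposition is immediate.
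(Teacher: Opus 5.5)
Your proof is correct, and it takes a different route from the paper's. The paper proceeds in four steps:
\begin{itemize}
\item it fixes an arbitrary basis and transports $\|\cdot\|_V$ to a norm $\|\cdot\|'$ on $\K^n$;
\item it asserts that the unit ball $C$ of $\|\cdot\|'$ is a nonempty compact open $\mathcal O_1$-module;
\item it uses the classification of such modules (submodules of free modules over the PID $\mathcal O_1$ are free) to write $C=B(\mathcal O_n)$ with $B\in\mathrm{GL}_n(\K)$;
\item it uses \eqref{eq:norm_scaling} and \eqref{eq:norm_values} to recover $\|\cdot\|'$ as the gauge of $C$, so that $B^{-1}T$ is the isometry.
\end{itemize}
You instead apply Lemma~\ref{lem:G-S} with $k=n$ and invert the resulting isometric parametrization. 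The paper's own Remark~\ref{rmk:isom_tuple_facts} already notes that Gram--Schmidt yields isometric bases, so your argument is the most economical path through the paper's lemmas.

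Your route is also more self-contained. Its only analytic input is the existence of minimizers inside Gram--Schmidt, and it actually proves that every such norm is equivalent to the max-norm. The paper's route takes for granted that $C$ is bounded, hence compact, in the usual topology of $\K^n$; that is the nontrivial half of the same equivalence. In exchange, the paper's route avoids the iterative construction and ties the statement to the description of $\mathcal N(V)$ as the compact open $\mathcal O_1$-modules.

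One correction to your final paragraph: the reason you give for the minima being attained is not valid. The value set $\{|\varpi|^e:e\in\Z\}$ accumulates at $0$, so a decreasing sequence of positive attained values need not stabilize. Pointwise positivity does not give a positive infimum, and being bounded above is irrelevant.

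The correct reason is compactness. Suppose $(u_1,\ldots,u_{i-1})$ is isometric and $\|v\|_V=1$. Whenever $\max_t|\alpha_t|>1$, we have $\|\sum_t\alpha_tu_t+v\|_V=\max_t|\alpha_t|>1$, so it suffices to minimize over $\mathcal O_1^{i-1}$. There, $\alpha\mapsto\|\sum_t\alpha_tu_t+v\|_V$ is $1$-Lipschitz for the max-norm, since $\|\sum_t(\alpha_t-\alpha_t')u_t\|_V=\max_t|\alpha_t-\alpha_t'|$. A minimizer therefore exists, and its value is positive by linear independence. Since you are entitled to cite Lemma~\ref{lem:G-S}, this does not affect your proof.
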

\begin{proof}
    Take first a basis $v_1,\ldots,v_n$ of $V$. Letting $T:V\to\K^n$ be the linear transformation sending $v_i$ to $\bf e_i$, and writing $\|x\|'=\|T^{-1}x\|_V$, we obtain an ultrametric norm $\|-\|'$ on $\K^n$. The unit ball $C$ of $\K^n$ under $\|-\|'$ is then a nonempty compact open $\cal O_1$-module, and thus $C=B(\cal O_n)$ for some $B\in\mathrm{GL}_n(\K)$. In particular, for $y\in\K^n$, then (writing $y=Bz$) we have (using the value-restriction and the compatibility law on $\|-\|_V$)
    \begin{equation*}
        \|y\|'=\inf\{|\lambda|:\lambda^{-1}y\in C\}=\inf\{|\lambda|:\lambda^{-1}Bz\in C\}=\inf\{|\lambda|:\lambda^{-1}z\in\cal O_n\}=\|z\|.
    \end{equation*}
    Write then
    \begin{equation*}
        w_i=T^{-1}(B(\bf e_i)).
    \end{equation*}
    Then, for any $\alpha_1,\ldots,\alpha_n\in\K$,
    \begin{equation*}
        \|\alpha_1w_1+\ldots+\alpha_nw_n\|_V=\|B(\alpha_1\bf e_1+\ldots+\alpha_n\bf e_n)\|'=\|\alpha_1\bf e_1+\ldots+\alpha_n\bf e_n\|=\max_i|\alpha_i|.
    \end{equation*}
    In particular, the linear map $A=B^{-1}T:V\to\K^n$, $A(w_i)=\bf e_i$, defines a suitable linear isometry.
\end{proof}

As a consequence, we will generally be free to just consider the spaces $\K^n$. 

We have said that a $\K$-vector space will always be equipped with a norm. We should clarify what this means for the specific examples of vector spaces, other than $\K^n$, that will turn up.

\begin{definition}[Normed subspace]
    If $V$ is a finite-dimensional normed $\K$-vector space, and $W\subseteq V$ is a vector subspace, then we understand $W$ to be equipped with the restriction of the norm to $W$.
\end{definition}

More interesting is the dual notion.

\begin{definition}[Quotient subspace]
    If $W\subseteq V$ is a subspace of a normed space, then the abstract $\K$-vector space $V/W$ is equipped with the norm
    \begin{equation*}
        \|v+W\|=\inf_{w\in W}\|v+w\|.
    \end{equation*}
\end{definition}

It is easy to check that this properly defines an ultrametric norm on $V/W$, satisfying the compatibility conditions~\eqref{eq:norm_scaling} and~\eqref{eq:norm_values}.

\begin{definition}[Direct sum]
    If $V,W$ are normed $\K$-vector spaces, then the $\K$-vector space $V\oplus W$, equipped with the norm $\|(x,y)\|=\max(\|x\|_V,\|y\|_W)$, is the \emph{direct sum} of $V$ and $W$.
\end{definition}

The relevant fact of the choices above is the following splitting relation.

\begin{lemma}[Splitting lemma]\label{lem:splitting}
    Let $V$ be a $\K$-vector space and $W$ a subspace. Then $V$ is $\K$-linearly isometric to $W\oplus (V/W)$.
\end{lemma}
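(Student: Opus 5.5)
We will construct an explicit linear isometry $\Phi: W\oplus(V/W)\to V$ by choosing a good lift of the quotient.

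\textbf{Step 1: bases.} Write $k=\dim W$ and $n=\dim V$. By Lemma~\ref{lem:G-S} and Remark~\ref{rmk:isom_tuple_facts}, $W$ has an isometric basis $(w_1,\ldots,w_k)$. Extend it to a basis $w_1,\ldots,w_k,v_{k+1},\ldots,v_n$ of $V$. Apply Lemma~\ref{lem:G-S} to this ordered tuple. Because of property (b) with $j=k$, we may take the first $k$ output vectors to be the $w_i$ themselves; the inductive construction in that proof leaves an already isometric initial segment unchanged. This gives an isometric basis $(w_1,\ldots,w_k,u_{k+1},\ldots,u_n)$ of $V$ whose first $k$ vectors span $W$.

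\textbf{Step 2: isometry of the lifts.} Set $U=\mathrm{span}_\K(u_{k+1},\ldots,u_n)$, so that $V=W\oplus U$ algebraically. For $w=\sum\alpha_iw_i$ and $u=\sum\beta_ju_j$, isometry of the full tuple gives
\begin{equation*}
\|w+u\|_V=\max\big(\max_i|\alpha_i|,\max_j|\beta_j|\big)=\max(\|w\|_V,\|u\|_V).
\end{equation*}
In particular, for fixed $u$ we have $\inf_{w\in W}\|u+w\|_V=\|u\|_V$, attained at $w=0$. Hence the quotient map $\pi:U\to V/W$, which is a linear bijection, satisfies $\|\pi(u)\|_{V/W}=\|u\|_V$. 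So $\pi$ is a linear isometry.

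\textbf{Step 3: conclusion.} Define $\Phi(w,\xi)=w+\pi^{-1}(\xi)$. This is a linear bijection $W\oplus(V/W)\to V$. By Step 2,
\begin{equation*}
\|\Phi(w,\xi)\|_V=\max(\|w\|_V,\|\pi^{-1}\xi\|_V)=\max(\|w\|_W,\|\xi\|_{V/W}),
\end{equation*}
which is exactly the direct-sum norm.

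\textbf{Main obstacle.} The only delicate point is Step 1: we need an isometric basis of $V$ that \emph{extends} an isometric basis of $W$. I would justify this directly from the proof of Lemma~\ref{lem:G-S}, whose span-compatibility (b) at level $k$ guarantees it. Alternatively, there is a reduction-mod-$\varpi$ argument via Remark~\ref{rmk:isom_tuple_facts}, after first identifying $V$ with $\K^n$ by the preceding proposition. The remaining steps are routine ultrametric computations.
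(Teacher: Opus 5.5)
Your proof is correct and follows the same route as the paper, which asserts that up to isometry $V=\K^n$ and $W=\K^k\times\{0\}^{n-k}$ and then reads off the quotient norm coordinatewise; your Step 1 supplies the justification for that reduction, which the paper only calls ``easy to see.'' You do not need the first $k$ output vectors to equal the $w_i$: property (b) of Lemma~\ref{lem:G-S} at $j=k$ already makes them an isometric basis of $W$, and that is all Steps 2 and 3 use.
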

\begin{proof}
    It is easy to see that, up to isometric isomorphism, we may assume that $V=\K^n$ and $W=\K^k\times\{0\}^{n-k}$. Then
    \begin{equation*}
        \|(x_1,\ldots,x_n)\|_{V/W}=\max_{k<i\leq n}|x_i|,
    \end{equation*}
    so that, if we write
    \begin{equation*}
        \Phi:W\oplus(V/W)\to V,\quad\Phi((x_1,\ldots,x_k,0,\ldots,0),(y_1,\ldots,y_n)+W)=(x_1,\ldots,x_k,y_{k+1},\ldots,y_n)
    \end{equation*}
    defines an isometric isomorphism.
    
\end{proof}

Lastly, we will implicitly use the following matrix factorization throughout the manuscript.
\begin{lemma}[Smith normal form]\label{lem:smith}
    Suppose $A$ is an $n\times n$ matrix over $\K$ of rank $r$. Then there exist $B,C\in\operatorname{GL}_n(\cal O_1)$ such that
    \begin{equation*}
        BAC=\mathrm{diag}(\lambda_1,\ldots,\lambda_r,0,\ldots,0),
    \end{equation*}
    where (say) $0<|\lambda_1|\leq\cdots\leq |\lambda_r|$.
\end{lemma}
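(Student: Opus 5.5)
We argue by induction on $n$, following the Euclidean-algorithm proof of Smith normal form over a PID, here using that $\cal O_1$ is a discrete valuation ring with uniformizer $\varpi$. The row and column operations we allow are swaps of rows or columns, and adding an $\cal O_1$-multiple of one row (or column) to another. Each of these is left- or right-multiplication by an elementary matrix in $\operatorname{GL}_n(\cal O_1)$. We also allow scaling a row by a unit of $\cal O_1$. If $A=0$ there is nothing to prove, so assume $A\neq 0$ and $r\geq 1$.

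\textbf{Step 1: the pivot.} Choose an entry $a_{kl}$ of maximal absolute value $|a_{kl}|=\max_{i,j}|a_{ij}|>0$. Permute rows and columns to move it to position $(1,1)$. Every other entry then satisfies $|a_{ij}|\leq|a_{11}|$, so $a_{ij}/a_{11}\in\cal O_1$. Subtracting $(a_{i1}/a_{11})$ times row $1$ from row $i$ clears the first column below the pivot. Similarly, subtracting $(a_{1j}/a_{11})$ times column $1$ from column $j$ clears the first row. All the operations used have coefficients in $\cal O_1$, so the accumulated left and right factors lie in $\operatorname{GL}_n(\cal O_1)$. The result is the block form $\mathrm{diag}(a_{11},A')$, where $A'$ is $(n-1)\times(n-1)$ of rank $r-1$. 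By the ultrametric inequality, every entry of $A'$ still has absolute value at most $|a_{11}|$.

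\textbf{Step 2: induction.} Apply the inductive hypothesis to $A'$, obtaining $B',C'\in\operatorname{GL}_{n-1}(\cal O_1)$ with $B'A'C'$ diagonal with nonzero entries $\lambda_1,\ldots,\lambda_{r-1}$ followed by zeros. Embed $B'$ and $C'$ block-diagonally as $1\oplus B'$ and $1\oplus C'$; these are still in $\operatorname{GL}_n(\cal O_1)$. The product is then diagonal, $\mathrm{diag}(a_{11},\lambda_1,\ldots,\lambda_{r-1},0,\ldots,0)$.

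\textbf{Step 3: ordering.} Finally, conjugate by a permutation matrix, which is in $\operatorname{GL}_n(\cal O_1)$. This moves the nonzero diagonal entries to the first $r$ slots, arranged so that their absolute values are nondecreasing. Note that the statement asks for no divisibility chain beyond this ordering. Even so, such a chain holds automatically, since over a DVR divisibility is equivalent to comparing absolute values.

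The only point needing any care is Step 1: we must check that the chosen pivot really dominates every entry. This is exactly what makes each elimination coefficient $a_{ij}/a_{11}$ lie in $\cal O_1$. It is also why the construction never needs general gcd steps: in a DVR, an element of maximal absolute value divides every other entry. The rest of the argument is routine bookkeeping.
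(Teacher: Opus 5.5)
Your proposal is correct and is the same approach as the paper, whose entire proof is the one line ``row/column reduction over $\cal O_1$.'' Your choice of an entry of maximal absolute value as the pivot, which makes every elimination coefficient lie in $\cal O_1$, is the standard way to carry out that reduction over a discrete valuation ring.
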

\begin{proof}
    Row/column reduction over $\cal O_1$.
\end{proof}

\section{Linear Brascamp--Lieb theory over non-Archimedean fields}\label{sec:lin_BL}

In this section, we discuss the general theory of linear Brascamp--Lieb inequalities over non-Archimedean fields. In the real theory, it is productive to focus ones attention on the restriction of Brascamp--Lieb functionals to Gaussian inputs; there, the optimization problems are finite-dimensional, and it is known since~\cite{lieb1990gaussian,bennett2008brascamp} that Brascamp--Lieb functionals are \emph{saturated} by Gaussians, i.e.\ that the constants are computable using only Gaussians. Over non-Archimedean fields, this is essentially true as well; in place of Gaussians, the relevant `simple' functions are \emph{indicators of regular subgroups}.

\begin{definition}[Regular subgroup Brascamp--Lieb] Given a Brascamp--Lieb datum $(\bf L,\bf c)$ and $G\in\cal N(\K^n)$, where $L_j$ has domain $\K^n$ for all $j$, we write
\begin{equation*}
    \operatorname{BL}_{\mathrm{grp}}(\bf L,\bf c;G)=\frac{\mu_n(G)}{\prod_{j=1}^m\mu_{n_j}(L_j(G))^{c_j}}\in(0,+\infty],
\end{equation*}
and
\begin{equation*}
    \operatorname{BL}_{\mathrm{grp}}(\bf L,\bf c)=\sup_{G\in\cal N(\K^n)}\operatorname{BL}(\bf L,\bf c;G).
\end{equation*}
\end{definition}

\begin{remark}[Directly comparing subgroup/functional Brascamp--Lieb] If $G\in\cal  N(\K^n)$, then upon writing $f_j=1_{L_j(G)}$ it is easy to discover that
\begin{equation*}
    \operatorname{BL}(\bf L,\bf c;\bf f)=\operatorname{BL}_{\mathrm{grp}}\Big(\bf L,\bf c;\bigcap_{j=1}^mL_j^{-1}(L_j(G))\Big).
\end{equation*}
Note that $G\subseteq \bigcap_{j=1}^mL_j^{-1}(L_j(G))$ and that $L_j\Big(\bigcap_{j=1}^mL_j^{-1}(L_j(G))\Big)=L_j(G)$. Thus (at least in the finite case $\operatorname{BL}_{\mathrm{grp}}(\bf L,\bf c)<+\infty$) we may freely replace each $G$ with $\bigcap_{j=1}^mL_j^{-1}(L_j(G))$, and thereafter assume that $G=\bigcap_{j=1}^mL_j^{-1}(L_j(G))$.

In particular,
\begin{equation*}
    \operatorname{BL}_{\mathrm{grp}}(\bf L,\bf c)\leq\operatorname{BL}(\bf L,\bf c).
\end{equation*}
It will transpire that the two are, in fact, always equal.
    
\end{remark}

We note that $\operatorname{BL}_{\mathrm{grp}}(\bf L,\bf c)<+\infty$ can only happen when the ``scaling condition'' and ``rank conditions'' hold, as in the real case. From now on, it will be helpful to assume that each $L_j$ is surjective:
\begin{definition}[Full Brascamp--Lieb datum]
    A Brascamp--Lieb datum $(\bf L,\bf c)$ is said to be \emph{full} if each $L_j$ is surjective onto its codomain.
\end{definition}

\begin{proposition}[Scaling and rank conditions]
    Suppose $(\bf L,\bf c)$ is a full linear Brascamp--Lieb datum such that $\operatorname{BL}_{\mathrm{grp}}(\bf L,\bf c)<+\infty$.

    Then, $\ker\bf L=\bigcap_{j=1}^m\ker L_j=\{0\}$. Also, $(\bf L,\bf c)$ satisfies the scaling condition
    \begin{equation}\label{eq:scaling}
        n=\sum_{j=1}^mc_jn_j,
    \end{equation}
    and the rank condition: for each $\bf 0\neq V\subsetneq\K^n$ proper subspace,
    \begin{equation}\label{ineq:rank}
        \dim V\leq\sum_{j=1}^m c_j\dim (L_jV)
    \end{equation}
\end{proposition}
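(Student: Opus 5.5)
Each of the three conclusions comes from testing $\operatorname{BL}_{\mathrm{grp}}(\bf L,\bf c;G)$ on a one-parameter family of regular subgroups $G\in\cal N(\K^n)$ along which the ratio blows up unless the condition holds.

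\emph{Trivial kernel.} Suppose $0\neq v\in\bigcap_j\ker L_j$. Extend $v$ to a basis $v=v_1,v_2,\ldots,v_n$ and set $G_k=\cal O_1\langle\varpi^{-k}v_1,v_2,\ldots,v_n\rangle\in\cal N(\K^n)$. Then $\mu_n(G_k)=|\varpi|^{-k}\mu_n(G_0)$, since $G_k=A_k(G_0)$ with $A_k$ scaling $v_1$ by $\varpi^{-k}$ and $|\det A_k|=|\varpi|^{-k}$. On the other hand, $L_j(G_k)=L_j(G_0)$ because $L_jv_1=0$. So the ratio tends to $+\infty$, a contradiction.

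\emph{Scaling.} Take $G=\lambda\cal O_n$ with $\lambda=\varpi^{k}$, $k\in\Z$. Then $\mu_n(G)=|\lambda|^n$. Each $L_j$ is surjective, so $L_j(\cal O_n)$ is a compact open $\cal O_1$-module, hence in $\cal N(\K^{n_j})$ with positive finite measure. It follows that $\mu_{n_j}(L_j(G))=|\lambda|^{n_j}\mu_{n_j}(L_j(\cal O_n))$. The ratio is therefore a constant times $|\lambda|^{n-\sum_jc_jn_j}$. Letting $k\to+\infty$ and $k\to-\infty$, boundedness forces $n=\sum_jc_jn_j$.

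\emph{Rank condition.} Fix a proper nonzero subspace $V$ with $\dim V=d$. By Lemma~\ref{lem:G-S} and Remark~\ref{rmk:isom_tuple_facts}, choose an isometric basis $u_1,\ldots,u_n$ of $\K^n$ with $V=\mathrm{span}(u_1,\ldots,u_d)$. After a change of variables by an element of $\operatorname{GL}_n(\cal O_1)$, which preserves $\cal O_n$ and Haar measure, we may assume $V=\K^d\times\{0\}$. Let
\[
G_k=\varpi^{-k}\cal O_d\times\cal O_{n-d},\qquad \mu_n(G_k)=|\varpi|^{-kd}.
\]
The main step is to estimate $\mu_{n_j}(L_j(G_k))$. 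We have $L_j(G_k)=\varpi^{-k}L_j(\cal O_V)+L_j(\cal O_{n-d}\text{-part})$. Apply Lemma~\ref{lem:splitting} to $W_j=L_jV\subseteq\K^{n_j}$, and use the fact that $L_j(\cal O_V)$ is a regular subgroup of $W_j$. For large $k$, the $W_j$-component of $L_j(G_k)$ is contained in $\varpi^{-k}L_j(\cal O_V)$ plus a bounded set, hence in $\varpi^{-k}B_j$ for a fixed compact open $B_j\subseteq W_j$. The quotient component $\K^{n_j}/W_j$ is a fixed compact set independent of $k$. Fubini via the splitting then gives
\[
\mu_{n_j}(L_j(G_k))\le C_j|\varpi|^{-k\dim(L_jV)}.
\]
Consequently
\[
\operatorname{BL}_{\mathrm{grp}}(\bf L,\bf c;G_k)\ge c\,|\varpi|^{-k(d-\sum_jc_j\dim L_jV)}.
\]
As $k\to\infty$, finiteness of the constant forces $d\le\sum_jc_j\dim(L_jV)$.

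The one technical point I expect to need care is the measure estimate in the rank step. It requires compatibility of Haar measures under the isometric splitting $\K^{n_j}\cong W_j\oplus(\K^{n_j}/W_j)$, with a product normalization up to constants, which is routine since both sides carry Haar measures. It also requires $L_j(G_k)\subseteq\varpi^{-k}B_j\oplus K_j$ for compact $B_j$ and $K_j$, which follows from the ultrametric inequality.
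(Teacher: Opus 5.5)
Your proof is correct and follows essentially the paper's route: dilates $\varpi^k\mathcal{O}_n$ for the scaling condition, and plates adapted to $V$ for the rank condition. The differences are minor: your plates $\varpi^{-k}\mathcal{O}_d\times\mathcal{O}_{n-d}$ are isotropic rescalings of the paper's $\mathcal{O}_n\cap(V+\varpi^{k}\mathcal{O}_n)$, which makes your rank argument independent of the scaling condition (the paper's exponent $-\dim V+\sum_j c_j\dim(L_jV)$ tacitly uses it), and you prove $\bigcap_j\ker L_j=\{0\}$ directly rather than deducing it from the rank condition.
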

\begin{proof}
    The condition $\ker\bf L=\{0\}$ is an immediate consequence of~\eqref{ineq:rank}. So, we handle the latter first.

    Let $\bf 0\neq V\subsetneq\K^n$ be a proper subspace. Let $h\in\K^\times$ have $|h|\leq 1$. Then
    \begin{equation*}
        \mu_n(\cal O_n\cap(V+h\cal O_n))=|h|^{n-\dim V},
    \end{equation*}
    and, when $h$ is sufficiently small,
    \begin{equation*}
        \mu_{n_j}(L_j(\cal O_n\cap(V+h\cal O_n)))=\eps|h|^{n_j-\dim(L_jV)}.
    \end{equation*}
    Thus,
    \begin{equation*}
        \operatorname{BL}_{\mathrm{grp}}(\bf L,\bf c;\cal O_n\cap(V+h\cal O_n))=|h|^{-\dim V+\sum_{j=1}^mc_j\dim(L_jV)}\eps^{-\sum_{j=1}^mc_j}.
    \end{equation*}
    Since $\sup_{0<|h|\leq 1}\operatorname{BL}_{\mathrm{grp}}(\bf L,\bf c;\cal O_n\cap(V+h\cal O_n))<+\infty$, it follows that $-\dim V+\sum_{j=1}^m c_j\dim(L_jV)\geq 0$. Thus,~\eqref{ineq:rank} holds.

    Now, we consider the scaling condition~\eqref{eq:scaling}. For $e\in\Z$, we have
    \begin{equation*}
        \operatorname{BL}_{\mathrm{grp}}(\bf L,\bf c;\varpi^e\cal O_n)=\frac{\mu_n(\varpi^e\cal O_n)}{\prod_{j=1}^m\mu_{n_j}(L_j(\varpi^e\cal O_n))^{c_j}}.
    \end{equation*}
    Clearly $\mu_n(\varpi^e\cal O_n)=|\varpi|^{en}\mu_n(\cal O_n)$. Since $L_j$ is surjective, it is in particular open, so $L_j(\varpi^e\cal O_n)=\varpi^eL_j(\cal O_n)$ is a compact open subset of $\K^{n_j}$; hence, $\mu_{n_j}( L_j(\varpi^e\cal O_n))^{c_j}=|\varpi|^{e c_jn_j}\mu_{n_j}(L_j(\cal O_n))$. Thus,
    \begin{equation*}
        \operatorname{BL}_{\mathrm{grp}}(\bf L,\bf c;\varpi^e\cal O_n)=|\varpi|^{e\big(n-\sum_{j=1}^mc_jn_j\big)}\operatorname{BL}_{\mathrm{grp}}(\bf L,\bf c;\cal O_n).
    \end{equation*}
    If $|\varpi|^{n-\sum_{j=1}^mc_jn_j}\neq 1$, then by either sending $e\to+\infty$ or $e\to-\infty$ we get $\operatorname{BL}_{\mathrm{grp}}(\bf L,\bf c)=+\infty$. Thus, we are forced to have $|\varpi|^{n-\sum_{j=1}^mc_jn_j}=1$, i.e.~\eqref{eq:scaling}.
\end{proof}

In the theory of Brascamp--Lieb, it is natural to rank regular subgroups by a measure of their regularity, chosen here to be their \emph{eccentricity}.
\begin{definition}[Eccentricity of a regular subgroup] Given a regular subgroup $G\in\cal N(\K^n)$, we write:
\begin{equation*}
    \mathrm{radius}(G)=\sup\{|\varpi^e|:e\in\Z,\quad\varpi^e\cal O_n\subseteq G\};
\end{equation*}
\begin{equation*}
    \mathrm{diam}(G)=\inf\{|\varpi^e|:e\in\Z,\quad G\subseteq\varpi^e\cal O_n\};
\end{equation*}
\begin{equation*}
    \mathrm{ecc}(G)=\frac{\mathrm{diam}(G)}{\mathrm{rad}(G)}.
\end{equation*}
In particular, $\mathrm{ecc}(G)$ is invariant under isotropic rescalings of $G$.
\end{definition}

It will be important to refer to a few elementary constructions in the category of Brascamp--Lieb data $(\bf L,\bf c)$.
\begin{definition}[Equivalence of data]
    Let $(\bf L^1,\bf c)$ and $(\bf L^2,\bf c)$ be Brascamp--Lieb data with a common weight vector $\bf c$, multilinearity $m$, and dimensions $n,\{n_j\}_{j=1}^m$. An \emph{equivalence $\bf R:(\bf L^1,\bf c)\overset{\simeq}{\longrightarrow}(\bf L^2,\bf c)$} is a collection of isomorphisms (``intertwining maps'')
    \begin{equation*}
        R:\K^n\to\K^n,\quad R_j:\K^{n_j}\to\K^{n_j},
    \end{equation*}
    such that
    \begin{equation*}
        R_j^{-1}\circ L_j^2\circ R=L_j^1,\quad 1\leq j\leq m.
    \end{equation*}
    An equivalence will be said to be \emph{rigid} if $R$ and each $R_j$ is an isometry.
\end{definition}
\begin{remark}
    Let $(\bf L^1,\bf c)$ and $(\bf L^2,\bf c)$ be data with $\bf R:(\bf L^1,\bf c)\to (\bf L^2,\bf c)$ an equivalence. Suppose $\bf f=(f_j)_{1\leq j\leq m}$ are nonzero nonnegative Schwartz--Bruhat functions on the $\K^{n_j}$. Write $\bf g=(g_j)_{1\leq j\leq m}$ for their pullbacks under the $R_j$: $g_j=f_j\circ R_j$. Then, since
    \begin{equation*}
        f_j\circ L_j^2=g_j\circ L_j^1\circ R^{-1},
    \end{equation*}
    we obtain
    \begin{equation*}
        \int_{\K^n}\prod_{j=1}^m f_j^{c_j}\circ L_j^2=|\det R|\int_{\K^n}\prod_{j=1}^m g_j^{c_j}\circ L_j^1,\quad\int_{\K^{n_j}}f_j=|\det R_j|\int_{\K^{n_j}}g_j.
    \end{equation*}
    Thus,
    \begin{equation*}
        \operatorname{BL}(\bf L^1,\bf c;\bf g)=\frac{\prod_{j=1}^m|\det R_j|^{c_j}}{|\det R|}\operatorname{BL}(\bf L^2,\bf c;\bf f),
    \end{equation*}
    and so certainly
    \begin{equation*}
        \operatorname{BL}(\bf L^1,\bf c)=\frac{\prod_{j=1}^m|\det R_j|^{c_j}}{|\det R|}\operatorname{BL}(\bf L^2,\bf c)
    \end{equation*}
    Similarly, given $G\in\cal N(\K^n)$, we may write $G'=R(G)$, so that
    \begin{equation*}
        \begin{split}
        \operatorname{BL}_{\mathrm{grp}}(\bf L^1,\bf c;G)=\frac{\mu_n(G)}{\prod_{j=1}^m\mu_{n_j}(L_j^1(G))^{c_j}}&=\frac{\mu_n(R^{-1}(G'))}{\prod_{j=1}^m\mu_{n_j}(R_j^{-1}(L_j^2(G')))^{c_j}}\\
        &=\frac{\prod_{j=1}^m|\det R_j|^{c_j}}{|\det R|}\operatorname{BL}_{\mathrm{grp}}(\bf L^2,\bf c;G'),
        \end{split}
    \end{equation*}
    and so certainly
    \begin{equation*}
        \operatorname{BL}_{\mathrm{grp}}(\bf L^1,\bf c)=\frac{\prod_{j=1}^m|\det R_j|^{c_j}}{|\det R|}\operatorname{BL}_{\mathrm{grp}}(\bf L^2,\bf c)
    \end{equation*}
    Lastly, note that if $(\bf L^1,\bf c)$ and $(\bf L^2,\bf c)$ are \emph{rigidly equivalent}, then we simply have $\operatorname{BL}(\bf L^1,\bf c)=\operatorname{BL}(\bf L^2,\bf c)$ and $\operatorname{BL}_{\mathrm{grp}}(\bf L^1,\bf c)=\operatorname{BL}_{\mathrm{grp}}(\bf L^2,\bf c)$.
\end{remark}

\begin{definition}[Direct sum]
    Suppose $(\bf L^1,\bf c)$ and $(\bf L^2,\bf c)$ are two Brascamp--Lieb data with common weights $\bf c$, and such that each $\bf L_i$ consists of the same number $m$ of linear maps. Then the \emph{direct sum datum} $(\bf L^1\oplus\bf L^2,\bf c)$ is the datum with weights $\bf c$ and whose linear maps are the direct sum of the linear maps from $\bf L^1,\bf L^2$. Thus, given
    \begin{equation*}
        L_j^1:\K^n\to\K^{n_j},\,\,L_j^2:\K^{n'}\to\K^{n_j'},
    \end{equation*}
    we write
    \begin{equation*}
        L_j^1\oplus L_j^2:\K^{n+n'}\to\mathrm{im}(L_j^1\oplus L_j^2)\subseteq\K^{n_j+n_j'},\quad (L_j^1\oplus L_j^2)(x,y)=(L_j^1x,L_j^2y),
    \end{equation*}
    and $\bf L^1\oplus\bf L^2=(L_j^1\oplus L_j^2)_{1\leq j\leq m}$.
\end{definition}

\begin{remark}[Necessary conditions sum]\label{rmk:dir_sum}
    By the definition of the $L_j^1\oplus L_j^2$, we see that $\bf L^1,\bf L^2$ full implies $\bf L^1\oplus\bf L^2$ full. If $(\bf L^1,\bf c)$ and $(\bf L^2,\bf c)$ satisfy the rank and scaling conditions, then so does $(\bf L^1\oplus \bf L^2,\bf c)$. These are checked in the real case in~\cite{bennett2008brascamp}; the $\K$ case is no different.

    The Brascamp--Lieb constants themselves multiply under this construction, as well. If $\bf f=(f_{j})_{1\leq j\leq m}$ and $\bf g=(g_j)_{1\leq j\leq m}$ are nonzero nonnegative functions, then upon writing
    \begin{equation*}
        \bf f\otimes\bf g=(f_j\otimes g_j)_{1\leq j\leq m},\quad (f_j\otimes g_j)(x_1,\ldots,x_{n_j},y_1,\ldots,y_{n_j'})=f_j(x_1,\ldots,x_{n_j})g_j(y_1,\ldots,y_{n_j'}),
    \end{equation*}
    we have
    \begin{equation*}
        \operatorname{BL}(\bf L^1\oplus\bf L^2,\bf c;\bf f\otimes\bf g)=\operatorname{BL}(\bf L^1,\bf c;\bf f)\operatorname{BL}(\bf L^2,\bf c;\bf g).
    \end{equation*}
    In particular, $\operatorname{BL}(\bf L^1,\bf c)\operatorname{BL}(\bf L^2,\bf c)\leq\operatorname{BL}(\bf L^1\oplus\bf L^2,\bf c)$. The other inequality follows by a Fubini--Tonelli argument: for a given $\bf f=(f_j)_j$ tuple of nonnegative nonzero Schwartz--Bruhat functions on the $\K^{n_j+n_j'}$, then
    \begin{equation*}
        \begin{split}
        \int_{\K^n}\int_{\K^{n'}}\prod_{j=1}^mf_j^{c_j}(L_j^1x,L_j^2y)d\mu_{n'}(y)d\mu_n(x)&\leq\operatorname{BL}(\bf L^2,\bf c)\int_{\K^n}\prod_{j=1}^m\left(\int_{\K^{n_j'}}f_j(L_j^1x,z')d\mu_{n_j'}(z')\right)^{c_j}d\mu_n(x)\\
        &\leq\operatorname{BL}(\bf L^1,\bf c)\operatorname{BL}(\bf L^2,\bf c)\prod_{j=1}^m\left(\int_{\K^{n_j+n_j'}}f_j(z,z')d\mu_{n_j+n_j'}(z,z')\right)^{c_j}.
        \end{split}
    \end{equation*}
    So, $\operatorname{BL}(\bf L^1\oplus\bf L^2,\bf c)=\operatorname{BL}(\bf L^1,\bf c)\operatorname{BL}(\bf L^2,\bf c)$.

    Similarly, if $G\in\cal N(\K^n)$ and $H\in\cal N(\K^{n'})$, then
    \begin{equation*}
        \operatorname{BL}_{\mathrm{grp}}(\bf L^1\oplus\bf L^2,\bf c;G\times H)=\operatorname{BL}_{\mathrm{grp}}(\bf L^1,\bf c;G)\operatorname{BL}_{\mathrm{grp}}(\bf L^2,\bf c;H),
    \end{equation*}
    and in particular $\operatorname{BL}_{\mathrm{grp}}(\bf L^1,\bf c)\operatorname{BL}(\bf L^2,\bf c)\leq\operatorname{BL}_{\mathrm{grp}}(\bf L^1\oplus\bf L^2,\bf c)$. On the other hand, if $G\in\cal N(\K^{n+n'})$, then writing $G'=G\cap(\K^n\times\{0\}^{n'})$ and $G''=G/(\{0\}^n\times\K^{n'})$ we have
    \begin{equation*}
        \mu_{n+n'}(G)=\mu_n(G')\mu_{n'}(G''),
    \end{equation*}
    and it follows that
    \begin{equation*}
        \operatorname{BL}_{\mathrm{grp}}(\bf L^1\oplus\bf L^2,\bf c;G)=\operatorname{BL}_{\mathrm{grp}}(\bf L^1,\bf c;G')\operatorname{BL}_{\mathrm{grp}}(\bf L^2,\bf c;G'')
    \end{equation*}
    and so $\operatorname{BL}_{\mathrm{grp}}(\bf L^1\oplus\bf L^2,\bf c)=\operatorname{BL}_{\mathrm{grp}}(\bf L^1,\bf c)\operatorname{BL}_{\mathrm{grp}}(\bf L^2,\bf c)$.
\end{remark}

Our last operation on Brascamp--Lieb data is that of removing zero exponents.

\begin{definition}[Contraction of trivial exponents]
    Let $(\bf L,\bf c)$ be a Brascamp--Lieb datum. Then the \emph{zero-contraction of $(\bf L,\bf c)$} is another $(\bf L',\bf c')$ where we have omitted those $L_j,c_j$ for which $c_j=0$, and preserved the rest.
\end{definition}
\begin{remark}
    Suppose $(\bf L,\bf c)$ is a Brascamp--Lieb datum with zero-contraction $(\bf L',\bf c')$. For any tuple $\bf f=(f_j)_{1\leq j\leq m}$ of nonzero nonnegative functions $f_j$, we have
    \begin{equation*}
        \operatorname{BL}(\bf L,\bf c;\bf f)=\operatorname{BL}(\bf L',\bf c';\bf f'),
    \end{equation*}
    where $\bf f'$ is obtained by also removing those $f_j$ corresponding to $c_j=0$. Similarly, if $G\in\cal N(\K^n)$, then
    \begin{equation*}
        \operatorname{BL}_{\mathrm{grp}}(\bf L,\bf c; G)=\operatorname{BL}_{\mathrm{grp}}(\bf L',\bf c';G).
    \end{equation*}
    In particular, $\operatorname{BL}(\bf L,\bf c)=\operatorname{BL}(\bf L',\bf c')$ and $\operatorname{BL}_{\mathrm{grp}}(\bf L,\bf c)=\operatorname{BL}_{\mathrm{grp}}(\bf L',\bf c')$.
\end{remark}

\begin{remark}
    Suppose $(\bf L,\bf c)$ is a Brascamp--Lieb datum which is not necessarily full. Suppose that $L_j$ is not surjective. If $c_j>0$, then by choosing $f_j=1_{\cal O_{n_j}}$ (resp. $G=\cal O_n)$ we see that $\operatorname{BL}(\bf L,\bf c)=+\infty$ (resp. $\operatorname{BL}_{\mathrm{grp}}(\bf L,\bf c)=+\infty$). If $c_j=0$, then by zero-contraction we may remove $L_j$. Thus, in the case that $\operatorname{BL}(\bf L,\bf c)<+\infty$, we will always be able to freely assume that $\bf L$ is full.
\end{remark}

\section{Existence of extremizers: the linear case}\label{sec:lin_extr}

In this section we demonstrate the following equivalence
\begin{equation*}
    \begin{split}
        \operatorname{BL}(\bf L,\bf c)=\operatorname{BL}(\bf L,\bf c;\bf f)\quad&\text{for some tuple $\bf f=(f_j)_j$, with each $f_j\in\operatorname{SB}(\K^{n_j})$ nonzero nonnegative}\\
        &\quad\iff\quad\operatorname{BL}(\bf L,\bf c)<+\infty.
    \end{split}
\end{equation*}
That is, \emph{all} finite linear Brascamp--Lieb inequalities are extremized by some functions $\bf f$. Moreover, the functions may be taken to be the indicators of regular subgroups. In the case of ``simple'' Brascamp--Lieb data $(\bf L,\bf c)$, one can show rather more: the regular subgroup may be taken to be the unit ball $\cal O_n$; under additional assumptions, depending on the ``compression factors'' $\Phi_\alpha$, one may further show that $\cal O_n$ and its multiples are the \emph{only} extremizing regular subgroups.

The existence of extremizers will follow by a non-Archimedean variant of the ``heat flow'' method, which was used in~\cite{bennett2008brascamp} to establish the existence of extremizers for \emph{real} Brascamp--Lieb inequalities, in the case of \emph{semisimple} data. We first indicate where the argument of~\cite{bennett2008brascamp} fails to apply to our problem. The latter shows that an extremizer may be found by identifying a suitable tuple $\bf A=(A_j)_j$ of matrices satisfying the relation
\begin{equation*}
    A_j^{-1}=L_j\Big(\sum_{j=1}^mc_jL_j^\top A_jL_j\Big);
\end{equation*}
the extremizer is then identified as a tuple of Gaussians with covariance matrices $A_j$ as an extremizer. We quickly see an issue for us: the sum
\begin{equation*}
    \sum_{j=1}^mc_jL_j^\top A_jL_j
\end{equation*}
involves a product of \emph{real} parameters $c_j$ with matrices $L_j$ whose entries lie in $\K$!

We may now state the main result of this section.
\begin{theorem}[Regular subgroup Brascamp--Lieb inequalities have extremizers]\label{thm:lin_sub_extr}
    Suppose $(\bf L,\bf c)$ is a linear Brascamp--Lieb datum such that $\operatorname{BL}_{\mathrm{grp}}(\bf L,\bf c)<+\infty$. Then there is a $G\in\cal N(\K^n)$ such that $\operatorname{BL}_{\mathrm{grp}}(\bf L,\bf c)=\operatorname{BL}_{\mathrm{grp}}(\bf L,\bf c;G)$.

    Moreover, the eccentricity of $G$ may be bounded as follows. Let $0=V_1\subsetneq\cdots\subsetneq V_r=\K^n$ be a maximal sequence of critical subspaces. Let $I=\big\{1\leq i\leq r-1:\dim(V_{i+1})>\dim(V_i)+1\big\}$. If $I=\emptyset$, then we may take $G=\cal O_n$ and hence $\mathrm{ecc}(G)=1$. Otherwise,
    \begin{equation*}
        \mathrm{ecc}(G)\leq\max_{i\in I}\min_{\substack{\alpha> 0\\
        (\bf L|_{W_{i+1}/W_i,}\bf c)\text{ is $\alpha$-simple}}}\max_{W_i\subsetneq V\subsetneq W_{i+1}}\Phi_\alpha(V)^{-\alpha^{-1}m(\dim W_{i+1}-\dim W_i)^2}.
    \end{equation*}
    Here, we say that $(\bf L,\bf c)$ is $\alpha$-simple if
    \begin{equation*}
        \dim V\leq-\alpha+\sum_{j=1}^mc_j\dim L_jV,\quad\text{for all subspaces $0\subsetneq V\subsetneq\K^n$}
    \end{equation*}
    and $\Phi_\alpha(V)$ is defined in Definition~\ref{def:compression} below.
\end{theorem}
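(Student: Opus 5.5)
We will prove existence of an extremizing regular subgroup by induction on $n$ along a maximal chain of critical subspaces, treating the simple pieces by a compactness argument on eccentricity. First, by zero-contraction and the remark on non-surjective maps, we may assume $(\bf L,\bf c)$ is full. Finiteness of $\operatorname{BL}_{\mathrm{grp}}$ then gives the scaling condition~\eqref{eq:scaling} and the rank conditions~\eqref{ineq:rank}. The first key step is a \emph{factorization lemma}. If $V$ is critical, i.e.\ $\dim V=\sum_jc_j\dim L_jV$, then
\begin{equation*}
\operatorname{BL}_{\mathrm{grp}}(\bf L,\bf c)=\operatorname{BL}_{\mathrm{grp}}(\bf L|_V,\bf c)\,\operatorname{BL}_{\mathrm{grp}}(\bf L|_{\K^n/V},\bf c).
\end{equation*}

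For the ``$\geq$'' direction, choose isometric bases adapted to $V$ using Lemma~\ref{lem:G-S} and Lemma~\ref{lem:splitting}. Take $G=G'\times G''$ with $G''$ scaled by $\varpi^{e}$, $e\to\infty$. As $G''$ shrinks, $L_j(G)$ approaches $L_j(G')\times L_j$-quotient of $G''$ in measure. This is the non-Archimedean analogue of the direct sum computation in Remark~\ref{rmk:dir_sum}, using that $V$ critical makes the scaling balanced on each factor. For ``$\leq$'', decompose an arbitrary $G\in\cal N(\K^n)$ as $G'=G\cap V$ and $G''=G/V$. Then $\mu_n(G)=\mu(G')\mu(G'')$, and $\mu(L_jG)\geq\mu(L_jG')\,\mu(\overline{L_j}G'')$ by the exact-sequence fibration of $L_jG$ over its image in $\K^{n_j}/L_jV$.

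Next, iterate the factorization along the maximal chain $0=V_1\subsetneq\cdots\subsetneq V_r=\K^n$. This reduces everything to the subquotient data $(\bf L|_{V_{i+1}/V_i},\bf c)$, which have no proper critical subspaces. Those are $\alpha$-simple for some $\alpha>0$, since the rank inequalities take values in a discrete set determined by $\bf c$ and the dimensions. If $\dim V_{i+1}/V_i=1$, every regular subgroup is a scalar multiple of $\cal O_1$. By scaling invariance $\operatorname{BL}_{\mathrm{grp}}$ is then attained by $\cal O_1$, which is why $I=\emptyset$ allows $G=\cal O_n$ once extremizers are glued. For gluing, given extremizers $G_i$ on each subquotient, we lift them to a product-type $G$ in an adapted isometric frame. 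Equality in the factorization lemma then shows $G$ extremizes. The eccentricity of $G$ is at most the maximum of the eccentricities of the pieces after rescaling them to a common radius, which gives the outer $\max_{i\in I}$.

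The core step, and the main obstacle, is the simple case: showing that for $\alpha$-simple data the supremum over $\cal N(\K^n)$ may be restricted to $G$ of bounded eccentricity. By Smith normal form (Lemma~\ref{lem:smith}) write $G=k\,\mathrm{diag}(\varpi^{a_1},\ldots,\varpi^{a_n})\cal O_n$ with $k\in\operatorname{GL}_n(\cal O_1)$ and $a_1\leq\cdots\leq a_n$. Since $\operatorname{GL}_n(\cal O_1)$ is compact and $\operatorname{BL}_{\mathrm{grp}}(\bf L,\bf c;\cdot)$ is locally constant in $k$, it suffices to bound the gaps $a_{i+1}-a_i$. The plan is a discrete heat-flow/contraction step. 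If some gap is large, let $V$ be the span of the corresponding flag piece $k(\K^i\times 0)$. Replace $G$ by $G\cap(V+\varpi^{a_{i}}\cal O_n)$-type modifications that compress the large gap, and compare ratios. The volume changes by $|\varpi|^{t\dim V}$ while each $\mu(L_jG)$ changes by at most $|\varpi|^{t\dim L_jV}$ up to the compression factor $\Phi_\alpha(V)$. So $\alpha$-simplicity gives a net gain of $|\varpi|^{-t\alpha}\Phi_\alpha(V)$, which exceeds $1$ once the gap exceeds roughly $\alpha^{-1}\log\Phi_\alpha(V)^{-1}$. Iterating over at most $n$ gaps, and accounting for up to $m$ maps, yields the exponent $\alpha^{-1}m(\dim W_{i+1}-\dim W_i)^2$. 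The delicate part is making precise how $\mu(L_jG)$ depends on the flag relative to $\ker L_j$, which is exactly what $\Phi_\alpha$ must quantify. Once eccentricity is bounded, modulo scaling the admissible $G$ form a compact set on which the functional is locally constant, so a maximum is attained.
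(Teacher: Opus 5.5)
Your factorization lemma for a critical $V$ is correct, and it is argued more carefully than the paper's claim that $(\bf L,\bf c)$ is the direct sum of its subquotients. Two other steps, however, do not go through.

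\textbf{The simple case.} To show that compressing a large gap helps, you need an \emph{upper} bound
\[
\mu_{n_j}(L_jG_t)\le|\varpi|^{tk_j}\mu_{n_j}(L_jG)\quad\text{with }\bf k\in\cal C_{\dim V,\alpha}(\bf c,\bf n).
\]
The bound you state, that each $\mu(L_jG)$ ``changes by at most $|\varpi|^{t\dim L_jV}$'', is the trivial inequality $\mu(L_jG_t)\ge|\varpi|^{t\dim L_jV}\mu(L_jG)$. It only gives $\operatorname{BL}_{\mathrm{grp}}(G_t)\le|\varpi|^{-t(\sum_jc_j\dim L_jV-\dim V)}\operatorname{BL}_{\mathrm{grp}}(G)$, which points the wrong way.

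The missing estimate is exactly the step you call delicate. It also needs a normalization you never make: normal form, so that $\|L_j\|\le 1$, since $\Phi_\alpha$ is not invariant under equivalences. The estimate can be supplied as follows.
\begin{itemize}
\item Write $G=G_V\oplus G_W$ in the adapted frame, with $a_{i+1}-a_i>c$ and $|\varpi|^c\le\eps^{\max_j n_j}$.
\item Fibre $L_jG$ over $\K^{n_j}/L_jV$. The fibre is $L_jG_V+S_j$, where $S_j=L_j(G_W\cap L_j^{-1}L_jV)\subseteq\varpi^{a_{i+1}}\cal O_{n_j}$.
\item The large minor guaranteed by $\Phi_\alpha(V)\ge\eps$ gives a $k_j$-dimensional $F_j\subseteq L_jV$ with $\varpi^{a_i+c}(F_j\cap\cal O_{n_j})\subseteq L_jG_V$.
\item Hence $S_j$ matters only modulo $F_j$, and an index count gives the bound for all $t\le a_{i+1}-a_i-c$.
\end{itemize}
In this argument $\Phi_\alpha$ enters only through the admissible range of $t$, not as a factor $\Phi_\alpha(V)$ in the gain. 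So your exponent $\alpha^{-1}m(\dim W_{i+1}-\dim W_i)^2$ is not actually derived. Run with $\alpha=0$, the same local argument would even give existence for all finite data without the critical-subspace decomposition.

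The paper avoids this two-sided comparison. It uses the Ball-type inequality (Proposition~\ref{prop:ball_subg}, Corollary~\ref{cor:subg_bd_by_subsp}) to bound $\operatorname{BL}_{\mathrm{grp}}(U)$ by a product of plate functionals $\operatorname{BL}_{\mathrm{grp}}(\cal O_n\cap(V_\iota+\varpi^{e_\iota}\cal O_n))$. For plates only the easy lower bound of Lemma~\ref{lem:lb_plate} is needed, and the comparison is made globally with $\operatorname{BL}_{\mathrm{grp}}(\cal O_n)=1$.

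\textbf{The gluing.} Your own ``$\ge$'' argument only shows that $G'\times\varpi^eG''$ attains the product for $e$ sufficiently large. For the unscaled product-type lift you have only $\mu(L_jG)\ge\mu(L_jG')\,\mu(\overline{L_j}G'')$, and this is strict whenever $L_jG\cap L_jV\supsetneq L_jG'$.

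A counterexample: take $n=2$, $L_1(x,y)=x$, $L_2(x,y)=y$, $L_3(x,y)=\varpi^Nx+y$, $\bf c=(\tfrac12,1,\tfrac12)$.
\begin{itemize}
\item The datum is in normal form.
\item Its only proper nonzero critical subspace is $\K\times\{0\}$, so $I=\emptyset$.
\item Yet $\operatorname{BL}_{\mathrm{grp}}(\cal O_2)=1$, while $\operatorname{BL}_{\mathrm{grp}}(\cal O_1\times\varpi^N\cal O_1)=|\varpi|^{-N/2}$.
\end{itemize}
So ``equality in the factorization lemma shows $G$ extremizes'' fails. So does your claim that the eccentricity of $G$ is the maximum over the pieces. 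In fact the $I=\emptyset$ clause of the statement is false as written, and the paper's general-case step has the same defect. Existence of an extremizer survives if you glue with $e$ large, but the relative scale $e$ is not controlled by the stated eccentricity bound.
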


The quantities $\Phi_\alpha(V)$ used in the above theorem record a suitable ``compression factor,'' concerning how $k$-dimensional volumes in $V$ are compressed to lower-dimensional volumes under $L_j$. The utility of these factors appears in Lemma~\ref{lem:lb_plate} below; they are defined in Definition~\ref{def:compression}.

In subsection~\ref{subsec:compression}, we define the compression factors and demonstrate their main application. In subsection~\ref{subsec:linear_heat}, we prove a ``heat flow'' bound on subgroups, which reduces the subgroup Brascamp--Lieb functional to the case of plates. In subsection~\ref{subsec:bl_module}, we prove that in ``$R$-module Brascamp--Lieb'' theory, functional Brascamp--Lieb is controlled by submodule Brascamp--Lieb. Finally, in subsection~\ref{subsec:lin_extremizers_exist}, we put the previous theory together to prove Theorems~\ref{thm:lin_sub_extr} and~\ref{thm:main_extr_functional}.

\subsection{Compression factors}\label{subsec:compression}

In this section, we define compression factors and prove basic lemmas related to them.

\begin{definition}[$\alpha$-compression factor]\label{def:compression}
    Suppose $V$ is a vector subspace of $\K^n$ of dimension $1\leq k\leq n-1$, and $\alpha\geq 0$. We build up the following definition of $\Phi_\alpha(V)$.

    Given tuples $\bf c\in[0,1]^m$ and $\bf n=(n_j)_j$ with $1\leq n_j\leq n$,  write
    \begin{equation*}
        \cal C_{k,\alpha}(\bf c,\bf n)=\Big\{\bf k\in\N_{\geq 0}^m: k_j\leq\min(k,n_j),\quad k\leq-\alpha+\sum_{j=1}^mc_jk_j\Big\}.
    \end{equation*}
    When working with a Brascamp--Lieb datum $(\bf L,\bf c)$, we will understand the two $\bf c$ to agree, and we will take $\bf n$ for the dimensions of the codomains of the $L_j$. For $\ell\geq k$, write also
    \begin{equation*}
        \mathbb P_{k,\ell}=\Big\{\text{surjective rank-$k$ coordinate projections } \K^\ell\to\K^k\Big\};
    \end{equation*}
    thus, an element of $\mathbb{P}_{k,\ell}$ picks out a particular $k$-tuple from an $\ell$-tuple of real numbers. We understand these elements to be $k\times\ell$ matrices.

    Given an isometric tuple $u_1,\ldots,u_k$ in $\K^n$, arrange them into a $n\times k$ matrix $U=[u_1\cdots u_k]$. We'll write $\cal U_k$ for the collection of all such matrices. Finally, we define the function
    \begin{equation*}
        \Phi_{k,\alpha}(U)=\max_{\bf k\in\cal C_{k,\alpha}(\bf c,\bf n)}\min_{1\leq j\leq m}\max_{P_j\in \mathbb P_{k_j,n_j}}\max_{Q_j\in\mathbb P_{k_j,k}}|\det(P_jL_jUQ_j^\top)|^{1/n_j}.
    \end{equation*}
    Thus, $\Phi_{k,\alpha}$ is a function with the property: for $\eps>0$,
    \begin{center}
        $\Phi_{k,\alpha}(U)\geq\eps$ if and only if there is $\bf k\in\cal C_{k,\alpha}(\bf c,\bf n)$ such that, for each $1\leq j\leq m$, there is a $k_j\times k_j$ minor of $L_jU$ of magnitude $\geq\eps^{n_j}$.
    \end{center}
    The exponent $n_j$ on the $\eps$ will be useful in cleaning up our main result. Since an implicit $(\bf L,\bf c)$ is present, we suppress both in the notation for $\Phi_{k,\alpha}$. Note that $\Phi_{k,\alpha}$ is a max/min combination of a finite number of polynomial functions in the entries of $U$ and $L_j$; in particular, it is continuous in the latter.

    Finally, recalling that $V$ is a $k$-dimension subspace of $\K^n$, we write $\Phi_\alpha(V)=\Phi_{k,\alpha}(U)$, where $U$ is an $n\times k$ matrix whose columns form an isometric basis of $V$. By~\ref{rmk:isom_tuple_facts}, such a $U$ always exists. It is reasonable to wonder if this definition depends on $U$; in Lemma~\ref{lem:comp_factor_founded} below, we will show that $U$ may be freely chosen. For the sake of exposition, we postpone the proof at this time.
\end{definition}

    We offer the following interpretation of the $\Phi_{k,\alpha}$. If the datum $(\bf L,\bf c)$ satisfies the rank condition, then for each $U\in\cal U_k$ whose columns span a subspace $V$, the inequality
    \begin{equation*}
        \dim V\leq\sum_jc_j\dim L_jV
    \end{equation*}
    implies that, for each $1\leq j\leq m$, writing $k_j=\dim L_jV$, the matrix $L_jU$ has rank $k_j$. Thus, $L_jU$ has a nonzero $k_j\times k_j$ minor. In particular, $\Phi_{k,0}(U)>0$. Since $\Phi_{k,0}$ is continuous with compact domain, and is pointwise positive, it follows that there is an $\eps>0$ such that $\Phi_{k,0}(U)\geq\eps$ for all $U\in\cal U_k$.

    Similarly, if $(\bf L,\bf c)$ is \emph{simple}, then there is an $\alpha>0$ and $\eps>0$ such that $\Phi_{k,\alpha}(U)\geq\eps$ for all $1\leq k\leq n-1$ and $U\in\cal U_k$.

    Having established the above, we now state the main lemma of this subsection.

    \begin{lemma}[Projections of of plates are large]\label{lem:lb_plate}
    Suppose $\eps>0,\alpha\geq 0$ are such that $\Phi_{k,\alpha}(U)\geq\eps$ for all $U\in\cal U_k$. Let $V$ be a $k$-dimensional subspace with isometric basis $u_1,\ldots,u_k$. Then, since $\Phi_{k,\alpha}([u_1\cdots u_k])\geq\eps$, for each $j$ there is a $k_j$ such that $L_jU$ has a $k_j$-minor of magnitude $\geq \eps^{n_j}$, and such that $(k_j)_j\in\cal C_{k,\alpha}(\bf c,\bf n)$. Let $e\in\Z_{\geq 0}$ be arbitrary. Then
    \begin{equation*}
        \mu_{n_j}\Big(L_j\big[\cal O_n\cap(V+\varpi^e\cal O_n)\big]\Big)\geq\eps^{n_j}|\varpi|^{e(n_j-k_j)}.
    \end{equation*}
        
    \end{lemma}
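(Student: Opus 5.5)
The strategy is to exhibit an explicit regular subgroup inside $L_j\big[\cal O_n\cap(V+\varpi^e\cal O_n)\big]$ whose measure we can compute.

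The set $S_e=\cal O_n\cap(V+\varpi^e\cal O_n)$ is an $\cal O_1$-module. It contains $\cal O_1\langle u_1,\ldots,u_k\rangle$, since the $u_i$ are isometric and so lie in $\cal O_n$, and it contains $\varpi^e\cal O_n$. Hence
\begin{equation*}
    L_jS_e\supseteq L_jU(\cal O_k)+\varpi^eL_j(\cal O_n).
\end{equation*}
We fix $j$, together with the coordinate projections $P_j\in\mathbb P_{k_j,n_j}$ and $Q_j\in\mathbb P_{k_j,k}$ realizing a $k_j\times k_j$ minor $M=P_jL_jUQ_j^\top$ with $|\det M|\geq\eps^{n_j}$.

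The first step handles $L_j(\cal O_n)$. Because $L_j$ is surjective, $L_j(\cal O_n)$ is a regular subgroup of $\K^{n_j}$. I would like $L_j(\cal O_n)\supseteq\cal O_{n_j}$. This holds if $L_j$ is normalized (say after a rigid equivalence, via Lemma~\ref{lem:smith}, its Smith form has all nonzero $|\lambda_i|\geq1$). In general we only have $L_j(\cal O_n)\supseteq\lambda\cal O_{n_j}$ for some fixed $\lambda$.

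This is the main point to watch. With $\eps$ allowed small this is harmless: the minors of $L_jU$ are bounded by $\max|L_j|$-type quantities, so $\eps^{n_j}\leq$ (something). I would instead proceed using the structure of the minor itself, as follows. Let $W\subseteq\K^{n_j}$ be the coordinate subspace selected by $P_j$, of dimension $k_j$, and let $W'$ be the complementary $n_j-k_j$ coordinates.

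The second step uses the columns $L_jUQ_j^\top$, which give vectors $w_1,\ldots,w_{k_j}\in L_jU(\cal O_k)$. Their $W$-components form the matrix $M$. We then work with the module
\begin{equation*}
    H=\cal O_1\langle w_1,\ldots,w_{k_j}\rangle+\varpi^e\cal O_{n_j}\subseteq L_jS_e,
\end{equation*}
where we are assuming the normalization $\varpi^e\cal O_{n_j}\subseteq\varpi^eL_j(\cal O_n)$.

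The measure of $H$ is computed by fibering over $W'$. Projection onto $W'$ has image containing $\varpi^e\cal O_{W'}$. For each fixed $W'$-coordinate in that image, the fiber in $W$ contains a translate of $M(\cal O_{k_j})+\varpi^e\cal O_{k_j}\supseteq M(\cal O_{k_j})$, since $|\varpi^e|\leq1$. Alternatively, the matrix with columns $w_1,\ldots,w_{k_j}$ and $\varpi^e$ times the $W'$ unit vectors is block triangular after permuting coordinates, with determinant $\det M\cdot\varpi^{e(n_j-k_j)}$. Its image of $\cal O_{n_j}$ lies in $H$, so
\begin{equation*}
    \mu_{n_j}(H)\geq|\det M|\,|\varpi|^{e(n_j-k_j)}\geq\eps^{n_j}|\varpi|^{e(n_j-k_j)},
\end{equation*}
by the change of variables formula $\mu(A\cal O)=|\det A|$.

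The only genuine obstacle is the containment $\varpi^e\cal O_{W'}\subseteq L_jS_e$, i.e.\ $\cal O_{n_j}\subseteq L_j(\cal O_n)$. I would secure it by noting that the standing normalization (replacing the datum by a rigidly equivalent one, or rescaling $L_j$ and absorbing the factor into $\eps$) may be assumed. Failing that, I would note that the determinant argument only needs the $n_j-k_j$ vectors $\varpi^e L_j(\text{suitable }x)$ from $\varpi^eL_j(\cal O_n)$ completing $w_1,\ldots,w_{k_j}$ to a basis. One then chooses these from $L_j\cal O_n$ so that the resulting determinant, expanded along the $W'$ block, is not smaller than $|\det M|$ times the $W'$ factor.
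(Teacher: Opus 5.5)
Your core argument is correct and is essentially the paper's proof. The paper moves $V$ to $\K^k\times\{0\}^{n-k}$ with the chosen minor in the top-left corner and runs Fubini over the $k_j$ selected coordinates. Your block lower-triangular matrix $N=\begin{bmatrix}M&0\\ B&\varpi^{e}I\end{bmatrix}$, with $N(\mathcal O_{n_j})\subseteq H\subseteq L_jS_e$, packages the same computation as a single change of variables. Two of your side remarks need correcting, however.

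First, the sentence fibering $H$ over $W'$ is backwards. For $y\in\varpi^e\mathcal O_{W'}$ we have $(0,y)\in H$, so the fiber of $H$ over $y$ is just $H\cap W$. That set need not contain $M(\mathcal O_{k_j})$: with $n_j=2$, $k_j=1$, $w_1=(\varpi^2,1)$ and $e=5$ one gets $H\cap W=\varpi^5\mathcal O_1$. The correct fibering, as in the paper, is over $W$. The projection of $H$ to $W$ contains $M(\mathcal O_{k_j})$, and each fiber contains a coset of $\varpi^e\mathcal O_{n_j-k_j}$. Your determinant computation is unaffected.

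Second, the containment $\mathcal O_{n_j}\subseteq L_j(\mathcal O_n)$ cannot be arranged inside the lemma in the ways you suggest. Rigid equivalences preserve the Smith invariants $|\lambda_i|$ of $L_j$. Rescaling $L_j$, or passing to normal form by a non-rigid equivalence, changes $\Phi_{k,\alpha}$ and hence $\eps$. So this containment must be a hypothesis, and in the paper it is: the proof invokes the normal form $L_j(\mathcal O_n)=\mathcal O_{n_j}$ of the surrounding section.

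Without it the statement is false, so your fallback paragraph cannot succeed. Take $n=2$, $k=1$, $\alpha=0$, $\mathbf c=(\tfrac12,1)$, and $L_1=\mathrm{diag}(1,\varpi^{100})$, $L_2=\mathrm{Id}$ on $\K^2$. Since $(0,1)\in\mathcal C_{1,0}$, we get $\Phi_{1,0}\equiv 1$ on $\mathcal U_1$, so $\eps=1$ is allowed. For $V=\mathrm{span}(\mathbf e_2)$ the only admissible $\mathbf k$ is $(0,1)$. At $e=0$ the claimed bound for $j=1$ reads $\mu_2(L_1(\mathcal O_2))\geq 1$, whereas in fact $\mu_2(L_1(\mathcal O_2))=|\varpi|^{100}$.

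With the normal-form assumption stated explicitly as a hypothesis, your proof is complete.
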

    \begin{proof}
        We may freely assume that $V=\K^k\times\{0\}^{n-k}$, that $u_\iota=\bf e_\iota$ are the standard basis vectors, and that the $k_j$-minor of interest is the top-left minor in $L_j.[u_1\cdots u_k]=L_j.\begin{bmatrix}I_k\\0_{(n-k)\times k}\end{bmatrix}$ (which we may note is just the top-left minor of $L_j$ itself). Thus, writing $A$ for the top-left $k_j\times k_j$ submatrix of $L_j$, the image $A(\cal O_{k_j})$ is a volume $\geq\eps^{n_j}$ regular subgroup of $\K^{k_j}$, which (by normal form of $\bf L$) is contained in $\cal O_{k_j}$.

        Now,
        \begin{equation*}
            \begin{split}
            L_j[\cal O_n\cap(V+\varpi^e\cal O_n)]=L_j[\cal O_k\times\{0\}^{n-k}+\varpi^e\cal O_n]&=L_j[\cal O_k\times\{0\}^{n-k}]+\varpi^e\cal O_{n_j}\\
            &\supseteq L_j[\cal O_{k_j}\times\{0\}^{n-k_j}]+\varpi^e\cal O_{n_j}.
            \end{split}
        \end{equation*}
        It will suffice to lower-bound the measure of this last set. The lower bound is obtained by (a) noting that if a pair $(\bf x,\bf  y)\in\K^{k_j}\times\K^{n_j-k_j}$ belongs to this set, then $(\bf x,\bf z)$ belongs to the same set for any $\bf z\in\bf y+\cal O_{n_j-k_j}$, and (b) the set of $\bf x$ for which such a $\bf y$ exists amounts to a projection of $L_j[\cal O_{k_j}\times\{0\}^{n-k_j}]$, which is large by our assumption on the minor of $L_j$. In detail, we proceed using Fubini:
        \begin{equation*}
            \begin{split}
            \mu_{n_j}&\big(L_j[\cal O_{k_j}\times\{0\}^{n-k_j}]+\varpi^e\cal O_{n_j}\big)\\
            &=\int_{\K^{k_j}\times\{0\}^{n_j-k_j}}\int_{\{0\}^{k_j}\times\K^{n_j-k_j}}1_{L_j[\cal O_{k_j}\times\{0\}^{n_j-k_j}]+\varpi^e\cal O_{n_j}}(\bf x+\bf y)d\mu_{n_j-k_j}(\bf y)d\mu_{k_j}(\bf x)\\
            &\geq|\varpi|^{e(n_j-k_j)}\int_{\K^{k_j}\times\{0\}^{n_j-k_j}}1_{[I_{k_j}\,\,0_{k_j\times(n_j-k_j)}]\big(L_j[\cal O_{k_j}\times\{0\}^{n_j-k_j}]+\varpi^e\cal O_{n_j}\big)}(\bf x)d\mu_{k_j}(\bf x)\\
            &=|\varpi|^{e(n_j-k_j)}\mu_{k_j}\big(A(\cal O_{k_j})+\varpi^e\cal O_{k_j}\big).
            \end{split}
        \end{equation*}
        Finally, the trivial inclusion $A(\cal O_{k_j})\subseteq A(\cal O_{k_j})+\varpi^e\cal O_{k_j}$ and our assumed lower bound implies
        \begin{equation*}
            \mu_{n_j}\Big(L_j\big[\cal O_n\cap(V+\varpi^e\cal O_n)\big]\Big)\geq|\varpi|^{e(n_j-k_j)}\mu_{k_j}(A(\cal O_{k_j}))\geq\eps^{n_j}|\varpi|^{e(n_j-k_j)},
        \end{equation*}
        as was to be justified.
        \end{proof}

        We end this subsection by indicating why the compression factors $\Phi_\alpha(V)$ are well-defined.
        \begin{lemma}[Compression factors are well-defined]\label{lem:comp_factor_founded}
            Suppose $U,U'$ are two $n\times k$ matrices over $\K$ whose columns form isometric tuples, each of which is a basis for a common vector subspace $V$. Then $\Phi_{k,\alpha}(U)=\Phi_{k,\alpha}(U')$.
        \end{lemma}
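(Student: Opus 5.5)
Since $U$ and $U'$ have columns forming bases of the same $k$-dimensional space $V$, there is a unique $B\in\mathrm{GL}_k(\K)$ with $U'=UB$. The plan is first to show that $B\in\mathrm{GL}_k(\cal O_1)$, and then to show that the quantity
\begin{equation*}
    M_{j,k_j}(U):=\max_{P_j\in\mathbb P_{k_j,n_j}}\max_{Q_j\in\mathbb P_{k_j,k}}|\det(P_jL_jUQ_j^\top)|
\end{equation*}
is unchanged when $U$ is replaced by $UB$. Since $\Phi_{k,\alpha}(U)$ is assembled from the numbers $M_{j,k_j}(U)^{1/n_j}$ by a max over $\bf k\in\cal C_{k,\alpha}(\bf c,\bf n)$ and a min over $j$, and the index sets do not depend on $U$, this gives $\Phi_{k,\alpha}(U)=\Phi_{k,\alpha}(U')$.

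\textbf{Step 1: $B\in\mathrm{GL}_k(\cal O_1)$.} The map $\beta\mapsto U\beta$ is an isometric embedding $\K^k\to\K^n$, and so is $\beta\mapsto U'\beta=UB\beta$. Hence $\|B\beta\|=\|UB\beta\|=\|\beta\|$ for all $\beta\in\K^k$, so $B$ is a linear isometry of $\K^k$. Such an isometry is exactly an element of $\mathrm{GL}_k(\cal O_1)$, as noted after the definition of the general linear groups.

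\textbf{Step 2: invariance of the maximal minor.} Fix $j$ and $k_j$, and write $M=L_jU$, a $n_j\times k$ matrix. For each fixed row selection $P$, apply Cauchy--Binet to $PMB$:
\begin{equation*}
    \det(PMBQ^\top)=\sum_{Q'}\det(PMQ'^\top)\det(Q'BQ^\top),
\end{equation*}
where $Q'$ ranges over $\mathbb P_{k_j,k}$. Since $B$ has entries in $\cal O_1$, every minor $\det(Q'BQ^\top)$ lies in $\cal O_1$. The ultrametric inequality then gives
\begin{equation*}
    |\det(PMBQ^\top)|\le\max_{Q'}|\det(PMQ'^\top)|.
\end{equation*}
Taking the max over $P$ and $Q$ yields $M_{j,k_j}(UB)\le M_{j,k_j}(U)$. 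Applying the same argument with $B^{-1}\in\mathrm{GL}_k(\cal O_1)$ in place of $B$ gives the reverse inequality, so the two are equal. Alternatively, one can avoid Cauchy--Binet by using the Smith normal form (Lemma~\ref{lem:smith}) and noting that the maximal $k_j$-minor norm is the product of the $k_j$ smallest-index invariant factors, which is invariant under multiplication by $\mathrm{GL}(\cal O_1)$ matrices. The Cauchy--Binet route is more direct.

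There is no real obstacle here. The only point needing care is Step 1, namely identifying the change-of-basis matrix as an element of $\mathrm{GL}_k(\cal O_1)$. The isometric-embedding characterization makes this immediate.
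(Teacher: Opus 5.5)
Your proof is correct, and it takes a somewhat different route from the paper's. The paper also writes $L_jU'=L_jUR$ with $R\in\mathrm{GL}_k(\cal O_1)$, but asserts this without the justification you give in Step 1. It then factors $R$ into elementary column operations over $\cal O_1$: swaps, unit scalings, and adding an $\cal O_1$-multiple of one column to another. For each such operation it checks by cases that the largest $k_j\times k_j$ minor is unchanged; one subcase is delicate, namely $|\beta|=1$ with cancellation in $\det(\Xi)+\beta\det(\Xi'')$.

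Your Cauchy--Binet step packages all of this into one ultrametric estimate. The bound $|\det(PMBQ^\top)|\le\max_{Q'}|\det(PMQ'^\top)|$ holds for any $B$ with entries in $\cal O_1$, invertible or not, and applying it to $B^{-1}$ gives equality. This avoids both the generation of $\mathrm{GL}_k(\cal O_1)$ by elementary matrices and the case analysis. Your Step 1 also supplies the missing reason why the change of basis is an isometry. The essential input is the same in both arguments: the ultrametric inequality applied to an expansion of minors.

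One small remark on your Smith-normal-form aside. With the paper's ordering $0<|\lambda_1|\le\cdots\le|\lambda_r|$ in Lemma~\ref{lem:smith}, the largest $k_j$-minor has absolute value $|\lambda_{r-k_j+1}\cdots\lambda_r|$, i.e.\ the largest-index factors. Your ``smallest-index'' is right only under the usual divisibility ordering of invariant factors. This does not affect your main argument.
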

        \begin{proof}
            Fix some $\bf k\in\cal C_{k,\alpha}$ and each $1\leq j\leq m$, and abbreviate $L_jU=\bar U,L_jU'=\bar U'$. Writing
            \begin{equation*}
                \Gamma(\Xi)=\max_{P_j\in\mathbb{P}_{k_j,n_j}}\max_{Q_j\in\mathbb{P}_{k_j,k}}|\det(P_j\Xi Q_j^\top)|^{1/n_j},         
            \end{equation*}
            for arbitrary $n_j\times k$ matrices $\Xi$ of rank $\geq k_j$, our goal will be to show that
            \begin{equation*}
                \Gamma(\bar U)=\Gamma(\bar U').
            \end{equation*}
            We note in passing that $\Gamma$ is equivalently the functional defined by taking $|\det(-)|^{1/n_j}$ over all submatrices, and choosing the largest such value. To establish our equality, note first that $\bar U'=\bar UR$, for some isometry $R\in\operatorname{GL}(\cal O_k)$. Each such $R$ is a finite product of elementary matrices, representing the column operations of (a) swapping two columns, or (b) replacing column $i$ by column $i$ plus $\beta$ times column $j$, where $|\beta|\leq 1$, or (c) multiplying column $i$ by a scalar $\beta$ with $|\beta|=1$.

            We claim that $\Xi\mapsto\Gamma(\Xi)$ is invariant under such moves. Indeed, swapping two columns of $\Xi$ simply changes the choice of the submatrix (more precisely, we change the choice of $Q_j$). Applying operation (c) similarly does not alter the absolute determinant. If $R$ has the effect (b), and column $i$ was among the columns defining submatrix $\Xi$, and column $j$ was not, then (writing $\Xi'$ for the same-location submatrix, after the column operation),
            \begin{equation*}
                |\det (\Xi')|=|\det (\Xi)+\beta\det (\Xi'')|;
            \end{equation*}
            here, $\Xi''$ is obtained by swapping columns $i$ and $j$. If $|\beta|<1$ and $\Xi$ maximizes $\Gamma$, then (since some $k_j\times k_j$ submatrix of $\Xi$ nonsingular) $|\det (\Xi)|>|\beta\det (\Xi'')|$, and so $|\det(\Xi')|=|\det (\Xi)|$. If instead $|\beta|=1$, then either $|\det \Xi'|=|\det \Xi|$ and we are done, or else
            \begin{equation*}
                |\det (\Xi)+\beta\det (\Xi'')|<|\det (\Xi)|,
            \end{equation*}
            and in particular $|\det (\Xi)|=|\det (\Xi'')|$. But then $|\det (\Xi'')|>|\det (\Xi')|$, and $\Xi''$ appears as a submatrix of $L_jUR$ (select column $j$ instead of column $i$!), so the maximum over submatrices is unaffected after swapping the choice of submatrix. The last case is that columns $i$ and $j$ are both among the columns of the chosen submatrix; but then $|\det (\Xi)|=|\det (\Xi)||\det R|=|\det (\Xi R)|=|\det (\Xi')|$, because applying the column operation to $\Xi$ and then passing to the submatrix $P$ has the same effect as just applying the column operation to $P$!

            Thus, $\Gamma$ is invariant under $\Xi\mapsto\Xi R$, for any $R\in\mathrm{GL}(\cal O_k)$. Since $\bar U$ and $\bar U'$ differ by such a move, we conclude that $\Gamma(\bar U)=\Gamma(\bar U')$. Since we have established this for every choice of $\bf k$ and $1\leq j\leq m$, we certainly have that $\Phi_{k,\alpha}(U)=\Phi_{k,\alpha}(U')$.
        \end{proof}

\subsection{Linear heat flow}\label{subsec:linear_heat}

We start to build towards the proof of Theorem~\ref{thm:lin_sub_extr}. As described above, the method will be a version of the heat flow argument. Lemma~\ref{lem:lb_plate} will be critically used to show monotonicity under the flow.

It will be convenient to assume that, by using symmetries of the Brascamp--Lieb problem (i.e.\ performing linear changes of variables in $\K^n$ and each $\K^{n_j}$), the maps $\bf L$ are taken to satsify a suitable ``normal form'' hypothesis. This guarantees that $\bf L$ is an isometric embedding into $\prod_{j=1}^m\K^{n_j}$.

\definition[Normal form] A tuple of maps $\bf L=(L_j)_j$ is said to be in \emph{normal form} if $L_j(\cal O_n)=\cal O_{n_j}$ for all $j$, and $\bigcap_{j=1}^mL_j^{-1}(\cal O_{n_j})=\cal O_n$. A Brascamp--Lieb datum $(\bf L,\bf c)$ is in normal form just when $\bf L$ is.

\begin{remark}
    Any $(\bf L,\bf c)$ is equivalent to some $(\bf L',\bf c)$ in normal form.
\end{remark}

A technical upshot of the normal form hypothesis is that ``factor-wise thickening'' is equivalent to ``ordinary thickening,'' in the following technical sense, for any regular subgroup that is a candidate for being an extremizer of $\operatorname{BL}_{\mathrm{grp}}(\bf L,\bf c)$.

\begin{lemma}[Thickening lemma]\label{lem:thick}
    Suppose $\bf L=(L_j)_{1\leq j\leq m}$ is injective (i.e.\ $\bigcap_j\ker L_j=0$) and in normal form, and that $U\in\cal N(\K^n)$ satisfies
    \begin{equation*}
        U=\bigcap_{j=1}^mL_j^{-1}\big(L_j(U)\big).
    \end{equation*}
    Then, for any $h\in\K^\times$,
    \begin{equation*}
        U+\varpi^e\cal O_n=\bigcap_{j=1}^mL_j^{-1}\big(L_j(U+h\cal O_n)\big).
    \end{equation*}
\end{lemma}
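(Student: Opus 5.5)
The statement should be read with $h$ in place of $\varpi^e$, so the claim is $U+h\cal O_n=\bigcap_{j}L_j^{-1}\big(L_j(U+h\cal O_n)\big)$; I will prove this version.

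The inclusion ``$\subseteq$'' is trivial, so the work is in ``$\supseteq$''. First I rewrite the right-hand side using normal form. Since $L_j$ is linear and $L_j(\cal O_n)=\cal O_{n_j}$, we have $L_j(U+h\cal O_n)=L_j(U)+h\cal O_{n_j}$. So it suffices to show the following: if $x\in\K^n$ satisfies $L_jx\in L_j(U)+h\cal O_{n_j}$ for every $j$, then $x\in U+h\cal O_n$.

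Next I record the metric consequence of normal form. Consider the map $\bf L=(L_j)_j:\K^n\to\prod_j\K^{n_j}$ with the max-norm on the target. From $L_j(\cal O_n)=\cal O_{n_j}$ and $\bigcap_j L_j^{-1}(\cal O_{n_j})=\cal O_n$, together with the scaling law \eqref{eq:norm_scaling} and the value set \eqref{eq:norm_values}, I get $\max_j\|L_jy\|=\|y\|$ for all $y$. Thus $\bf L$ is an isometric embedding. The target statement becomes: if $\max_j\operatorname{dist}(L_jx,L_jU)\le|h|$, then $\operatorname{dist}(x,U)\le|h|$.

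To prove this, I pick $u\in U$ minimizing $\|x-u\|$. Such a $u$ exists because $U$ is compact and norms take discrete values. Suppose for contradiction that $\|x-u\|=|\varpi^s|>|h|$. By minimality, $x-u\notin U+\varpi^{s+1}\cal O_n$. The idea is to transfer this non-membership to some factor $L_j$. Let $U':=U+\varpi^{s+1}\cal O_n$. I want to know that $U'$ again satisfies $U'=\bigcap_jL_j^{-1}L_j(U')$, i.e.\ the lemma itself at the coarser scale $\varpi^{s+1}$. Granting that, there is some $j$ with $L_j(x-u)\notin L_jU+\varpi^{s+1}\cal O_{n_j}$. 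This gives $\operatorname{dist}(L_jx,L_jU)\ge|\varpi^s|>|h|$, which is the desired contradiction. The circularity suggests an induction on the scale: the statement is trivial for $|h|\le$ the radius of $U$ (then $U+h\cal O_n=U$) and for $|h|\ge\mathrm{diam}(U)$ (then both sides equal $h\cal O_n$ by normal form). So I would run a downward induction in $e$ between these two thresholds.

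The main obstacle is the inductive step: passing from scale $\varpi^{e+1}$ to $\varpi^e$. This requires a single $u\in U$ that simultaneously approximates $x$ in every factor, rather than a different $u_j$ for each $j$. The first approach I would try is a Smith normal form adaptation (Lemma~\ref{lem:smith}) of $U$ relative to $\cal O_n$, say $U=B\,\mathrm{diag}(\varpi^{a_i})\cal O_n$ with $B\in\mathrm{GL}_n(\cal O_1)$. Then $U+\varpi^e\cal O_n=B\,\mathrm{diag}(\varpi^{\min(a_i,e)})\cal O_n$, which reduces the problem to checking the residue-level statement on the single layer $(U+\varpi^{e}\cal O_n)/(U+\varpi^{e+1}\cal O_n)$. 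That layer is a vector space over the residue field, and there injectivity of the reduced map follows from isometry of $\bf L$. If this fails, the fallback is to dualize. Under duality, intersections of preimages become sums of transposed images, and normal form becomes $\cal O_n=\sum_jL_j^\top\cal O_{n_j}$. The claim then becomes a modular-law identity for lattices, which I would attempt by the same layer-by-layer argument.
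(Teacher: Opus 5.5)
\textbf{Gap in your argument.} Your contradiction step needs $U+\varpi^{s+1}\mathcal O_n$ to be closed, i.e.\ $U+\varpi^{s+1}\mathcal O_n=\bigcap_jL_j^{-1}L_j(U+\varpi^{s+1}\mathcal O_n)$. But $|\varpi^{s}|>|h|$ only gives $|\varpi^{s+1}|\ge |h|$. When $|\varpi^{s+1}|=|h|$, this is exactly the statement being proved. So, after your induction, everything rests on the single-layer step, and the claim you make there is false.

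Isometry of $\mathbf L$ gives injectivity of the reduction $\bar{\mathbf L}\colon\F^n\to\prod_j\F^{n_j}$, where $\F=\mathcal O_1/\varpi\mathcal O_1$. Passing from $U+\varpi^{e}\mathcal O_n$ to $U+\varpi^{e+1}\mathcal O_n$ needs something else: injectivity of the induced map $\F^n/\bar V\to\prod_j\F^{n_j}/\bar W_j$. Here $\bar V$ and $\bar W_j$ are the reductions of $\varpi^{-e}U\cap\mathcal O_n$ and $\varpi^{-e}L_jU\cap\mathcal O_{n_j}$. Equivalently, you need $\bar V=\bigcap_j\bar L_j^{-1}(\bar W_j)$. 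Injectivity does not descend to such quotients, and closedness of the lattice $U$ does not force this residue-level closedness.

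\textbf{The statement is false.} No repair exists. Take $n=3$ and
$L_1(x,y,z)=(y,x+\varpi z)$, $L_2(x,y,z)=(x,z)$, $L_3(x,y,z)=(x,y)$.
This datum is injective and in normal form. Let
$U=\{(x,y,z):y\in\mathcal O_1,\ y+z\in\varpi\mathcal O_1,\ x-\varpi y\in\varpi^2\mathcal O_1\}=\mathcal O_1(\varpi,1,-1)+\varpi^2\mathcal O_1\times\varpi\mathcal O_1\times\varpi\mathcal O_1$.
Then:
\begin{itemize}
\item $L_1U=\mathcal O_1\times\varpi^2\mathcal O_1$;
\item $L_2U=\{(a,b):b\in\mathcal O_1,\ a+\varpi b\in\varpi^2\mathcal O_1\}$;
\item $L_3U=\{(a,b):b\in\mathcal O_1,\ a-\varpi b\in\varpi^2\mathcal O_1\}$.
\end{itemize}
Moreover $\bigcap_jL_j^{-1}(L_jU)=U$: the first two conditions give $x+\varpi z\in\varpi^2\mathcal O_1$, and together with the third this forces $y+z\in\varpi\mathcal O_1$.

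Now take $h=\varpi$. On one side, $U+\varpi\mathcal O_3=\{x\in\varpi\mathcal O_1,\ y\in\mathcal O_1,\ y+z\in\varpi\mathcal O_1\}$. On the other, $L_1(U+\varpi\mathcal O_3)=\mathcal O_1\times\varpi\mathcal O_1$ and $L_2(U+\varpi\mathcal O_3)=L_3(U+\varpi\mathcal O_3)=\varpi\mathcal O_1\times\mathcal O_1$. So the right-hand side is $\varpi\mathcal O_1\times\mathcal O_1\times\mathcal O_1$, which contains $(0,0,1)\notin U+\varpi\mathcal O_3$.

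At residue level this is precisely your failure. Here $\bar V=\F(0,1,-1)$ and $\bar W_j=\bar L_j\bar V$ for the reduced maps $\bar L_1=(y,x)$, $\bar L_2=(x,z)$, $\bar L_3=(x,y)$. The vector $(0,0,1)$ lies in $\bigcap_j\bar L_j^{-1}(\bar W_j)\setminus\bar V$.

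\textbf{The paper's proof has the same gap.} It asserts, without justification,
$H\cap\big(\prod_jL_jU\big)+H\cap h\mathcal O_{n_1+\cdots+n_m}=H\cap\big(\prod_jL_jU+h\mathcal O_{n_1+\cdots+n_m}\big)$.
This modular law for lattices is exactly what your dualized fallback would have to prove. It fails in the example above; in general only ``$\subseteq$'' holds.
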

\begin{proof}
    The containment $\subseteq$ is trivial. Consider $\bf L$ as a map $\K^n\to\K^{n_1}\times\cdots\times\K^{n_m}$. Since $\bf L$ is injective, it suffices to show that
    \begin{equation*}
        \bf L(U+h\cal O_n)=\bigcap_{j=1}^m\bf L\Big(L_j^{-1}\big(L_j(U+h\cal O_n)\big)\Big).
    \end{equation*}
    Write $H=\bf L(\K^n)$, and
    \begin{equation*}
        V=L_1(U)\times\cdots\times L_m(U).
    \end{equation*}
    Note that, if $K\in\cal N(\K^n)$ satisfies
    \begin{equation}\label{eq:k_reg}
        K=\bigcap_{j=1}^mL_j^{-1}(L_jK),
    \end{equation}
    then
    \begin{equation}\label{eq:set_lem}
        \bf L(K)=\big(L_1(K)\times\cdots\times L_m(K)\big)\cap H.
    \end{equation}
    The containment $\subseteq$ is trivial; for the other, if $\bf y$ belongs to the right-hand side, then (by virtue of $\bf y\in H$) we may find $\bf x\in\K^n$ with $\bf L(\bf x)=\bf y$. If we write
    \begin{equation*}
        \pi_j:\K^{n_1}\times\cdots\times\K^{n_m}\to\K^{n_j}
    \end{equation*}
    for the projection map, then $L_j\bf x=\pi_j\bf y\in L_j(K)$, so
    \begin{equation*}
        \bf x\in\bigcap_{j=1}^m(\pi_j\circ\bf L)^{-1}(L_j(K))=\bigcap_{j=1}^mL_j^{-1}(L_j(K)),
    \end{equation*}
    so~\eqref{eq:set_lem} holds upon applying~\eqref{eq:k_reg} and $\bf L$. Consequently,
    \begin{equation*}
        \begin{split}
        \bf L(U+h\cal O_n)&=\bf L(U)+h\bf L(\cal O_n)\\
        &=H\cap\Big(L_1(U)\times\cdots\times L_m(U)\Big)+H\cap h\cal O_{n_1+\ldots+n_m}\\
        &=H\cap\Big(L_1(U)\times\cdots\times L_m(U)+h\cal O_{n_1+\ldots+n_m}\Big).
        \end{split}
    \end{equation*}
    We use this to examine the right-hand side of the desired set identity. Note that
    \begin{equation*}
        \pi_j\big(\bf L\big (L_j^{-1}(L_j(U+h\cal O_n)\big)\big)=L_j(U)+h\cal O_{n_j},
    \end{equation*}
    so that
    \begin{equation*}
        \begin{split}
        \bigcap_{j=1}^m\bf L\Big(L_j^{-1}\big(L_j(U+h\cal O_n)\big)\Big)&\subseteq\bigcap_{j=1}^m\pi_j^{-1}\big(L_jU+h\cal O_{n_j}\big)\\
        &=L_1(U)\times\cdots\times L_m(U)+h\cal O_{n_1+\ldots+n_m},
        \end{split}
    \end{equation*}
    and of course
    \begin{equation*}
        \bigcap_{j=1}^m\bf L\Big(L_j^{-1}\big(L_j(U+h\cal O_n)\big)\Big)\subseteq H.
    \end{equation*}
    Thus
    \begin{equation*}
        \bigcap_{j=1}^m\bf L\Big(L_j^{-1}\big(L_j(U+h\cal O_n)\big)\Big)\subseteq H\cap \big(L_1(U)\times\cdots\times L_m(U)+h\cal O_{n_1+\ldots+n_m}\big)=\bf L\big(U+h\cal O_n\big).
    \end{equation*}
    Since $\bf L$ is injective,
    \begin{equation*}
        \bigcap_{j=1}^mL_j^{-1}\big(L_j(U+h\cal O_n)\big)\subseteq U+h\cal O_n.
    \end{equation*}
    From the trivial reverse inequality, we are done.
\end{proof}

\begin{proposition}[Normal form Ball's inequality, regular subgroup case]\label{prop:ball_subg}
    Suppose $\bf L$ is full, injective, and in normal form. Let $U\in\cal U(\K^n)$ be an arbitrary regular subgroup, which we assume to satisfy
    \begin{equation*}
        U=\bigcap_{j=1}^mL_j^{-1}(L_j(U)),
    \end{equation*}
    \begin{equation*}
        U\subseteq\cal O_n.
    \end{equation*}
    Then, for any $e\in\Z_{\geq 0}$,
    \begin{equation*}
        \operatorname{BL}_{\mathrm{grp}}(\bf L,\bf c;U)\leq\operatorname{BL}_{\mathrm{grp}}(\bf L,\bf c;U\cap\varpi^e\cal O_n)\operatorname{BL}_{\mathrm{grp}}(\bf L,\bf c;U+\varpi^e\cal O_n).
    \end{equation*}
\end{proposition}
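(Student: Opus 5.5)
The plan is to reduce the inequality to a second-isomorphism-theorem identity for compact open subgroups, applied once in $\K^n$ and once in each $\K^{n_j}$. Write $B=\varpi^e\cal O_n$. Since $U$ and $B$ are compact open subgroups of $\K^n$, the natural map $U/(U\cap B)\to (U+B)/B$ is a bijection of finite groups. Haar measure is additive over cosets, so
\begin{equation*}
    \mu_n(U)\,\mu_n(B)=\mu_n(U\cap B)\,\mu_n(U+B).
\end{equation*}
Substituting this into the right-hand side of the claimed inequality, we need to show
\begin{equation*}
    \prod_{j=1}^m\Big[\mu_{n_j}\big(L_j(U\cap B)\big)\,\mu_{n_j}\big(L_j(U+B)\big)\Big]^{c_j}\leq\mu_n(B)\prod_{j=1}^m\mu_{n_j}(L_jU)^{c_j}.
\end{equation*}

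Next I will handle the factor $\mu_n(B)$. By normal form, $L_j(\cal O_n)=\cal O_{n_j}$, so $L_j(B)=\varpi^e\cal O_{n_j}$ and $\mu_{n_j}(L_jB)=|\varpi|^{en_j}$. Also $\mu_n(B)=|\varpi|^{en}$. I will use the scaling condition \eqref{eq:scaling} $n=\sum_jc_jn_j$. This is implicit in the setting, since we only care about the case $\operatorname{BL}_{\mathrm{grp}}(\bf L,\bf c)<+\infty$. If one prefers, for $e\geq0$ the weaker inequality $n\geq\sum_jc_jn_j$ suffices, because $|\varpi|<1$. With scaling we get $\mu_n(B)=\prod_j\mu_{n_j}(L_jB)^{c_j}$. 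Since $c_j\ge 0$, it then suffices to prove, for each $j$ separately,
\begin{equation*}
    \mu_{n_j}\big(L_j(U\cap B)\big)\,\mu_{n_j}\big(L_j(U+B)\big)\leq\mu_{n_j}(L_jU)\,\mu_{n_j}(L_jB).
\end{equation*}

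This per-$j$ inequality comes from two observations. First, $L_j$ is additive, so $L_j(U+B)=L_jU+L_jB$. Both summands are compact open subgroups of $\K^{n_j}$ (they are open because $L_j$ is surjective). So the same isomorphism-theorem identity gives
\begin{equation*}
    \mu_{n_j}(L_jU+L_jB)\,\mu_{n_j}(L_jU\cap L_jB)=\mu_{n_j}(L_jU)\,\mu_{n_j}(L_jB).
\end{equation*}
Second, there is the trivial inclusion $L_j(U\cap B)\subseteq L_jU\cap L_jB$, which gives the inequality after multiplying through. The only lossy step in the whole argument is this inclusion; everything else is an identity.

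I do not expect a genuine obstacle. The one point needing care is making sure every set involved is a compact open subgroup of positive finite measure, so that the coset-counting identities are legitimate. Here $U\cap B$ is open since both sets are open, and $L_j(U\cap B)$ is compact open because $L_j$ is a surjective, hence open, linear map. The further hypotheses $U=\bigcap_jL_j^{-1}(L_jU)$ and $U\subseteq\cal O_n$, and the Thickening Lemma~\ref{lem:thick}, seem unnecessary for the inequality itself. I expect them to matter later, when this bound is iterated in the heat-flow argument, to keep the thickened group $U+\varpi^e\cal O_n$ in the same normalized class.
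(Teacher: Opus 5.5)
Your proof is correct, and it takes a genuinely different and much shorter route than the paper's.

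The paper argues in Ball's style. It inserts $\operatorname{BL}_{\mathrm{grp}}(\mathbf L,\mathbf c;\varpi^e\mathcal O_n)=1$ as an averaging kernel and writes $\mu_n(U)$ as a double integral. It then bounds this by $\sup_x\operatorname{BL}(\mathbf L,\mathbf c;\mathbf h_x)$ times the Brascamp--Lieb quotient of the mollified tuple $\bigl(1_{L_jU}*|\varpi|^{-en_j}1_{\varpi^e\mathcal O_{n_j}}\bigr)_j$. Identifying these two factors with the group quotients of $U\cap\varpi^e\mathcal O_n$ and $U+\varpi^e\mathcal O_n$ is where it uses $U=\bigcap_jL_j^{-1}(L_jU)$, a diagonalization of each $L_jU$, and the Thickening Lemma (hence injectivity).

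Your coset count bypasses all of this. It in fact proves, for arbitrary $U,B\in\mathcal N(\mathbb K^n)$,
\[
\operatorname{BL}_{\mathrm{grp}}(\mathbf L,\mathbf c;U)\,\operatorname{BL}_{\mathrm{grp}}(\mathbf L,\mathbf c;B)\le\operatorname{BL}_{\mathrm{grp}}(\mathbf L,\mathbf c;U\cap B)\,\operatorname{BL}_{\mathrm{grp}}(\mathbf L,\mathbf c;U+B).
\]
This is a multiplicative analogue of the supermodularity of $V\mapsto\dim V-\sum_jc_j\dim L_jV$. The proposition is the case $B=\varpi^e\mathcal O_n$, for which normal form and scaling give $\operatorname{BL}_{\mathrm{grp}}(\mathbf L,\mathbf c;B)=1$.

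The only lossy step in either argument is the inclusion $L_j(U\cap B)\subseteq L_jU\cap L_jB$; in the paper it is hidden in the bound on $\sup_x\operatorname{BL}(\mathbf L,\mathbf c;\mathbf h_x)$. The paper also uses scaling at exactly the point you do, so your tacit appeal to it is no weaker than the paper's. Contrary to your guess, with your proof neither the extra hypotheses nor the Thickening Lemma are needed when the bound is iterated in Corollary~\ref{cor:subg_bd_by_subsp}. What the paper's formulation buys is that it is the linear template for the nonlinear Ball inequality, Lemma~\ref{lem:nonlin_ball_ineq}. There the $B_j$ are not homomorphisms, so no isomorphism-theorem identity is available.

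One correction to your aside. Without exact scaling you need $|\varpi|^{e\sum_jc_jn_j}\le|\varpi|^{en}$, i.e.\ $n\le\sum_jc_jn_j$, not $n\ge\sum_jc_jn_j$. The direction matters. If $n>\sum_jc_jn_j$ and $e>0$, then $U=\mathcal O_n$ satisfies every hypothesis, yet the left-hand side is $1$ and the right-hand side is $|\varpi|^{e(n-\sum_jc_jn_j)}<1$. So super-scaling is precisely the condition under which your argument, and the proposition itself, hold.
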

\begin{proof}
    By replacing each $L_j$ with $R_j\circ L_j$ for a suitable isometry $R_j$, we may assume that each $L_jU$ takes the form
    \begin{equation*}
        L_jU=\varpi^{f_{1j}}\cal O_1\times\cdots\times\varpi^{f_{n_jj}}\cal O_1,\quad (0\leq f_{1j}\leq\cdots\leq f_{n_jj}).
    \end{equation*}
    Note that, under the normal form assumption, $\operatorname{BL}_{\mathrm{grp}}(\bf L,\bf c;\varpi^e\cal O_n)=1$. Thus, we have the calculation
    \begin{equation*}
        \begin{split}
            \mu_n(U)&=\int_{\K^n}\Big[\prod_{j=1}^m1_{L_jU}(L_jy)\Big]\operatorname{BL}_{\mathrm{grp}}(\bf L,\bf c;\varpi^e\cal O_n)dy\\
            &=\int_{\K^n}\Big[\prod_{j=1}^m 1_{L_jU}(L_jy)\Big]\int_{\K^n}\Big[\prod_{j=1}^m|\varpi|^{-en_jc_j}1_{\varpi^e\cal O_{n_j}}(L_j(x-y))\Big]dxdy\\
            &=\int_{\K^n}\int_{\K^n}\Big[\prod_{j=1}^m|\varpi|^{-en_jc_j}1_{\varpi^e\cal O_{n_j}}(L_j(x-y))1_{L_jU}(L_jy)\Big]dydx\\
            &=\int_{\K^n}\int_{\K^n}\prod_{j=1}^m h_{j,x}^{c_j}(L_jy)dydx,
        \end{split}
    \end{equation*}
    where
    \begin{equation*}
        h_{j,x}(z)=|\varpi|^{-en_j}1_{\varpi^e\cal O_{n_j}}(L_jx-z)1_{L_jU}(z).
    \end{equation*}
    Thus,
    \begin{equation*}
        \begin{split}
            \mu(U)&\leq\sup_{x\in\K^n}\operatorname{BL}(\bf L,\bf c;\bf h_x)\int_{\K^n}\prod_{j=1}^m(1_{L_jU}*|\varpi|^{-en_j}1_{\varpi^e\cal O_{n_j}})^{c_j}(L_jx)dx\\
            &\leq\sup_{x\in\K^n}\operatorname{BL}(\bf L,\bf c;\bf h_x)\operatorname{BL}(\bf L,\bf c;(1_{L_jU}*|\varpi|^{-en_j}1_{\varpi^e\cal O_{n_j}})_{1\leq j\leq m})\prod_{j=1}^m\mu_{n_j}(L_jU)^{c_j}.
        \end{split}
    \end{equation*}
    We analyze the two Brascamp--Lieb quantities on the right-hand side. Consider first
    \begin{equation*}
        \int_{\K^{n_j}} h_{j,x}(z)dz=|\varpi|^{-en_j}\mu_{n_j}(L_jU\cap(\varpi^e\cal O_{n_j}+L_jx)).
    \end{equation*}
    If $L_jU\cap(\varpi^e\cal O_{n_j}+L_jx)\neq\emptyset$, then any $a\in L_jU\cap(\varpi^e\cal O_{n_j}+L_jx)$ defines a translation $z\mapsto z+a$ carrying $L_jU\cap \varpi^e\cal O_{n_j}$ to $L_jU\cap(\varpi^e\cal O_{n_j}+L_jx)$; it follows that 
    \begin{equation*}
        \mu_{n_j}(L_jU\cap(\varpi^e\cal O_{n_j}+L_jx))=\begin{cases}
            \mu_{n_j}(L_jU\cap \varpi^e\cal O_{n_j}) & L_jx\in L_jU+\varpi^e\cal O_{n_j}\\
            0 & L_jx\not\in L_j(U)+\varpi^e\cal O_{n_j}.
        \end{cases}
    \end{equation*}
    On the other hand, if $x\in\bigcap_{j=1}^mL_j^{-1}(L_j(U)+\varpi^e\cal O_{n_j})$,
    \begin{equation*}
        \int_{\K^n}\prod_{j=1}^mh_{j,x}^{c_j}(L_jy)dy=|\varpi|^{-en}\int_{\K^n}\prod_{j=1}^m1_{\varpi^e\cal O_{n_j}}(L_j(x-y))1_{L_jU}(L_jy)dy=|\varpi|^{-en}\mu_n\Big(U\cap\varpi^e\cal O_n\Big).
    \end{equation*}
    Thus,
    \begin{equation*}
        \sup_x\operatorname{BL}(\bf L,\bf c;\bf h_x)\leq\operatorname{BL}_{\mathrm{grp}}(\bf L,\bf c;U\cap\varpi^e\cal O_n).
    \end{equation*}
    
    We now study the next Brascamp--Lieb quantity. For each $j$, if $1\leq\iota_j\leq n_j$ is such that
    \begin{equation*}
        f_{\iota_jj}\leq e<f_{(\iota_j+1)j},
    \end{equation*}
    then
    \begin{equation*}
        L_j(U+\varpi^e\cal O_n)=L_jU+\varpi^e\cal O_{n_j}=\varpi^{f_{1j}}\cal O_1\times\cdots\times\varpi^{f_{\iota_jj}}\cal O_1\times\varpi^e\cal O_{n_j-\iota_j},
    \end{equation*}
    so that
    \begin{equation*}
        \mu_{n_j}(L_jU)=|\varpi|^{\sum_{\iota=\iota_j+1}^{n_j}(f_{\iota j}-e)}\mu_{n_j}\big(L_j(U+\varpi^e\cal O_n)\big).
    \end{equation*}

    Considering the second factor,
    \begin{equation*}
        \int_{\K^{n_j}}1_{L_jU}*|\varpi|^{-en_j}1_{\varpi^e\cal O_{n_j}}(z)dz=\mu_{n_j}(L_jU)=|\varpi|^{\sum_{\iota=\iota_j+1}^{n_j}(f_{\iota j}-e)}\mu_{n_j}\big(L_j(U+\varpi^e\cal O_n)\big),
    \end{equation*}
    and, using Lemma~\ref{lem:thick},
    \begin{equation*}
        \begin{split}
        \int_{\K^n}\prod_{j=1}^m(1_{L_jU}*|\varpi|^{-en_j}1_{\varpi^e\cal O_{n_j}})^{c_j}(L_jy)dy&=\int_{\K^n}\prod_{j=1}^m|\varpi|^{c_j\sum_{\iota=\iota_j+1}^{n_j}(f_{\iota j}-e)}1_{L_jU+\varpi^e\cal O_{n_j}}(L_jy)dy\\
        &=|\varpi|^{\sum_{j=1}^mc_j\sum_{\iota=\iota_j+1}^{n_j}(f_{\iota j}-e)}\mu_n(U+\varpi^e\cal O_n).
        \end{split}
    \end{equation*}
    
    Thus,
    \begin{equation*}
        \operatorname{BL}(\bf L,\bf c;(1_{L_jU}*|\varpi|^{-en_j}1_{\varpi^e\cal O_{n_j}})_{1\leq j\leq m})=\frac{\mu_n(U+\varpi^e\cal O_n)}{\prod_{j=1}^m\mu_{n_j}\big(L_j(U+\varpi^e\cal O_n)\big)}=\operatorname{BL}_{\mathrm{grp}}(\bf L,\bf c;U+\varpi^e\cal O_n),
    \end{equation*}
    and we conclude that
    \begin{equation*}
        \operatorname{BL}_{\mathrm{grp}}(\bf L,\bf c;U)\leq\operatorname{BL}_{\mathrm{grp}}(\bf L,\bf c;U\cap\varpi^e\cal O_n)\operatorname{BL}_{\mathrm{grp}}\big(\bf L,\bf c;U+\varpi^e\cal O_n\big).
    \end{equation*}
\end{proof}

\begin{corollary}[Reduction to plates]\label{cor:subg_bd_by_subsp}
    Suppose $\bf L$ is full, injective, and in normal form. Suppose $\bf c\in[0,1]^m$ satisfies the scaling law $n=\sum_jc_jn_j$. Let $U\in\cal N(\K^n)$ have radius $|\varpi|^{f+e}$ and diameter $|\varpi|^{f}$, where $e,f\in\Z$ and $e\geq 0$. Then there is some $1\leq\kappa\leq n-1$, integers $e_1,\ldots, e_\kappa\in\Z_{\geq 0}$, and subspaces $V_1,\ldots,V_\kappa$ such that
    \begin{equation*}
        e_1+\ldots+e_\kappa=e,
    \end{equation*}
    and
    \begin{equation}\label{ineq:subg_bd_by_subsp}
        \operatorname{BL}_{\mathrm{grp}}(\bf L,\bf c; U)\leq\prod_{\iota=1}^\kappa\operatorname{BL}_{\mathrm{grp}}\big(\bf L,\bf c;\cal O_n\cap(V_\iota+\varpi^{e_\iota}\cal O_n)\big).
    \end{equation}
    
\end{corollary}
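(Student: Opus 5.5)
Iterate Proposition~\ref{prop:ball_subg}, peeling off one ``scale gap'' of $U$ at a time, and recognise each small factor as a plate.

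First, normalise. By the scaling law and isotropic rescaling by $\varpi^{-f}$, $\operatorname{BL}_{\mathrm{grp}}$ and the eccentricity are unchanged. So I may assume $\mathrm{diam}(U)=1$, i.e.\ $\varpi^e\cal O_n\subseteq U\subseteq\cal O_n$. As in the remark comparing subgroup and functional Brascamp--Lieb, I may also replace $U$ by $\bigcap_jL_j^{-1}(L_jU)$. This only increases $\operatorname{BL}_{\mathrm{grp}}$. It also keeps $U\subseteq\cal O_n$, because normal form gives $\bigcap_jL_j^{-1}(\cal O_{n_j})=\cal O_n$. So the hypotheses of Proposition~\ref{prop:ball_subg} hold.

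Next, use Smith normal form (Lemma~\ref{lem:smith}). Write $U=R(\varpi^{a_1}\cal O_1\times\cdots\times\varpi^{a_n}\cal O_1)$ with $R\in\mathrm{GL}_n(\cal O_1)$ and $0=a_1\le\cdots\le a_n=e$. Let $V^{(i)}=R(\K^i\times 0)$. Choose the breakpoints to be the distinct values $b_0=0<b_1<\cdots<b_\kappa=e$ among the $a_i$, with gaps $e_\iota=b_\iota-b_{\iota-1}$; these sum to $e$.

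Now apply Proposition~\ref{prop:ball_subg} with exponent $b_1$. The factor $U+\varpi^{b_1}\cal O_n$ equals $\cal O_n\cap(V+\varpi^{b_1}\cal O_n)$, where $V$ is the span of the coordinates with $a_i=0$. This is a plate, hence of the required form.

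The other factor is $U\cap\varpi^{b_1}\cal O_n$. After rescaling by $\varpi^{-b_1}$ (allowed by scaling) it becomes a subgroup of diameter $1$ and radius $|\varpi|^{e-b_1}$. It still satisfies $U'=\bigcap_jL_j^{-1}(L_jU')$, because an intersection of two such ``closed'' subgroups is closed. Induct on the number of distinct values. At the last stage the subgroup is $\varpi^{b_{\kappa-1}}\cal O_n\cap U$ with only two levels, which is itself a plate $\cal O_n\cap(V_\kappa+\varpi^{e_\kappa}\cal O_n)$ after rescaling.

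Two degenerate cases need checking. If a plate has $V=0$ or $V=\K^n$, it is a ball and contributes $\operatorname{BL}_{\mathrm{grp}}=1$ under normal form, so it can be dropped or merged. The number of nontrivial steps is at most $n-1$, since each proper nonzero $V_\iota$ has a distinct dimension. If $e=0$, then $U=\cal O_n$ and any trivial plate works.

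The main obstacle is the closedness hypothesis of Proposition~\ref{prop:ball_subg}, which is needed at every stage. I must check that $U\cap\varpi^{b}\cal O_n$, after rescaling, satisfies $U'=\bigcap_jL_j^{-1}(L_jU')$. Membership of $x$ in the right side gives $x\in U$ by closedness of $U$, and gives $x\in\varpi^b\cal O_n$ by closedness of the ball under normal form. But $L_j(U\cap\varpi^b\cal O_n)$ may be strictly smaller than $L_jU\cap\varpi^b\cal O_{n_j}$. If this fails, I would instead pass to the closure $\bigcap_jL_j^{-1}(L_jU')$, which only increases $\operatorname{BL}_{\mathrm{grp}}$. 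I would then verify that it still lies between $\varpi^e\cal O_n$ and $\varpi^b\cal O_n$, so the induction on eccentricity proceeds.
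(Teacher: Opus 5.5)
Your argument is correct, and it is the mirror image of the paper's. The paper peels off the \emph{top} level. It applies Proposition~\ref{prop:ball_subg} at the second-largest distinct level $e_1$, so that the intersection $U\cap\varpi^{e_1}\mathcal O_n$ is the two-level plate. The recursion then runs on the \emph{sum} $U+\varpi^{e_1}\mathcal O_n$, and the closedness $K=\bigcap_jL_j^{-1}(L_jK)$ needed to re-apply the proposition to that sum is exactly the Thickening Lemma~\ref{lem:thick}. You peel off the \emph{bottom} level instead. For you the sum is the plate, and you recurse on the rescaled \emph{intersection}. Closedness there follows from monotonicity of $\mathrm{cl}(K)=\bigcap_jL_j^{-1}(L_jK)$: one has $\mathrm{cl}(A\cap B)\subseteq\mathrm{cl}(A)\cap\mathrm{cl}(B)=A\cap B$, with $\varpi^b\mathcal O_n$ closed by normal form. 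Both orderings extract literally the same plates, one for each pair of consecutive distinct Smith levels. The only real difference is that the paper's recursion relies on Lemma~\ref{lem:thick}, while yours needs only the monotonicity of the closure.

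Your closing ``main obstacle'' is therefore not an obstacle. The argument you sketch there already proves closedness: if $L_jx\in L_j(U\cap\varpi^b\mathcal O_n)$ for all $j$, then $x\in U$ and $x\in\varpi^b\mathcal O_n$. This uses only the trivial inclusion $L_j(A\cap B)\subseteq L_jA\cap L_jB$, so whether that inclusion is strict is irrelevant, and no fallback is needed.

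One small point you share with the paper. Replacing $U$ by $\bigcap_jL_j^{-1}(L_jU)$ at the outset keeps $U\subseteq\mathcal O_n$, but it can enlarge the radius. For example, with $L_1(x,y)=x$ and $L_2(x,y)=y$, the subgroup $\{(x,y)\in\mathcal O_2:x-y\in\varpi\mathcal O_1\}$ has radius $|\varpi|$ but closure $\mathcal O_2$. So, strictly, the gaps you produce sum to the closure's exponent $e'\le e$ rather than to $e$. This is harmless for Theorem~\ref{thm:lin_sub_extr}, where one may restrict the supremum to closed $U$.
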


\begin{proof}
    By rescaling, we may take $f=0$, so that in particular $U\subseteq\cal O_n$; we may clearly assume $U\neq\cal O_n$, so that $e>0$.
    We may also freely assume that
    \begin{equation*}
        U=\bigcap_{j=1}^mL_j^{-1}(L_j(U)).
    \end{equation*}
    Replacing each $L_j$ with $L_j\circ R$ for a suitable isometry $R$, we may assume that $U$ takes the form
    \begin{equation*}
        U=\varpi^{f_1}\cal O_1\times\cdots\times\varpi^{f_n}\cal O_1,\quad (0\leq f_1\leq\cdots\leq f_n).
    \end{equation*}
    The diameter assumption implies that $f_1=0$. Since $U\neq\cal O_n$, we have $f_n=e>0$. Taking $e_1=\max(f_\iota:f_\iota<f_n)$ and applying Proposition~\ref{prop:ball_subg}, we get
    \begin{equation*}
        \operatorname{BL}_{\mathrm{grp}}(\bf L,\bf c; U)\leq\operatorname{BL}_{\mathrm{grp}}(\bf L,\bf c;U\cap\varpi^{e_1}\cal O_n)\operatorname{BL}_{\mathrm{grp}}(\bf L,\bf c;U+\varpi^{e_1}\cal O_n).
    \end{equation*}
    Note that
    \begin{equation*}
        U\cap\varpi^{e_1}\cal O_n=\varpi^{e_1}\cal O_\iota\times\varpi^{f_n}\cal O_{n-\iota},
    \end{equation*}
    where $1\leq\iota<n$ is such that $f_\iota=e_1$, $f_{\iota+1}=f_n$. Thus, using the scaling law,
    \begin{equation*}
        \operatorname{BL}_{\mathrm{grp}}(\bf L,\bf c;U\cap\varpi^{e_1}\cal O_n)=\operatorname{BL}_{\mathrm{grp}}(\bf L,\bf c;\cal O_\iota\times\varpi^{f_n-e_1}\cal O_{n-\iota}),
    \end{equation*}
    which is a factor as desired in our claimed result. On the other hand, note that $U+\varpi^e\cal O_n\in\cal N(\K^n)$ and has diameter $1=|\varpi|^0$, and radius $|\varpi|^{e_1}>|\varpi|^{f_n}$. Thus, applying the same inequality with $U$ replaced by $U+\varpi^{e_1}$, and continuing as long as $U\neq\cal O_n$, we eventually obtain
    \begin{equation*}
        \operatorname{BL}_{\mathrm{grp}}(\bf L,\bf c;U)\leq\operatorname{BL}_{\mathrm{grp}}(\bf L,\bf c;U\cap\varpi^{e_1}\cal O_n)\times\cdots\times\operatorname{BL}_{\mathrm{grp}}(\bf L,\bf c;U\cap\varpi^{e_\kappa}\cal O_n);
    \end{equation*}
    here $1\leq\kappa\leq n-1$ and $e_1+\ldots+e_\kappa$ is the a sum of differences of the form $f_\iota-f_j$, $\iota>j$, telescoping to $f_n-f_1=e$. Choosing subspaces $V_\iota$ to be subspaces of the form $\K^{t_\iota}\times\{0\}^{n-t_\iota}$ for suitable $t_\iota$, we obtain the desired result.

\end{proof}

\subsection{\texorpdfstring{H\"older}{Holder}--Brascamp--Lieb inequalities over modules}\label{subsec:bl_module}

In this subsection, we compare set and functional H\"older--Brascamp--Lieb inequalities over $R$-modules, for a finite ring $R$. Our analysis is intended to mimic both the Abelian group case (with homomorphisms) and vector space case (with linear maps). This is needed to show that, not only does linear Brascamp--Lieb over $\K$ reduces to the case of indicators of compact open subgroups, but further of \emph{regular} subgroups (that is to say, compact open $\cal O_1$-modules). The latter is needed in order to apply Lemma~\ref{lem:smith}, so that a typical candidate $\prod_j f_j\circ L_j$ takes the form $1_U$ with $U=\varpi^{f_1}\cal O_1\times\cdots\times\varpi^{f_n}\cal O_1$, up to symmetries. Our main result is Theorem~\ref{thm:module_reduction}.

Let $R$ be a commutative ring with identity, and $M,M_1,\ldots,M_m$ be $R$-modules. Suppose $\phi_j:M\to M_j$ are $R$-module homomorphisms, and $\bf c\in[0,1]^m$. We define the following functionals.
\begin{definition}[$R$-module Brascamp--Lieb]\label{def:rmod_BL}
    If $\bf f=(f_j)_{1\leq j\leq m}$, $f_j:M_j\to\R_{\geq 0}$ are finitely-supported and nonzero, then we define
    \begin{equation*}
        \operatorname{BL}^{R\text{-mod}}(\phi,\bf c;\bf f)=\frac{\sum_{x\in M}f_j(\phi_j(x))}{\prod_{j=1}^m(\sum_{a\in M_j}|f_j(a)|^{1/c_j})^{c_j}}.
    \end{equation*}
    We also write
    \begin{equation*}
        \operatorname{BL}^{R\text{-mod}}(\phi,\bf c)=\sup_{\bf f}\operatorname{BL}^{R\text{-mod}}(\phi,\bf c;\bf f)\in[0,+\infty].
    \end{equation*}
    If $\Omega\subseteq M$ is a nonzero submodule (which we write as $\Omega\leq M$), then we write
    \begin{equation*}
        \operatorname{BL}_{\mathrm{sub}}^{R\text{-mod}}(\phi,\bf c;\Omega)=\frac{\#\Omega}{\prod_{j=1}^m(\#\phi_j(\Omega))^{c_j}}\in[0,+\infty].
    \end{equation*}
    We write
    \begin{equation*}
        \operatorname{BL}_{\mathrm{sub}}^{R\text{-mod}}(\phi,\bf c)=\sup_{\Omega\leq M}\operatorname{BL}_{\mathrm{sub}}^{R\text{-mod}}(\phi,\bf c;\Omega)
    \end{equation*}
\end{definition}
If $\Omega\leq M$ is a submodule, then so is $\tilde\Omega=\bigcap_{j=1}^mL_j^{-1}(L_j\Omega)$, and $\operatorname{BL}_{\mathrm{sub}}^{R\text{-mod}}(\phi,\bf c;\Omega)\leq\operatorname{BL}_{\mathrm{sub}}^{R\text{-mod}}(\phi,\bf c;\tilde\Omega)$. Thus, we may always take $\Omega=\tilde\Omega$. Similarly, we have that $\operatorname{BL}^{R\text{-mod}}(\phi,\bf c)\leq \operatorname{BL}_{\mathrm{sub}}^{R\text{-mod}}(\phi,\bf c)$. Our aim will be to show the reverse inequality.

The proof of $\operatorname{BL}^{R\text{-mod}}(\phi,\bf c)\leq \operatorname{BL}_{\mathrm{sub}}^{R\text{-mod}}(\phi,\bf c)$ will be nearly identical to the corresponding result over discrete Abelian groups in~\cite{christ2013optimal}. Indeed, most of the results in the latter may be used off the shelf: an $R$-module is in particular an Abelian group, and an $R$-module homomorphism is in particular a group homomorphism. If $\Omega\leq M$ is a submodule and $f:M\to N$ is an $R$-module homomorphism, then (regarding $\Omega$ as a subgroup and $f$ as a group homomorphism) the induced group homomorphism $\bar f:M/\Omega\to N/f(\Omega)$ is in fact an $R$-modular homomorphism. Thus, essentially all of the factorization theory developed in~\cite{christ2013optimal} applies.

Rather than carefully stepping through all the theory, we constrain ourselves to considering the critical lemmas that require meaningful alteration. \emph{In each of the following lemmas, we preserve the following working assumption:} $R$ is a commutative unital ring, $M$ and $M_j$, $1\leq j\leq m$, are finite $R$-modules, and $\phi_j:M\to M_j$ are $R$-module homomorphisms.
\begin{lemma}[Critical submodules exist; c.f.~\cite{christ2013optimal}, Lemma 5.3]\label{lem:mod_crit_case}
    For $\bf A\in[1,+\infty)$, write $\cal P_\bf A=\{\bf c\in[0,1]^m:\operatorname{BL}^{R\text{-mod}}_{\mathrm{sub}}(\phi,\bf c)\leq\bf A\}$. Suppose $\bf c=(c_j)_{1\leq j\leq m}$ is an extreme point of $\cal P_\bf A$,  and that $\operatorname{BL}^{R\text{-mod}}_{\mathrm{sub}}(\phi,\bf c;\Omega)\leq\operatorname{BL}^{R\text{-mod}}_{\mathrm{sub}}(\phi,\bf c;M)$ for all $\Omega\leq M$.

    Then at least one of the following holds.
    \begin{itemize}
        \item[(a)] $c_j\in\{0,1\}$ for all but at most one $j$.
        \item[(b)] For some proper submodule $\bf 0<\Omega<M$, we have $\operatorname{BL}^{R\text{-mod}}_{\mathrm{sub}}(\phi,\bf c;\Omega)=\bf A$.
    \end{itemize}
\end{lemma}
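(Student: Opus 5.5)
The plan is to follow the argument of~\cite{christ2013optimal}, Lemma 5.3: realize $\cal P_{\bf A}$ as a polytope cut out by finitely many affine inequalities in $\bf c$, and read off the two alternatives from counting active constraints at a vertex. Since $M$ is finite, it has finitely many nonzero submodules $\Omega$. For each such $\Omega$ the condition $\operatorname{BL}^{R\text{-mod}}_{\mathrm{sub}}(\phi,\bf c;\Omega)\leq\bf A$ is equivalent to
\begin{equation*}
    \log\#\Omega-\sum_{j=1}^m c_j\log\#\phi_j(\Omega)\leq\log\bf A,
\end{equation*}
which is affine in $\bf c$, with normal vector $\nu_\Omega=(\log\#\phi_j(\Omega))_{1\leq j\leq m}\in\R^m$. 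Together with the box constraints $0\leq c_j\leq 1$, this presents $\cal P_{\bf A}$ as an intersection of finitely many closed half-spaces, i.e.\ a (compact) convex polytope in $[0,1]^m$. I will use the standard characterization that $\bf c$ is an extreme point of such a polytope if and only if the normals of the constraints active at $\bf c$ span $\R^m$, i.e.\ contain $m$ linearly independent vectors.

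Next I classify the active constraints at $\bf c$. A box constraint is active exactly when $c_j\in\{0,1\}$, and its normal is $\pm\bf e_j$. Let $J=\{j:c_j\in\{0,1\}\}$. Assume (a) fails, so that $\#J\leq m-2$. The box constraints then contribute at most $m-2$ independent normals, so the submodule constraints must contribute at least two normals $\nu_\Omega,\nu_{\Omega'}$, with $\Omega\neq\Omega'$, which are independent of each other modulo $\mathrm{span}\{\bf e_j:j\in J\}$. In particular there are at least two distinct nonzero submodules $\Omega\neq\Omega'$ at which the constraint is active, i.e.\ at which $\operatorname{BL}^{R\text{-mod}}_{\mathrm{sub}}(\phi,\bf c;\cdot)=\bf A$.

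Finally, at most one of these two submodules can equal $M$, so the other is a proper nonzero submodule $\bf 0<\Omega<M$ with $\operatorname{BL}^{R\text{-mod}}_{\mathrm{sub}}(\phi,\bf c;\Omega)=\bf A$, which is (b). The hypothesis $\operatorname{BL}_{\mathrm{sub}}(\Omega)\leq\operatorname{BL}_{\mathrm{sub}}(M)$ for all $\Omega$ is consistent with this conclusion: any active $\Omega$ forces $\operatorname{BL}_{\mathrm{sub}}(M)\geq\bf A$, hence $M$ is active too. So the conclusion is really that some \emph{proper} critical submodule sits alongside $M$. Implicit in this is the caveat that the constraint attached to $\Omega=\{0\}$ is excluded, as it is in the definition of $\operatorname{BL}^{R\text{-mod}}_{\mathrm{sub}}$; its normal vanishes in any case, so it could not help span $\R^m$.

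The only step needing care is the linear-algebra fact about vertices of polytopes. This is standard, but I would verify it here. If the active normals failed to span $\R^m$, I would take a nonzero direction $v$ orthogonal to all of them. All inactive constraints hold strictly at $\bf c$ and there are finitely many of them, so $\bf c\pm tv\in\cal P_{\bf A}$ for small $t>0$. This contradicts extremality. Everything else is bookkeeping, and no module-theoretic input beyond finiteness of the set of submodules is needed.
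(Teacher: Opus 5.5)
Your proof is correct, but it is organized differently from the paper's. The paper argues by cases. If $\operatorname{BL}^{R\text{-mod}}_{\mathrm{sub}}(\phi,\bf c;M)<\bf A$, it uses the hypothesis $\operatorname{BL}^{R\text{-mod}}_{\mathrm{sub}}(\phi,\bf c;\Omega)\leq\operatorname{BL}^{R\text{-mod}}_{\mathrm{sub}}(\phi,\bf c;M)$ to make every submodule constraint slack, so any fractional $c_j$ could be moved and hence $\bf c\in\{0,1\}^m$. If instead $M$ is the only active submodule, it writes down the explicit direction $\lambda\bf e_i-\bf e_j$ with $\lambda=\log\#\phi_j(M)/\log\#\phi_i(M)$; this keeps the $M$-constraint tight while the finitely many slack constraints stay slack. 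It then treats the degenerate case $\#\phi_i(M)=1$ separately.

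You instead prove the rank criterion for vertices once and count active normals. This is the same perturbation mechanism, packaged so that it handles all cases uniformly and removes the degenerate edge case. As a byproduct, your argument shows that the hypothesis comparing each $\Omega$ with $M$ is not needed for the lemma as stated. Any two active submodule constraints with independent normals already include a proper nonzero one. That hypothesis only matters in the application in the reduction to submodules, where $\bf A$ is attained at $M$. What the paper's route buys in exchange is concreteness: it exhibits the perturbation direction explicitly rather than appealing to a general fact about polyhedra.
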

\begin{proof}
    Suppose first that $\operatorname{BL}^{R\text{-mod}}_{\mathrm{sub}}(\phi,\bf c;M)<\bf A$. If $0<c_j<1$, then by considering the strict inequalities
    \begin{equation*}
        \log(\#\Omega)-\sum_{j'=1}^mc_{j'}\log(\#\phi_j(\Omega))\leq\log(\# M)-\sum_{j'=1}^mc_{j'}\log(\#\phi_j(M))<\log \bf A,
    \end{equation*}
    we see that $c_j$ may be freely varied along a small (two-sided) interval without contradicting the inequalities defining $\cal P_\bf A$; thus, since $\bf c$ is an extreme point, we conclude that $\bf c\in\{0,1\}^m$ if $\operatorname{BL}^{R\text{-mod}}_{\mathrm{sub}}(\phi,\bf c;M)<\bf A$.

    So, now suppose that $\operatorname{BL}^{R\text{-mod}}_{\mathrm{sub}}(\phi,\bf c;M)=\bf A$. Suppose that $\operatorname{BL}^{R\text{-mod}}_{\mathrm{sub}}(\phi,\bf c;\Omega)<\bf A$ for all proper $\bf 0<\Omega<M$. Suppose also that $1\leq i\neq j\leq m$ are such that $c_i,c_j\in(0,1)$. Suppose initially that $\#\phi_i(M)>1$, and set $\lambda=\frac{\log(\#\phi_j(M))}{\log(\#\phi_i(M))}$. For $t\in\R$, we write
    \begin{equation*}
        c_i^t=c_i+\lambda t,\quad c_j^t=c_j-t,\quad c_{j'}^t=c_{j'}\quad\forall j'\neq i,j,
    \end{equation*}
    and $\bf c^t=(c_{j'}^t)_{1\leq j'\leq m}$. Then we have
    \begin{equation*}
        \begin{split}
        \log\operatorname{BL}^{R\text{-mod}}_{\mathrm{sub}}(\phi,\bf c^t;M)&=\log(\#M)-\sum_{j'=1}^mc_{j'}^t\log(\phi_{j'}(\#M))\\
        &=\log(\#M)-\sum_{j'=1}^mc_{j'}\log(\phi_{j'}(\#M))=\log\operatorname{BL}^{R\text{-mod}}_{\mathrm{sub}}(\phi,\bf c;M),
        \end{split}
    \end{equation*}
    so $\operatorname{BL}^{R\text{-mod}}_{\mathrm{sub}}(\phi,\bf c^t;M)=\bf A$, for all $t\in\R$; on the other hand, since $\operatorname{BL}^{R\text{-mod}}_{\mathrm{sub}}(\phi,\bf c;\Omega)<\bf A$ for the finitely-many $0<\Omega<M$, we will also have $\operatorname{BL}^{R\text{-mod}}_{\mathrm{sub}}(\phi,\bf c^t;\Omega)<\bf A$ for $t$ sufficiently close to $0$. Lastly, $\operatorname{BL}^{R\text{-mod}}_{\mathrm{sub}}(\phi,\bf c^t;\bf 0)=1\leq\bf A$ irrespective of $t$. It follows that $\operatorname{BL}^{R\text{-mod}}_{\mathrm{sub}}(\phi,\bf c^t)\leq\bf A$ for all small enough $t$, which is to say that $\bf c^t\in\cal P_\bf A$ for all small $t$, which contradicts the fact that $\bf c$ is an extreme point of $\cal P_\bf A$.

    The last case is that $\phi_i(M)=1$; if $\phi_j(M)>1$, we may simply reverse the roles of $i,j$. So, we need to consider the case that $\#\phi_i(M)=1=\#\phi_j(M)$. But then the same holds for $\phi_i(\Omega),\phi_j(\Omega)$ for each $0\leq\Omega\leq M$, and hence $\operatorname{BL}^{R\text{-mod}}_{\mathrm{sub}}(\phi,\bf c;M)$ is independent of $c_i,c_j$; the same argument then contradicts that $\bf c$ is an extreme point of $\cal P_\bf A$.
\end{proof}

\begin{lemma}[Equality in an easy case]\label{lem:mod_easy}
    If $1\leq i\leq m$ is such that $c_j=0$ for all $j\neq i$, then
    \begin{equation*}
        \operatorname{BL}^{R\text{-mod}}_{\mathrm{sub}}(\phi,\bf c)=\operatorname{BL}^{R\text{-mod}}(\phi,\bf c)=(\#M)^{1-c_i}(\#\ker\phi_i)^{c_i}
    \end{equation*}
\end{lemma}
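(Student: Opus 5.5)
The plan is to compute both quantities directly: bound every submodule ratio and every functional ratio above by $(\#M)^{1-c_i}(\#\ker\phi_i)^{c_i}$, and exhibit the submodule $\Omega=M$, which attains this value. Throughout, for an exponent $c_j=0$ the factor $(\sum_a|f_j(a)|^{1/c_j})^{c_j}$ is read as $\|f_j\|_{\ell^\infty}$, the usual limiting convention. The numerator of $\operatorname{BL}^{R\text{-mod}}(\phi,\bf c;\bf f)$ is read as $\sum_{x\in M}\prod_{j=1}^m f_j(\phi_j(x))$.

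\emph{Submodule side.} Let $\Omega\leq M$. By the first isomorphism theorem, $\#\phi_i(\Omega)=\#\Omega/\#(\Omega\cap\ker\phi_i)$. For $j\neq i$ the factor $(\#\phi_j(\Omega))^{c_j}=1$, so
\begin{equation*}
    \operatorname{BL}^{R\text{-mod}}_{\mathrm{sub}}(\phi,\bf c;\Omega)=(\#\Omega)^{1-c_i}\,\#(\Omega\cap\ker\phi_i)^{c_i}\leq(\#M)^{1-c_i}(\#\ker\phi_i)^{c_i},
\end{equation*}
using $0\leq c_i\leq 1$. For $\Omega=M$ this is an equality. Hence $\operatorname{BL}^{R\text{-mod}}_{\mathrm{sub}}(\phi,\bf c)$ equals the claimed value.

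\emph{Functional side.} Fix nonzero nonnegative $\bf f$. For $j\neq i$ bound $f_j(\phi_j(x))\leq\|f_j\|_{\ell^\infty}$; these factors cancel against the denominator. It then remains to bound $\sum_{x\in M}f_i(\phi_i(x))$. Each fibre of $\phi_i$ over a point of $\phi_i(M)$ is a coset of $\ker\phi_i$, so this sum equals $\#\ker\phi_i\sum_{a\in\phi_i(M)}f_i(a)$. Apply H\"older's inequality on the finite set $\phi_i(M)$ with exponents $1/c_i$ and $1/(1-c_i)$; when $c_i=0$ use the trivial $\ell^\infty$ bound instead. This gives
\begin{equation*}
    \sum_{a\in\phi_i(M)}f_i(a)\leq(\#\phi_i(M))^{1-c_i}\Big(\sum_a f_i(a)^{1/c_i}\Big)^{c_i}.
\end{equation*}
Substituting $\#\phi_i(M)=\#M/\#\ker\phi_i$ yields
\begin{equation*}
    \operatorname{BL}^{R\text{-mod}}(\phi,\bf c;\bf f)\leq(\#M)^{1-c_i}(\#\ker\phi_i)^{c_i}.
\end{equation*}

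\emph{Attainment.} Take the indicator tuple $f_j=1_{\phi_j(M)}$ for all $j$. Then each $\ell^\infty$ norm equals $1$, the numerator is $\#M$, and the ratio equals $\operatorname{BL}^{R\text{-mod}}_{\mathrm{sub}}(\phi,\bf c;M)$, which is the claimed value. Together with the two upper bounds this gives $\operatorname{BL}^{R\text{-mod}}_{\mathrm{sub}}(\phi,\bf c)=\operatorname{BL}^{R\text{-mod}}(\phi,\bf c)=(\#M)^{1-c_i}(\#\ker\phi_i)^{c_i}$.

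There is no real obstacle. The only points needing care are the convention for the zero exponents (the $\ell^{1/c_j}$ norm becomes the $\ell^\infty$ norm) and the degenerate case $c_i=0$. In that case every quantity reduces to $\#M$, and the same argument goes through.
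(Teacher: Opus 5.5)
Your proof is correct and complete. The submodule identity $\#\phi_i(\Omega)=\#\Omega/\#(\Omega\cap\ker\phi_i)$, the fibre-counting plus H\"older bound on the functional, and attainment by the indicators $1_{\phi_j(M)}$ together give both equalities, including the degenerate cases $c_i\in\{0,1\}$. The paper gives no argument of its own and simply defers to Lemma 2.4 of Christ's discrete Abelian group paper; your direct computation supplies the self-contained argument that citation stands in for, so the two routes agree.
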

\begin{proof}
    Identical to~\cite{christ2013optimal}, Lemma 2.4.
\end{proof}

\begin{lemma}[Factorization]\label{lem:mod_factor}
    Suppose $\Omega\leq M$ is a submodule. Then, for $\left.\phi_j\right|_\Omega:\Omega\to\phi_j(\Omega)$ and $\left.\phi_j\right|_{M/\Omega}:M/\Omega\to M_j/\phi_j(\Omega)$ the induced $R$-module maps, we have
    \begin{equation*}
        \operatorname{BL}^{R\text{-mod}}(\phi,\bf c)\leq\operatorname{BL}^{R\text{-mod}}(\left.\phi\right|_\Omega,\bf c)\times\operatorname{BL}^{R\text{-mod}}(\left.\phi\right|_{M/\Omega},\bf c).
    \end{equation*}
    If $\operatorname{BL}_{\mathrm{sub}}^{R\text{-mod}}(\phi,\bf c;\Omega)=\operatorname{BL}_{\mathrm{sub}}^{R\text{-mod}}(\phi,\bf c)$, then we additionally have
    \begin{equation*}
        \operatorname{BL}_{\mathrm{sub}}^{R\text{-mod}}(\left.\phi\right|_\Omega,\bf c)\times\operatorname{BL}_{\mathrm{sub}}^{R\text{-mod}}(\left.\phi\right|_{M/\Omega},\bf   c)\leq\operatorname{BL}_{\mathrm{sub}}^{R\text{-mod}}(\phi,\bf c).
    \end{equation*}
\end{lemma}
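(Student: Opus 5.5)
The factorization inequality will be proved by the Fubini argument over cosets of $\Omega$, as in the discrete abelian group case of~\cite{christ2013optimal}. The submodule inequality will then follow by multiplicativity of the submodule functional along an extension $\Omega\leq\Omega'\leq M$. I read the functional in Definition~\ref{def:rmod_BL} as $\sum_{x\in M}\prod_j f_j(\phi_j(x))$ divided by $\prod_j\|f_j\|_{\ell^{1/c_j}}$, with the convention $\|f_j\|_{\ell^\infty}$ when $c_j=0$.

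\emph{First inequality.} Fix nonnegative finitely supported $f_j$ on $M_j$. Split
\begin{equation*}
\sum_{x\in M}\prod_j f_j(\phi_j(x))=\sum_{\bar x\in M/\Omega}\ \sum_{\omega\in\Omega}\prod_j f_j(\phi_j(x)+\phi_j(\omega)),
\end{equation*}
where $x$ is any representative of $\bar x$. For fixed $x$, the inner sum is the $R$-module functional for $\phi|_\Omega:\Omega\to\phi_j(\Omega)$ applied to the restrictions $g_{j,x}(a)=f_j(\phi_j(x)+a)$, for $a\in\phi_j(\Omega)$. It is therefore at most $\operatorname{BL}^{R\text{-mod}}(\phi|_\Omega,\bf c)\prod_j\|g_{j,x}\|_{\ell^{1/c_j}(\phi_j(\Omega))}$.

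Next, $\|g_{j,x}\|_{\ell^{1/c_j}}=F_j(\bar\phi_j(\bar x))$, where $F_j(\bar y)=\big(\sum_{a\in\bar y}f_j(a)^{1/c_j}\big)^{c_j}$ is defined on $M_j/\phi_j(\Omega)$. This quantity depends only on the coset $\phi_j(x)+\phi_j(\Omega)$, so it is well defined on $M/\Omega$. Applying the quotient inequality to $(F_j)$ gives the bound $\operatorname{BL}^{R\text{-mod}}(\phi|_{M/\Omega},\bf c)\prod_j\|F_j\|_{\ell^{1/c_j}}$. Finally, $\|F_j\|_{\ell^{1/c_j}}=\|f_j\|_{\ell^{1/c_j}}$ because the cosets partition $M_j$. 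The case $c_j=0$ is handled with sup norms and works the same way.

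\emph{Second inequality.} Let $\Omega'\leq\Omega$ and $\bar\Omega''\leq M/\Omega$ be arbitrary submodules. Let $\Omega''$ be the preimage of $\bar\Omega''$ in $M$, so $\Omega\leq\Omega''$. Then
\begin{equation*}
\#\Omega''=\#\Omega\cdot\#\bar\Omega'',\qquad \#\phi_j(\Omega'')=\#\phi_j(\Omega)\cdot\#\bar\phi_j(\bar\Omega''),
\end{equation*}
where the second identity holds because $\phi_j(\Omega'')\supseteq\phi_j(\Omega)$ and $\phi_j(\Omega'')/\phi_j(\Omega)=\bar\phi_j(\bar\Omega'')$. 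It follows that
\begin{equation*}
\operatorname{BL}_{\mathrm{sub}}(\phi,\bf c;\Omega'')=\operatorname{BL}_{\mathrm{sub}}(\phi,\bf c;\Omega)\operatorname{BL}_{\mathrm{sub}}(\phi|_{M/\Omega},\bf c;\bar\Omega'')\leq\operatorname{BL}_{\mathrm{sub}}(\phi,\bf c).
\end{equation*}
Since $\Omega$ is assumed to attain the supremum $\operatorname{BL}_{\mathrm{sub}}(\phi,\bf c)$, this yields $\operatorname{BL}_{\mathrm{sub}}(\phi|_{M/\Omega},\bf c)\leq1$.

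Similarly, any $\Omega'\leq\Omega$ is a submodule of $M$, and $\phi|_\Omega(\Omega')=\phi_j(\Omega')$. Hence $\operatorname{BL}_{\mathrm{sub}}(\phi|_\Omega,\bf c;\Omega')=\operatorname{BL}_{\mathrm{sub}}(\phi,\bf c;\Omega')\leq\operatorname{BL}_{\mathrm{sub}}(\phi,\bf c)$. Multiplying the two bounds gives the claim.

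The only point needing care is that the zero submodule is allowed, or at least that the quotient supremum is not vacuous. In Definition~\ref{def:rmod_BL} only nonzero submodules are allowed. If $\Omega=M$, the quotient is trivial and its sup should be read as $1$. Otherwise $\bar\Omega''\neq0$ is available, so the argument above applies as written.
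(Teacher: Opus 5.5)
Your proof is correct and is essentially the paper's argument. The paper cites Christ's Lemma 3.1 for the first inequality, which is exactly your Fubini-over-cosets bound. For the second it invokes a translation of his Lemma 3.2, which amounts to the multiplicativity $\operatorname{BL}_{\mathrm{sub}}(\phi,\mathbf{c};\Omega'')=\operatorname{BL}_{\mathrm{sub}}(\phi,\mathbf{c};\Omega)\,\operatorname{BL}_{\mathrm{sub}}(\phi|_{M/\Omega},\mathbf{c};\bar\Omega'')$ along preimages that you wrote out, combined with maximality of $\Omega$.
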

\begin{proof}
    For the first, just apply~\cite{christ2013optimal}, Lemma 3.1. The second is a very easy translation of the proof of~\cite{christ2013optimal}, Lemma 3.2.
\end{proof}

\begin{lemma}[Equality in an interesting case]\label{lem:mod_corner}
    If $\bf c\in[0,1]^m$ is such that $c_j\in\{0,1\}$ except for at most one $j$, we have
    \begin{equation*}
        \operatorname{BL}_{\mathrm{sub}}^{R\text{-mod}}(\phi,\bf c)=\operatorname{BL}^{R\text{-mod}}(\phi,\bf c).
    \end{equation*}
\end{lemma}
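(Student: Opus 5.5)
The plan is to follow the discrete Abelian group argument of~\cite{christ2013optimal} (the analogue of their Lemma 2.5/Proposition 2.6), checking that every subgroup it produces is in fact a submodule. Since $\operatorname{BL}^{R\text{-mod}}\leq\operatorname{BL}^{R\text{-mod}}_{\mathrm{sub}}$ always holds, only the reverse inequality needs proof. Let $J_1=\{j:c_j=1\}$, $J_0=\{j:c_j=0\}$, and let $i$ be the possible exceptional index with $0<c_i<1$ (if there is none, we can absorb one index into either class). Exponents with $c_j=0$ can be dropped: the factor $f_j$ contributes only $\|f_j\|_\infty$, which we normalize to one by taking $f_j=1_{M_j}$. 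So we may assume $c_j\in\{1\}$ for all $j\neq i$.

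The key reduction is by the indices in $J_1$. Set $K=\bigcap_{j\in J_1}\ker\phi_j$, which is a submodule. For functions $f_j$ with $j\in J_1$, we estimate
\begin{equation*}
\sum_{x}\prod_{j\in J_1}f_j(\phi_jx)\,g(x)\leq\prod_{j\in J_1\setminus\{j_0\}}\|f_j\|_\infty\sum_x f_{j_0}(\phi_{j_0}x)g(x),
\end{equation*}
where $g=f_i^{c_i}\circ\phi_i$. This crude approach is lossy, so instead I will argue by induction on $\#J_1$, using Lemma~\ref{lem:mod_factor} with $\Omega=\ker\phi_{j_0}$ for some $j_0\in J_1$. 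On the quotient $M/\ker\phi_{j_0}\cong\phi_{j_0}(M)$, the map $\phi_{j_0}$ becomes injective, and $\operatorname{BL}$ with an injective map at exponent $1$ is handled by noting that the sum over $x$ is a sum over $\phi_{j_0}(M)$, bounded via $\ell^1\times\ell^\infty$ Hölder. On $\ker\phi_{j_0}$ the map $\phi_{j_0}$ is zero, so its factor is constant, and it reduces to one fewer index in $J_1$. Each reduced datum again has all exponents in $\{0,1\}$ except possibly one. After finitely many steps, we reach the case covered by Lemma~\ref{lem:mod_easy}, where all exponents but $c_i$ vanish, and there $\operatorname{BL}=\operatorname{BL}_{\mathrm{sub}}$.

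The main obstacle is the last step: showing that the product of the submodule constants of the pieces is at most $\operatorname{BL}^{R\text{-mod}}_{\mathrm{sub}}(\phi,\bf c)$. The second half of Lemma~\ref{lem:mod_factor} gives this only when $\Omega$ is critical, which $\ker\phi_{j_0}$ need not be. My first attempt will avoid the factorization inequality. I will directly compute $\operatorname{BL}$ by fixing the coset structure: the sum over $M$ splits into cosets of $K$, and on each coset the factors with $j\in J_1$ are constant while $\phi_i$ is affine. Applying Lemma~\ref{lem:mod_easy}-type Hölder on each coset, and then $\ell^1$ summation over $M/K$ against the $J_1$-functions, should yield an explicit bound of the form $(\#K)^{1-c_i}(\#(K\cap\ker\phi_i))^{c_i}$ times a counting factor. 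I would then check that this equals $\operatorname{BL}^{R\text{-mod}}_{\mathrm{sub}}(\phi,\bf c;\Omega)$ for $\Omega=K$ or $\Omega=K\cap\ker\phi_i$, both of which are submodules. Verifying this identification, and that the optimal choice among these two submodules matches the functional bound, is where I expect the real work to lie.
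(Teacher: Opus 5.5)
There is a gap, though only because you stop short. Your decomposition is the right one, and your first route (factor through a kernel with the first half of Lemma~\ref{lem:mod_factor}, finish with Lemma~\ref{lem:mod_easy}) is essentially what the paper does via Lemma~4.1 of~\cite{christ2013optimal}. But as written the proof is not closed. You stop exactly at the inequality $\operatorname{BL}^{R\text{-mod}}(\phi,\mathbf c)\le\operatorname{BL}^{R\text{-mod}}_{\mathrm{sub}}(\phi,\mathbf c)$, call it the main obstacle, and leave the direct computation at ``should yield \ldots\ times a counting factor''.

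That obstacle is not real, and no criticality of $\Omega$ is needed. Take $\Omega=\ker\phi_{j_0}$ with $c_{j_0}=1$. Then $\operatorname{BL}^{R\text{-mod}}(\phi|_{M/\Omega},\mathbf c)\le 1$ outright, by injectivity of $\bar\phi_{j_0}$ together with $\|g\|_{\ell^\infty}\le\|g\|_{\ell^{1/c_j}}$. So Lemma~\ref{lem:mod_factor} gives $\operatorname{BL}^{R\text{-mod}}(\phi,\mathbf c)\le\operatorname{BL}^{R\text{-mod}}(\phi|_\Omega,\mathbf c)$. By your induction this equals $\sup_{\Omega'\le\Omega}\#\Omega'/\prod_j(\#\phi_j(\Omega'))^{c_j}$. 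That supremum is trivially at most $\operatorname{BL}^{R\text{-mod}}_{\mathrm{sub}}(\phi,\mathbf c)$, since every submodule of $\Omega$ is a submodule of $M$ with the same images. The submodule constant of the quotient piece never enters, so the second half of Lemma~\ref{lem:mod_factor} is irrelevant.

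Your direct route closes just as quickly, with no extra counting factor and no choice between submodules.
\begin{itemize}
\item Bound the $c_j=0$ factors by sup norms.
\item With $K=\bigcap_{j\in J_1}\ker\phi_j$, H\"older on each coset gives $\sum_{k\in K}f_i(\phi_ix+\phi_ik)\le\#(K\cap\ker\phi_i)\,(\#\phi_iK)^{1-c_i}\,\|f_i\|_{\ell^{1/c_i}(M_i)}$.
\item The map $x+K\mapsto(\phi_jx)_{j\in J_1}$ is injective on $M/K$, so the remaining sum over $M/K$ is at most $\prod_{j\in J_1}\|f_j\|_{\ell^1}$.
\end{itemize}
The resulting constant is $\#(K\cap\ker\phi_i)(\#\phi_iK)^{1-c_i}=(\#K)^{1-c_i}(\#(K\cap\ker\phi_i))^{c_i}$. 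This is exactly $\operatorname{BL}^{R\text{-mod}}_{\mathrm{sub}}(\phi,\mathbf c;K)$, because $\phi_j(K)=0$ for $j\in J_1$.

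The candidate $K\cap\ker\phi_i$ is never needed: its ratio is $\#(K\cap\ker\phi_i)$, which is never larger than that of $K$. This direct computation is just the one-step version of the factorization argument with $\Omega=K$.
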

\begin{proof}
    An easy translation of~\cite{christ2013optimal}, Lemma 4.1, incorporating Lemmas~\ref{lem:mod_easy},~\ref{lem:mod_factor} when appropriate.
\end{proof}

\begin{lemma}\label{lem:mod_ind_crit}
    Suppose $n\in\N$ is such that $\operatorname{BL}_{\mathrm{sub}}^{R\text{-mod}}(\tilde\phi,\bf c)=\operatorname{BL}^{R\text{-mod}}(\tilde\phi,\bf c)$, whenever $\tilde\phi$ is defined over an $R$-module $\widetilde M$ with $\#\widetilde{M}\leq n$. Suppose $\# M\leq n+1$, and $\bf 0<\Omega<M$ is a submodule for which one of the following holds:
    \begin{itemize}
        \item[(a)]  $\operatorname{BL}_{\mathrm{sub}}^{R\text{-mod}}(\left.\phi\right|_\Omega,\bf c;\Omega)=\operatorname{BL}_{\mathrm{sub}}^{R\text{-mod}}(\left.\phi\right|_\Omega,\bf c)$,
        \item[(b)] $\operatorname{BL}_{\mathrm{sub}}^{R\text{-mod}}(\phi,\bf c;\Omega)\geq 1$.
    \end{itemize}
    Then $\operatorname{BL}_{\mathrm{sub}}^{R\text{-mod}}(\phi,\bf c)=\operatorname{BL}^{R\text{-mod}}(\phi,\bf c)$.
\end{lemma}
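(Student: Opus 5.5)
The plan is to mirror the inductive step of \cite{christ2013optimal}, Lemma 5.2/Proposition 5.4, using the factorization Lemma~\ref{lem:mod_factor}. Since $\bf 0<\Omega<M$ is proper, both $\Omega$ and $M/\Omega$ have cardinality at most $n$. The induction hypothesis then gives $\operatorname{BL}_{\mathrm{sub}}^{R\text{-mod}}=\operatorname{BL}^{R\text{-mod}}$ for both $\left.\phi\right|_\Omega$ and $\left.\phi\right|_{M/\Omega}$. Combined with the first inequality of Lemma~\ref{lem:mod_factor}, this yields
\begin{equation*}
    \operatorname{BL}^{R\text{-mod}}(\phi,\bf c)\leq\operatorname{BL}_{\mathrm{sub}}^{R\text{-mod}}(\left.\phi\right|_\Omega,\bf c)\cdot\operatorname{BL}_{\mathrm{sub}}^{R\text{-mod}}(\left.\phi\right|_{M/\Omega},\bf c).
\end{equation*}
Since $\operatorname{BL}^{R\text{-mod}}\leq\operatorname{BL}_{\mathrm{sub}}^{R\text{-mod}}$ trivially (indicators of submodules are admissible), it remains to bound the right-hand product by $\operatorname{BL}_{\mathrm{sub}}^{R\text{-mod}}(\phi,\bf c)$. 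Concretely, for every $\Omega\leq\Omega'\leq M$ (which parametrize submodules of $M/\Omega$) we want
\begin{equation*}
    \operatorname{BL}_{\mathrm{sub}}^{R\text{-mod}}(\left.\phi\right|_\Omega,\bf c;\Omega'')\cdot\operatorname{BL}_{\mathrm{sub}}^{R\text{-mod}}(\left.\phi\right|_{M/\Omega},\bf c;\Omega'/\Omega)\leq\operatorname{BL}_{\mathrm{sub}}^{R\text{-mod}}(\phi,\bf c)
\end{equation*}
with $\Omega''\leq\Omega$ chosen optimally.

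The key computation is the multiplicativity identity. Since $\#(\Omega'/\Omega)=\#\Omega'/\#\Omega$ and $\#\big(\phi_j(\Omega')/\phi_j(\Omega)\big)=\#\phi_j(\Omega')/\#\phi_j(\Omega)$, we get
\begin{equation*}
    \operatorname{BL}_{\mathrm{sub}}^{R\text{-mod}}(\left.\phi\right|_{M/\Omega},\bf c;\Omega'/\Omega)=\frac{\operatorname{BL}_{\mathrm{sub}}^{R\text{-mod}}(\phi,\bf c;\Omega')}{\operatorname{BL}_{\mathrm{sub}}^{R\text{-mod}}(\phi,\bf c;\Omega)}.
\end{equation*}
The quotient factor is therefore at most $\operatorname{BL}_{\mathrm{sub}}^{R\text{-mod}}(\phi,\bf c)/\operatorname{BL}_{\mathrm{sub}}^{R\text{-mod}}(\phi,\bf c;\Omega)$, and the problem reduces to controlling the $\Omega$ factor. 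Note that $\operatorname{BL}_{\mathrm{sub}}^{R\text{-mod}}(\left.\phi\right|_\Omega,\bf c;\Omega'')=\operatorname{BL}_{\mathrm{sub}}^{R\text{-mod}}(\phi,\bf c;\Omega'')$ for $\Omega''\leq\Omega$.

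The two cases are handled separately.

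\emph{Case (a).} The supremum of the sub-constant of $\left.\phi\right|_\Omega$ is attained at $\Omega$ itself, so the $\Omega$ factor equals $\operatorname{BL}_{\mathrm{sub}}^{R\text{-mod}}(\phi,\bf c;\Omega)$. This cancels the denominator in the quotient bound, giving the product $\leq\operatorname{BL}_{\mathrm{sub}}^{R\text{-mod}}(\phi,\bf c)$.

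\emph{Case (b).} The plan is to not fix $\Omega$ but to replace it. Among all proper nonzero submodules satisfying (b), I would pick one, $\Omega_0$, that is minimal with respect to inclusion (or of minimal cardinality). I would then show that $\Omega_0$ satisfies (a). Suppose $\Omega''<\Omega_0$ had a strictly larger sub-functional value, i.e. $\operatorname{BL}_{\mathrm{sub}}^{R\text{-mod}}(\phi,\bf c;\Omega'')>\operatorname{BL}_{\mathrm{sub}}^{R\text{-mod}}(\phi,\bf c;\Omega_0)\geq1$. If $\Omega''\neq\bf 0$, it would also satisfy (b), contradicting minimality. If $\Omega''=\bf 0$, its value is $1$, contradicting the strict inequality. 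Hence $\Omega_0$ satisfies (a), and Case (a) applies to it.

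I expect the main obstacle to be this reduction of (b) to (a). In particular, I need to check that $\bf 0$ does not spoil the comparison, since $\operatorname{BL}_{\mathrm{sub}}(\bf 0)=1$, and this is exactly where the threshold $\geq1$ in (b) enters. I also need to check the measure bookkeeping in the quotient identity, i.e. that $\phi_j$ induces surjections $\Omega'/\Omega\to\phi_j(\Omega')/\phi_j(\Omega)$, which holds since these are $R$-module maps. The remaining inputs are applied as already stated in the paper: Lemma~\ref{lem:mod_factor} and the induction hypothesis on cardinalities $\leq n$.
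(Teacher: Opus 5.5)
Your proposal is correct and is essentially the paper's proof. Case (a) comes from the first inequality of Lemma~\ref{lem:mod_factor} together with the induction hypothesis applied to $\left.\phi\right|_\Omega$ and $\left.\phi\right|_{M/\Omega}$; your quotient identity makes explicit the bound on the product that the paper only asserts. Case (b) is reduced to (a) by passing to a suitable submodule, which you do with a minimality choice where the paper simply calls the step trivial.

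There is one directional slip. Testing indicators of submodules gives the trivial bound $\operatorname{BL}_{\mathrm{sub}}^{R\text{-mod}}\leq\operatorname{BL}^{R\text{-mod}}$. It is the reverse inequality, $\operatorname{BL}^{R\text{-mod}}\leq\operatorname{BL}_{\mathrm{sub}}^{R\text{-mod}}$, that your argument actually establishes.
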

\begin{proof}
    We take (a) first. By the inductive hypothesis, we have 
    \begin{equation*}
        \operatorname{BL}_{\mathrm{sub}}^{R\text{-mod}}(\left.\phi\right|_\Omega,\bf c)=\operatorname{BL}^{R\text{-mod}}(\left.\phi\right|_\Omega,\bf c)\quad\text{and}\quad\operatorname{BL}_{\mathrm{sub}}^{R\text{-mod}}(\left.\phi\right|_{M/\Omega},\bf c)=\operatorname{BL}^{R\text{-mod}}(\left.\phi\right|_{M/\Omega},\bf c).
    \end{equation*}
    By Lemma~\ref{lem:mod_factor},
    \begin{equation*}
        \begin{split}
        \operatorname{BL}^{R\text{-mod}}(\phi,\bf c)&\leq\operatorname{BL}^{R\text{-mod}}(\left.\phi\right|_\Omega,\bf c)\cdot\operatorname{BL}^{R\text{-mod}}(\left.\phi\right|_{M/\Omega},\bf c)\\
        &=\operatorname{BL}_{\mathrm{sub}}^{R\text{-mod}}(\left.\phi\right|_\Omega,\bf c)\cdot\operatorname{BL}_{\mathrm{sub}}^{R\text{-mod}}(\left.\phi\right|_{M/\Omega},\bf c)\leq\operatorname{BL}_{\mathrm{sub}}^{R\text{-mod}}(\phi,\bf c).
        \end{split}
    \end{equation*}
    The reverse inequality is trivial.

    Next, if (b) holds, it trivially follows that there is some $\bf 0<\Omega'<M$ for which (a) holds. Thus, the result follows once again.
\end{proof}

\begin{theorem}[Reduction to submodules]\label{thm:module_reduction}
    Suppose $R$ is a finite commutative unital ring, and $M$, $M_j$ ($1\leq j\leq m$) are finite $R$-modules; suppose also that $\phi_j:M\to M_j$ are $R$-module homomorphisms. Then, for any $\bf c\in[0,1]^m$, we have
    \begin{equation*}
        \operatorname{BL}^{R\text{-mod}}(\phi,\bf c)=\operatorname{BL}_{\mathrm{sub}}^{R\text{-mod}}(\phi,\bf c).
    \end{equation*}
\end{theorem}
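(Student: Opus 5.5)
The plan follows the strategy of~\cite{christ2013optimal}: induct on $\#M$, with the lemmas of this subsection supplying the inductive step. The inequality $\operatorname{BL}^{R\text{-mod}}_{\mathrm{sub}}(\phi,\bf c)\leq\operatorname{BL}^{R\text{-mod}}(\phi,\bf c)$ is immediate by testing $f_j=1_{\phi_j(\Omega)}$ (with the normalization $\sum|f_j|^{1/c_j}$ giving $(\#\phi_j\Omega)^{c_j}$). So only the reverse inequality needs work. The base case $\#M=1$ is trivial, since both sides equal $1$ (only the zero submodule and constant inputs occur). Assume the equality holds for all data defined on modules of cardinality at most $n$, and let $\#M=n+1$.

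\textbf{Step 1: reduction to extreme points.} Fix $\bf A=\operatorname{BL}^{R\text{-mod}}_{\mathrm{sub}}(\phi,\bf c)$. Since $M$ has finitely many submodules, $\cal P_{\bf A}$ is a compact convex polytope cut out by finitely many linear inequalities in $\bf c$ (in log form). By multilinear (Riesz--Thorin/H\"older-type) interpolation, $\bf c\mapsto\log\operatorname{BL}^{R\text{-mod}}(\phi,\bf c)$ is convex on $[0,1]^m$. Hence it suffices to prove $\operatorname{BL}^{R\text{-mod}}(\phi,\bf c)\leq\bf A$ at each extreme point $\bf c$ of $\cal P_{\bf A}$.

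\textbf{Step 2: case analysis at an extreme point.} Let $\bf c$ be an extreme point of $\cal P_{\bf A}$. If some proper $\bf 0<\Omega<M$ satisfies $\operatorname{BL}^{R\text{-mod}}_{\mathrm{sub}}(\phi,\bf c;\Omega)\geq\operatorname{BL}^{R\text{-mod}}_{\mathrm{sub}}(\phi,\bf c;M)$, choose $\Omega$ maximizing the functional. Then Lemma~\ref{lem:mod_factor} together with the inductive hypothesis applied to $\Omega$ and $M/\Omega$ (both of cardinality at most $n$), i.e.\ the argument of Lemma~\ref{lem:mod_ind_crit}(a), gives equality at $\bf c$. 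If instead $M$ strictly dominates every proper submodule, the hypotheses of Lemma~\ref{lem:mod_crit_case} hold. In its alternative (a), Lemma~\ref{lem:mod_corner} gives the equality directly. In its alternative (b), a proper $\Omega$ attains $\bf A$, so $\Omega$ is a critical submodule and we conclude again via Lemma~\ref{lem:mod_factor} and induction as in Lemma~\ref{lem:mod_ind_crit}. One should also check that the quotient and restricted data are again $R$-module data on strictly smaller modules, which holds by the remark preceding Lemma~\ref{lem:mod_crit_case}.

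\textbf{Main obstacle.} The delicate point is the interpolation step. We need log-convexity of the functional constant in $\bf c$, and this requires checking that the denominators $\|f_j\|_{\ell^{1/c_j}}$ interpolate correctly, including at the degenerate values $c_j=0$, where $\ell^\infty$ appears. I would handle this by the complex interpolation argument of~\cite{christ2013optimal}, which is insensitive to the module structure because it concerns only functions on finite sets.

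A subsidiary point is the bookkeeping in Step 2. When $\Omega$ attains $\bf A$ for the original datum, we must verify that $\Omega$ also maximizes $\operatorname{BL}^{R\text{-mod}}_{\mathrm{sub}}$ for $\phi|_\Omega$, so that Lemma~\ref{lem:mod_factor}'s second inequality applies. This follows because submodules of $\Omega$ are submodules of $M$, with the same images.
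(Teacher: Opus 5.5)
Your proposal is correct and uses the same ingredients as the paper: induction on $\#M$, Lemmas~\ref{lem:mod_crit_case}, \ref{lem:mod_corner} and \ref{lem:mod_factor} at extreme points of $\cal P_{\bf A}$, then interpolation. The case analysis is organized differently. When every nonzero submodule has value $<1$, the paper first rescales $\bf c\mapsto\theta\bf c$, using $\operatorname{BL}^{R\text{-mod}}(\phi,\bf c)\leq\operatorname{BL}^{R\text{-mod}}(\phi,\theta\bf c)$, so that some \emph{nonzero} submodule attains $\operatorname{BL}^{R\text{-mod}}_{\mathrm{sub}}$. It then disposes of a proper maximizer at $\bf c$ itself via Lemma~\ref{lem:mod_ind_crit}, and passes to extreme points of $\cal P_{\bf A}$ only when $M$ is the maximizer. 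You instead pass to extreme points immediately and run the full split at each $\bf c^*$. This is cleaner in one respect: the paper's final step treats every extreme point $\bf c^*$ as though $M$ were still the maximizer there, whereas your split covers both possibilities explicitly. What you give up is the $\theta$-trick's guarantee that the maximizer has value $\geq 1$. In your case (i), if every nonzero submodule has value $<1$ at $\bf c^*$, then $\operatorname{BL}^{R\text{-mod}}_{\mathrm{sub}}(\phi,\bf c^*)=1$ is attained only by $\bf 0$. The hypothesis of the second inequality in Lemma~\ref{lem:mod_factor} then fails as stated. The conclusion still holds, and you should say why. Trivially $\operatorname{BL}^{R\text{-mod}}_{\mathrm{sub}}(\phi|_\Omega,\bf c^*)=1$. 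Each submodule $\Omega''/\Omega$ of $M/\Omega$ has value $\operatorname{BL}^{R\text{-mod}}_{\mathrm{sub}}(\phi,\bf c^*;\Omega'')/\operatorname{BL}^{R\text{-mod}}_{\mathrm{sub}}(\phi,\bf c^*;\Omega)\leq 1$, so the quotient factor is also $1$ and the product bound holds. Alternatively, all nonzero-submodule constraints are slack at $\bf c^*$, so extremality forces $\bf c^*\in\{0,1\}^m$ and Lemma~\ref{lem:mod_corner} applies. Also, in your case (ii), alternative (b) of Lemma~\ref{lem:mod_crit_case} is excluded by strict domination, so only (a) is ever used.
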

\begin{proof}
    By induction on $\#M=n$. The result is trivial for $n=1$. Suppose we are done for modules of cardinality at most $n$, and consider $\#M=n+1$. Suppose $\operatorname{BL}_{\mathrm{sub}}^{R\text{-mod}}(\phi,\bf c;\Omega)<1$ for all $\bf 0<\Omega\leq M$. Since $\operatorname{BL}_{\mathrm{sub}}^{R\text{-mod}}(\phi,(0)_{1\leq j\leq m};\Omega)=1$ for all $\Omega$, there is a minimal $\theta\in(0,1)$ such that $\operatorname{BL}_{\mathrm{sub}}^{R\text{-mod}}(\phi,\theta\cdot\bf c)=1$ and, for some $\bf 0<\Omega\leq M$, we have $\operatorname{BL}_{\mathrm{sub}}^{R\text{-mod}}(\phi,\theta\cdot\bf c;\Omega)=1$. Since $\operatorname{BL}^{R\text{-mod}}(\phi,\bf c)\leq \operatorname{BL}^{R\text{-mod}}(\phi,\theta\cdot\bf c)$, it suffices to establish $\operatorname{BL}^{R\text{-mod}}(\phi,\theta\cdot\bf c)\leq\operatorname{BL}_{\mathrm{sub}}^{R\text{-mod}}(\phi,\theta\cdot\bf c)$; thus, we have reduced to the alternate case that $\operatorname{BL}_{\mathrm{sub}}^{R\text{-mod}}(\phi,\bf c)=\operatorname{BL}_{\mathrm{sub}}^{R\text{-mod}}(\phi,\bf c;\Omega)$ for some $\bf 0<\Omega\leq M$.

    In particular, we have $\operatorname{BL}_{\mathrm{sub}}^{R\text{-mod}}(\phi,\bf c;\Omega)\geq 1$. If $\Omega<M$, then by Lemma~\ref{lem:mod_ind_crit}, we are done. So we assume that $\Omega=M$. Write $\bf A=\operatorname{BL}_{\mathrm{sub}}^{R\text{-mod}}(\phi,\bf c)$. If $\bf c=(c_j)_{1\leq j\leq m}$ is an extreme point of $\cal P_\bf A$, then Lemma~\ref{lem:mod_crit_case} implies that either $c_j\in\{0,1\}$ for all but at most one $j$, or there is a submodule $\bf 0<\Omega'<M$ such that $\operatorname{BL}_{\mathrm{sub}}^{R\text{-mod}}(\phi,\bf c;\Omega')=\bf A=\operatorname{BL}_{\mathrm{sub}}^{R\text{-mod}}(\phi,\bf c;M)$. In the former case, we are done via Lemma~\ref{lem:mod_corner}. In the latter case, we are done via Lemma~\ref{lem:mod_ind_crit}. Thus, the result is true if $\bf c$ happens to be an extreme point of $\cal P_\bf A$.

    Finally, we have established $\operatorname{BL}_{\mathrm{sub}}^{R\text{-mod}}(\phi,\bf c^*)=\operatorname{BL}^{R\text{-mod}}(\phi,\bf c^*)$ whenever $\bf c^*$ is an extreme point of $\cal P_\bf A$. By interpolation, we have $\operatorname{BL}^{R\text{-mod}}(\phi,\bf c)\leq\bf A=\operatorname{BL}_{\mathrm{sub}}^{R\text{-mod}}(\phi,\bf c)$; by the reverse inequality, we are done.

    Thus, by induction on $n$, we obtain the desired equality.
\end{proof}

\subsection{Existence of extremizers, linear case}\label{subsec:lin_extremizers_exist}

    We are now in a position to prove Theorem~\ref{thm:lin_sub_extr}.

    \begin{proof}[Proof of Theorem~\ref{thm:lin_sub_extr}]

        We first handle the case that $(\bf L,\bf c)$ is simple, i.e.\ that $W_2=\K^n$. If $n=1$, then every regular subgroup evaluates to the same quantity under $\operatorname{BL}_{\mathrm{grp}}(\bf L,\bf c;-)$; so, we may assume that $n\geq 2$. Then the quantity
        \begin{equation*}
            \alpha'=\inf_{0\subsetneq V\subsetneq\K^n}\Big(-\dim V+\sum_{j=1}^mc_j\dim L_jV\Big)
        \end{equation*}
        is positive; thus $(\bf L,\bf c)$ is $\alpha$-simple, for each $0<\alpha\leq\alpha'$. So, we take arbitrary such $\alpha$.

        Now, let $U\in\cal N(\K^n)$ have diameter $1$ and radius $|\varpi|^e$. By Corollary~\ref{cor:subg_bd_by_subsp}, there are $1\leq\kappa\leq n-1$, subspaces $V_1,\ldots,V_\kappa$, and nonnegative integers $e_1,\ldots,e_\kappa$ such that $e_1+\ldots+e_\kappa=e$, such that
        \begin{equation*}
            \operatorname{BL}_{\mathrm{grp}}(\bf L,\bf c; U)\leq\prod_{\iota=1}^\kappa\operatorname{BL}_{\mathrm{grp}}\big(\bf L,\bf c;\cal O_n\cap(V_\iota+\varpi^{e_\iota}\cal O_n)\big).
        \end{equation*}
        Since $(\bf L,\bf c)$ is $\alpha$-simple, for each $\iota$ there is $\bf k_\iota=(k_{j\iota})_j$ such that
        \begin{equation*}
            \dim L_jV_\iota\geq k_{j\iota}\quad\text{for all $j$,\,\,\, and}\,\,\,\dim V_\iota\leq-\alpha+\sum_{j=1}^mc_jk_{j\iota},
        \end{equation*}
        and such that, whenever $U_\iota$ is an $(n\times \dim V_\iota)$ matrix with columns an isometric basis for $V_\iota$, it happens that $L_jU_\iota$ has a $k_j\times k_j$-minor of absolute determinant at least $\eps^{n_j}$, where $\eps>0$ is such that $\Phi_{\alpha}(V)\geq\eps$ for all $\bf 0\neq V\subsetneq\K^n$.
        Thus, by Lemma~\ref{lem:lb_plate},
        \begin{equation*}
            \operatorname{BL}_{\mathrm{grp}}\big(\bf L,\bf c;\cal O_n\cap (V_\iota+\varpi^{e_\iota}\cal O_n)\big)\leq|\varpi|^{e_\iota\big[n-\dim V-\sum_jc_j(n_j-k_{j\iota})\big]}\eps^{-n}\leq|\varpi|^{e_\iota\alpha}\eps^{-n}.
        \end{equation*}
        Multiplying these bounds together, we conclude that
        \begin{equation}\label{ineq:BL_bdd_simple}
            \operatorname{BL}_{\mathrm{grp}}(\bf L,\bf c; U)\leq|\varpi|^{e\alpha}\eps^{-n^2}.
        \end{equation}
        Thus, if $|\varpi|^{e}\leq\eps^{\alpha^{-1}n^2}$, we have
        \begin{equation*}
            \operatorname{BL}_{\mathrm{grp}}(\bf L,\bf c;U)\leq 1=\operatorname{BL}_{\mathrm{grp}}(\bf L,\bf c;\cal O_n).
        \end{equation*}
        That is,
        \begin{equation*}
            \operatorname{BL}_{\mathrm{grp}}(\bf L,\bf c)=\sup_{U\in\cal N(\K^n)}\operatorname{BL}_{\mathrm{grp}}(\bf L,\bf c;U)=\sup_{\substack{U\in\cal N(\K^n)\\\mathrm{ecc}(U)\leq\eps^{-\alpha^{-1}n^2}}}\operatorname{BL}_{\mathrm{grp}}(\bf L,\bf c;U).
        \end{equation*}
        Since there are only finitely many $U\in\cal N(\K^n)$ of eccentricity at most $C$, we conclude that there is an extremizer with the given bound.

        Now, we handle the general case. By the Lemma~\ref{lem:splitting},
        \begin{equation*}
            (\bf L,\bf c)=\bigoplus_{\iota=1}^{r-1}\big(\left.\bf L\right|_{W_{\iota+1}/W_\iota},\bf c\big);
        \end{equation*}
        here, we have written
        \begin{equation*}
            \left.L_j\right|_{W_{\iota+1}/W_\iota}:W_{\iota+1}/W_\iota\to L(W_{\iota+1})/L(W_\iota),\quad\left.\bf L\right|_{W_{\iota+1}/W_\iota}=\big(\left.L_j\right|_{W_{\iota+1}/W_\iota}\big)_{1\leq j\leq m}.
        \end{equation*}
        We have seen previously that each $\big(\left.\bf L\right|_{W_{\iota+1}/W_\iota},\bf c\big)$ satisfies the rank and scaling conditions. For $1\leq\iota\leq r-1$, we may find a regular subgroup extremizer $U_\iota\in\cal N(W_{\iota+1}/W_\iota)$ of diameter $1$ and eccentricity
        \begin{equation*}
            \mathrm{ecc}(U_\iota)\leq\max_{W_\iota\subsetneq V\subsetneq W_{\iota+1}}\Phi_\alpha(V)^{-\alpha^{-1}(\dim W_{\iota+1}-\dim W_\iota)^2},
        \end{equation*}
        for each $\alpha$ such that $\big(\left.\bf L\right|_{W_{\iota+1}/W_\iota},\bf c\big)$ is $\alpha$-simple. By Remark~\ref{rmk:dir_sum}, there is a $U\in\cal N(\K^n)$ which factors onto each $U_\iota$, with eccentricity bound
        \begin{equation*}
            \mathrm{ecc}(U)\leq\max_{\iota\in I}\min_{\substack{\alpha>0\\\big(\left.\bf L\right|_{W_{\iota+1}/W_\iota},\bf c\big)\text{ is $\alpha$-simple}}}\max_{W_\iota\subsetneq V\subsetneq W_{\iota+1}}\Phi_\alpha(V)^{-\alpha^{-1}(\dim W_{\iota+1}-\dim W_\iota)^2},
        \end{equation*}
        so that
        \begin{equation*}
            \operatorname{BL}_{\mathrm{grp}}(\bf L,\bf c;U)=\prod_{\iota=1}^{r-1}\operatorname{BL}_{\mathrm{grp}}\big(\left.\bf L\right|_{W_{\iota+1}/W_\iota},\bf c;U_\iota\big).
        \end{equation*}
        Since $\operatorname{BL}_{\mathrm{grp}}(\bf L,\bf c)=\prod_{\iota=1}^{r-1}\operatorname{BL}_{\mathrm{grp}}\big(\left.\bf L\right|_{W_{\iota+1}/W_\iota},\bf c\big)$, and each of the $U_\iota$ is an extremizer, it follows that $U$ is an extremizer:
        \begin{equation*}
            \operatorname{BL}_{\mathrm{grp}}(\bf L,\bf c;U)=\operatorname{BL}_{\mathrm{grp}}(\bf L,\bf c).
        \end{equation*}

    \end{proof}

    From this, we may show that there is a similar extremizer result in the case of functional Brascamp--Lieb; this is the content of Theorem~\ref{thm:main_extr_functional}. We require our earlier ``module Brascamp--Lieb'' result, Theorem~\ref{thm:module_reduction}. We restate the result to be shown for the convenience of the reader.

    \begin{corollary}
        Let $(\bf L,\bf c)$ be any Brascamp--Lieb datum with $\bigcap_{j=1}^m\ker L_j=\{0\}$. Then $\operatorname{BL}(\bf L,\bf c)<+\infty$ if and only if there is some tuple $\bf f=(f_j)_{1\leq j\leq m}$ of nonzero Schwartz--Bruhat functions such that $\operatorname{BL}(\bf L,\bf c)=\operatorname{BL}(\bf L,\bf c;\bf f)$.

        Moreover, in this case, there is a regular subgroup $U\in\cal N(\K^n)$ such that the extremizer $f_j$ may be taken to be $1_{L_j(G)}$. The eccentricity of $U$ may be bounded as in Theorem~\ref{thm:lin_sub_extr}.
    \end{corollary}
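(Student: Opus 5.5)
The plan is to prove $\operatorname{BL}(\mathbf L,\mathbf c)=\operatorname{BL}_{\mathrm{grp}}(\mathbf L,\mathbf c)$ whenever the latter is finite, and then let Theorem~\ref{thm:lin_sub_extr} supply the extremizer. One direction is trivial: an extremizer makes $\operatorname{BL}$ a value $\operatorname{BL}(\mathbf L,\mathbf c;\mathbf f)<+\infty$. For the converse, assume $\operatorname{BL}(\mathbf L,\mathbf c)<+\infty$. Since $\operatorname{BL}_{\mathrm{grp}}\leq\operatorname{BL}$, the group constant is finite too. By the earlier remarks we may take $(\mathbf L,\mathbf c)$ full, and after zero-contraction assume every $c_j>0$. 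Theorem~\ref{thm:lin_sub_extr} then gives $G\in\cal N(\K^n)$ with $\operatorname{BL}_{\mathrm{grp}}(\mathbf L,\mathbf c;G)=\operatorname{BL}_{\mathrm{grp}}(\mathbf L,\mathbf c)$, together with the stated eccentricity bound. By the remark directly comparing the two functionals, $f_j=1_{L_j(G)}$ satisfies $\operatorname{BL}(\mathbf L,\mathbf c;\mathbf f)=\operatorname{BL}_{\mathrm{grp}}(\mathbf L,\mathbf c;\tilde G)\geq\operatorname{BL}_{\mathrm{grp}}(\mathbf L,\mathbf c;G)$, where $\tilde G=\bigcap_jL_j^{-1}(L_jG)$. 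So it remains to show $\operatorname{BL}(\mathbf L,\mathbf c)\leq\operatorname{BL}_{\mathrm{grp}}(\mathbf L,\mathbf c)$.

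For this I reduce to finite modules. Fix nonnegative nonzero Schwartz--Bruhat $f_j$. Using the scaling condition (which holds because $\operatorname{BL}_{\mathrm{grp}}<\infty$) and dilation invariance, I choose $\gamma$ and $h$ so that every $f_j$ is supported in $\gamma\cal O_{n_j}$ and is $h\cal O_{n_j}$-invariant. Then I pick $N$ large and small, with $M=\bigcap_jL_j^{-1}(\gamma\cal O_{n_j})\supseteq\gamma'\cal O_n$ and $K=\bigcap_jL_j^{-1}(h\cal O_{n_j})$. Here $M$ is compact because $\ker\mathbf L=0$, and $K$ is an open subgroup. Set $R=\cal O_1/\varpi^s\cal O_1$ with $s$ large enough that $\varpi^sM\subseteq K$ and $\varpi^s\gamma\cal O_{n_j}\subseteq h\cal O_{n_j}$.

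Each $f_j$ descends to a function $F_j$ on the finite $R$-module $M_j=\gamma\cal O_{n_j}/h\cal O_{n_j}$, and each $L_j$ induces an $R$-module map $\phi_j:M/K\to M_j$. The integrals then become sums: $\int\prod f_j^{c_j}\circ L_j=\mu_n(K)\sum_{x\in M/K}\prod_jF_j^{c_j}(\phi_jx)$ and $\int f_j=\mu_{n_j}(h\cal O_{n_j})\sum F_j$. Applying Theorem~\ref{thm:module_reduction} with $g_j=F_j^{c_j}$ bounds the ratio by $\operatorname{BL}^{R\text{-mod}}_{\mathrm{sub}}(\phi,\mathbf c)$, up to the normalizing factor $\mu_n(K)/\prod_j\mu_{n_j}(h\cal O_{n_j})^{c_j}$.

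Next I convert submodules back to regular subgroups. An $R$-submodule $\Omega\leq M/K$ pulls back to a compact open $\cal O_1$-module $\tilde\Omega$ with $K\subseteq\tilde\Omega\subseteq M$. By the lemma characterizing $\cal N$, this means $\tilde\Omega\in\cal N(\K^n)$. Counting gives $\#\Omega=\mu_n(\tilde\Omega)/\mu_n(K)$. I also need $\#\phi_j(\Omega)=\mu_{n_j}(L_j\tilde\Omega+h\cal O_{n_j})/\mu_{n_j}(h\cal O_{n_j})$, which is at least $\mu_{n_j}(L_j\tilde\Omega)/\mu_{n_j}(h\cal O_{n_j})$. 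Combining these, the normalizing factors cancel exactly, and I obtain $\operatorname{BL}(\mathbf L,\mathbf c;\mathbf f)\leq\operatorname{BL}_{\mathrm{grp}}(\mathbf L,\mathbf c;\tilde\Omega)\leq\operatorname{BL}_{\mathrm{grp}}(\mathbf L,\mathbf c)$.

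The main obstacle is the bookkeeping in this discretization. The difficulty is making $K$ and $h\cal O_{n_j}$ compatible, so that each $\phi_j$ is well defined ($L_jK\subseteq h\cal O_{n_j}$ holds by construction of $K$) and so that the measure factors cancel exactly rather than leaving a loss. The fix I would try is to replace $L_j(\tilde\Omega)$ by $L_j(\tilde\Omega)+h\cal O_{n_j}$, which is itself a regular subgroup image. If that inequality goes the wrong way, I would replace $\tilde\Omega$ by $\tilde\Omega+K$, so that $L_j$ of it contains $h\cal O_{n_j}$ whenever the normal form ensures $L_j(K)=h\cal O_{n_j}$. The eccentricity statement then follows directly from Theorem~\ref{thm:lin_sub_extr}.
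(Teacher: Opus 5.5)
Your proposal is correct and follows the paper's proof step for step. The steps are: $\operatorname{BL}_{\mathrm{grp}}\leq\operatorname{BL}$; an extremal regular subgroup from Theorem~\ref{thm:lin_sub_extr}; discretization of an arbitrary Schwartz--Bruhat tuple to the finite modules $M/K\to\gamma\cal O_{n_j}/h\cal O_{n_j}$; Theorem~\ref{thm:module_reduction}; and pulling the optimal submodule back to a regular subgroup. Your choice of the finite ring $\cal O_1/\varpi^s\cal O_1$ meets that theorem's finiteness hypothesis more literally than the paper does. The worry in your last paragraph is unfounded: replacing $L_j\tilde\Omega$ by the larger set $L_j\tilde\Omega+h\cal O_{n_j}$ only enlarges the denominator, so $\operatorname{BL}(\mathbf{L},\mathbf{c};\mathbf{f})\leq\operatorname{BL}_{\mathrm{grp}}(\mathbf{L},\mathbf{c};\tilde\Omega)$ already holds as you derived it, and in any case $\tilde\Omega+K=\tilde\Omega$.
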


    \begin{proof}
        The ``only if'' part is trivial, so we focus on the ``if'' and assume $\operatorname{BL}(\bf L,\bf c)<+\infty$.
    
        For any regular subgroup $U\subseteq\K^n$, if $U=\bigcap_{j=1}^mL_j^{-1}(L_j(U))$,
        \begin{equation*}
            \operatorname{BL}_{\mathrm{grp}}(\bf L,\bf c;U)=\operatorname{BL}\big(\bf L,\bf c;(1_{L_j(U)})_j\big)\leq\operatorname{BL}(\bf L,\bf c).
        \end{equation*}
        Thus, $\operatorname{BL}_{\mathrm{grp}}(\bf L,\bf c)\leq\operatorname{BL}(\bf L,\bf c)$. By Theorem~\ref{thm:lin_sub_extr}, we may thus find an extremizer: there is a $U\in\cal N(\K^n)$ such that $\operatorname{BL}_{\mathrm{grp}}(\bf L,\bf c)=\operatorname{BL}_{\mathrm{grp}}(\bf L,\bf c;U)$. We claim that $(1_{L_j(U)})_j$ is an extremizer to $\operatorname{BL}(\bf L,\bf c)$.
    
        Let $\bf g=(g_j)_j$ be a tuple of nonnegative nonzero Schwartz--Bruhat functions. Let $\gamma,h\in\K^\times$ be such that each $g_j$ is supported in $\gamma\cal O_{n_j}$, and is constant on cosets of $h\cal O_{n_j}$. Write $K=\bigcap_{j=1}^mL_j^{-1}(h\cal O_{n_j})$ and $U=\bigcap_{j=1}^mL_j^{-1}(\gamma\cal O_{n_j})$. Since $\operatorname{BL}_{\mathrm{grp}}(\bf L,\bf c)$ is finite, $K$ is compact and open; further, $K$ is an $\cal O_1$-module. Then the $L_j$ descend to $\phi_j$,
        \begin{equation*}
            \phi_j:U/K=M\to M_j=\gamma\cal O_{n_j}/h\cal O_{n_j},
        \end{equation*}
        and the $g_j$ descend to $f_j:M_j\to\R_{\geq 0}$ nonzero. Then
        \begin{equation*}
            \operatorname{BL}(\bf L,\bf c;\bf g)=\frac{\mu_n(K)\sum_{a\in M}f_j^{c_j}(\phi_j(a))}{\prod_{j=1}^m\big(|h|^{n_j}\sum_{b\in M_j}f_j(b)\big)^{c_j}}=\mu_n(K)\cdot|h|^{-\sum_jc_jn_j}\operatorname{BL}^{\cal O_1\text{-mod}}(\phi,\bf c;\bf f).
        \end{equation*}
        the latter is the modular Brascamp--Lieb functional, in the sense of Definition~\ref{def:rmod_BL}. By Theorem~\ref{thm:module_reduction},
        \begin{equation*}
            \operatorname{BL}(\phi,\bf c;\bf f)\leq\frac{\#\Omega}{\prod_j(\#\phi_j(\Omega))^{c_j}}
        \end{equation*}
        where $\Omega$ is a suitable $\cal O_1$-submodule of $M$. On the other hand,
        \begin{equation*}
            \frac{\#\Omega}{\prod_j(\#\phi_j(\Omega))^{c_j}}=\frac{|h|^{\sum_jc_jn_j}}{\mu_n(K)}\frac{\int\prod_{j=1}^m1_{\phi_j(\Omega)+h\cal O_{n_j}}^{c_j}\circ L_j}{\prod_{j=1}^m\big(\int 1_{\phi_j(\Omega)+h\cal O_{j}}\big)^{c_j}}.
        \end{equation*}
        In particular, $\operatorname{BL}(\bf L,\bf c;\bf g)\leq\operatorname{BL}(\bf L,\bf c;(1_{\phi_j(\Omega)+h\cal O_{n_j}})_j)=\operatorname{BL}_{\mathrm{grp}}(\bf L,\bf c;\Omega+K)$.

        Thus, $\operatorname{BL}(\bf L,\bf c;\bf g)\leq\operatorname{BL}_{\mathrm{grp}}(\bf L,\bf c)=\operatorname{BL}_{\mathrm{grp}}(\bf L,\bf c;U)$, where $U$ is the extremizer from earlier. In particular,
        \begin{equation*}
            \operatorname{BL}(\bf L,\bf c;\bf g)\leq \operatorname{BL}_{\mathrm{grp}}(\bf L,\bf c;U)=\operatorname{BL}\big(\bf L,\bf c;(1_{L_j(U)})_j\big).
        \end{equation*}
        That is, $(1_{L_j(U)})_j$ dominates any other Schwartz--Bruhat tuple under the Brascamp--Lieb functional.
    \end{proof}

    \begin{remark}
        In particular, $\operatorname{BL}(\bf L,\bf c)$ is extremized by a ``Guassian;'' i.e.\ each function is an indicator of a regular subgroup.
    \end{remark} 

    We conclude this subsection with a small ``rearrangement'' lemma. Unfortunately, this is not powerful enough to imply a non-Archimedean variant of the functional rearrangement inequality from~\cite{brascamp1974general}; the critical issue is that $\K^n$ does not possess ``convex'' sets of arbitrary measure.

    \begin{lemma}[Rearrangement inequality]\label{lem:rearrange}
        Suppose $(\bf L,\bf c)$ is a Brascamp--Lieb datum. Let $U\in\cal N(\K^n)$ and $x_j\in\K^{n_j}$, for each $j$. Then
        \begin{equation*}
            \operatorname{BL}(\bf L,\bf c;\{1_{L_jU+x_j}\}_{1\leq j\leq m})\leq\operatorname{BL}(\bf L,\bf c; \{1_{L_jU}\}_{1\leq j\leq m}).
        \end{equation*}
    \end{lemma}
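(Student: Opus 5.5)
The plan is to compare the numerators and the denominators of the two Brascamp--Lieb quotients separately, exploiting the fact that each $L_jU$ is a subgroup of $\K^{n_j}$. The key observation is that a translate of a subgroup which meets the subgroup's translates in a common point is itself a coset. Hence the intersection of pulled-back translates is either empty or a coset of a fixed group.

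First, I deal with zero exponents. By the zero-contraction remark, any $j$ with $c_j=0$ contributes the factor $1$ to both numerator and denominator, so I may discard those indices (interpreting $1_{E}^{0}\equiv 1$, as in the definition). So I assume $c_j>0$ for all $j$. Then $1_{E}^{c_j}=1_E$, and the numerators become measures of sets:
\begin{equation*}
    \int_{\K^n}\prod_{j=1}^m 1_{L_jU+x_j}(L_jy)\,dy=\mu_n(S),\qquad S=\bigcap_{j=1}^mL_j^{-1}(L_jU+x_j),
\end{equation*}
and similarly the right-hand numerator equals $\mu_n(G)$, where $G=\bigcap_{j=1}^mL_j^{-1}(L_jU)$. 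For the denominators, translation invariance of Haar measure gives $\mu_{n_j}(L_jU+x_j)=\mu_{n_j}(L_jU)$. These are finite and positive whenever $L_j$ is surjective, since $L_jU$ is then compact open. If some $L_j$ with $c_j>0$ fails to be surjective, both sides degenerate in the same way, so this case is not a genuine issue.

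It therefore remains to show $\mu_n(S)\leq\mu_n(G)$. If $S=\emptyset$ there is nothing to prove. Otherwise, I fix $y_0\in S$. Since $L_jU$ is a subgroup and $L_jy_0\in L_jU+x_j$, we have $L_jU+x_j=L_jU+L_jy_0$. Consequently $y\in S$ if and only if $L_j(y-y_0)\in L_jU$ for every $j$, i.e.\ $S=y_0+G$. Translation invariance then gives $\mu_n(S)=\mu_n(G)$. So in all cases $\mu_n(S)\in\{0,\mu_n(G)\}$, which yields the claimed inequality.

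I do not expect a real obstacle here. The only points requiring care are the bookkeeping for $c_j=0$ and confirming that the denominators are nonzero and finite, so that the quotients are well defined. This holds because $U$ is compact open and each relevant $L_j$ is surjective, hence open.
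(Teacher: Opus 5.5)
Your proof is correct and is essentially the paper's argument: the paper replaces $U$ by $\bigcap_j L_j^{-1}(L_jU)$ and observes that $V-V\subseteq U$ for the pulled-back set $V$, whence $\mu_n(V)\le\mu_n(V-V)\le\mu_n(U)$; you make the same observation slightly more precisely by showing that $S$ is either empty or the coset $y_0+G$. Your explicit treatment of the indices with $c_j=0$ (and of the surjectivity needed for the denominators) is bookkeeping the paper leaves implicit.
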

    \begin{proof}
        We may freely take $U=\bigcap_{j=1}^mL_j^{-1}(L_j(U))$. Let $V=\{y\in\K^n:L_j(y)\in L_jU+x_j,\,\,\forall j\}$. Then, for $y,z\in V$, we have $L_j(y-z)\in L_jU-L_jU=L_jU$. Thus, $V-V\subseteq U$, so $\mu_n(V)\leq\mu_n(V-V)\leq\mu_n(U)$. Thus,
        \begin{equation*}
            \begin{split}
            \operatorname{BL}(\bf L,\bf c;\{1_{L_jU+x_j}\}_{1\leq j\leq m})&=\frac{\mu_n(V)}{\prod_{j=1}^m\mu_{n_j}(L_jU+x_j)^{c_j}}\\
            &\leq\frac{\mu_n(U)}{\prod_{j=1}^m\mu_{n_j}(L_jU)^{c_j}}=\operatorname{BL}(\bf L,\bf c;\{1_{L_jU}\}_{1\leq j\leq m}).
            \end{split}
        \end{equation*}
    \end{proof}

\section{Nonlinear Brascamp--Lieb: general finiteness constrains}\label{sec:nonlinear_constraints}

In this section, we prove general necessary constraints on maps $\bf B$ to obtain finite Brascamp--Lieb constants. Throughout this section, we take $\bf B=(B_j)_{1\leq j\leq m}$ be a $C^1$ tuple defined on a nonempty open set $U\subseteq\K^n$,  such that $B_j$ takes values in $\K^{n_j}$.

We will show that $\bf B$ must be an immersion, and each $B_j$ must be a submersion: that is, $d\bf B(x)$ must have small kernel and $d B_j(x)$ must be nearly surjective (or $c_j=0$), for generic $x$.

\begin{proposition}[Submersion condition]
    Let $1\leq j\leq m$ be arbitrary with $c_j>0$. Suppose that $B_j$ is a $C^k$ map, $k\geq 1$. Suppose that $r\leq n_j-\frac{n-n_j}{k-1}$, and that $x\in U$ is such that, for some $\delta\in\K^\times$, we have $\mathrm{rank}(dB_j(y))\leq r$ for all $y\in x+\delta\cal O_n$. Then $\operatorname{BL}(\bf B,\bf c)=+\infty$.
\end{proposition}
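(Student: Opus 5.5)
We plan to exhibit test functions for which $\operatorname{BL}(\bf B,\bf c;\bf f)$ is unbounded, by showing that $B_j$ compresses small balls around $x$ into sets of measure much smaller than the natural $|h|^{n_j}$ scale. The mechanism is that on $x+\delta\cal O_n$ the differential $dB_j$ has rank at most $r<n_j$. Hence, by $C^k$ Taylor expansion (Appendix, Definition~\ref{def:Ck}), the image of a ball of radius $|h|$ lies in an $|h|^{k}$-neighbourhood of an $r$-dimensional affine piece. This piece has size about $|h|$ in $r$ directions and about $|h|^k$ in the remaining $n_j-r$ directions.

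\textbf{Covering estimate.} For $|h|$ small and $y\in x+\delta\cal O_n$, write $B_j(y+z)=B_j(y)+\sum_{1\le i<k}T_i(y)[z]+O(|h|^k)$ for $\|z\|\le|h|$. We would like the whole polynomial part to lie in a thin neighbourhood of a low-dimensional set. The cleaner route is to use the rank bound at every point. The derivatives of order $\ge2$ are derivatives of $dB_j$, so locally $B_j(y+h\cal O_n)$ is covered by $\lesssim |h|^{-(\cdots)}$ cosets of a box $|h|\cal O_r\times|h|^k\cal O_{n_j-r}$.

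\textbf{Simpler alternative.} One can avoid that structural claim by covering $x+\delta\cal O_n$ by balls of radius $|h|$. Then $B_j$ of each ball is contained in $B_j(y)+dB_j(y)(h\cal O_n)+O(|h|^k)\cal O_{n_j}$. This set has measure $\lesssim |h|^{r}|h|^{k(n_j-r)}$, since the image of $dB_j(y)$ has rank at most $r$ with entries bounded uniformly. Summing over the $|h|^{-n}$ balls gives
\[
\mu_{n_j}\big(B_j(x+\delta\cal O_n)\big)\lesssim |h|^{-n+r+k(n_j-r)}.
\]
Set $f_j=1_{B_j(x+\delta\cal O_n)}$ (after enlarging slightly to make it Schwartz--Bruhat, e.g.\ the union of the covering cosets), and set $f_i=1_{B_i(x+\delta\cal O_n)+\cal O_{n_i}}$ for $i\ne j$, or any fixed bounded Schwartz--Bruhat majorant. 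The numerator is then at least $\mu_n(x+\delta\cal O_n)>0$, with the $f_i$ independent of $h$. The denominator contains $\mu(f_j)^{c_j}\lesssim |h|^{c_j(k(n_j-r)+r-n)}$. This tends to $0$ as $|h|\to0$ provided $k(n_j-r)+r-n>0$, i.e.\ $(k-1)(n_j-r)>n-n_j$. That is the hypothesis $r\le n_j-\frac{n-n_j}{k-1}$, up to the equality case.

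\textbf{Main obstacle: the equality case.} When $r=n_j-\frac{n-n_j}{k-1}$ the crude covering gives only boundedness. There I would exploit that the rank bound holds on the whole ball rather than at a point. This forces the quadratic and higher Taylor terms to be tangent to the rank-$r$ image directions up to lower order. It gains an extra factor, or lets one iterate at a finer scale to produce a logarithmic or power gain. If that fails, I would expect the paper's proposition to be read with a strict inequality. The secondary technical point is checking that the $O(|h|^k)$ remainder is uniform in $y$, which is exactly the strengthened $C^k$ definition from the Appendix.
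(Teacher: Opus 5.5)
Your architecture is the paper's. You make $B_j(x+\delta\cal O_n)$ null by covering it with images of $|h|$-balls, then test with $f_j$ the indicator of a small compact open neighbourhood of that image and fixed majorants $f_i$, $i\neq j$. The paper gets nullity by citing its weak Sard theorem, Theorem~\ref{thm:sard}, and then uses essentially your test functions.

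\textbf{The gap.} It is the inclusion $B_j(y+h\cal O_n)\subseteq B_j(y)+dB_j(y)(h\cal O_n)+O(|h|^k)\cal O_{n_j}$ in your ``simpler alternative''. Taylor expansion leaves the terms $D^\alpha B_j(y)z^\alpha$ with $2\le|\alpha|\le k-1$. These have size up to $|h|^2$, and nothing you have said puts them in $\mathrm{im}\,dB_j(y)$. With only the $C^2$ bound, your count gives the exponent $r+2(n_j-r)-n$, so the conclusion only for $r<2n_j-n$. That falls short of the stated range once $k\ge3$ and never reaches the equality case.

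So the structural claim you set aside cannot be avoided. The rank bound on the \emph{whole} ball must force the higher coefficients $D^\alpha B_j$ to vanish in the degenerate directions. Then each $B_j(y+h\cal O_n)$ is an $o(|h|^k)$-thickening of an $r$-dimensional piece of size $|h|$; this is what the proof of Theorem~\ref{thm:sard} attempts. Granting it, equality costs nothing: the remainder is $o(|h|^k)$ uniformly, since the $E_\alpha$ are continuous on a compact set and vanish on the diagonal. So there is no need to reread the statement with a strict inequality.

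\textbf{Characteristic.} This structural step genuinely needs characteristic $0$, where $D^\alpha$ agrees with $\partial^\alpha/\alpha!$ and $\alpha!$ is invertible. Your heuristic ``higher derivatives are derivatives of $dB_j$'' is exactly what fails otherwise. Over $\K=\F_2\llparen t\rrparen$ take $n=2$, $m=n_1=1$, $U=\cal O_2$ and $B_1(x_1,x_2)=x_1^2+tx_2^2$.
\begin{itemize}
\item $B_1$ is a polynomial, hence $C^k$ for all $k$, and $dB_1\equiv0$. So $r=0\le 1-\frac{1}{k-1}$ for every $k\ge2$, strictly for $k\ge3$.
\item $B_1$ is additive and maps $\cal O_2$ bijectively onto $\cal O_1$, since valuations of $x_1^2$ are even and those of $tx_2^2$ are odd.
\item $B_1$ pushes $\mu_2|_{\cal O_2}$ forward to $\mu_1|_{\cal O_1}$: for $e\ge0$ the preimage of $t^e\cal O_1$ is $t^{\lceil e/2\rceil}\cal O_1\times t^{\lceil (e-1)/2\rceil}\cal O_1$, of measure $2^{-e}$.
\end{itemize}
Hence $\int_{\cal O_2}f^c\circ B_1=\int_{\cal O_1}f^c\le(\int f)^c$ for $c\in(0,1]$ by H\"older, and $\operatorname{BL}(B_1,c)\le1$. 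So the proposition, and Theorem~\ref{thm:sard} behind the paper's proof, fail in positive characteristic even with strict inequality.

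\textbf{The case $r=n_j$.} The statement also needs $r<n_j$, which you assumed tacitly. For $n=n_j$ it permits $r=n_j$, and the identity on $\cal O_n$ with $c=1$ has $\operatorname{BL}=1$.

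You should prove the structural step under $\mathrm{char}\,\K=0$ and $r<n_j$; the paper's applications are over $\Q_p$ anyway.
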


\begin{proof}
    By Theorem~\ref{thm:sard} below, we have that $B_j(x+\delta\cal O_n)$ is a nullset in $\K^{n_j}$. Since $\operatorname{BL}(\bf B,\bf c)$ is decreasing under restrictions, we may freely replace $\bf B$ with $\left.\bf B\right|_{x+\delta\cal O_n}$. We may also, for simplicity, assume that each $B_j$ is $1$-Lipschitz on $x+\delta\cal O_n$; affine symmetries certainly accomplish this without affecting the conclusion.

    Now, for $j'\neq j$, write simply $f_{j'}=1_{B_{j'}(x)+\cal O_{n_j}}$. For $j$, let $\eps\in\delta\cal O_1$, and choose $X_\eps$ to be an open neighborhood of $B_j(x+\delta\cal O_n)$ with $\mu_{n_j}(X_\eps)\leq|\eps|$. Then, writing $f_{j}=1_{X_\eps}$, we obtain
    \begin{equation*}
        \int\prod_{j'=1}^mf_j(B_j(y))=|\delta|^n,
    \end{equation*}
    whereas
    \begin{equation*}
        \|f_j\|_{L^{1/c_j}\big(\K^{n_j}\big)}=\mu_{n_j}(X_\eps)^{c_j},\quad\|f_{j'}\|_{L^{1/c_{j'}}\big(\K^{n_{j'}}\big)}=1,
    \end{equation*}
    so that
    \begin{equation*}
        \operatorname{BL}\big(\bf B,\bf c\big)\geq|\delta|^n|\eps|^{-c_j n_j}.
    \end{equation*}
    Since $c_j>0$ and $\eps$ is only constrained by $|\eps|\leq|\delta|$, we may send $\eps\to 0$ and conclude that $\operatorname{BL}(\bf B,\bf c)=+\infty$, as desired.
\end{proof}

\begin{remark}
    In particular, if $c_j>0$ and $B_j$ is $C^{n-n_j+1}$, then $\operatorname{BL}(\bf B,\bf c)<+\infty$ forces $dB_j$ to be generically full rank.
\end{remark}

\begin{proposition}[Immersion condition]\label{prop:ker_cond}
    Suppose that there is some $x\in U$ such that
    \begin{equation*}
        n-\sum_{j=1}^m c_jn_j<\dim\ker d\bf B(x).
    \end{equation*}
    Then $\operatorname{BL}(\bf B,\bf c)=+\infty$.
\end{proposition}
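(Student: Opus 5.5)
We test the functional on thin neighborhoods of the kernel directions at $x$. Write $k=\dim\ker d\bf B(x)$ and assume $k>n-\sum_jc_jn_j$. Since $\operatorname{BL}$ can only decrease under restriction of the domain, it suffices to work on a small ball $x+\delta\cal O_n\subseteq U$. After an isometric change of variables in $\K^n$ (harmless for the constant), we take $\ker d\bf B(x)=\K^k\times\{0\}^{n-k}$. Fix $\delta$ small enough that, by the $C^1$ property of Definition~\ref{def:Ck}, each $B_j$ is well approximated by its linearization on $x+\delta\cal O_n$. The approximation we need is
\begin{equation*}
\|B_j(y)-B_j(z)-dB_j(x)(y-z)\|\leq\eta(\delta)\|y-z\|,\quad y,z\in x+\delta\cal O_n,
\end{equation*}
with $\eta(\delta)\to0$. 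After rescaling we may also assume every $B_j$ is $1$-Lipschitz there.

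Next we build the test sets. For $h\in\K^\times$ with $|h|\leq|\delta|$, let
\begin{equation*}
S_h=x+(\delta\cal O_k\times h\cal O_{n-k}),
\end{equation*}
so that $\mu_n(S_h)=|\delta|^k|h|^{n-k}$. For $y\in S_h$ the difference $y-x$ is a kernel part of size at most $|\delta|$ plus a transverse part of size at most $|h|$. The linear term $dB_j(x)(y-x)$ therefore has size at most $C|h|$, and the error term has size at most $\eta(\delta)|\delta|$. This gives
\begin{equation*}
B_j(S_h)\subseteq B_j(x)+\rho\,\cal O_{n_j},\quad|\rho|=\max(C|h|,\eta(\delta)|\delta|).
\end{equation*}
Take $f_j=1_{B_j(x)+\rho\cal O_{n_j}}$. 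Then
\begin{equation*}
\operatorname{BL}(\bf B,\bf c;\bf f)\geq\frac{|\delta|^k|h|^{n-k}}{|\rho|^{\sum_jc_jn_j}}.
\end{equation*}

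The difficulty is that $\eta(\delta)|\delta|$ does not shrink with $h$, so we must couple the scales. First fix $h$, then pick $\delta$ in terms of $h$. Let $\delta=\delta(h)$ be the largest scale with $\eta(\delta)|\delta|\leq|h|$; this makes $|\rho|\approx|h|$. The bound becomes $|\delta(h)|^k|h|^{n-k-\sum_jc_jn_j}$. Because $n-k-\sum_jc_jn_j<0$, the factor $|h|^{n-k-\sum_jc_jn_j}$ blows up as $h\to0$, and we need it to beat the shrinking $|\delta(h)|^k$.

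Two cases follow. In the special case $c_j$ where $\sum c_jn_j\leq n$, all we need is $|\delta|\gg|h|$, which holds because $\eta(\delta)\to0$ gives $|\delta(h)|/|h|\to\infty$. The estimate is then at least $(|\delta|/|h|)^k\,|h|^{n-\sum c_jn_j}$. When $n-\sum_jc_jn_j\geq0$ this tends to $\infty$ as soon as $k>0$, and $k>n-\sum c_jn_j\geq0$ gives $k\geq1$.

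If $n-\sum_jc_jn_j<0$, the claimed hypothesis is automatic even at $k=0$. That case should instead follow from the scaling argument: shrinking all sets $x+\delta\cal O_n$ together with the boxes $B_j(x)+C\delta\cal O_{n_j}$ gives a ratio of order $|\delta|^{n-\sum c_jn_j}\to\infty$.

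The main obstacle is therefore the first regime, where we must check that the $C^1$ definition gives a uniform error $o(\|y-z\|)$ on the whole ball. Without that uniformity we cannot get $|\delta(h)|/|h|\to\infty$. We would take this uniformity from the appendix's strengthened $C^1$ notion.
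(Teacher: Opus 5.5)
Your construction is essentially the paper's: a slab $x+(\delta\mathcal O_k\times h\mathcal O_{n-k})$ along $\ker d\mathbf B(x)$, indicators of $\rho$-balls about $B_j(x)$, and $\delta$ coupled to $h$ through the differentiability modulus. It is sound when $n\le\sum_jc_jn_j$. On the scaling line your bound is $(|\delta(h)|/|h|)^k\to\infty$, and when $n<\sum_jc_jn_j$ the rescaling argument works. The gap is the sentence claiming that $(|\delta|/|h|)^k|h|^{n-\sum_jc_jn_j}\to\infty$ whenever $n-\sum_jc_jn_j\ge0$. If $n-\sum_jc_jn_j>0$, the factor $|h|^{n-\sum_jc_jn_j}$ tends to $0$, while $|\delta(h)|/|h|$ grows only as fast as the modulus $\eta$ permits. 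For example, take $B(y)=y^2$ on $\mathcal O_1=\mathbb Z_p$ with $p$ odd, and $x=0$. Then $\eta(\delta)=|\delta|$ and $|\delta(h)|\approx|h|^{1/2}$, so your lower bound is $|h|^{1/2-c}$, which does not blow up for $c\le 1/2$.

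No choice of test functions can close this gap, because the proposition is false in that regime. Keep $B(y)=y^2$ on $\mathbb Z_p$ ($p$ odd), $n=n_1=1$, and take $c\in(0,1/2)$. The hypothesis $1-c<1=\dim\ker dB(0)$ holds. The pushforward of $1_{\mathbb Z_p}\,dy$ under $y\mapsto y^2$ is $w(u)\,du$, where $w(u)=2|u|^{-1/2}$ on the nonzero squares of $\mathbb Z_p$ and $w=0$ elsewhere. By H\"older, $\int_{\mathbb Z_p}f(y^2)^c\,dy=\int f^c w\le(\int f)^c\|w\|_{L^{1/(1-c)}}$. Moreover $\|w\|_{L^{1/(1-c)}}<\infty$ because $\frac{1}{2(1-c)}<1$, so $\operatorname{BL}(B,c)<\infty$.

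The paper's own proof has the same oversight. Its final bound $|\eta|^{n-\sum_jn_jc_j}|\kappa|^{n-\dim V-\sum_jn_jc_j}$ sends $\kappa\to0$ as if $\eta$ were fixed, although in general $\eta=\eta(\kappa)\to0$ (here $|\eta|\le|\kappa|$). So both arguments prove the proposition only under the extra hypothesis $n\le\sum_jc_jn_j$. That is all the later arguments need: super-scaling in the converse estimate, and data on the scaling line. You should state that hypothesis rather than claim the general case.

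Finally, the uniformity you flag as the main obstacle is not needed. Every point of $S_h$ is compared only with $x$, so differentiability at $x$ alone suffices, and that follows from continuity of the Newton quotients in the paper's $C^1$ definition.
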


\begin{proof}
    Abbreviate $V=\ker d\bf B(x)$; thus, for each $1\leq j\leq k$ and $z\in V$, we have $dB_j(x).z=0$. It will suffice to assume that $U$ is a metric ball; i.e.\ $U=x_0+\gamma\cal O_n$. Recall that $\bf B$ is $C^1$, i.e.\ that the functions
    \begin{equation*}
        \Psi_{ik}(y,z,\xi)=\frac{\bf B(\xi+y\bf e_i)_k-\bf B(\xi+z\bf e_i)_k}{y-z},\quad \xi\in U,\,\,y,z\in\gamma\cal O_1
    \end{equation*}
    extends to be continuous on $U\times \gamma\cal O_1\times\gamma\cal O_1$; in particular, we may find some $C>0$ such that
    \begin{equation*}
        |\Psi_{ik}(y,z,\xi)|\leq C,\quad\forall i,k,\quad\forall \xi\in U,\,\,y,z\in\gamma\cal O_1.
    \end{equation*}
    It follows immediately that $\bf B$ satisfies the Lipschitz bound
    \begin{equation*}
        \|\bf B(y)-\bf B(z)\|\leq C\|y-z\|,\quad\forall y,z\in U.
    \end{equation*}
    
    Next, given $\kappa\in\K^\times$ small, there exists $\eta\in\K^\times$ with $|\eta|\leq 1$ such that
    \begin{equation*}
        z\in\eta\cal O_n\implies B_j(x+z)-B_j(x)-dB_j(x).z\in\kappa\eta\cal O_{n_j}.
    \end{equation*}
    For $h\in\K^\times$ small (always $|h|\leq 1$), write
    \begin{equation*}
        f_j(y)=1_{h\cal O_{n_j}}(y-B_j(x)).
    \end{equation*}
    We choose for now $h=\kappa\eta$, where $\kappa$ is small and $\eta$ is obtained as above. From the Lipschitz bound on $\bf B$, if $\rho\in\K^\times$ is such that $|\rho|=\min\big(|\gamma|,|h|C^{-1}\big)$, it follows that
    \begin{equation*}
        \forall y\in U,\,\,z\in\rho\cal O_n:\quad\|\bf B(y+z)-\bf B(y)\|\leq|h|.
    \end{equation*}
    We will assume that $\kappa$ is so small that $|\rho|=|h|C^{-1}$.
    
    Thus, for $z\in \eta\cal O_n\cap V$ and $z'\in\rho\cal O_n$,
    \begin{equation*}
        B_j(x+z+z')\in B_j(x+z)+h\cal O_{n_j}
    \end{equation*}
    and
    \begin{equation*}
        B_j(x+z)\in B_j(x)+dB_j(x).z+\kappa\eta\cal O_{n_j}=B_j(x)+\kappa\eta\cal O_{n_j.}
    \end{equation*}
    In particular,
    \begin{equation*}
        \|\bf B(x+z+z')-\bf B(x)\|\leq |h|,
    \end{equation*}
    for each $z\in \eta\cal O_n\cap V$ and $z'\in\rho\cal O_n$. Thus,
    \begin{equation*}
        \prod_{j=1}^m(f_j^{c_j}\circ B_j)(x+z+z')\geq 1_{\eta\cal O_n\cap V}(z)1_{\rho\cal O_n}(z').
    \end{equation*}
    In particular, if $W\subseteq\K^n$ is complementary to $V$,
    \begin{equation*}
        \int_{V}\int_W\prod_{j=1}^m(f_j^{c_j}\circ B_j)(x+z+z')dz'dz\geq|\eta|^{\dim V}|\rho|^{n-\dim V},
    \end{equation*}
    and
    \begin{equation*}
        \prod_{j=1}^m\left(\int f_j\right)^{c_j}=|h|^{\sum_{j=1}^mn_jc_j}.
    \end{equation*}
    Since $h=\kappa\eta$, and $|\rho|\leq C^{-1}|h|$
    \begin{equation*}
        \operatorname{BL}(\bf B,\bf c;\bf f)\gtrsim|h|^{n-\dim V-\sum_jn_jc_j}|\eta|^{\dim V}\geq|\eta|^{n-\sum_jn_jc_j}|\kappa|^{n-\dim V-\sum_jn_jc_j}.
    \end{equation*}
    Since $n<\dim V+\sum_jc_jn_j$, and since $\kappa$ was permitted to be arbitrarily small, we obtain
    \begin{equation*}
        \operatorname{BL}(\bf B,\bf c)=+\infty=\operatorname{BL}(d\bf B(x),\bf c),
    \end{equation*}
    as claimed.
\end{proof}

\begin{remark}
    Thus, if $n\leq\sum_{j=1}^mc_jn_j$, it follows that $\operatorname{BL}(\bf B,\bf c)<+\infty$ only if $d\bf B(x)$ is injective, for every $x$. In particular, if $\bf c$ lies on the scaling line $n=\sum_jc_jn_j$, the same requirement holds. More generally, if $\operatorname{BL}(\bf B,\bf c)<+\infty$, it must hold that $\dim\ker d\bf B(x)\leq n-\sum_jc_jn_j$, for every $x\in U$.
\end{remark}

\section{Nonlinear Brascamp--Lieb: reduction to linear}\label{sec:nonlin_to_lin}

In this section, we will prove upper- and lower-bounds relating nonlinear Brascamp--Lieb inequalities to their worst linearizations, at least locally. The result will be proofs of Theorems~\ref{thm:main_K_NL} and~\ref{thm:main_L_by_NL}.

One of the useful features of this analysis is that we implicitly obtain a quantitative bound on near-extremizers. We introduce the following quantity to track the distance of a given tuple of test functions $\bf f$ from an extremizer.
\begin{definition}\label{def:inv_thry_Z}
    Suppose that $\operatorname{BL}(\bf L,\bf c)<+\infty$. Let $M\in\cal N(\K^n)$ be an extremizer to $\operatorname{BL}_{\mathrm{grp}}(\bf L,\bf c)$. For $\lambda\in\varpi^\Z$ and a tuple of nonnegative Schwartz--Bruhat functions $\bf f$, we write
    \begin{equation*}
        \cal Z(\bf f;\lambda)=\cal Z_{\bf L}(\bf f;\lambda)=\frac{\operatorname{BL}(\bf L,\bf c;\big(f_j*1_{\lambda (L_jM)})_{1\leq j\leq m}\big)}{\operatorname{BL}(\bf L,\bf c)}.
    \end{equation*}
\end{definition}

We restate Theorem~\ref{thm:main_K_NL} for convenience.

\begin{theorem}[Linearity is locally the worst case]\label{thm:nonlin_local_bdd}
    Let $U\subseteq\K^n$ be a nonempty compact open set, and $\bf B=(B_j)_{1\leq j\leq m}$ be a tuple of $C^1$ maps $B_j:U\to\K^{n_j}$, such that $\|\bf B\|_{C^1(U)}<+\infty$, and $\bf c=(c_j)_{1\leq j\leq m}\in[0,1]^m$. Suppose that each $B_j$ is a submersion on $U$, and that
    \begin{equation*}
        \sup_{u\in U}\operatorname{BL}(d\bf B(u),\bf c)<+\infty.
    \end{equation*}
    Then there is a constant $\delta\in\K^\times$, depending on $\bf B$ and $\bf c$, for which the following holds: for each $\epsilon\in\delta\cal O_1$,
    \begin{equation*}
        \operatorname{BL}(\left.\bf B\right|_{u+\epsilon\cal O_n},\bf c)\leq\sup_{x\in u+\eps\cal O_n}\operatorname{BL}(d\bf B(x),\bf c),\quad\text{for all $u\in U$ with $u+\epsilon\cal O_n\subseteq U$}.
    \end{equation*}
    Moreover, $\delta$ may be chosen as follows. There is a regular subgroup $M_u\in\cal N(\K^n)$ extremizing $\operatorname{BL}_{\mathrm{grp}}(d\bf B(u),\bf c)$, say with minimal eccentricity $\mathrm{ecc}(M_u)=E_u$. Let $E<+\infty$ be a uniform upper bound for all the $E_u$. Let also
    \begin{equation*}
        T=\max\big(1,\sup\big\{\|dB_j(u)\|:1\leq j\leq m,\,\, u\in U\big\}\big),
    \end{equation*}
    \begin{equation*}
        r=\min_{1\leq j\leq m}\inf_{u\in U}\mathrm{rad}\big(dB_j(u).(\cal O_n)\big),
    \end{equation*}
    \begin{equation*}
        c=\inf_{\|\bf v\|=1}\inf_{u\in U}\|d\bf B(u).\bf v\|.
    \end{equation*}
    Then, $\delta$ is chosen to arrange that
    \begin{equation*}
        x,y\in U\cap(u+\delta\cal O_n)\implies \|\bf B(x)-\bf B(u)-d\bf B(y).(x-y)\|\leq|\varpi|E^{-1}T^{-1}rc\|x-y\|.
    \end{equation*}
\end{theorem}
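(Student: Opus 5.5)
The plan is to run a non-Archimedean version of Ball's inequality, as in Proposition~\ref{prop:ball_subg}, with the linear map $d\bf B(x)$ frozen at each point $x$. The key input is the regular-subgroup extremizer $M_x$ of $\operatorname{BL}_{\mathrm{grp}}(d\bf B(x),\bf c)$ supplied by Theorem~\ref{thm:lin_sub_extr}, whose eccentricity is at most $E$. The ultrametric inequality should turn the approximate linearity of $\bf B$ into \emph{exact} set containments. So instead of an error term we expect a clean inequality with constant $\sup_x\operatorname{BL}(d\bf B(x),\bf c)$.

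\emph{Step 1: exact local linearization.} Fix $u$ and $\epsilon\in\delta\cal O_1$, and nonnegative Schwartz--Bruhat $f_j$. For $x\in u+\epsilon\cal O_n$ and a scale $\lambda\in\varpi^{\Z}$, normalize $M_x$ to have diameter $|\lambda|$, so that its radius is at least $|\lambda|E^{-1}$. I would show that for $y\in x+M_x$,
\begin{equation*}
B_j(y)\in B_j(x)+dB_j(x)(y-x)+\varpi\,dB_j(x)(M_x),
\end{equation*}
and that the reverse implication holds as well. By the choice of $\delta$, the error term has size at most $|\varpi|E^{-1}T^{-1}rc\|y-x\|$. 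The ball of radius $rE^{-1}|\lambda|$ is contained in $dB_j(x)(M_x)$, by the definition of $r$ and the eccentricity bound. The constant $c$, the injectivity modulus of $d\bf B$, handles the converse via Lemma~\ref{lem:thick}. Ultrametricity then gives that $\{y:B_j(y)\in B_j(x)+dB_j(x)M_x\ \forall j\}$ equals $x+\bigcap_j dB_j(x)^{-1}(dB_j(x)M_x)=x+M_x$ exactly. In the same way, $dB_j(y)M_y=dB_j(x)M_x$ for $y$ in this coset, so the frozen data are locally constant along the flow.

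\emph{Step 2: Ball splitting.} As in the proof of Proposition~\ref{prop:ball_subg}, I insert the identity $1=\mu_n(M_x)^{-1}\int 1_{y\in x+M_x}\,dx$ into $\int_{u+\epsilon\cal O_n}\prod_j f_j^{c_j}(B_j(y))\,dy$ and swap the order of integration. By Step~1, for each fixed $x$ the inner integral is a linear-datum Brascamp--Lieb integral for $d\bf B(x)$ applied to the localized functions $h_{j,x}=f_j\cdot1_{B_j(x)+dB_j(x)M_x}$, transported by the translation $y\mapsto B_j(x)+dB_j(x)(y-x)$. Using Theorem~\ref{thm:main_extr_functional} together with Lemma~\ref{lem:rearrange}, this inner integral is at most $\operatorname{BL}(d\bf B(x),\bf c)\prod_j(\int h_{j,x})^{c_j}$. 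Normalizing by $\mu_n(M_x)=\operatorname{BL}(d\bf B(x),\bf c)\prod_j\mu(dB_j(x)M_x)^{c_j}$, which holds since $M_x$ is an extremizer, the linear constants cancel. What remains is the outer nonlinear integral $\int\prod_j (f_j*\mu(dB_j(x)M_x)^{-1}1_{dB_j(x)M_x})^{c_j}(B_j(x))\,dx$, i.e.\ the nonlinear functional applied to averaged functions at scale $\lambda$, times a factor of $\sup_x\operatorname{BL}(d\bf B(x),\bf c)$.

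\emph{Step 3: closing the recursion, which I expect to be the main obstacle.} We need the outer averaged functional to be at most $\prod_j\|f_j\|_{1/c_j}$ with constant $1$ rather than merely $\lesssim$. My first attempt is to take $\lambda$ as large as possible, of size comparable to $\epsilon$, so that $u+\epsilon\cal O_n$ is a single coset of $M_x$. Then the averaged functions are constant on $B_j(u+\epsilon\cal O_n)$ by Step~1, and the outer integral collapses. If a single step does not suffice, the alternative is to iterate Step~2 dyadically in $\lambda$ and telescope, in the way Corollary~\ref{cor:subg_bd_by_subsp} telescopes. Here the scaling line $n=\sum_jc_jn_j$ should make the remaining quotients equal to exactly $1$. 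The delicate point is that at every stage $M_x$ must still be an extremizer for the relevant $x$ and must remain locally constant in $x$; this is exactly why the hypothesis on $\delta$ carries the factors $E^{-1}$, $T^{-1}$, $r$ and $c$.
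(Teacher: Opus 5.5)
There is a genuine gap, and it is where you suspected. Step~1 only gives $B_j(y)\in B_j(x)+dB_j(x)(y-x)+\varpi\,dB_j(x)M_x$. So the claim in Step~2 that the inner integral $\int_{x+M_x}\prod_j f_j^{c_j}(B_j(y))\,dy$ is a linear-datum integral silently requires each $f_j$ to be invariant under translations by $\varpi\,dB_j(x)M_x$. This is why your ``first attempt'' fails: at $|\lambda|\approx|\epsilon|$ a general Schwartz--Bruhat $f_j$ is not constant at that scale. Moreover, $u+\epsilon\mathcal O_n$ is a coset of $M_x$ only if $M_x=\epsilon\mathcal O_n$.

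The iteration you fall back on is left unspecified, and your bookkeeping would not survive it. You say the linear constants cancel but then keep a factor $\sup_x\operatorname{BL}(d\mathbf B(x),\mathbf c)$. Also, the insertion identity $1=\mu(M)^{-1}\int 1_{y\in x+M}\,dx$ needs a single $M$. Bounding the inner integral by $\operatorname{BL}(d\mathbf B(x),\mathbf c)$ while normalizing with that one extremizer leaves a ratio $\operatorname{BL}(d\mathbf B(x),\mathbf c)/\operatorname{BL}(d\mathbf B(u),\mathbf c)$. Either factor compounds over the unboundedly many scales an iteration needs. Finally, $dB_j(y)M_y=dB_j(x)M_x$ is unjustified, since extremizers are not unique. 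What is true, and all you need, is that one fixed $M=M_u$ satisfies $dB_j(y)M=dB_j(u)M$ throughout the ball.

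Your route can be closed as an \emph{upward} heat flow, run from fine to coarse scales with everything frozen at $u$. Take $M=M_u$ rescaled to diameter $|\lambda|$, and $M_j=dB_j(u)M$. Bound the inner integral using $d\mathbf B(u)$ rather than $d\mathbf B(x)$; the swap costs nothing at the error scale. Since $\lambda M$ is an extremizer at every scale (scaling condition), the constants cancel exactly. You get $\int\prod_j g_j^{c_j}\circ B_j\le\int\prod_j (g_j*\mathrm{avg}_{\lambda M_j})^{c_j}\circ B_j$ whenever each $g_j$ is $\varpi\lambda M_j$-invariant.

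Start at $\lambda_0$ small enough that the $f_j$ qualify. The function $g_j*\mathrm{avg}_{\lambda M_j}$ is $\lambda M_j=\varpi(\varpi^{-1}\lambda)M_j$-invariant, so the step applies again at $\varpi^{-1}\lambda$. The averages telescope because the $\lambda M_j$ are nested. Stop at $|\lambda|=|\epsilon|$. There the averaged $F_j$ satisfy $F_j(B_j(x))=F_j(B_j(u)+dB_j(u)(x-u))$ on all of $u+\epsilon\mathcal O_n$, so the functional is at most $\operatorname{BL}(d\mathbf B(u),\mathbf c)\prod_j(\int f_j)^{c_j}$. This uses only an error bound of size $|\varpi|rE^{-1}$, not $T$ or $c$.

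The paper runs the dual, downward iteration. In its nonlinear Ball inequality the frozen datum $d\mathbf B(u)$ sits on the coarse, outer side, contributing $\mathcal Z\le 1$. Only the indicator $1_{\varpi\delta M_j}$ is ever linearized, never $f_j$. So arbitrary $f_j$ can be localized scale by scale to cosets of $\varpi^{\iota}\delta M_j$ until they are constant there. At that point the nonlinear functional with indicator inputs is exactly linear at some $x$ (this is where $T$ and $c$ enter), and it is bounded by $\operatorname{BL}(d\mathbf B(x),\mathbf c)$.
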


In subsection~\ref{subsec:ball_ineq}, we prove a nonlinear Ball's inequality over $\K$. In subsection~\ref{subsec:nonlin_to_lin}, we use Ball's inequality to prove Theorem~\ref{thm:nonlin_local_bdd}. In subsection~\ref{subsec:converse}, we prove Theorem~\ref{thm:main_L_by_NL}.

\subsection{Ball's inequality}\label{subsec:ball_ineq}

The core tool for controlling nonlinear Brascamp--Lieb functionals is the following. The technique originated in~\cite{ball2006volumes}, and was used critically in~\cite{bennett2020nonlinear}.

\begin{lemma}[Nonlinear Ball's inequality]\label{lem:nonlin_ball_ineq}
    Let $U\subseteq\K^n$ be a nonempty compact open set, and $\bf B=(B_j)_{1\leq j\leq m}$ be a tuple of $C^1$ submersions $B_j:U\to\K^{n_j}$, such that $\|\bf B\|_{C^1(U)}<+\infty$, and $\bf c=(c_j)_{1\leq j\leq m}\in[0,1]^m$ satisfy~\eqref{eq:scaling}. Fix $u\in U$, and suppose that $\gamma,\tau\in\K^\times$ are such that each $dB_j(u)$ has maximum entry at most $|\tau|$, and has an invertible $n_j\times n_j$ submatrix $A$ such that $\|A^{-1}\|\leq|\gamma|^{-1}$.  Suppose that $M\in\cal N(\K^n)$ is an extremizer of $\operatorname{BL}_{\mathrm{grp}}(d\bf B(u),\bf c)$ with $\mathrm{rad}(M)=|\rho|\leq 1=\mathrm{diam}(M)$. Write $M_j=dB_j(u).(M)$ and $\bf M=(M_j)_{1\leq j\leq m}$.

    Suppose $\delta\in\K^\times$ is such that $u+\delta\cal O_n\subseteq U$ and
    \begin{equation*}
        x\in u+\delta\cal O_n\quad\implies\quad\|B_j(x)-B_j(u)-dB_j(u).(x-u)\|\leq|\varpi\rho\gamma|\|x-u\|.
    \end{equation*}
    Then, for any $\bf f=(f_j)_{1\leq j\leq m}$ tuple of nonnegative nonzero Schwartz--Bruhat functions on $\K^{n_j}$, we have
    \begin{equation*}
        \operatorname{BL}\big(\left.\bf B\right|_{u+\delta\cal O_n},\bf c;\bf f\big)\leq\cal Z_{d\bf B(u)}(\tau_{\bf B(u)}\bf f;\varpi\delta)\sup_{x\in u+\delta\cal O_n}\operatorname{BL}\big(\left.\bf B\right|_{x+\varpi\delta\cal O_n},\bf c;\bf f1_{\bf B(u)+d\bf B(u).(x-u)+\varpi\delta\bf M}\big),
    \end{equation*}
    where $\tau_af(x)=f(x+a)$ is translation by $a$.
\end{lemma}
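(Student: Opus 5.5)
We follow the structure of the proof of Proposition~\ref{prop:ball_subg}, with the nonlinear maps replacing $\bf L$ and the extremizer $M$ supplying the auxiliary averaging functions. Abbreviate $L_j=dB_j(u)$ and $\bf L=d\bf B(u)$. Without loss of generality translate so that $u=0$ and $\bf B(u)=0$; this is what the translation $\tau_{\bf B(u)}$ in the statement records. Write $\phi_j=|\varpi\delta|^{-n_j}\mu_{n_j}(M_j)^{-1}1_{\varpi\delta M_j}$. The scaling condition~\eqref{eq:scaling} together with extremality of $M$ gives
\begin{equation*}
\int_{\K^n}\prod_j\big(1_{\varpi\delta M_j}(L_jz)\big)^{c_j}dz\Big/\prod_j\mu_{n_j}(\varpi\delta M_j)^{c_j}=\operatorname{BL}(\bf L,\bf c).
\end{equation*}
We insert this identity as a factor $1$ into $\int_{\delta\cal O_n}\prod_jf_j^{c_j}(B_j(y))\,dy$, in the form $\operatorname{BL}(\bf L,\bf c)^{-1}\int_{\K^n}\prod_j\phi_j^{c_j}(L_x)\dots$. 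After a change of variables this becomes a double integral over $(x,y)$ with integrand $\prod_j[f_j(B_j(y))\,\phi_j(L_j x-B_j(y))]^{c_j}$.

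The key geometric step is to show that the integrand vanishes unless $y\in x+\varpi\delta\cal O_n$, up to a harmless shift of the $x$ variable. This is the nonlinear analogue of Lemma~\ref{lem:thick}. Start from $B_j(y)\in L_jx+\varpi\delta M_j$ for all $j$, and use the linearization hypothesis to get
\begin{equation*}
\|B_j(y)-L_jy\|\le|\varpi\rho\gamma|\|y\|.
\end{equation*}
Combining these, $L_j(x-y)$ lies in $\varpi\delta M_j$ up to an error of size $|\varpi\rho\gamma\delta|$. The error is absorbed for the following reason. Since $\mathrm{rad}(M)=|\rho|$ and $L_j$ has an invertible minor with inverse norm at most $|\gamma|^{-1}$, the set $M_j$ contains the ball $\rho\gamma\cal O_{n_j}$. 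Hence the error is swallowed, and $L_j(x-y)\in\varpi\delta M_j$. Then $M=\bigcap_jL_j^{-1}(M_j)$ (after replacing $M$ by its saturation, which is still an extremizer) gives $x-y\in\varpi\delta M\subseteq\varpi\delta\cal O_n$, using $\mathrm{diam}(M)=1$.

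With localization in hand, we apply Fubini and H\"older in two stages, exactly as in Proposition~\ref{prop:ball_subg}. First, for fixed $x$, the inner $y$-integral is bounded by the functional $\operatorname{BL}(\bf B|_{x+\varpi\delta\cal O_n},\bf c;\bf f1_{L_jx+\varpi\delta M_j})$ times
\begin{equation*}
\prod_j\Big(\int f_j1_{L_jx+\varpi\delta M_j}\Big)^{c_j}\cdot|\varpi\delta|^{-\sum c_jn_j}\mu(M_j)^{-c_j}.
\end{equation*}
This product is exactly $\prod_j(f_j*\phi_j)^{c_j}(L_jx)$, up to the normalization of $\phi_j$. We take the supremum over $x\in\delta\cal O_n$ of the first factor. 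The remaining $x$-integral, $\int\prod_j(f_j*1_{\varpi\delta M_j})^{c_j}(L_jx)\,dx$ divided by $\operatorname{BL}(\bf L,\bf c)$ and by the mass normalizations, is by Definition~\ref{def:inv_thry_Z} precisely $\cal Z_{\bf L}(\bf f;\varpi\delta)\prod_j(\int f_j)^{c_j}$. Note that $\int f_j*1_{\lambda M_j}=\mu(\lambda M_j)\int f_j$. The bookkeeping of the constants $|\varpi\delta|^{n_j}$ cancels by scaling.

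The main obstacle is the localization step. Two points need care there: matching the shift $\bf B(u)+\bf L(x-u)$ appearing in the statement, and checking that the error radius $|\varpi\rho\gamma|\|y-u\|$ really sits inside $\varpi\delta M_j$ rather than merely $\delta M_j$. The extra factor $\varpi$ in the hypothesis is what provides the strict containment. The first thing to try is to use $y\in\delta\cal O_n$ to bound $\|y\|\le|\delta|$, and to place the ball inclusion $\rho\gamma\cal O_{n_j}\subseteq M_j$ via Lemma~\ref{lem:smith} applied to the minor $A$.
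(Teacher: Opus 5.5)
Your proposal is correct and is essentially the paper's argument, with the roles of $x$ and $y$ swapped. In both, you insert the identity $\operatorname{BL}(d\bf B(u),\bf c)=\mu_n(\varpi\delta M)/\prod_j\mu_{n_j}(\varpi\delta M_j)^{c_j}$, replace $dB_j(u).(x-y)$ by $B_j(u)+dB_j(u).(x-u)-B_j(y)$ using that $1_{\varpi\delta M_j}$ is invariant under $\varpi\rho\gamma\delta\cal O_{n_j}$ (because $\mathrm{rad}(M_j)\geq|\rho\gamma|$), and use Fubini to bound the inner integral by the localized nonlinear functional and the outer one by $\cal Z$.

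One thing to tidy: the passage from $\phi_j(L_j(x-y))$ to $\phi_j(L_jx-B_j(y))$ is not a mere change of variables. It is exactly your absorption argument, applied in either direction since $\varpi\delta M_j$ is a group. The localization $x-y\in\varpi\delta M\subseteq\varpi\delta\cal O_n$ then follows automatically from $M=\bigcap_jL_j^{-1}(M_j)$ and $\mathrm{diam}(M)=1$.
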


\begin{proof}
    Note that $\mathrm{rad}(M_j)\geq|\gamma\rho|$ and $\mathrm{diam}(M_j)\leq|\tau|$. Given $\bf f=(f_1,\ldots,f_m)$ Schwartz--Bruhat nonnegative nonzero,
    \begin{equation*}
        \begin{split}
            \int_{u+\delta\cal O_n}\prod_{j=1}^m(f_j^{c_j}\circ B_j)(x)dx&=\frac{1}{\operatorname{BL}(d\bf B(u),\bf c)}\int_{u+\delta\cal O_n}\prod_{j=1}^m(f_j^{c_j}\circ B_j)(x)\\
            &\hspace{3em}\times\int_{x+\varpi\delta\cal O_n}\prod_{j=1}^m\mu_{n_j}(\varpi\delta M_j)^{-c_j}(1_{\varpi\delta M_j}\circ dB_j(u))(y-x)dy\,dx\\
            &\hspace{-3em}=\frac{1}{\operatorname{BL}(d\bf B(u),\bf c)}\int_{u+\delta\cal O_n}\int_{x+\varpi\delta\cal O_n}\\
            &\prod_{j=1}^m\mu_{n_j}(\varpi\delta M_j)^{-c_j}(f_j^{c_j}\circ B_j)(x)(1_{\varpi\delta M_j}\circ dB_j(u))(y-x)dy\,dx.
        \end{split}
    \end{equation*}
    Writing
    \begin{equation*}
        dB_j(u).(y-x)=B_j(u)+dB_j(u).(y-u)-B_j(x)+\Big(B_j(x)-B_j(u)-dB_j(u).(x-u)\Big),
    \end{equation*}
    and noticing that the bracketed expression belongs to $\varpi\rho\gamma\delta\cal O_{n_j}$, together with the fact that each $1_{\varpi\delta M_j}$ is $\varpi\rho\gamma\delta\cal O_{n_j}$-invariant, we conclude that
    \begin{equation*}
        (1_{\varpi\delta M_j}\circ dB_j(u))(y-x)=1_{\varpi\delta M_j}\big(B_j(u)+dB_j(u).(y-u)-B_j(x)\big).
    \end{equation*}
    Define then
    \begin{equation*}
        h_{y,j}(z)=\mu_{n_j}(\varpi\delta M_j)^{-1}f_j(z)1_{\varpi\delta M_j}(B_j(u)+dB_j(u).(y-u)-z),
    \end{equation*}
    and we obtain
    \begin{equation*}
        \int_{u+\delta\cal O_n}\prod_{j=1}^m(f_j^{c_j}\circ B_j)(x)dx=\frac{1}{\operatorname{BL}(d\bf B(u),\bf c)}\int_{u+\delta\cal O_n}\int_{x+\varpi\delta\cal O_n}\prod_{j=1}^m (h_{y,j}^{c_j}\circ B_j)(x) dy\,dx.
    \end{equation*}
    Thus,
    \begin{equation*}
        \begin{split}
        \int_{u+\delta\cal O_n}\prod_{j=1}^m(f_j^{c_j}\circ B_j)(x)dx&\leq\frac{1}{\operatorname{BL}(d\bf B(u),\bf c)}\int_{u+\delta\cal O_n}\operatorname{BL}(\left.\bf B\right|_{y+\varpi\delta\cal O_n},\bf c;\bf h_{y})\\
        &\hspace{4em}\times\prod_{j=1}^m\big[\mu_{n_j}(\varpi\delta M_j)^{-1}f_j*1_{\varpi\delta M_j}\big]^{c_j}\big( B_j(u)+dB_j(u).(y-u)\big)dy\\
        \leq\frac{1}{\operatorname{BL}(d\bf B(u),\bf c)}&\sup_{y\in u+\delta\cal O_n}\operatorname{BL}(\left.\bf B\right|_{y+\varpi\delta\cal O_n},\bf c;\bf h_{y})\operatorname{BL}(d\bf B(u),\bf c;\tau_{\bf B(u)}[\bf f*1_{\varpi\delta\bf M}])\prod_{j=1}^m\left[\int f_j\right]^{c_j}.
        \end{split}
    \end{equation*}
    Thus, we have obtained
    \begin{equation*}
    \begin{split}
        \operatorname{BL}(\left.\bf B\right|_{u+\delta\cal O_n},\bf c;\bf f)&\leq\frac{\operatorname{BL}(d\bf B(u),\bf c;(\tau_{\bf B(u)}\bf f)*1_{\varpi\delta\bf M})}{\operatorname{BL}(d\bf B(u),\bf c)}\\&\quad\quad\times\sup_{x\in u+\delta\cal O_n}\operatorname{BL}\big(\left.\bf B\right|_{x+\varpi\delta\cal O_n},\bf c;\bf f1_{\bf B(u)+d\bf B(u).(x-u)+\varpi\delta\bf M}\big),
    \end{split}
    \end{equation*}
    and so we conclude the desired
    \begin{equation*}
        \operatorname{BL}(\left.\bf B\right|_{u+\delta\cal O_n},\bf c;\bf f)\leq\cal Z_{d\bf B(u)}(\tau_{\bf B(u)}\bf f;\varpi\delta)\sup_{x\in u+\delta\cal O_n}\operatorname{BL}\big(\left.\bf B\right|_{x+\varpi\delta\cal O_n},\bf c;\bf f1_{\bf B(u)+d\bf B(u).(x-u)+\varpi\delta\bf M}\big).
    \end{equation*}
\end{proof}

\subsection{Reducing nonlinear to linear}\label{subsec:nonlin_to_lin}

Repeatedly applying Ball's inequality, we obtain the following estimate. One should compare to heat-flow methods over the reals.

\begin{corollary}[Quantitative nonlinear-to-linear reduction]\label{cor:nonlin_quant_ineq}
    Let $U\subseteq\K^n$ be a nonempty compact open set, $\bf B=(B_j)_{1\leq j\leq m}$ be a tuple of $C^1$ submersions $B_j:U\to\K^{n_j}$, such that $\|\bf B\|_{C^1(U)}<+\infty$, and $\bf c=(c_j)_{1\leq j\leq m}\in[0,1]^m$ satisfy~\eqref{eq:scaling}. Fix $u\in U$. Suppose that $M\in\cal N(\K^n)$ is an extremizer of $\operatorname{BL}_{\mathrm{grp}}(d\bf B(u),\bf c)$ with $\mathrm{rad}(M)=|\rho|\leq 1=\mathrm{diam}(M)$; write as before $M_j=dB_j(u).(M)$ and $\bf M=(M_j)_{1\leq j\leq m}$. Suppose also that $\gamma,\tau\in\K^\times$ are such that each $dB_j(u)$ has maximum entry at most $|\tau|$, and has an invertible $n_j\times n_j$ submatrix $A$ such that $\|A^{-1}\|\leq|\gamma|^{-1}$. We assume as well that $|\tau|\geq 1$ and $|\gamma|\leq 1$. 
    
    Then there is a $\delta=\delta(\bf B,\bf c)\in\cal O_1\setminus\{0\}$ such that the following holds. Let $\bf f=(f_j)_{1\leq j\leq m}$ be a tuple of nonnegative nonzero Schwartz--Bruhat functions on $\K^{n_j}$. Suppose that each $f_j$ is constant on cosets of $\varpi^{K+1}\delta M_j$, where $K\in\N$. Then, for each $\eps\in\delta\cal O_1$ such that $u+\eps\cal O_n\subseteq U$, we have
    \begin{equation*}
        \begin{split}
        \operatorname{BL}\big(\left.\bf B\right|_{u+\eps\cal O_n},\bf c;\bf f\big)&\leq\cal Z_{d\bf B(u)}(\tau_{\bf B(u)}\bf f;\varpi\eps)\\
        &\times\sup_{\substack{x_1,\ldots,x_K\in u+\eps\cal O_n\\x_{\iota+1}\in x_{\iota}+\varpi^\iota\eps\cal O_n}}\prod_{\iota=1}^K\cal Z_{d\bf B(u)}\big((\tau_{\bf B(u)}\bf f)1_{d\bf B(u).(x_\iota-u)+\varpi^\iota\eps\bf M};\varpi^{\iota+1}\eps\big)\\
        &\times\sup_{x\in u+\eps\cal O_n}\operatorname{BL}\big(d\bf B(x),\bf c\big).
        \end{split}
    \end{equation*}
    
\end{corollary}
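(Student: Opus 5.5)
We will iterate Lemma~\ref{lem:nonlin_ball_ineq} $K$ times and then, at the finest scale, replace the nonlinear functional by the linear one using the local constancy of $\bf f$. First choose $\delta$ so that the hypothesis of Lemma~\ref{lem:nonlin_ball_ineq} holds uniformly. We need that for every base point $x$ in a ball of radius $|\delta|$ and every $\eps\in\delta\cal O_1$,
\begin{equation*}
\|B_j(y)-B_j(x)-dB_j(u).(y-x)\|\leq|\varpi\rho\gamma|\|y-x\|\quad\text{for } y\in x+\eps\cal O_n .
\end{equation*}
The derivative is frozen at the fixed point $u$, so that $\bf M$ and $\cal Z_{d\bf B(u)}$ are the same at every step. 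This follows from the strong $C^1$ condition (Definition~\ref{def:Ck}): the difference quotients are uniformly continuous on the compact set $U$. Hence the linearization error relative to $dB_j(u)$ is at most $\sup_{x,y\in u+\delta\cal O_n}\|\text{difference quotient}-dB_j(u)\|\cdot\|y-x\|$, and this can be made at most $|\varpi\rho\gamma|\|y-x\|$ by shrinking $\delta$.

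The first application is Lemma~\ref{lem:nonlin_ball_ineq} at $u$ with radius $\eps$. It gives the factor $\cal Z_{d\bf B(u)}(\tau_{\bf B(u)}\bf f;\varpi\eps)$ times a supremum over $x_1\in u+\eps\cal O_n$ of $\operatorname{BL}(\bf B|_{x_1+\varpi\eps\cal O_n},\bf c;\bf f1_{\bf B(u)+d\bf B(u).(x_1-u)+\varpi\eps\bf M})$. We apply the lemma again, now centered at $x_1$ with radius $\varpi\eps$ and the new tuple $\bf f^{(1)}=\bf f 1_{\cdots}$. We still use $dB_j(u)$ and $\bf M$, which is legitimate because the error estimate above was arranged relative to $dB_j(u)$. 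The only change from the lemma is that the center is $x_1$; we handle this by rewriting the translate $\tau_{\bf B(x_1)}$ through $\bf B(x_1)\in\bf B(u)+d\bf B(u).(x_1-u)+\varpi\eps\bf M$. This membership holds by the same error bound together with the $\varpi\eps M_j$-invariance, which comes from $\mathrm{rad}(M_j)\geq|\gamma\rho|$. The step produces the factor $\cal Z_{d\bf B(u)}((\tau_{\bf B(u)}\bf f)1_{d\bf B(u).(x_1-u)+\varpi\eps\bf M};\varpi^{2}\eps)$. Intersecting successive indicator sets collapses to the last one, since the cosets are nested. Inducting for $\iota=1,\dots,K$ yields the product of $\cal Z$ factors over admissible chains $x_{\iota+1}\in x_\iota+\varpi^\iota\eps\cal O_n$. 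It leaves a final term $\sup_{x}\operatorname{BL}(\bf B|_{x+\varpi^{K+1}\eps\cal O_n},\bf c;\bf f 1_{a_x+\varpi^{K+1}\eps\bf M})$.

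For the final term, $f_j$ is constant on cosets of $\varpi^{K+1}\delta M_j\supseteq\varpi^{K+1}\eps M_j$, so $f_j1_{a_{x,j}+\varpi^{K+1}\eps M_j}$ is a constant multiple of the indicator of a single coset. The BL ratio is homogeneous, so the constants cancel and we are left with a ratio of indicator-type quantities. On the ball $x+\varpi^{K+1}\eps\cal O_n$ the map $B_j$ agrees with its affine approximation $B_j(x)+dB_j(x).(\cdot-x)$ up to an error finer than the coset scale, by the same $C^1$ estimate. Therefore the preimage set coincides with that of an affine datum with derivative $d\bf B(x)$, and its BL value is at most $\operatorname{BL}(d\bf B(x),\bf c)$. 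Lemma~\ref{lem:rearrange} absorbs the translations.

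The main obstacle is the bookkeeping in the inductive step. The lemma is stated with the linearization taken at the center of the ball, but we need it at the fixed point $u$. We also have to check that the translated and restricted tuples produced at each stage have exactly the form appearing in the $\cal Z$ factors, with the shifts $d\bf B(u).(x_\iota-u)$. We expect to redo the proof of Lemma~\ref{lem:nonlin_ball_ineq} with the center generalized rather than cite it verbatim. The conditions $|\tau|\geq1$ and $|\gamma|\leq1$ enter only to make $\delta$ uniform in these estimates.
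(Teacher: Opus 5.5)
Your plan follows the paper's proof. You apply Lemma~\ref{lem:nonlin_ball_ineq} $K+1$ times with the derivative frozen at $u$, then use the local constancy of $\bf f$ to turn the last term into a linear functional of coset indicators. You are more explicit than the paper about freezing the derivative: the paper only says ``iterate the lemma,'' although the lemma as stated linearizes at the center of the ball. Your uniform estimate $\|B_j(y)-B_j(x)-dB_j(u).(y-x)\|\leq|\varpi\rho\gamma|\|y-x\|$ on $u+\delta\cal O_n$ is what makes the re-centered lemma valid with $\bf M$ and $\cal Z_{d\bf B(u)}$ unchanged. Your endgame is also simpler than the paper's. The paper writes the last term over all of $u+\delta\cal O_n$, and needs a lower bound $\|d\bf B(x).v\|\geq|h|\|v\|$ to show the final integrand has support of diameter at most $|h^{-1}\tau\varpi^{K+1}\delta|$ before linearizing. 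You linearize directly on the ball $x+\varpi^{K+1}\eps\cal O_n$ produced by the last application of the lemma, so $h$ is never needed. (With your frozen estimate the linear datum there is $d\bf B(u)$ rather than $d\bf B(x)$, which is equally good.)

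The step that does not work as you justify it is the rewriting of the translate. Re-centering at $x_\iota$ produces the factor $\cal Z_{d\bf B(u)}\big((\tau_{\bf B(x_\iota)}\bf f)1_{\varpi^\iota\eps\bf M};\varpi^{\iota+1}\eps\big)$. This factor is unchanged if the tuple is shifted by an element of $d\bf B(u)(\K^n)$ (change of variables) or of $\varpi^{\iota+1}\eps\bf M$ (the convolution). It is not unchanged by other shifts in general, since $\bf f$ is only constant on cosets of $\varpi^{K+1}\delta\bf M$. To reach the stated form $(\tau_{\bf B(u)}\bf f)1_{d\bf B(u).(x_\iota-u)+\varpi^\iota\eps\bf M}$ you must absorb the residual $E_\iota=\bf B(x_\iota)-\bf B(u)-d\bf B(u).(x_\iota-u)$. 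All you know is $\|E_\iota\|\leq|\varpi\rho\gamma\eps|$. Since $x_\iota$ may stay at distance $|\eps|$ from $u$ while $\iota$ grows, $E_\iota$ is in general not in $\varpi^{\iota+1}\eps\bf M+d\bf B(u)(\K^n)$. The membership $\bf B(x_1)\in\bf B(u)+d\bf B(u).(x_1-u)+\varpi\eps\bf M$ that you invoke lives at the scale of the indicator, not of the convolution. It re-centers the coset but not the profile of $\bf f$ inside it. What your argument actually proves is the inequality with $\tau_{\bf B(u)}\bf f$ replaced by $\tau_{\bf B(x_\iota)-d\bf B(u).(x_\iota-u)}\bf f$ in the $\iota$-th factor. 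The paper's one-line ``iterate'' passes over exactly this point. The discrepancy is invisible in Theorem~\ref{thm:nonlin_local_bdd}, which only uses $\cal Z\leq 1$. Still, the corollary in its literal form is not reached by your argument: you would need to restate it with the $\bf B(x_\iota)$-translates or find a further idea.
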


\begin{proof}
    We may freely assume that $\sup_{x\in U}\operatorname{BL}(d\bf B(x),\bf c)<+\infty$. In particular, we may assume that each $d\bf B(x)$ has trivial kernel. By compactness, there is an $h\in\cal O_1\setminus\{0\}$ such that
    \begin{equation*}
        \inf_{\substack{x\in U\\\|\bf v\|=1}}\|d\bf B(x).\bf v\|\geq|h|.
    \end{equation*}
    Take first $\delta'$ as in Lemma~\ref{lem:nonlin_ball_ineq}. Let $\kappa\in\cal O_1\setminus\{0\}$ so that,
    \begin{equation*}
        x,y\in u+\delta'\cal O_n\,\,\,\text{and}\,\,\,x-y\in\kappa\delta'\cal O_n\,\,\,\implies\,\,\, \|\bf B(y)-\bf B(x)-d\bf B(x).(y-x)\|\leq|\varpi \rho\gamma h\tau^{-1}|\|x-y\|.
    \end{equation*}
    In particular, if $y\neq x$, we have
    \begin{equation*}
        \|\bf B(y)-\bf B(x)-d\bf B(x).(y-x)\|<\|d\bf B(x).(y-x)\|;
    \end{equation*}
    thus,
    \begin{equation*}
        \|\bf B(y)-\bf B(x)\|=\|d\bf B(x).(y-x)\|.
    \end{equation*}
    Choose $\delta=\kappa\delta'$. The upshot is that we may freely assume that, over $u+\delta\cal O_n$, we have
    \begin{equation*}
        |h|\|x-y\|\leq\|\bf B(y)-\bf B(x)\|\leq\|\bf B\|_{C^1}\|x-y\|.
    \end{equation*}
    
    We proceed to proving the inequality. For simplicity, we assume that $\eps=\delta$. Iterate Lemma~\ref{lem:nonlin_ball_ineq}. After $K+1$ applications, we are left with
    \begin{equation*}
        \begin{split}
        \operatorname{BL}\big(\left.\bf B\right|_{u+\delta\cal O_n},\bf c;\bf f\big)&\leq\cal Z_{d\bf B(u)}(\tau_{\bf B(u)}\bf f;\varpi\delta)\\
        &\times\sup_{\substack{x_1,\ldots,x_K\in u+\delta\cal O_n\\x_{\iota+1}\in x_{\iota}+\varpi^\iota\delta\cal O_n}}\prod_{\iota=1}^K\cal Z_{d\bf B(u)}\big((\tau_{\bf B(u)}\bf f)1_{d\bf B(u).(x_\iota-u)+\varpi^\iota\delta\bf M},\varpi^{\iota+1}\delta\big)\\
        &\times\operatorname{BL}\big(\left.\bf B\right|_{u+\delta\cal O_n},\bf c;1_{\bf B(u)+d\bf B(u).(x_K-u)+\varpi^{K+1}\delta\bf M}\big).
        \end{split}
    \end{equation*}
    Any further iterations of Lemma~\ref{lem:nonlin_ball_ineq} will have $\cal Z=1$, so we halt at this point. If $x,y\in u+\delta\cal O_n$ are such that
    \begin{equation*}
        \bf B(x),\bf B(y)\in\bf B(u)+d\bf B(u).(x_K-u)+\varpi^{K+1}\delta\bf M,
    \end{equation*}
    then in particular
    \begin{equation*}
        \|\bf B(x)-\bf B(y)\|\leq|\tau\varpi^{K+1}\delta|,
    \end{equation*}
    and hence
    \begin{equation*}
        \|x-y\|\leq|h^{-1}\tau\varpi^{K+1}\delta|.
    \end{equation*}
    Thus,
    \begin{equation*}
        \|\bf B(y)-\bf B(x)-d\bf B(x).(y-x)\|\leq|\varpi^{K+2}\gamma\rho\delta|.
    \end{equation*}
    Since $1_{B_j(u)+dB_j(u).(x_K-u)+\varpi^{K+1}\delta M_j}$ is invariant under translations by elements of $\varpi^{K+1}\gamma\rho\delta\cal O_{n_j}$, we conclude that
    \begin{equation*}
        \begin{split}
            &1_{B_j(u)+dB_j(u).(x_K-u)+\varpi^{K+1}\delta M_j}(B_j(y))\\
            &=1_{B_j(u)+dB_j(u).(x_K-u)+\varpi^{K+1}\delta M_j}(B_j(x)+dB_j(x).(y-x)).
        \end{split}
    \end{equation*}
    Regarding $x$ as fixed, we obtain
    \begin{equation*}
        \begin{split}
        \int_{u+\delta\cal O_n}&\prod_{j=1}^m
        1_{B_j(u)+dB_j(u).(x_K-u)+\varpi^{K+1}\delta M_j}(B_j(y))dy\\
        &=\int_{u+\delta\cal O_n}\prod_{j=1}^m
        1_{B_j(u)-B_j(x)+dB_j(u).(x_K-u)+dB_j(x).x+\varpi^{K+1}\delta M_j}(dB_j(x).y)dy\\
        &=\operatorname{BL}\big(d\bf B(x),\bf c;1_{\bf B(u)-\bf B(x)+d\bf B(u).(x_k-u)+d\bf B(x).x+\varpi^{K+1}\delta\bf M}\big)\prod_{j=1}^m\mu_{n_j}(\varpi^{K+1}\delta M_j)^{c_j}.
        \end{split}
    \end{equation*}
    Dividing the left-hand side by the right-most factor, we conclude.
\end{proof}

By using the trivial inequality $\cal Z\leq 1$, we may arrive at Theorem~\ref{thm:nonlin_local_bdd}.

\begin{proof}[Proof of Theorem~\ref{thm:nonlin_local_bdd}]
    Let $\delta$ be as in Corollary~\ref{cor:nonlin_quant_ineq}. It suffices to test Schwartz--Bruhat functions. Such functions are locally constant at scale $\varpi^{K+1}\delta$ for sufficiently large $K$. Combine Corollary~\ref{cor:nonlin_quant_ineq} with the trivial inequality $\cal Z\leq 1$.
\end{proof}

\subsection{A converse estimate}\label{subsec:converse}

Above the scaling line, we show that we don't lose much by replacing $\bf B$ by its worst linearization $d\bf B(u)$. The following is a restatement of Theorem~\ref{thm:main_L_by_NL}.

\begin{proposition}[Matching lower bound above the scaling line]
    Let $U\subseteq\K^n$ be open. Suppose that $\bf B=(B_j)_{j=1}^m:U\to\K^{n_1}\times\cdots\times\K^{n_m}$ is a $C^1$ map. Suppose that $\bf c=(c_j)_{j=1}^m$ satisfies the super-scaling condition
    \begin{equation*}
        n\leq\sum_{j=1}^mc_jn_j.
    \end{equation*}
    Then we have
    \begin{equation*}
        \operatorname{BL}(\bf B,\bf c)\geq\sup_{x\in U}\operatorname{BL}(d\bf B(x),\bf c).
    \end{equation*}
\end{proposition}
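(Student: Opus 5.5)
Fix $x\in U$ and write $\bf L=d\bf B(x)$. The plan is to test the nonlinear functional against the linear extremizer, rescaled to a small scale around $\bf B(x)$, and to let that scale tend to zero. The super-scaling inequality is what makes the shrinking step harmless.

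We may assume $\operatorname{BL}(\bf L,\bf c)$ is finite. If it is infinite, Proposition~\ref{prop:ker_cond} together with the submersion discussion already forces $\operatorname{BL}(\bf B,\bf c)=+\infty$ in the relevant degenerate cases. Alternatively, one runs the argument below with a near-extremizing regular subgroup from the definition of $\operatorname{BL}_{\mathrm{grp}}$, which is legitimate because the functional only needs some $G\in\cal N(\K^n)$ with large ratio. In the finite case, Theorem~\ref{thm:main_extr_functional} and Theorem~\ref{thm:lin_sub_extr} supply $G\in\cal N(\K^n)$ with $G=\bigcap_jL_j^{-1}(L_jG)$ and $\operatorname{BL}(\bf L,\bf c)=\operatorname{BL}_{\mathrm{grp}}(\bf L,\bf c;G)$. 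Rescaling, we may take $G\subseteq\cal O_n$ with $\mathrm{rad}(G)=|\rho|$.

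For $\lambda\in\varpi^{\N}$ small, set $f_j=1_{B_j(x)+\lambda L_jG}$. The key step is a containment for $y\in x+\lambda G$. By the $C^1$ property, once $|\lambda|$ is small enough we have
\[
\|B_j(y)-B_j(x)-L_j(y-x)\|\leq \eta(\lambda)\|y-x\|, \qquad \eta(\lambda)\to0 .
\]
Since $L_j(y-x)\in\lambda L_jG$, and $\lambda L_jG$ contains a ball of radius $|\lambda|\,\mathrm{rad}(L_jG)>0$, the error is absorbed once $\eta(\lambda)$ is smaller than a fixed constant. This is the same invariance argument used in the proof of Lemma~\ref{lem:nonlin_ball_ineq}. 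Hence $B_j(y)\in B_j(x)+\lambda L_jG$, and so $\prod_j f_j^{c_j}(B_j(y))\geq 1_{x+\lambda G}(y)$.

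It follows that
\[
\operatorname{BL}(\bf B,\bf c;\bf f)\geq\frac{\mu_n(\lambda G)}{\prod_j\mu_{n_j}(\lambda L_jG)^{c_j}}=|\lambda|^{\,n-\sum_jc_jn_j}\operatorname{BL}_{\mathrm{grp}}(\bf L,\bf c;G).
\]
This uses that $L_j$ is surjective, so that $\mu_{n_j}(\lambda L_jG)=|\lambda|^{n_j}\mu_{n_j}(L_jG)$. If some $L_j$ with $c_j>0$ fails to be surjective, the linear constant is infinite, and the same construction with a thin neighbourhood of $B_j(x+\lambda G)$ in place of $f_j$ gives $+\infty$ on the nonlinear side, as in the submersion proposition. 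Because $n-\sum_jc_jn_j\leq0$ and $|\lambda|\leq1$, the factor $|\lambda|^{\,n-\sum_jc_jn_j}$ is at least $1$. Therefore $\operatorname{BL}(\bf B,\bf c)\geq\operatorname{BL}(d\bf B(x),\bf c)$, and taking the supremum over $x\in U$ finishes the proof.

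The main obstacle I expect is the absorption step. It must hold uniformly in $y\in x+\lambda G$, even though $G$ may be eccentric: the error is of order $\eta|\lambda|$, while the smallest radius of $\lambda L_jG$ is $|\lambda\rho|\,\mathrm{rad}(L_j\cal O_n)$. So I would fix $G$ first and then choose $\lambda$ so small that $\eta(\lambda)<|\varpi\rho|\,\mathrm{rad}(L_j\cal O_n)$, exactly mirroring the hypothesis of Lemma~\ref{lem:nonlin_ball_ineq}. In the near-extremizer variant of the infinite case, the same ordering of choices works: take $G$ first, then $\lambda$.
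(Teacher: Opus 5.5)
Your argument is correct when $\operatorname{BL}(\bf L,\bf c)<+\infty$, but it takes a different and heavier route than the paper. You test only rescaled regular-subgroup indicators $1_{B_j(x)+\lambda L_jG}$. You absorb the linearization error because $\lambda L_jG$ is invariant under the ball of radius $|\lambda\rho|\,\mathrm{rad}(L_j\cal O_n)$. You then need the saturation identity $\operatorname{BL}(\bf L,\bf c)=\operatorname{BL}_{\mathrm{grp}}(\bf L,\bf c)$, which rests on the heat-flow and module-reduction theory behind Theorems~\ref{thm:lin_sub_extr} and~\ref{thm:module_reduction}.

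The paper uses none of that. After Proposition~\ref{prop:ker_cond} and super-scaling give injectivity of $d\bf B(x)$, it tests an \emph{arbitrary} Schwartz--Bruhat tuple $\bf f$ (constant on $\kappa\cal O_{n_j}$-cosets, supported in $\gamma\cal O_{n_j}$) through the $L^1$-normalized zooms $g_{h,j}(y)=|h|^{-n_j}f_j(h^{-1}(y-B_j(x)))$. Injectivity confines the support of $\prod_jg_{h,j}^{c_j}\circ B_j$ to $x+h\gamma\, d\bf B(x)^{-1}(\cal O_{n_1+\cdots+n_m})$. There the nonlinear error is $o(|h|)$ and is swallowed by the local constancy of $f_j$ — the same absorption you perform, but for general $f_j$. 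This gives $\operatorname{BL}(\bf B,\bf c)\geq\operatorname{BL}(d\bf B(x),\bf c;\bf f)$ for every $\bf f$, so an infinite linear constant is covered automatically with no further case analysis.

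That infinite case is where your proposal is not yet a proof.

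\emph{Non-surjective $L_j$ with $c_j>0$.} You appeal to the submersion proposition, but its argument (via Theorem~\ref{thm:sard}) needs $\mathrm{rank}\,dB_j\leq r$ on a whole neighbourhood plus extra smoothness. Here the rank is only known to drop at $x$, and $B_j(x+\lambda G)$ can have positive measure (e.g.\ $B_j(y)=y^2$ at $x=0$), so no ``thin neighbourhood'' of small measure need exist. The correct reason is first-order. $B_j(x+\lambda G)$ lies in the $\eta(\lambda)|\lambda|$-neighbourhood of $B_j(x)+\lambda L_jG$, which is a slab around a proper subspace of measure $O(|\lambda|^{n_j}\eta(\lambda))$. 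Using its indicator for that $j$ gives a ratio $\gtrsim\eta(\lambda)^{-c_j}\to\infty$.

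\emph{Injective full data with infinite constant.} Your near-extremizer route presupposes $\operatorname{BL}_{\mathrm{grp}}(\bf L,\bf c)=+\infty$ whenever $\operatorname{BL}(\bf L,\bf c)=+\infty$, i.e.\ $\operatorname{BL}\leq\operatorname{BL}_{\mathrm{grp}}$ in the infinite case. The paper states that inequality (in the corollary after Theorem~\ref{thm:lin_sub_extr}) only under a finiteness hypothesis, although its proof uses only injectivity of $\bf L$.

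Both points are repairable. The cleanest repair is to run your absorption step on rescaled arbitrary Schwartz--Bruhat tuples, which is exactly the paper's proof.
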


\begin{proof}
    Fix $x\in U$; our goal will be to show that $\operatorname{BL}(d\bf B(x),\bf c)\leq\operatorname{BL}(\bf B,\bf c)$. Since $n\leq\sum_jc_jn_j$, by Proposition~\ref{prop:ker_cond} we may assume that $d\bf B(x)$ is injective. Write
    \begin{equation*}
        c=\min_{\|\bf v\|=1}\|d\bf B(x).\bf v\|>0.
    \end{equation*}
    Let $\eta\in\K^\times$ be such that $x+\eta\cal O_n\subseteq U$ and, if $y,z\in x+\eta\cal O_n$, then we have
    \begin{equation*}
        \|B_j(y)-B_j(x)-dB_j(x).(y-x)\|\leq|\varpi|c\|y-x\|,\quad\forall j.
    \end{equation*}
    It follows that
    \begin{equation*}
        \|\bf B(y)-\bf B(z)\|=\|d\bf B(x).(y-z)\|,
    \end{equation*}
    for all such $y,z$. Next, note that we have the restriction-monotonicity
    \begin{equation*}
        \operatorname{BL}\big(\left.\bf B\right|_{x+\eta\cal O_n},\bf c\big)\leq\operatorname{BL}(\bf B,\bf c).
    \end{equation*}
    From now on, we'll take $U=x+\eta\cal O_n$.
    
    Let $\bf f=(f_j)_j$ be a tuple of nonnegative Schwartz--Bruhat functions, such that $f_j$ is constant on cosets of $\kappa\cal O_{n_j}$, and supported in $\gamma\cal O_{n_j}$, for each $j$. For $h\in\K^\times$, write
    \begin{equation*}
        \bf g_h=(g_{h,j})_j,\quad g_{h,j}(y)=|h|^{-n_j}f_j(h^{-1}(y-B_j(x)).
    \end{equation*}
    Note that $\int g_{h,j}=\int f_j$ for each $h\in\K^\times$. Note also that the function
    \begin{equation*}
        \prod_{j=1}^mg_{h,j}^{c_j}\circ B_j
    \end{equation*}
    is supported on the set
    \begin{equation*}
        \bigcap_{j=1}^mB_j^{-1}(B_j(x)+h\gamma\cal O_{n_j}).
    \end{equation*}
    Since $\|\bf B(y)-\bf B(x)\|=\|d\bf B(x).(y-x)\|$ for all $y\in U$, we see that
    \begin{equation*}
        \begin{split}
        \|B_j(y)-B_j(x)\|\leq|h\gamma|\,\,\forall j&\quad\iff\quad\|\bf B(y)-\bf B(x)\|\leq|h\gamma|\\
        &\quad\iff\quad\|d\bf B(x).(y-x)\|\leq|h\gamma|.
        \end{split}
    \end{equation*}
    Thus, $\prod_{j=1}^mg_{h,j}^{c_j}\circ B_j$ is supported in $T=x+h\gamma\cdot d\bf B(x)^{-1}(\cal O_{n_1+\ldots+n_m})$. Write
    \begin{equation*}
        D=\sup\,\big\{\|\xi\|:\xi\in d\bf B(x)^{-1}(\cal O_{n_1+\ldots+n_m})\big\}.
    \end{equation*}
    
    We compute:
    \begin{equation*}
        \begin{split}
        g_{h,j}(B_j(z))&=|h|^{-n_j}f_j(h^{-1}(B_j(z)-B_j(x))\\
        &=|h|^{-n_j}f_j(h^{-1}(dB_j(x).(z-x))+h^{-1}(B_j(z)-B_j(x)-dB_j(x).(z-x)))
        \end{split}
    \end{equation*}
    Since $f_j$ is $\kappa\cal O_{n_j}$-invariant, we will obtain the identity
    \begin{equation*}
        g_{h,j}(B_j(z))=|h|^{-n_j}f_j(h^{-1}(d B_j(x).(z-x))),
    \end{equation*}
    as long as
    \begin{equation*}
        B_j(z)-B_j(x)-dB_j(x).(z-x)\in h\kappa\cal O_{n_j}.
    \end{equation*}
    If $h$ is sufficiently small, then for any $z\in T$ we have 
    \begin{equation*}
        \|B_j(z)-B_j(x)-dB_j(x).(z-x)\|\leq|h\gamma|D\times\frac{|\kappa|}{|\gamma|D}.
    \end{equation*}
    Thus, for such $h$,
    \begin{equation*}
        \begin{split}
            \int_U\prod_{j=1}^m(g_{h,j}^{c_j}\circ B_j)(z)dz&=\int_{T}\prod_{j=1}^m(g_{h,j}^{c_j}\circ B_j)(z)dz\\
            &=\int\prod_{j=1}^m|h|^{-n_jc_j}f_{j}^{c_j}(h^{-1}dB_j(x).(z-x))dz.
        \end{split}
    \end{equation*}
    Note however that the super-scaling condition implies
    \begin{equation*}
        \frac{\int\prod_{j=1}^m|h|^{-n_jc_j}f_{j}^{c_j}(h^{-1}dB_j(x).z)dz}{\prod_{j=1}^m\Big(\int |h|^{-n_j}f_{j}(h^{-1}y)dy\Big)^{c_j}}\geq\frac{\int\prod_{j=1}^mf_{j}^{c_j}(dB_j(x).z)dz}{\prod_{j=1}^m\Big(\int f_{j}(y)dy\Big)^{c_j}}=\operatorname{BL}(d\bf B(x),\bf c;\bf f).
    \end{equation*}
    Thus, we have shown that
    \begin{equation*}
        \operatorname{BL}(\bf B,\bf c)\geq\operatorname{BL}(d\bf B(x),\bf c;\bf f)
    \end{equation*}
    for any normalized $\bf f$ and any $x\in U$. We conclude that
    \begin{equation*}
        \sup_{x\in U}\operatorname{BL}(d\bf B(x),\bf c)\leq\operatorname{BL}(\bf B,\bf c).
    \end{equation*}
    
\end{proof}

\section{Nonlinear H\"older--Brascamp--Lieb inequalities over the integers: necessary conditions}\label{sec:nonlin_nec}

In this section, we discuss necessary conditions for the finiteness of polynomial H\"older--Brascamp--Lieb inequalities over the integers, and relate these conditions to our main conditions in Theorem~\ref{thm:int_by_padic}. The second condition, Proposition~\ref{prop:alg_rank}, is roughly equivalent to a necessary condition for real nonlinear H\"older--Brascamp--Lieb inequalities (morally arising from the theorem of Frobenius), together with the assumption that the $k$-dimensional ``integral manifold'' has $\approx N^k$-many lattice points in balls of radius $N$. When this latter assumption fails, one may expect ``smoothing'' phenomenae, with estimates stronger than H\"older--Brascamp--Lieb.

\begin{proposition}[Finite fibers]\label{prop:fiber}
    Suppose $\bf P=(P_j)_{1\leq j\leq m}$ is a tuple of integer polynomial maps such that, for some $b_1\in\Z^{n_1},\ldots,b_m\in\Z^{n_m}$, the fiber $\{x\in\Z^n:P_1(x)=b_1,\ldots,P_m(x)=b_n\}$ is infinite. Then the functional $\Lambda(f_1,\ldots,f_m)=\sum\prod_jf_j\circ P_j$ is unbounded on $\ell^{q_1}\times\cdots\times\ell^{q_m}$, for any $1\leq q_1,\ldots,q_m\leq+\infty$.
\end{proposition}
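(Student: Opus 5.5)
The plan is to test the functional on indicator functions of single points. Fix $b_1,\ldots,b_m$ as in the hypothesis, and write
\begin{equation*}
    F=\{x\in\Z^n:P_1(x)=b_1,\ldots,P_m(x)=b_m\},
\end{equation*}
which is infinite by assumption. Set $f_j=1_{\{b_j\}}$ on $\Z^{n_j}$. Then $\|f_j\|_{\ell^{q_j}(\Z^{n_j})}=1$ for every $1\leq q_j\leq+\infty$. For each $x\in\Z^n$ we have $\prod_jf_j(P_j(x))=1_F(x)$. The issue is that $\sum_x 1_F(x)=\#F=+\infty$, so $\Lambda(f_1,\ldots,f_m)$ is not a finite quantity. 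To state unboundedness properly, I would interpret $\Lambda$ as first defined on finitely supported inputs, or equivalently truncate the sum over $x$.

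The cleanest way to do this is to pass to finite truncations of the domain sum. For $R\in\N$, let $\Lambda_R(\mathbf f)=\sum_{\|x\|\leq R}\prod_jf_j(P_j(x))$. Then $\Lambda_R(1_{\{b_1\}},\ldots,1_{\{b_m\}})=\#(F\cap\{\|x\|\leq R\})$, and this tends to $+\infty$ as $R\to\infty$ while the product of norms stays equal to $1$. So no bound $|\Lambda|\lesssim\prod_j\|f_j\|_{\ell^{q_j}}$ can hold, in the sense of monotone limits of the truncations. If one instead wants a single finitely supported input on which the ratio is large, the point indicators above already have finite support in each $\Z^{n_j}$, and $\Lambda$ evaluates to $+\infty$ directly. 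Since all terms are nonnegative, the sum is well defined in $[0,+\infty]$, and that is enough to contradict any finite bound.

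The only thing to watch is that the argument is uniform over the exponents. This is immediate, because the $\ell^q$ norm of a point mass equals $1$ for every $q$, including $q=\infty$. I therefore expect no real obstacle here. The only care needed is in phrasing what ``unbounded'' means when the defining series diverges, and nonnegativity of the test functions handles that.
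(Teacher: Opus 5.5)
Your proposal is correct and takes the same approach as the paper: the paper's proof is just the observation that $\Lambda(1_{\{b_1\}},\ldots,1_{\{b_m\}})=+\infty$, while each point mass has unit $\ell^{q_j}$ norm. Your truncation argument is a harmless addition that makes precise what ``unbounded'' means when the defining sum diverges.
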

\begin{proof}
    We have
    \begin{equation*}
        \Lambda(1_{\{b_1\}},\ldots,1_{\{b_m\}})=+\infty.
    \end{equation*}
\end{proof}

The next condition, roughly speaking, asserts the following. Suppose that there is a $(n-k)$-dimensional submanifold $\cal M$ such that each $P_j$ is constant along a $(k-k_j)$-dimensional submanifold of $\cal M$. Then the rank condition $k\leq\sum_j\frac{k_j}{q_j}$ is, in fact, a necessary condition.
\begin{proposition}[Polynomial rank condition]\label{prop:alg_rank}
    Suppose $\bf P=(P_j)_{1\leq j\leq m}$ is a tuple of integer polynomial maps such that $\Lambda$ is bounded on $\ell^{q_1}\times\cdots\times\ell^{q_m}$, with $1\leq q_1,\ldots,q_m\leq +\infty$.
    
    Now, suppose there are $A\in\mathrm{GL}_n(\Z)$ and $1\leq k\leq n-1$, and $Q_1,\ldots,Q_{n-k}\in\Z[x_1,\ldots,x_k]$ are such that, for each $1\leq j\leq m$, there is an injective homomorphism $R_j:\Z^{k-k_j}\to\Z^k$ such that
    \begin{equation}\label{eq:poly_rank}
        \Z^{k-k_j}\ni y\overset{F_j}\mapsto P_j(A(R_j(y)+c,Q_1(R_j(y)+c),\ldots,Q_{n-k}(R_j(y)+c)))
    \end{equation}
    is constant, for each $c\in\Z^k$ and each $1\leq j\leq m$. Then
    \begin{equation*}
        k\leq\sum_j\frac{k_j}{q_j}.
    \end{equation*}
\end{proposition}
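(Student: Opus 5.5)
We test the bounded functional $\Lambda$ on indicators of images of a large box inside the $k$-dimensional ``integral manifold''. Change variables by $A$ and set
\begin{equation*}
    \Psi:\Z^k\to\Z^n,\quad\Psi(w)=A(w,Q_1(w),\ldots,Q_{n-k}(w)).
\end{equation*}
$\Psi$ is injective, since its first $k$ coordinates (before applying $A$) recover $w$. For large $N$, let $\Omega_N=\Psi(\{0,\ldots,N-1\}^k)$, so $\#\Omega_N=N^k$. We take $f_j=1_{P_j(\Omega_N)}$. Since $f_j\circ P_j\geq 1$ on $\Omega_N$, we get $\Lambda(f_1,\ldots,f_m)\geq\#\Omega_N=N^k$, while $\|f_j\|_{\ell^{q_j}}=(\#P_j(\Omega_N))^{1/q_j}$. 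The whole argument therefore reduces to the counting bound
\begin{equation*}
    \#P_j(\Omega_N)\lesssim N^{k_j}.
\end{equation*}
Boundedness of $\Lambda$ then gives $N^k\lesssim N^{\sum_jk_j/q_j}$, and sending $N\to\infty$ yields $k\leq\sum_jk_j/q_j$.

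For the counting bound, fix $j$. By~\eqref{eq:poly_rank}, the map $w\mapsto P_j(\Psi(w))$ is constant on every coset $c+R_j(\Z^{k-k_j})$ in $\Z^k$. So $\#P_j(\Omega_N)$ is at most the number of cosets of the subgroup $\Gamma_j=R_j(\Z^{k-k_j})$ that meet the box $\{0,\ldots,N-1\}^k$. Equivalently, this is $\#\pi(\{0,\ldots,N-1\}^k)$, where $\pi:\Z^k\to\Z^k/\Gamma_j$ is the quotient map.

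To bound this, let $\Gamma_j'=(\Gamma_j\otimes\Q)\cap\Z^k$ be the saturation of $\Gamma_j$. It has rank $k-k_j$ and $[\Gamma_j':\Gamma_j]<\infty$, so the count for $\Gamma_j$ is at most $[\Gamma_j':\Gamma_j]$ times the count for $\Gamma_j'$. Since $\Gamma_j'$ is saturated, $\Z^k/\Gamma_j'\cong\Z^{k_j}$, and the quotient map is a linear map $\Z^k\to\Z^{k_j}$ with bounded integer entries. It sends the box into a box of side $O(N)$, giving $O(N^{k_j})$ points. Hence $\#P_j(\Omega_N)\leq C_jN^{k_j}$, with $C_j$ independent of $N$.

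The main obstacle is this coset-counting step: the lattice $R_j(\Z^{k-k_j})$ need not be saturated, so the index and saturation argument above is needed. The degenerate cases are harmless. If $q_j=\infty$, the factor $\|f_j\|_{\ell^\infty}=1$ contributes no power of $N$, consistent with the term $k_j/q_j=0$. If $k_j=0$, then $P_j(\Omega_N)$ is bounded by the constant $[\Gamma_j':\Gamma_j]$.
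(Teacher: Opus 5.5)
Your proof is correct. Its skeleton is the paper's: the same sets $\Omega_N$ (the image of $\{0,\ldots,N-1\}^k$ under $w\mapsto A(w,Q_1(w),\ldots,Q_{n-k}(w))$), the same test functions $f_j=1_{P_j(\Omega_N)}$, and the same reduction to $\#P_j(\Omega_N)\lesssim N^{k_j}$.

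The difference lies in how that count is obtained. The paper counts fibers from below. It indexes level sets by $c\in\{0\}^{k-k_j}\times[0,N-1]^{k_j}$. It then asserts that each coset $c+R_j(\Z^{k-k_j})$ meets the box in $\gtrsim N^{k-k_j}$ points because $R_j$ is injective, and concludes $N^k\gtrsim N^{k-k_j}\,\#P_j(\Omega_N)$.

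That uniform fiber bound, and that choice of transversal, really presume that $R_j$ is essentially a coordinate inclusion. For $R_j(y)=(y,-y)$ in $\Z^2$, the coset through $(0,0)$ meets the box in a single point. Moreover, $\{0\}\times[0,N-1]$ does not represent the cosets with $w_1+w_2\geq N$.

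Your argument instead bounds the image from above directly, by the number of cosets meeting the box. You control that number through the saturation $\Gamma_j'$ and the projection $\Z^k\to\Z^k/\Gamma_j'\cong\Z^{k_j}$. This needs no lower bound on fiber sizes and works for arbitrary, possibly non-saturated, $R_j$, so it is the more robust route.

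The paper's double-counting formulation adds only a heuristic: it shows that the fibers of $P_j$ on $\Omega_N$ typically have size $N^{k-k_j}$, which explains why the condition takes its form. The inequality itself needs only your upper bound.
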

\begin{proof}
    For $N\in\N$, write $\Omega_N$ for the set
    \begin{equation*}
        \Omega_N=\{A(w,Q_1(w),\ldots,Q_{n-k}(w)):w\in[0,N-1]^k\cap\Z^k\}.
    \end{equation*}
    Note that $\#\Omega_N=N^k$. For $c\in\{0\}^{k-k_j}\times \Z^{k_j}$, write $\Phi_j(c)$ for the common value of~\eqref{eq:poly_rank}. Then we may express $\#\Omega_N$ as
    \begin{equation*}
        \#\Omega_N=\sum_{c\in\{0\}\times[0,N-1]_\Z^{k_j}}\frac{\#\{z\in\Omega_N:P_j(z)=\Phi_j(c)\}}{\#\{d\in\{0\}\times[0,N-1]_\Z^{k_j}:\Phi_j(d)=\Phi_j(c)\}}.
    \end{equation*}
   Write $\Omega_N^c$ for the set in the numerator. Then $A^{-1}(\Omega_N^c)$ contains the values $R_j(y)+c$, as long as $R_j(y)+c\in[0,N-1]^k.$ When $N$ is sufficiently large, we have $\# R_j^{-1}\big[-c+[0,N-1]^k\cap\Z^k\big]\gtrsim N^{k-k_j}$, because $R_j$ is a $\Z$-linear injection. Thus,
   \begin{equation*}
       \#\Omega_N\gtrsim N^{k-k_j}\sum_{c\in\{0\}\times[0,N-1]_\Z^{k_j}}\frac{1}{\#\{d\in\{0\}\times[0,N-1]_\Z^{k_j}:\Phi_j(d)=\Phi_j(c)\}}.
   \end{equation*}
   On the other hand, the sum on the right-hand side just coincides with $\#P_j(\Omega_N)$. Thus, we have arrived at
   \begin{equation*}
       \# P_j(\Omega_N)\lesssim N^{k_j},\quad\text{for $N$ sufficiently large.}
   \end{equation*}
    In particular, from the boundedness assumption applied to $f_j=1_{P_j(\Omega_N)}$, we obtain for large $N$
    \begin{equation*}
        N^k\lesssim N^{\sum_j\frac{k_j}{q_j}},
    \end{equation*}
    and the result follows.
\end{proof}

We now describe the relationship between the necessary conditions described in Propositions~\ref{prop:fiber} and ~\ref{prop:alg_rank}, and the rank assumptions in Theorem~\ref{thm:int_by_padic}.

\begin{lemma}
    Suppose $\bf P=(P_j)_{1\leq j\leq m}$ is a tuple of integer polynomial maps.
    \begin{enumerate}[label=(\alph*)]
        \item If $d\bf P(a)$ has full rank for every $a\in\Z^n$, then $\bf P$ has finite fibers.
        \item Let $k,k_j,A,Q_\iota,R_j$ be as in Proposition~\ref{prop:alg_rank}. Assume $k_j$ is minimal: that is, $R_j$ cannot be extended to a higher-rank injective map for which the same conclusion applies. Let $V$ be the $\R$-tangent space to the manifold $\{A(w,Q_1(w),\ldots,Q_{n-k}(w)):w\in\R^k\}$ at $w$. Then, for $a=A(w,Q_1(w),\ldots,Q_{n-k}(w))$, we have
        \begin{equation*}
            \dim V=k,\quad\dim(dP_j(a)(V))=k_j,
        \end{equation*}
        so that
        \begin{equation*}
            k\leq\sum_{j=1}^m\frac{k_j}{q_j}\,\,\iff\,\,\dim V\leq\sum_{j=1}^m\frac{\dim(dP_j(a)(V))}{q_j}.
        \end{equation*}
    \end{enumerate}
\end{lemma}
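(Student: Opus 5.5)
The plan is to treat the two parts separately: (a) is an algebraic-geometry argument, and (b) a polynomial-identity argument followed by a rank comparison. Part (b) is where I expect the difficulty.

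\emph{Part (a).} Fix $b=(b_1,\ldots,b_m)$ and let $X=\{x\in\C^n:\bf P(x)=b\}$, an affine variety defined over $\Z$. The fiber in the statement is $X\cap\Z^n$. At any point $x\in X$, the Zariski tangent space $T_xX$ is contained in $\ker d\bf P(x)$. Consider an integer point $a\in X\cap\Z^n$. The hypothesis says $d\bf P(a)$ has full rank, meaning it is injective since the fibers are what is at stake, so $\ker d\bf P(a)=0$ and hence $T_aX=0$. Then the local ring of $X$ at $a$ has zero-dimensional Zariski tangent space, so $a$ is an isolated point of $X$ and forms a zero-dimensional irreducible component. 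Since $X$ is Noetherian, it has only finitely many irreducible components, so only finitely many isolated points. Every integer point of $X$ is among them, so $X\cap\Z^n$ is finite.

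\emph{Part (b), the easy inequalities.} Write $\Phi(w)=A(w,Q_1(w),\ldots,Q_{n-k}(w))$, a polynomial map $\R^k\to\R^n$. Its first $k$ coordinates, before applying the invertible matrix $A$, are the identity. Hence $d\Phi(w)$ is injective and $V=d\Phi(w)(\R^k)$ has $\dim V=k$. Next set
\begin{equation*}
G_j(y,c)=P_j(\Phi(R_jy+c))-P_j(\Phi(c)),
\end{equation*}
a polynomial in $(y,c)\in\R^{k-k_j}\times\R^k$. It vanishes on $\Z^{k-k_j}\times\Z^k$, and a polynomial vanishing on a full integer lattice vanishes identically. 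So $P_j\circ\Phi$ is constant along every real coset $c+R_j(\R^{k-k_j})$. Differentiating in $y$ gives $R_j(\R^{k-k_j})\subseteq\ker d(P_j\circ\Phi)(w)$, and therefore
\begin{equation*}
\dim(dP_j(a)(V))=\mathrm{rank}\, d(P_j\circ\Phi)(w)\leq k-(k-k_j)=k_j.
\end{equation*}
Once equality is established, the displayed equivalence is immediate by substituting $\dim V=k$ and $\dim(dP_j(a)V)=k_j$.

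\emph{Part (b), the reverse inequality $\mathrm{rank}\geq k_j$, via minimality.} This is the main obstacle. I would first replace $R_j(\Z^{k-k_j})$ by its saturation; this changes nothing, because constancy along integer cosets of the saturation still follows from the real-coset statement. Choose a complementary lattice, so that $P_j\circ\Phi$ descends to a polynomial map $H_j$ on $\R^{k_j}\cong\R^k/R_j(\R^{k-k_j})$. Suppose, for contradiction, that $dH_j$ has rank $r<k_j$ generically. Its kernel then defines an integrable distribution, and $H_j$ is constant on its leaves. The goal is to show this kernel contains a constant rational direction; adjoining that direction to $R_j$ would contradict minimality. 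To find such a direction, I would first check whether the fibers of $H_j$ are, by the polynomial structure, unions of translates of a fixed subspace. If they are, a rationality argument over $\Q$ finishes: the subspace is defined by polynomial equations with integer coefficients, and Galois invariance makes it rational.

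I expect this step to need either an extra hypothesis or a reading of $w$ as a generic point. Even with the generic rank equal to $k_j$, the rank can drop at special points $w$, so the equality $\dim(dP_j(a)V)=k_j$ should be proved for generic $w$, or for all $w$ after a separate argument. The inequality direction proved above already suffices for using the rank condition as a necessary condition.
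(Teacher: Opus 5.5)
Your part (a) is the paper's argument in contrapositive form. The paper takes a positive-dimensional component of an infinite fiber through an integer point and extracts a (rational) kernel vector of $d\mathbf P(a)$. You instead use $T_aX\subseteq\ker d\mathbf P(a)=0$ to show that every integer point of $X$ is isolated. Both rest on $\dim_aX\leq\dim T_aX$ and on $X$ having finitely many components, so this is fine. In (b), your proofs of $\dim V=k$ and $\dim(dP_j(a)(V))\leq k_j$ also follow the paper. Your observation that $G_j$ vanishes on $\Z^{k-k_j}\times\Z^k$, and hence identically, supplies a justification the paper skips when it ``differentiates'' an identity known a priori only at integer points.

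The gap is the reverse inequality $\dim(dP_j(a)(V))\geq k_j$. It cannot be filled, because it is false. The paper's proof has the same hole: it disposes of this step with the bare assertion that maximality of $R_j$ forces equality.

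Here is a counterexample. Take $n=3$, $k=2$, $A=I_3$, $Q_1=0$, and $P_j(x_1,x_2,x_3)=(x_1+x_2^2,\,x_3)$, which is a submersion everywhere. Then $P_j\circ\Phi(w)=(w_1+w_2^2,0)$ with your $\Phi$. For $c,v\in\Z^2$ and $t\in\Z$,
\begin{equation*}
    (c_1+tv_1)+(c_2+tv_2)^2=c_1+c_2^2+t(v_1+2c_2v_2)+t^2v_2^2
\end{equation*}
is constant in $t$ only if $v=0$. So the only admissible $R_j$ is the trivial map out of $\Z^0$, it is trivially non-extendable, and $k_j=2$. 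This matches $\#P_j(\Omega_N)\approx N^2$ in the proof of the polynomial rank proposition. Yet $V=\R^2\times\{0\}$ and $dP_j(a)(V)=\R\times\{0\}$ at every $a$, so $\dim(dP_j(a)(V))=1<k_j$. With $m=1$ and $q_1=1$, the displayed equivalence fails as well: $2\leq 2$ but $2\not\leq 1$.

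This is exactly where your mechanism breaks. The kernel $\ker d(P_j\circ\Phi)(w)=\R\,(-2w_2,1)$ turns with $w$, and the fibers are parabolas rather than unions of translates of a fixed subspace. So there is no constant rational direction to adjoin to $R_j$: curvature lets the lattice count see two dimensions while the derivative sees one. Your fallback of proving equality only for generic $w$ fails too, since here the rank is $1$ at every point.

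What can be proved is only $\dim(dP_j(a)(V))\leq k_j$. This gives the one-way implication $\dim V\leq\sum_j\dim(dP_j(a)(V))/q_j\Rightarrow k\leq\sum_jk_j/q_j$. As your last sentence anticipates, that direction is all the paper uses afterwards: the tangential rank hypothesis is at least as strong as the necessary condition from the polynomial rank proposition. But the statement itself must be weakened, replacing ``$=k_j$'' by ``$\leq k_j$'' and ``$\iff$'' by ``$\Leftarrow$''.
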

\begin{proof}
    (a):\footnote{I am indebted to the helpful Mathoverflow answer~\cite{MO2026answer} by Jack Huizenga, in identifying this argument.} If $\bf P$ has an infinite fiber, then the extension of $\bf P$ to a map $\C^n\to\C^{n_1+\ldots+n_m}$ has a positive-dimensional irreducible component of that same fiber; moreover, only finitely many integer points can be contained in zero-dimensional components, so this positive-dimensional component contains an integer point. At such a point $a$, $d\bf P(a)$ has kernel in $\C^n$. By easy algebraic considerations (e.g.\ flatness of $\Q\subseteq\C$), it follows that $d\bf P(a)$ has a rational kernel vector, and hence $d\bf P(a)$ has less than full rank, as an integer matrix.

    (b): Differentiating~\eqref{eq:poly_rank} with respect to $y$, where $w=R_jy+c$, we obtain (for $a=A(w,Q_1(w),\ldots, Q_{n-k}(w))$)
    \begin{equation*}
        dP_j(a).A.\begin{bmatrix}
            I_k\\dQ_1(R_jy+c)\\\vdots\\dQ_{n-k}(R_jy+c)
        \end{bmatrix}.R_j=0.
    \end{equation*}
    Note that $V$ is just the space spanned by the columns of
    \begin{equation*}
        A.\begin{bmatrix}
            I_k\\dQ_1(R_jy+c)\\\vdots\\dQ_{n-k}(R_jy+c)
        \end{bmatrix}.
    \end{equation*}
    Since $R_j$ has rank $k-k_j$, it follows that $dP_j(a)(V)$ is at most $k_j$-dimensional. Since $R_j$ could not be replaced by a higher-rank injective map, it follows that $dP_j(a)(V)$ is exactly $k_j$-dimensional. The result follows.
\end{proof}

Thus, the submersion condition on $d\bf P$ is stronger than the necessary condition that $\bf P$ has finite fibers; the rank condition
\begin{equation*}
    \mathrm{rank}(V)\leq\sum_{j=1}^m\frac{\mathrm{rank}(dP_j(a)(V))}{q_j}
\end{equation*}
is stronger than the assumption that there is no integer variety whose tangent space is $\mathrm{span}_\R V$ with $\approx N^{\mathrm{rank}(V)}$-many integral points in balls of radius $N$.

\section{Transference of polytopes between fields}\label{sec:rank_transfer}

In this section, we will prove that the rank conditions defining the H\"older--Brascamp--Lieb polytopes for a particular choice of homomorphisms $L_j$ are essentially equivalent for a broad class of underlying rings $R$, as long as the $L_j$ make suitable sense over $R$. Specifically, we will show that, for a given tuple of integer-linear maps $L_j$, one can compute a positive integer $\Delta$ such that the rank conditions over $\F_p$ agree with the rank conditions over $\Z$, as long as $p\nmid\Delta$. As an upshot, rank conditions over $\Z$ will suffice to be able to pass to the $p$-adic setting, which will enable us to use our local field theory. The main result of this section is Theorem~\ref{thm:rank_transfer}.

Fix an arbitrary field $\F$, understood without a topology, which we equip with counting measure. We will find a suitable ``combinatorial'' structure, which characterizes the set $\cal P$ of exponents such that a Brascamp--Lieb inequality holds. By using one such combinatorial structure over an $\F$-vector space $V$ to find another, matching combinatorial structure over an $\F'$-vector space $V'$, we may transfer Brascamp--Lieb boundedness over $\F$ to $\F'$.

Our approach is inspired by~\cite{christ2015discrete}. In the latter, a decision procedure is described which takes in a tuple $\bf L=(L_j)_j$ of surjective linear maps out of a common $\F$-vector space $V$, where $\F$ is a general (at most) countable field and $V$ is finite-dimensional, and by listing out subspaces $W$ of $V$ and calculating $\dim(W)$, $\dim(L_jW)$ it decides, in finite time, that it has fully characterized the polytope $\cal P(\bf L)$ defined by the scaling and rank conditions.~\cite{christ2015discrete} proves that this algorithm always succeeds. Our approach can be described as follows: we run the algorithm for integer matrices, regarded as linear maps over $\Q$, and after it terminates we read off the subspaces that were reached by the time it terminated. We then choose a prime $p$ such that, if we treat the maps $L_j$ as $\Z$-linear maps between lattices, the same algorithm run over $\F_p=\Z/p\Z$ with the corresponding reduced data, will terminate before it ``realizes'' that anything was different from the $\Q$ case. Thus, the same polytope is realized after reducing mod $p$, for cofinitely-many $p$.

The combinatorial structures we choose to reason with are ``witnesses'' and ``folios;'' they were implicitly present in the algorithms described in~\cite{christ2015discrete}.

\begin{definition}[Rank/scaling polytope]
    Let $\bf L=(L_j)_{1\leq j\leq m}$ be a tuple of surjective $\F$-linear maps defined out of a common $n$-dimensional $\F$-vector space $V$. Write $\cal P(\bf L)$ to be the subset of $[0,1]^m$ defined by the conditions
    \begin{equation*}
        \forall \bf 0\neq W\lneq V\,\,\text{subspace,}\quad\dim_\F W\leq\sum_{j=1}^mc_j\dim_\F(L_jW),
    \end{equation*}
    \begin{equation*}
        \dim_\F V=\sum_{j=1}^mc_j\dim_\F(L_jV).
    \end{equation*}
\end{definition}
\begin{remark}
    $\cal P(\bf L)$ is fully determined by a finite number of linear inequalities.
\end{remark}

\begin{definition}[Polytope determined by a family of subspaces]
    Let $\bf L=(L_j)_{1\leq j\leq m}$ be a tuple of surjective $\F$-linear maps out of a common $n$-dimensional $\F$-vector space $V$. Let $\{V_\tau\}_\tau$ be a finite set of vector subspaces of $V$. The \emph{polytope} $\operatorname{Poly}(\bf L,\{V_\tau\}_\tau)$ is the convex set in $\R^m$ cut out by the (in)equalities
    \begin{equation*}
        0\leq c_j\leq 1,\quad\forall j,
    \end{equation*}
    \begin{equation*}
        n=\sum_{j=1}^m c_j\dim(L_jV),
    \end{equation*}
    \begin{equation*}
        \dim V_\tau\leq\sum_{j=1}^mc_j\dim(L_jV_\tau),\quad\forall\tau\,\,\text{s.t. $\bf 0\neq V_\tau\subsetneq V$.}
    \end{equation*}
\end{definition}

\begin{definition}[Extreme points]
    Given $\cal C\subseteq\R^n$ compact convex, we write $\partial^*\cal C$ for the set of extreme points; i.e.\ those points of $\cal C$ which are not interpolants of two other points of $\cal C$.
\end{definition}

We will need the following two results from~\cite{christ2015discrete}:

\begin{proposition}[\cite{christ2015discrete}, Proposition 2.12]\label{prop:christ_crit_exist}
    Suppose $\bf c\in\partial^*\cal P(\bf L)$. Then at least one of the following is true: $\bf c\in\{0,1\}^m$, or there is a critical subspace $\bf 0\neq W\subsetneq V$.
\end{proposition}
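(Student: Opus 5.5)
The plan is to identify which constraints of $\cal P(\bf L)$ are active at $\bf c$, and then to produce a critical subspace explicitly in the one configuration that the counting argument leaves open. Since $\cal P(\bf L)$ is cut out by finitely many linear (in)equalities (by the Remark above), it is a polytope in $\R^m$. A point $\bf c$ is extreme precisely when the active constraints at $\bf c$ include $m$ linearly independent ones. The possible active constraints are of three kinds: box constraints $c_j=0$ or $c_j=1$; the scaling equality $n=\sum_j c_jn_j$; and rank inequalities holding with equality, i.e.\ critical subspaces $\bf 0\neq W\subsetneq V$.

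Assume there is no critical subspace. Then the active constraints are only box constraints together with the single scaling equation. To reach $m$ independent ones, at least $m-1$ box constraints must be active. Hence every coordinate lies in $\{0,1\}$ except possibly one index $i$. Suppose, for contradiction, that $0<c_i<1$, and put $S=\{j\neq i:c_j=1\}$.

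If $n_i=0$, then $c_i$ appears in no constraint at all. It could then be moved within a small interval while staying in $\cal P(\bf L)$, contradicting extremality. So $n_i\geq 1$, and the scaling equality reads
\begin{equation*}
    n=\sum_{j\in S}n_j+c_in_i.
\end{equation*}

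The key construction is the subspace $W=\bigcap_{j\in S}\ker L_j$. We have
\begin{equation*}
    \dim W\geq n-\sum_{j\in S}n_j=c_in_i>0,
\end{equation*}
so $W\neq\bf 0$. On $W$ the maps $L_j$ with $j\in S$ vanish, and the $L_j$ with $j\notin S\cup\{i\}$ carry weight $c_j=0$. Suppose first that $W\neq V$. The rank condition at $W$ then gives
\begin{equation*}
    \dim W\leq\sum_j c_j\dim L_jW=c_i\dim L_iW\leq c_in_i\leq\dim W.
\end{equation*}
Equality holds throughout, so $W$ is a critical subspace, contradicting our assumption.

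It remains to treat $W=V$. In that case every $L_j$ with $j\in S$ is the zero map. By surjectivity this forces $n_j=0$ for $j\in S$, and the scaling equality becomes $n=c_in_i$. Since $n_i\leq n$ (again by surjectivity of $L_i$), this gives $c_i=n/n_i\geq 1$, contradicting $c_i<1$. Hence $\bf c\in\{0,1\}^m$ whenever no critical subspace exists, which is the claimed dichotomy.

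The main obstacle is the single-fractional-coordinate case. The counting of active constraints alone does not exclude it, because the scaling hyperplane can cut the relevant edge of the box $[0,1]^m$ at an interior point. That case is closed by the explicit choice $W=\bigcap_{j\in S}\ker L_j$: combining the dimension lower bound $\dim W\geq c_in_i$ with the rank inequality forces equality, so $W$ is critical.
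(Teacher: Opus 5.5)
Your proof is correct and complete. The paper does not prove this statement itself; it imports it from \cite{christ2015discrete}, so the only internal point of comparison is the module analogue, Lemma~\ref{lem:mod_crit_case}.

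That lemma uses a perturbation argument: it trades off two fractional exponents $c_i,c_j$ along a direction that keeps the whole-module equality intact. It only reaches the weaker conclusion that all but at most one $c_j$ lie in $\{0,1\}$. Your count of active constraints at a vertex is the dual form of the same perturbation, and it lands at the same intermediate point.

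What you add is the closing step. You take $W=\bigcap_{j\in S}\ker L_j$ and use the lower bound $\dim W\geq n-\sum_{j\in S}n_j=c_in_i$, which comes from the scaling equality. Combined with the rank inequality at $W$, this forces $W$ to be critical. When $W=V$, it instead contradicts $c_i<1$ via surjectivity.

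This step genuinely needs the exact scaling equality $\dim V=\sum_jc_j\dim L_jV$ and the surjectivity of the $L_j$. Neither has a counterpart in the module setting, where the whole-module quantity equals an arbitrary $\mathbf A\geq 1$. That is why the present statement can assert $\mathbf c\in\{0,1\}^m$ outright, and why your argument is a suitable self-contained replacement for the citation.

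Your separate treatment of $n_i=0$ is genuinely needed, since there the dimension bound gives no information. One point deserves a sentence of justification. The vertex characterization requires $\cal P(\mathbf L)$ to have only finitely many \emph{distinct} constraints, so that inactive constraints have uniform slack. This holds because the vectors $(\dim W,\dim L_1W,\ldots,\dim L_mW)$ take finitely many values, even over an infinite field.
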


\begin{theorem}[\cite{christ2015discrete}, Theorem 1.4]\label{thm:christ_disc_suff} The following conditions are mutually equivalent, for any discrete Abelian groups $G,\{G_j\}_{1\leq j\leq m}$ with $G$ torsion-free and homomorphisms $\phi_j:G\to G_j$ and any $\bf c\in[0,1]^m$.
\begin{enumerate}[label=(\alph*):]
    \item $\mathrm{rank}(H)\leq\sum_{j=1}^mc_j\mathrm{rank}(\phi_j(H))$, for all subgroups $H\leq G$.
    \item There exists a constant $C<+\infty$ such that, for any nonnegative finitely-supported functions $f_j$ on $G_j$,
    \begin{equation*}
        \sum_{x\in G}\prod_{j=1}^m f_j(\phi_j(x))\leq C\prod_{j=1}^m\|f_j\|_{\ell^{1/c_j}(G_j)}.
    \end{equation*}
    \item There exists $A<+\infty$ such that
    \begin{equation*}
        \#E\leq A\prod_{j=1}^m\big(\#\phi_j(E)\big)^{c_j},\quad\forall E\subseteq G\,\,\text{finite.}
    \end{equation*}
    \item Item (b) holds with $C=1$.
    \item Item (c) holds with $A=1$.
\end{enumerate}
\end{theorem}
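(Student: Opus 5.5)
The plan is to prove the cycle of implications (d)$\Rightarrow$(b)$\Rightarrow$(c)$\Rightarrow$(a)$\Rightarrow$(d), together with (d)$\Rightarrow$(e)$\Rightarrow$(c). The implications (d)$\Rightarrow$(b) and (e)$\Rightarrow$(c) are trivial. For (b)$\Rightarrow$(c) and (d)$\Rightarrow$(e), I will test the functional inequality on $f_j=1_{\phi_j(E)}$: the left-hand side is at least $\#E$, and the right-hand side equals $C\prod_j(\#\phi_j(E))^{c_j}$.

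For (c)$\Rightarrow$(a), I will run a dilation argument. Since $\mathrm{rank}(\phi_j(H))$ only depends on a finite-rank free subgroup of $H$ of full rank, I may assume $H\cong\Z^r$ with generators $h_1,\ldots,h_r$, using that $G$ is torsion-free. Take $E_N=\{\sum_i a_ih_i:0\le a_i<N\}$, so that $\#E_N=N^r$. The image $\phi_j(H)$ is a finitely generated abelian group of rank $k_j=\mathrm{rank}(\phi_j(H))$, hence isomorphic to $\Z^{k_j}\oplus T_j$ with $T_j$ finite. In these coordinates the free part of $\phi_j(E_N)$ lies in a box of side $O(N)$, so $\#\phi_j(E_N)\lesssim N^{k_j}$. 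Then (c) gives $N^r\lesssim N^{\sum_jc_jk_j}$, and letting $N\to\infty$ yields (a).

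The substance is (a)$\Rightarrow$(d). First I reduce to $G$ finitely generated, hence $G\cong\Z^n$. Any finite sum is a sum over a finitely generated subgroup, and the inequality with constant $1$ is stable under exhausting $G$ by finitely generated subgroups; both (a) and (d) restrict to subgroups. I also replace each $G_j$ by $\phi_j(G)$. Next I use convexity in $\mathbf c$. The set of $\mathbf c$ satisfying (a) is the compact polytope cut out by finitely many linear inequalities (only finitely many rank vectors occur). The set of $\mathbf c$ for which (d) holds is convex by multilinear (Riesz--Thorin type / H\"older) interpolation in $1/q_j=c_j$. So it suffices to treat extreme points, and I then induct on $n=\mathrm{rank}\,G$. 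The analogue of Proposition~\oldref{prop:christ_crit_exist} for subgroups (proved exactly as Lemma~\oldref{lem:mod_crit_case}: perturb two fractional $c_i,c_j$ along the level set of the full-group equality) says an extreme point $\mathbf c$ either lies in $\{0,1\}^m$ or admits a critical subgroup $0\neq H\subsetneq G$ with $\mathrm{rank}\,H=\sum_jc_j\mathrm{rank}\,\phi_j(H)$.

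In the case $\mathbf c\in\{0,1\}^m$, let $S=\{j:c_j=1\}$. Condition (a) applied to $H=\bigcap_{j\in S}\ker\phi_j$ forces $\mathrm{rank}\,H=0$, hence $H=0$ since $G$ is torsion-free. So $x\mapsto(\phi_j x)_{j\in S}$ is injective, and bounding $f_j\le\|f_j\|_\infty$ for $j\notin S$ gives the sum $\le\prod_{j\notin S}\|f_j\|_\infty\prod_{j\in S}\|f_j\|_1$. In the critical case, I first replace $H$ by its saturation $\{x:kx\in H\text{ for some }k\ge1\}$. This does not change $\mathrm{rank}\,H$ or any $\mathrm{rank}\,\phi_j(H)$, so criticality is kept, and now $G/H$ is torsion-free of smaller rank. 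Exactly as in Lemma~\oldref{lem:mod_factor}, the functional factors: summing first over cosets $x+H$ bounds the sum by the $H$-functional applied to translates of the $f_j$, and then by the $G/H$-functional applied to the fiberwise norms $F_j(y)=\|f_j|_{y+\phi_j(H)}\|_{\ell^{1/c_j}}$, all with constant $1$. It remains to check that (a) holds for $\phi|_H$ (immediate) and for the induced maps $G/H\to G_j/\phi_j(H)$. For the quotient, given $K/H$, subtract the critical equality from (a) for $K$, using $\mathrm{rank}(\phi_j K)-\mathrm{rank}(\phi_jH)=\mathrm{rank}(\phi_j(K)/\phi_j(H))$. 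Induction then closes the argument.

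The main obstacle I expect is the factorization step when the targets have torsion. The quotient groups $G_j/\phi_j(H)$ may have torsion even though $G/H$ does not. So the inductive hypothesis must be formulated with only the domain torsion-free (as in the statement), and rank additivity must be checked for non-free images. Verifying that saturation plus this rank bookkeeping keeps the constant exactly $1$ is where I would spend the most care.
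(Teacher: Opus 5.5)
The paper gives no proof of this statement. It is imported from \cite{christ2015discrete} and used as a black box. Your argument is the standard one from that source, and it follows the same pattern the paper uses over fields in Proposition~\oldref{prop:wit_imp_bdd} and over finite modules in Subsection~\oldref{subsec:bl_module}: injectivity in the $\{0,1\}^m$ case, factorization through a critical subgroup, and interpolation over the polytope. The following parts are all fine:
\begin{itemize}
\item the easy implications;
\item the dilation argument for (c)$\Rightarrow$(a);
\item the exhaustion reduction to $G\cong\Z^n$;
\item the saturation trick;
\item the rank bookkeeping for the quotient datum (torsion in the targets is harmless, since only the domain must be torsion-free).
\end{itemize}
Two points in the extreme-point dichotomy need tightening.

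\textbf{First point.} The perturbation argument of Lemma~\oldref{lem:mod_crit_case} does not give the dichotomy you state. It gives only what that lemma gives: if no proper subgroup is critical, then all but at most one $c_j$ lie in $\{0,1\}$. Your $\{0,1\}^m$ argument does not cover the case of exactly one fractional coordinate $c_{j_0}$ with $G$ critical. This case is vacuous for torsion-free $G$, but you must say why.
\begin{itemize}
\item If $r_{j_0}=\mathrm{rank}\,\phi_{j_0}(G)=0$, then $c_{j_0}$ appears in no constraint, so $\mathbf c$ is not extreme.
\item Otherwise, put $S=\{j\neq j_0:c_j=1\}$ and $K=\bigcap_{j\in S}\ker\phi_j$. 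Criticality of $G$ gives $\mathrm{rank}\,K\geq n-\sum_{j\in S}\mathrm{rank}\,\phi_j(G)=c_{j_0}r_{j_0}>0$.
\item But (a) applied at $K$ gives $\mathrm{rank}\,K\leq c_{j_0}\,\mathrm{rank}\,\phi_{j_0}(K)\leq c_{j_0}\,\mathrm{rank}\,K<\mathrm{rank}\,K$, a contradiction.
\end{itemize}

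\textbf{Second point.} ``Critical subgroup'' must mean $0<\mathrm{rank}\,H<\mathrm{rank}\,G$, not merely $0\neq H\subsetneq G$. A proper subgroup of full rank imposes the same constraint as $G$, so it is critical exactly when $G$ is. Its saturation is $G$ itself, so it gives no reduction in your induction. The perturbation argument naturally yields the correct version, since the constraints are indexed by rank vectors.

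With these two repairs your proof is complete.
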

We will also need the following multilinear interpolation theorem.
\begin{theorem}[Multilinear Riez--Thorin; Theorem 2.7 of~\cite{bennett1988interpolation}]\label{thm:ml_interp}
    Suppose $(S,\nu)$ and $(R_j,\mu_j)$, $1\leq j\leq m$ are measure spaces, and $T$ is a multilinear map defined on tuples of simple functions $(f_1,\ldots,f_m)$, with $f_j$ a complex-valued simple function over $R_j$, such that $T(f_1,\ldots,f_m)$ is a function over $S$. Suppose $1\leq q_1,q_2\leq+\infty$ and $1\leq p_{1,1},p_{2,1},\ldots,p_{1,m},p_{2,m}\leq+\infty$ and $M_1,M_2\in\R_{\geq 0}$ are such that
    \begin{equation*}
        \|T(f_1,\ldots,f_m)\|_{L^{q_i}(S,\nu)}\leq M_i\prod_{j=1}^m\|f_j\|_{L^{p_{i,j}}(R_j,\mu_j)},\quad\text{for each of $i=1,2$}.
    \end{equation*}
    Then, for any $0\leq\theta\leq 1$, we have the bound
    \begin{equation*}
        \|T(f_1,\ldots,f_m)\|_{L^{q}(S,\nu)}\leq M_1^{1-\theta}M_2^\theta\prod_{j=1}^m\|f_j\|_{L^{p_{j}}(R_j,\mu_j)},
    \end{equation*}
    when we choose exponents
    \begin{equation*}
        \frac{1}{q}=\frac{1-\theta}{q_1}+\frac{\theta}{q_2},\quad\frac{1}{p_j}=\frac{1-\theta}{p_{1,j}}+\frac{\theta}{p_{2,j}}.
    \end{equation*}
\end{theorem}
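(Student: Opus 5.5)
The plan is the classical complex-interpolation argument: duality plus Hadamard's three lines lemma. Only the multilinear bookkeeping and the endpoint exponents need extra care. Fix $0<\theta<1$; the cases $\theta\in\{0,1\}$ are the hypotheses. By homogeneity it suffices to take nonzero simple functions $f_j$ normalized so that $\|f_j\|_{L^{p_j}}=1$, and to show $\|T(f_1,\ldots,f_m)\|_{L^q}\leq M_1^{1-\theta}M_2^\theta$.

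I will compute the $L^q$ norm by duality, as the supremum of $|\int_S T(f_1,\ldots,f_m)\,g\,d\nu|$ over simple $g$ with $\|g\|_{L^{q'}}=1$ and finite-measure support. This is legitimate once we know $T(\mathbf f)$ lies in $L^{q_1}\cap L^{q_2}$, which the hypotheses give; for $q=\infty$ I would assume $\nu$ semifinite, as is implicit in such statements.

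Next, write $f_j=|f_j|\,\mathrm{sgn}(f_j)$ and define, for $z$ in the strip $0\leq\mathrm{Re}\,z\leq1$,
\begin{equation*}
    f_j^z=|f_j|^{p_j\left(\frac{1-z}{p_{1,j}}+\frac{z}{p_{2,j}}\right)}\mathrm{sgn}(f_j),\qquad g^z=|g|^{q'\left(\frac{1-z}{q_1'}+\frac{z}{q_2'}\right)}\mathrm{sgn}(g).
\end{equation*}
When $p_j=\infty$, which forces $p_{1,j}=p_{2,j}=\infty$, I set $f_j^z=f_j$; the analogous convention applies to $g$ when $q'=\infty$. Then set
\begin{equation*}
    F(z)=\int_S T(f_1^z,\ldots,f_m^z)\,g^z\,d\nu .
\end{equation*}
Each $f_j^z$ is a finite linear combination of indicator functions whose coefficients are of the form $a^{\alpha+\beta z}$ with $a>0$. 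By multilinearity of $T$, $F(z)$ is therefore a finite sum of terms (constant)$\times e^{\lambda z}$ with real $\lambda$. Hence $F$ is entire and bounded on the closed strip, and $F(\theta)=\int_S T(\mathbf f)\,g\,d\nu$.

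On the line $\mathrm{Re}\,z=0$ we have $|f_j^{it}|=|f_j|^{p_j/p_{1,j}}$, so $\|f_j^{it}\|_{L^{p_{1,j}}}=\|f_j\|_{L^{p_j}}^{p_j/p_{1,j}}=1$, and likewise $\|g^{it}\|_{L^{q_1'}}=1$. Hölder's inequality and the first hypothesis then give $|F(it)|\leq M_1$. The same computation on $\mathrm{Re}\,z=1$ gives $|F(1+it)|\leq M_2$. The three lines lemma now yields $|F(\theta)|\leq M_1^{1-\theta}M_2^\theta$, and taking the supremum over $g$ concludes.

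The main obstacle is not the analytic core but the endpoint exponents. One must check that when some $p_{i,j}$ or $q_i$ equals $1$ or $\infty$, the interpolated functions $f_j^z$ and $g^z$ remain simple with the correct norms. One must also check that duality is valid at $q=\infty$, or when $q_1$ or $q_2$ is infinite. I would treat these by the conventions above, namely freezing the $z$-dependence of any factor whose exponent is constant across the two endpoints. I would also verify directly the norm identities $\|f_j^{\,i t}\|_{p_{1,j}}=1$ in the mixed finite and infinite cases, where $|f_j|^{0}$ should be read as the indicator of the support of $f_j$.
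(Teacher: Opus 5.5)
The paper gives no proof of this and simply cites the classical theorem; your duality-plus-three-lines argument is the standard proof of it and is correct, including the endpoint conventions ($|f_j|^{0}=1_{\mathrm{supp}\,f_j}$, and freezing any factor whose exponent does not depend on $z$). The one hypothesis you add, semifiniteness of $\nu$ when $q=\infty$, can be removed: in that case $q_1=q_2=\infty$, so you can apply the three lines lemma pointwise to the finite exponential sum $z\mapsto T(f_1^z,\ldots,f_m^z)(x)$, obtaining the boundary bounds for a.e.\ $x$ simultaneously at all rational $t$ and then at every $t$ by continuity.
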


In subsection~\ref{subsec:folio_witness}, we construct the ``witnesses'' and ``folios'' which will characterize the sets $\cal P(\bf L)$, and prove many results about them; the main output is Proposition~\ref{prop:fol_transfer}. In subsection~\ref{subsec:transfer}, we apply Proposition~\ref{prop:fol_transfer} to show that the rank/scaling conditions over $\Z$ descend to rank/scaling conditions over $\F_p$ for cofinitely-many $p$.

\subsection{Folios and witnesses}\label{subsec:folio_witness}

As indicated, we reason in terms of two constructions, \emph{folios} and \emph{witnesses}. We define each of them inductively. The two are defined with reference to each other. After the two definitions, we make a remark which validates that there is in fact no problematic circularity.

We briefly motivate the concepts and corresponding names. A witness for $(\bf c,\bf L)$ is a collection of subspaces $\{V_\tau\}_\tau$ such that the dimensions $\dim V_\tau$, $\dim(L_jV_\tau)$ and containment relations among the $V_\tau$ suffice to prove that $\bf c\in\cal P(\bf L)$; thus, a witness ``witnesses'' boundedness of the functional at a particular $\bf c$. A folio for $\bf L$ is a collection of subspaces $\{V_\tau\}_\tau$ such that, for every $\bf c\in\partial^*\operatorname{Poly}(\bf L,\{V_\tau\}_\tau)$, there is a subfamily $\cal A\subseteq\{V_\tau\}_\tau$ such that $\cal A$ is a witness for $(\bf c,\bf L)$. It follows that $\partial^*\operatorname{Poly}(\bf L,\{V_\tau\}_\tau)\subseteq\cal P(\bf L)$, and hence $\cal P(\bf L)=\operatorname{Poly}(\bf L,\{V_\tau\}_\tau)$. Thus, the dimensions and containment relations of subspaces and their images from a folio, suffice to determine $\cal P(\bf L)$; put another way, a folio is a ``portfolio'' fully accounting for the shape of $\cal P(\bf L)$.

\begin{definition}[Witness]
    Let $\bf L=(L_j)_{1\leq j\leq m}$ be a tuple of surjective $\F$-linear maps out of a common $n$-dimensional $\F$-vector space $V$. Let $\bf c\in\R^m$. We inductively define the class of \emph{witnesses} $\cal A\in\operatorname{Wit}(\bf c,\bf L)$ of $(\bf c,\bf L)$ as a class of finite sets of subspaces of $V$, possibly empty.
    \begin{itemize}
        \item If $\bf c\not\in[0,1]^m$, then $\operatorname{Wit}(\bf c,\bf L)=\emptyset$.

        Otherwise, we assume from now on $\bf c\in[0,1]^m$.
        \item If $n\neq\sum_{j=1}^mc_j\mathrm{rank}(L_j)$, then $\operatorname{Wit}(\bf c,\bf L)=\emptyset$.

        Otherwise, we assume from now on $n=\sum_{j=1}^mc_j\mathrm{rank}(L_j)$.
        \item If 
        \begin{equation*}
            \bigcap_{\substack{1\leq j\leq m\\c_j\neq 0}}\ker L_j\neq 0,
        \end{equation*}
        then $\operatorname{Wit}(\bf c,\bf L)=\emptyset$. Otherwise, we assume that the sub-tuple of $L_j$ for which $c_j\neq 0$ is non-degenerate.
        \item If $n=1$, then $\operatorname{Wit}_1(\bf c,\bf L)=\{\{V\}\}$.
        \item If $m=1$,  then $\operatorname{Wit}_2(\bf c,\bf L)=\{\{V\}\}$.
        
        In the remaining bullet points, we assume that $n,m\geq 2$, and then we set $\operatorname{Wit}_1(\bf c,\bf L)=\operatorname{Wit}_2(\bf c,\bf L)=\emptyset$. Recall that $\bf c\in[0,1]^m$ and $n=\sum_{j=1}^mc_j\mathrm{rank}(L_j)$.
        \item If $\bf c\in\{0,1\}^{m}$, then $\operatorname{Wit}_3(\bf c,\bf L)=\{\{V\}\}$.

        Otherwise, we assume that $c_j\in(0,1)$ for at least one index $j$, and we set $\operatorname{Wit}_3(\bf c,\bf L)=\emptyset$.
        \item If $\bf c\in[0,1]^m\setminus\{0,1\}^m$, then
        \begin{equation*}
            \operatorname{Wit}_4(\bf c,\bf L)=\bigcup_{\substack{\bf 0\neq W\subsetneq V\\ W\,\text{critical subspace}}}\left\{\cal A\cup\cal B:\cal A\in\operatorname{Wit}\big(\bf c,\left.\bf L\right|_{W}\big),\cal B\in\operatorname{Wit}\big(\bf c,\left.\bf L\right|_{V/W}\big)\right\}.
        \end{equation*}
        Here, $\cal A\in\operatorname{Wit}\big(\bf c,\left.\bf L\right|_{W}\big)$ is treated as a set of subspaces of $V$ contained in $W$, and $\cal B\in\operatorname{Wit}\big(\bf c,\left.\bf L\right|_{V/W}\big)$ is treated as a set of subspaces of $V$ which contain $W$.

        \item We set
        \begin{equation*}
            \operatorname{Wit}_5(\bf c,\bf L)=\left\{\cal A\in\operatorname{Folio}(\bf L):\bf c\in\operatorname{Poly}(\bf L,\cal A)\right\}.
        \end{equation*}
        
        \item Finally, if we have not already declared $\operatorname{Wit}(\bf c,\bf L)=\emptyset$, then we write
        \begin{equation*}
            \operatorname{Wit}(\bf c,\bf L)=\bigcup_{i=1}^5\operatorname{Wit}_i(\bf c,\bf L).
        \end{equation*}
    \end{itemize}
\end{definition}

\begin{definition}[Folio]
    Let $\bf L=(L_j)_{1\leq j\leq m}$ be a tuple of $\F$-linear maps out of a common $n$-dimensional $\F$-vector space $V$. We inductively define the class of \emph{folios} $\operatorname{Folio}(\bf L)$ of $\bf L$.

    A \emph{folio} for $\bf L$ is a finite nonempty set $\cal F$ of nonzero subspaces of $V$ such that $\operatorname{Poly}(\bf L,\cal F)\neq\emptyset$ and, for every $\bf c\in\partial^*\operatorname{Poly}(\bf L,\cal F)$, there is a subcollection $\cal A\subseteq\cal F$ such that $\cal A\in\operatorname{Wit}_i(\bf c,\bf L)$ for some $1\leq i\leq 4$. $\operatorname{Folio}(\bf L)$ is the set of all folios of $\bf L$.
\end{definition}

\begin{remark}[Well-founded]
    Witnesses and folios are well-defined. When, in defining a folio, we appeal to witnesses, we are considering an extreme point $\bf c$ of a polytope. Any such $\bf c$, after potentially removing $0,1$ entries, will be associated to a critical subspace. Thus, a witness here will be with respect to lower-dimensional spaces. Similarly, when appealing to folios in the definition of witnesses, the dimension does not increase. So, the definitions are inductively well-defined.
\end{remark}

\begin{lemma}\label{lem:wit_has_top}
    Suppose $\cal A\in\operatorname{Wit}(\bf c,\bf L)$ for some $\bf c$, where $\bf L$ is defined on $V$. Then $V\in\cal A$.

    Similarly, if $\cal F\in\operatorname{Folio}(\bf L)$, we have that $V\in\cal F$. 
\end{lemma}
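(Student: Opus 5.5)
We argue by strong induction on $n=\dim V$ (and, within a fixed dimension, on the depth of the inductive definitions of witnesses and folios, which the well-foundedness remark makes available), proving both statements simultaneously, since each definition refers to the other. Concretely, we take as inductive hypothesis that the lemma holds for all data $(\bf c',\bf L')$ defined on spaces of dimension $<n$, and for all folios/witnesses in dimension $n$ that were constructed at an earlier stage.

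For witnesses, we go through the five clauses defining $\operatorname{Wit}(\bf c,\bf L)=\bigcup_{i=1}^5\operatorname{Wit}_i(\bf c,\bf L)$. If $\operatorname{Wit}(\bf c,\bf L)=\emptyset$ there is nothing to prove. For $i=1,2,3$ the only witness is $\{V\}$, which trivially contains $V$. For $i=5$, a witness is a folio $\cal A\in\operatorname{Folio}(\bf L)$, so the folio part of the claim (at an earlier stage of the induction) gives $V\in\cal A$. For $i=4$, a witness has the form $\cal A\cup\cal B$ with $\cal B\in\operatorname{Wit}(\bf c,\left.\bf L\right|_{V/W})$ for a proper nonzero critical $W$. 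Since $\dim(V/W)<n$, the inductive hypothesis gives that $\cal B$ contains the top space $V/W$. Under the stated convention that elements of $\cal B$ are regarded as subspaces of $V$ containing $W$ (via preimage under $V\to V/W$), the top space $V/W$ corresponds to $V$ itself, so $V\in\cal B\subseteq\cal A\cup\cal B$.

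For folios, let $\cal F\in\operatorname{Folio}(\bf L)$. By definition $\operatorname{Poly}(\bf L,\cal F)$ is a nonempty compact convex polytope, so it has at least one extreme point $\bf c$. The folio condition then yields $\cal A\subseteq\cal F$ with $\cal A\in\operatorname{Wit}_i(\bf c,\bf L)$ for some $1\leq i\leq 4$. By the witness case just treated (which only uses clauses $1$–$4$, hence only the inductive hypothesis in strictly smaller dimension), we get $V\in\cal A\subseteq\cal F$.

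The main obstacle is purely bookkeeping: we must make sure the simultaneous induction is genuinely well-founded. In particular, the folio step should invoke only $\operatorname{Wit}_1$ through $\operatorname{Wit}_4$, while the $\operatorname{Wit}_5$ step invokes the folio statement. We must also interpret the identification of subspaces of $V/W$ with subspaces of $V$ containing $W$ correctly, so that the top element really is $V$. One further subtlety needs a moment's attention: in the degenerate case where $\operatorname{Poly}$ might be unbounded, boundedness comes from the constraints $0\leq c_j\leq 1$, so extreme points exist.
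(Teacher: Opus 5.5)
Your proposal is correct and follows essentially the same argument as the paper: induction on $\dim V$, with witnesses of types $1$--$3$ trivial, type $4$ handled by applying the inductive hypothesis to the witness for $V/W$ (whose top element corresponds to $V$), folios handled by taking an extreme point of the nonempty compact polytope and extracting a type $1$--$4$ sub-witness, and type $5$ reduced to the folio case. Your explicit ordering within dimension $n$ (types $1$--$4$, then folios, then type $5$) is exactly what makes the paper's induction well-founded, and it also absorbs the paper's separate $n=1$ base case.
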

\begin{proof}
    We show this by induction. If $n=1$ and $\cal A\in\operatorname{Wit}_i(\bf c,\bf L)$ for $1\leq i\leq 3$, then the result is trivial. Since $n=1$, $\operatorname{Wit}_4(\bf c,\bf L)=\emptyset.$ Since $V$ is $1$-dimensional, the only possible folios are $\emptyset$ or $\{V\}$; thus, if $\cal A\in\operatorname{Wit}_5(\bf c,\bf L)$, then $\cal A=\{V\}$.

    Now, assume that the result has been shown for dimensions smaller than $n$. Suppose that $\cal A\in\operatorname{Wit}_i(\bf c,\bf L)$ for some $1\leq i\leq 4$. If $i\in\{1,2,3\}$, then the result is trivial. If $i=4$, then there is a critical subspace $\bf 0\neq W\subsetneq V$ such that $\cal A=\cal A'\cup\cal B'$ and
    \begin{equation*}
        \cal A'\in\operatorname{Wit}\big(\bf c,\left.\bf L\right|_W\big),\quad \cal B'\in\operatorname{Wit}\big(\bf c,\left.\bf L\right|_{V/W}\big).
    \end{equation*}
    By induction on $n$, we have $V\in\cal B'$. Thus, $V\in\cal A$. The remaining case is that $\cal A$ is a folio.

    Suppose $\cal F\in\operatorname{Folio}(\bf L)$. Then $\operatorname{Poly}(\bf L,\cal F)$ is nonempty compact and convex, so $\partial^*\operatorname{Poly}(\bf L,\cal F)\neq\emptyset$. Let $\bf c\in\partial^*\operatorname{Poly}(\bf L,\cal F)$. Then there is a subcollection $\cal A\subseteq\cal F$ such that $\cal A\in\operatorname{Wit}_i(\bf c,\bf L)$ for some $1\leq i\leq 4$. By the previous case, $V\in\cal A\subseteq\cal F$.
\end{proof}

\begin{example}
     Suppose $n=2$ and $\bf L=(L_1,L_2,L_3)$ are such that
     \begin{equation*}
         L_1(x,y)=x,\quad L_2(x,y)=y,\quad L_3(x,y)=x+y.
     \end{equation*}
     Then $\{\F^2\}\in\operatorname{Folio}(\bf L)$. Indeed, $\operatorname{Poly}(\bf L,\{\F^2\})$ is the set of $\bf c\in[0,1]^3$ such that $c_1+c_2+c_3=2$. Thus,
     \begin{equation*}
         \partial^*\operatorname{Poly}(\{V_i\}_{i})=\{(1,1,0),(1,0,1),(0,1,1)\}.
     \end{equation*}
     Since each such extreme $\bf c$ has witness $\{\F^2\}$, we conclude that $\{\F^2\}$ is a folio.
     
\end{example}

\begin{example}
     Suppose $n=3$ and $\bf L=(L_1,L_2,L_3)$ are such that
     \begin{equation*}
         L_1(x,y,z)=(y,z),\quad L_2(x,y,z)=(x,z),\quad L_3(x,y,z)=(x,y).
     \end{equation*}
     Then $V_1=\mathrm{span}_\F(\bf e_1), V_2=\mathrm{span}_\F(\bf e_1,\bf e_2), V_3=\F^3$ are such that $\{V_1,V_2,V_3\}\in\operatorname{Folio}(\bf L)$. Indeed, $\operatorname{Poly}(\bf L,\{V_i\}_{i})=\{(\frac{1}{2},\frac12,\frac12)\}$, so
     \begin{equation*}
         \partial^*\operatorname{Poly}(\{V_i\}_{i})=\Big\{\Big(\frac{1}{2},\frac12,\frac12\Big)\Big\}.
     \end{equation*}
     Then $V_1$ is critical for this point, and
     \begin{equation*}
         \left.\bf L\right|_{V_1}\simeq\bf L'=(L_1',L_2',L_3'):\F\to\{0\}\times\F\times\F,\quad L_1'=0,\,\,L_2'=L_3'=\operatorname{Id},
     \end{equation*}
     \begin{equation*}
         \left.\bf L\right|_{\F^3/V_1}\simeq\bf L''=(L_1'',L_2'',L_3''):\F^2\to\F^2\times\F\times\F,\,\,L_1''=\operatorname{Id},\,\,L_2''(x,y)=x,\,\,L_3''(x,y)=y.
     \end{equation*}
    With respect to $\bf c=(\frac 12,\frac 12,\frac 12)$, $\bf L'$ has witness $\{\F\}\in\operatorname{Wit}_1(\bf c,\bf L')$, corresponding to $V_1$. $\bf L''$ has critical subspace $\F\times\{0\}$, and each of the factor data is $1$-dimensional; thus, $(\bf c,\bf L'')$ has witness $\{\F\times\{0\},\F^2\}\in\operatorname{Wit}_4(\bf c,\bf L'')$, corresponding to $V_2$ and $V_3$, respectively. Thus, $(\bf c,\bf L)$ has witness $\{V_1,V_2,V_3\}\in\operatorname{Wit}_4(\bf c,\bf L)$. Since $\bf c$ is the unique member of $\partial^*\operatorname{Poly}(\bf L,\{V_i\}_{i})$, we conclude that $\{V_i\}_i$ is a folio for $\bf L$.

\end{example}

\begin{proposition}[Witnesses have good polytopes]
    For every $\cal A\in\operatorname{Wit}(\bf c,\bf L)$, $\bf c\in\operatorname{Poly}(\bf L,\cal A)$.
\end{proposition}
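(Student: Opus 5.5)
We argue by strong induction on $n=\dim V$, following the recursive structure of the definition of $\operatorname{Wit}(\bf c,\bf L)$. If $\cal A\in\operatorname{Wit}(\bf c,\bf L)$, then the early bullet points were not triggered. Hence $\bf c\in[0,1]^m$ and $n=\sum_jc_j\mathrm{rank}(L_j)$. Since each $L_j$ is surjective, $\mathrm{rank}(L_j)=\dim L_jV$, so the box constraints and the scaling equality in $\operatorname{Poly}(\bf L,\cal A)$ hold automatically. It remains to check, for every proper nonzero $V_\tau\in\cal A$, the inequality $\dim V_\tau\leq\sum_jc_j\dim L_jV_\tau$. We split according to which $\operatorname{Wit}_i$ contains $\cal A$.

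For $i\in\{1,2,3\}$ we have $\cal A=\{V\}$, so there are no proper subspaces to check and we are done. For $i=5$, the condition $\bf c\in\operatorname{Poly}(\bf L,\cal A)$ is part of the definition of $\operatorname{Wit}_5$, so there is nothing to prove.

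The substantive case is $i=4$. Here $\cal A=\cal A'\cup\cal B'$, where $W$ is a critical subspace, $\cal A'\in\operatorname{Wit}(\bf c,\bf L|_W)$ and $\cal B'\in\operatorname{Wit}(\bf c,\bf L|_{V/W})$. Criticality means $\dim W=\sum_jc_j\dim L_jW$. Both $W$ and $V/W$ have dimension less than $n$, so the inductive hypothesis gives $\bf c\in\operatorname{Poly}(\bf L|_W,\cal A')$ and $\bf c\in\operatorname{Poly}(\bf L|_{V/W},\cal B')$. Before using this, we note that the restricted maps $L_j|_W:W\to L_jW$ and $L_j|_{V/W}:V/W\to L_jV/L_jW$ are surjective, so the induction applies.

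We now handle the elements of $\cal A$ one at a time.
\begin{itemize}
\item An element $V_\tau\in\cal A'$ is a subspace of $W$. If it is proper in $W$, the inductive polytope gives exactly the desired inequality, because $L_j|_W(V_\tau)=L_jV_\tau$. If $V_\tau=W$, the inequality holds with equality by criticality.
\item An element $V_\tau\in\cal B'$ contains $W$ and corresponds to $V_\tau/W\subseteq V/W$. Its image is $L_j|_{V/W}(V_\tau/W)=L_jV_\tau/L_jW$, of dimension $\dim L_jV_\tau-\dim L_jW$. If $V_\tau/W$ is nonzero and proper, the inductive inequality reads $\dim V_\tau-\dim W\leq\sum_jc_j(\dim L_jV_\tau-\dim L_jW)$. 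Adding the criticality equality for $W$ yields the desired inequality for $V_\tau$.
\item If $V_\tau/W=V/W$, then $V_\tau=V$, which is not a proper subspace and needs no check. If $V_\tau/W=0$, then $V_\tau=W$, which was already handled.
\end{itemize}
The scaling equality was verified at the start, so this completes the induction.

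The main obstacle is the well-foundedness of the induction when $i=5$ and $i=4$ interact. A witness may be a folio, and folios are defined via witnesses in $\operatorname{Wit}_{1\text{--}4}$ at the same dimension. This causes no difficulty here, because the $\operatorname{Wit}_5$ case is immediate from its definition and never invokes the inductive hypothesis. Only $\operatorname{Wit}_4$ recurses, and it always passes to strictly smaller dimensions. So the induction is on $n$ alone, with the cases for fixed $n$ treated directly; the only care needed is the dimension bookkeeping for quotients described above.
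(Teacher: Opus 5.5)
Your proof is correct and follows essentially the same route as the paper's. You argue by induction on $n$, note that the $\operatorname{Wit}_1$--$\operatorname{Wit}_3$ and $\operatorname{Wit}_5$ cases are immediate, and in the $\operatorname{Wit}_4$ case apply the inductive hypothesis on $W$ and $V/W$, adding the criticality equality $\dim W=\sum_j c_j\dim L_jW$ for subspaces containing $W$. Your explicit handling of the boundary cases $V_\tau=W$ and $V_\tau=V$ is slightly more careful than the paper's, which absorbs them into the scaling equalities of the two factor polytopes.
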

\begin{proof}
    By the assumption, $\bf c\in[0,1]^m$ and $n=\sum_jc_j\mathrm{rank}(L_j)$ and $\bigcap_{c_j\neq 0}\ker L_j=0$. If If $n=1$ or $m=1$ or $\bf c\in\{0,1\}^m$, the result is easy. Next, suppose that $n\geq 2$ and that the result is known in dimensions $<n$; as before, the case $\cal A\in\operatorname{Wit}_i(\bf c,\bf L)$ for $1\leq i\leq 3$ is easy. Suppose next that $\cal A\in\operatorname{Wit}_4(\bf c,\bf L)$. Then there is some critical subspace $W$ such that $\cal A=\cal A'\cup\cal B'$, where $\cal A'$ is a collection of subspaces of $W$ which make up a witness for $\big(\bf c,\left.\bf L\right|_{W}\big)$ and $\cal B'$ is a collection of subspaces intermediate between $W$ and $V$ that descend to a witness for $\big(\bf c,\left.\bf L\right|_{V/W}\big)$. By an inductive assumption, we may assume
    \begin{equation*}
        \bf c\in\operatorname{Poly}\big(\left.\bf L\right|_W,\cal A')\cap\operatorname{Poly}\big(\left.\bf L\right|_{V/W},\cal B').
    \end{equation*}
    Thus, for each $V_\tau\in\cal A'$, since $V_\tau\subseteq W$ we have
    \begin{equation*}
        \dim V_\tau\leq\sum_{j=1}^mc_j\dim\big(\left.L_j\right|_{W}(V_\tau)\big)=\sum_{j=1}^mc_j\dim\big(L_j(V_\tau)\big).
    \end{equation*}
    Similarly, for each $V_\tau\in\cal B'$, since $V_\tau\supseteq W$ we have
    \begin{equation*}
        \begin{split}
        \dim (V_\tau)=\dim W+\dim(V_\tau/W)&\leq\dim W+\sum_{j=1}^mc_j\dim\big(\left.L_j\right|_{V/W}(V_\tau/W)\big)\\
        &=\sum_{j=1}^mc_j\Big(\dim (L_jW)+\dim\big(\left.L_j\right|_{V/W}(V_\tau/W)\big)\Big)\\
        &=\sum_{j=1}^mc_j\dim L_j(V_\tau).
        \end{split}
    \end{equation*}
    Thus, $\bf c\in\operatorname{Poly}(\bf L,\cal A)$. Finally, if $\cal A\in\operatorname{Wit}_5(\bf c,\bf L)$, then definitionally we have $\bf c\in\operatorname{Poly}(\bf L,\cal A)$.
    
\end{proof}

\begin{proposition}[Witnesses imply boundedness]\label{prop:wit_imp_bdd}
    Suppose $\cal A\in\operatorname{Wit}(\bf c,\bf L)$. Then, giving $\F$ the counting measure, we have that $\operatorname{BL}(\bf L,\bf c)=1$ over $\F$.
\end{proposition}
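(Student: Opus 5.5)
Fix $\mathcal A\in\operatorname{Wit}(\mathbf c,\mathbf L)$. I will prove that $\operatorname{BL}(\mathbf L,\mathbf c)\le 1$ over $\F$ with counting measure by strong induction on $n=\dim V$, following the recursive definition of witnesses. The reverse inequality $\operatorname{BL}\ge 1$ is immediate: take $f_j=1_{\{0\}}$. Membership in $\operatorname{Wit}$ gives three facts we will use throughout:
\begin{itemize}
\item $\mathbf c\in[0,1]^m$;
\item the scaling law $n=\sum_j c_j\operatorname{rank}L_j$ holds;
\item the maps $L_j$ with $c_j\neq 0$ have trivial common kernel.
\end{itemize}
The third fact makes the functional $\sum_{x\in V}\prod_j f_j^{c_j}(L_jx)$ finite on finitely supported inputs.

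\textbf{Cases $\operatorname{Wit}_1$, $\operatorname{Wit}_2$, $\operatorname{Wit}_3$.} These are the base cases.
\begin{itemize}
\item If $m=1$, scaling forces $c_1=1$ and $\operatorname{rank}L_1=n$, so $L_1$ is a bijection and the bound is an equality.
\item If $n=1$, each $L_j$ with $c_j\neq 0$ is injective. After relabeling, $\sum_x\prod_j f_j^{c_j}(L_jx)\le\prod_j\|f_j\circ L_j\|_{\ell^{1/c_j}}$ by H\"older, since $\sum_j c_j=1$. Injectivity turns the right side into $\prod_j\|f_j\|_{\ell^{1/c_j}}$ (with respect to $f_j$, not $f_j^{c_j}$).
\item If $\mathbf c\in\{0,1\}^m$, let $S=\{j:c_j=1\}$. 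The map $x\mapsto(L_jx)_{j\in S}$ is injective and $\sum_{j\in S}\operatorname{rank}L_j=n$, so it is a linear isomorphism onto $\prod_{j\in S}\F^{n_j}$. Hence the sum equals $\prod_{j\in S}\|f_j\|_{\ell^1}$.
\end{itemize}

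\textbf{Case $\operatorname{Wit}_4$.} Here $\mathcal A=\mathcal A'\cup\mathcal B'$ for some critical $W$. By the inductive hypothesis, $\operatorname{BL}(\mathbf L|_W,\mathbf c)\le 1$ and $\operatorname{BL}(\mathbf L|_{V/W},\mathbf c)\le1$, since both pieces have smaller dimension. First I check that the two restricted data inherit the witness hypotheses. Criticality of $W$ supplies scaling on both $W$ and $V/W$. Nondegeneracy on $V/W$ needs care: if a vector $v$ is killed by all relevant $L_j$ mod $L_jW$, then $\operatorname{span}(W,v)$ would violate the rank inequality. That inequality is available because $\mathcal B'$ is itself a witness, which already includes nondegeneracy in its definition. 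So this is automatic from membership, and nothing extra is needed.

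Next I run the standard factorization argument, as in Lemma~\ref{lem:mod_factor} or the field version in \cite{christ2015discrete}, with counting measure:
\begin{itemize}
\item Split the sum over $V$ into cosets $x+W$.
\item On each coset, apply the $W$-inequality to the translated functions $f_j(L_jx+\cdot)$ restricted to $L_jW$. This bounds the inner sum by $\prod_j F_j(L_jx+L_jW)^{c_j}$, where $F_j$ on $\F^{n_j}/L_jW$ is the $\ell^{1/c_j}$-norm of $f_j$ on the coset raised to $1/c_j$.
\item Apply the $V/W$-inequality to the $F_j$.
\item Observe that $\sum_{\text{cosets}}F_j=\|f_j\|_{\ell^{1/c_j}}^{1/c_j}$, which is exactly how the exponents telescope.
\end{itemize}
Cases with $c_j=0$ should be handled by zero-contraction, treating $f_j^0$ as $\|f_j\|_\infty$-type bounds.

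\textbf{Case $\operatorname{Wit}_5$.} This is the main obstacle. Here $\mathcal A$ is a folio and $\mathbf c\in\operatorname{Poly}(\mathbf L,\mathcal A)$. By definition, every extreme point $\mathbf c^*$ of this polytope has a subfamily in $\operatorname{Wit}_i(\mathbf c^*,\mathbf L)$ for some $i\le4$. The cases above then give $\operatorname{BL}(\mathbf L,\mathbf c^*)\le1$.

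There is a circularity to rule out: a $\operatorname{Wit}_4$ witness for $\mathbf c^*$ refers to witnesses of lower-dimensional data, which may themselves be of type $\operatorname{Wit}_5$. So the induction must be on $n$, with all five cases proved simultaneously at each dimension. At dimension $n$, the $\operatorname{Wit}_5$ case uses only cases $1$–$4$ at dimension $n$, together with arbitrary cases at lower dimensions. The well-foundedness remark guarantees this ordering is consistent.

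Finally, write $\mathbf c$ as a convex combination of extreme points; the polytope is compact, convex and nonempty. Interpolate with Theorem~\ref{thm:ml_interp}, applied iteratively to pairs of extreme points. Apply it to the multilinear form $(g_j)\mapsto\sum_x\prod_j g_j(L_jx)$ with $g_j\in\ell^{1/c_j}$, using $f_j^{c_j}=g_j$ and $q=1$. This gives constant $\le 1^{1-\theta}1^\theta=1$. The complex-valued bound follows from the nonnegative one by the triangle inequality.
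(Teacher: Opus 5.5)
Your proposal is correct and follows essentially the same route as the paper: induction on $n$, with the $\operatorname{Wit}_1$--$\operatorname{Wit}_3$ cases handled directly (injectivity of $(L_j)_{c_j=1}$ in the $\{0,1\}^m$ case), the $\operatorname{Wit}_4$ case by the coset factorization through $W$ and $V/W$, and the $\operatorname{Wit}_5$ case by multilinear interpolation between the extreme points of $\operatorname{Poly}(\mathbf L,\mathcal A)$. Your check that the restricted data are nondegenerate is unnecessary, because the inductive hypothesis applies directly to the witnesses $\mathcal A'$, $\mathcal B'$ supplied by the definition of $\operatorname{Wit}_4$; it does no harm, though.
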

\begin{proof}
    Trivially, $\operatorname{BL}(\bf L,\bf c)\geq 1$; the remaining inequality is the focus of our efforts. By the first three bullet points in the definition of $\operatorname{Wit}(\bf c,\bf L)$, we have $\bf c\in[0,1]^m$ and $n=\sum_jc_j\mathrm{rank}(L_j)$ and
    \begin{equation*}
        \bigcap_{\substack{1\leq j\leq m\\c_j\neq 0}}\ker L_j=0.
    \end{equation*}
    If $n=1$, then the bound $\operatorname{BL}(\bf L,\bf c)\leq 1$ follows easily from the scaling condition; from now on, we assume that $n\geq 2$, and the result is known for dimensions $<n$. If $m=1$, then the conditions from the first two bullet points imply $\mathrm{rank}(L_1)=n$ and $c_1=1$, in which case $\operatorname{BL}(\bf L,\bf c)\leq 1$ is immediate.

    Next, we divide into cases depending on which $\operatorname{Wit}_i(\bf c,\bf L)$ belongs to. By the above, we may assume that $i\geq 3$.

    Suppose $\cal A\in\operatorname{Wit}_3(\bf c,\bf L)$. Then $\cal A=\{V\}$ and $\bf c\in\{0,1\}^m$. Further,
    \begin{equation*}
        \bigcap_{j:c_j\neq 0}\ker L_j=0.
    \end{equation*}
    Then, for $J'=\{j:c_j=1\}$, we have that $(L_j)_{j\in J'}$ is injective. Thus, for any nonnegative finitely supported $f_j$ on $\F^{n_j}$,
    \begin{equation*}
        \begin{split}
            \sum_{x\in\F^n}\prod_{j=1}^mf_j(L_j(x))&\leq\Big[\prod_{j\not\in J}\|f_j\|_{\ell^\infty}\Big]\sum_{\substack{(y_j)_{j\in J}\in\prod_{j\in J}L_j(\F^n)}}\prod_{j\in J}f_j(y_j)\\
            &\leq \Big[\prod_{j\not\in J}\|f_j\|_{\ell^\infty}\Big]\Big[\prod_{j\in J}\|f_j\|_{\ell^1}\Big]=\prod_{j=1}^m\|f_j\|_{\ell^{1/c_j}}.
        \end{split}
    \end{equation*}
    So we are done in this case.

    Next, we assume $\cal A\in\operatorname{Wit}_4(\bf c,\bf L)$. Then, for some critical subspace $W\subseteq V$, $\bf 0\neq W\subsetneq V$, we have $\cal A=\cal A'\cup\cal B'$ where $\cal A'\in\operatorname{Wit}(\bf c,\left.\bf L\right|_W)$ and $\cal B'\in\operatorname{Wit}(\bf c,\left.\bf L\right|_{V/W})$, as described in the definition of $\operatorname{Wit}_4(\bf c,\bf L)$. By induction on $n$, we may assume $\operatorname{BL}\big(\left.\bf L\right|_W,\bf c\big)=1=\operatorname{BL}\big(\left.\bf L\right|_{V/W},\bf c\big)$. Then, if $\bf f=(f_j)_{1\leq j\leq m}$ with $f_j$ nonnegative and finitely-supported on $\F^{n_j}$, then (letting $\cal E$ be a set of representatives of cosets $x+V$ in $V/W$)
    \begin{equation*}
        \begin{split}
            \sum_{x\in V}\prod_{j=1}^mf_j(L_jx)&=\sum_{\substack{x\in\cal E\\w\in W}}\prod_{j=1}^mf_j(L_j(x+w))\\
            &\leq\sum_{x\in\cal E}\operatorname{BL}\big(\left.\bf L\right|_W,\bf c\big)\prod_{j=1}^m\Big(\sum_{z\in L_jW}f_j^{1/c_j}(L_jx+z)\Big)^{c_j}
        \end{split}
    \end{equation*}
    Writing 
    \begin{equation*}
        g_j(w)=\sum_{z\in L_jW}f_j^{1/c_j}(w+z),
    \end{equation*}
    we have
    \begin{equation*}
        \begin{split}
            \sum_{x\in V}\prod_{j=1}^mf_j(L_jx)&\leq\operatorname{BL}\big(\left.\bf L\right|_W,\bf c\big)\sum_{x\in\cal E}\prod_{j=1}^mg_j(L_jx)\\
            &\leq\operatorname{BL}\big(\left.\bf L\right|_W,\bf c\big)\operatorname{BL}\big(\left.\bf L\right|_{V/W},\bf c\big)\prod_{j=1}^m\Big(\sum_{z+L_jW}g_j(z)^{1/c_j}\Big)^{c_j}\\
            &=\prod_{j=1}^m\|f_j\|_{\ell^{1/c_j}(\F^{n_j})}.
        \end{split}
    \end{equation*}
    Thus, $\operatorname{BL}(\bf L,\bf c)\leq\operatorname{BL}\big(\left.\bf L\right|_W,\bf c\big)\operatorname{BL}\big(\left.\bf L\right|_{V/W},\bf c\big)=1$.

    Finally, we assume that $\cal A\in\operatorname{Wit}_5(\bf c,\bf L)$. In particular, $\cal A$ is a folio. For every $\bf c^*\in\partial^*\operatorname{Poly}(\bf L,\cal A)$, the above cases imply that $\operatorname{BL}(\bf L,\bf c^*)=1$. By multilinear interpolation, Theorem~\ref{thm:ml_interp}, we obtain the same at $\bf c$.

\end{proof}

\begin{corollary}
    Suppose $\cal A$ is a folio for $\bf L$. Then, for all $\bf c\in\operatorname{Poly}(\bf L,\cal A)$, we have that $\operatorname{BL}(\bf L,\bf c)=1$.
\end{corollary}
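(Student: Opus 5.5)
The plan is to reduce directly to Proposition~\ref{prop:wit_imp_bdd}, using the definition of a folio together with the fact that a folio is itself a witness of type $\operatorname{Wit}_5$.

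Fix $\bf c\in\operatorname{Poly}(\bf L,\cal A)$. First I check that $\cal A\in\operatorname{Wit}(\bf c,\bf L)$. The definition of $\operatorname{Wit}_5(\bf c,\bf L)$ is exactly the set of folios $\cal F$ with $\bf c\in\operatorname{Poly}(\bf L,\cal F)$, so $\cal A\in\operatorname{Wit}_5(\bf c,\bf L)$. The only subtlety is that $\operatorname{Wit}(\bf c,\bf L)$ is declared empty in the preliminary bullet points if any of the following fail:
\begin{itemize}
\item $\bf c\in[0,1]^m$;
\item the scaling identity $n=\sum_jc_j\mathrm{rank}(L_j)$;
\item the nondegeneracy $\bigcap_{c_j\neq0}\ker L_j=0$.
\end{itemize}
The first two are built into the defining (in)equalities of $\operatorname{Poly}(\bf L,\cal A)$; since the $L_j$ are surjective, $\dim L_jV=\mathrm{rank}(L_j)$. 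For nondegeneracy, suppose $K=\bigcap_{c_j\neq 0}\ker L_j\neq 0$. The rank inequality would fail at $K$, but $K$ need not belong to $\cal A$. Here I would instead argue via extreme points. Every $\bf c^*\in\partial^*\operatorname{Poly}(\bf L,\cal A)$ has a sub-witness in $\cal A$ of type $1$ through $4$, so the preliminary bullets hold at $\bf c^*$. Then I apply the bound below at the extreme points and interpolate.

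So the actual argument goes as follows. By the definition of a folio, for each extreme point $\bf c^*$ of the compact convex polytope $\operatorname{Poly}(\bf L,\cal A)$ there is $\cal A^*\subseteq\cal A$ with $\cal A^*\in\operatorname{Wit}_i(\bf c^*,\bf L)$ for some $1\le i\le 4$. Proposition~\ref{prop:wit_imp_bdd} then gives $\operatorname{BL}(\bf L,\bf c^*)=1$. Since the polytope is the convex hull of finitely many extreme points, any $\bf c$ in it is a finite convex combination of them. Iterating the two-point multilinear Riesz--Thorin bound (Theorem~\ref{thm:ml_interp}), applied to the functional $(f_j)\mapsto\sum_x\prod_j f_j(L_jx)$ with exponents $1/c_j$, gives $\operatorname{BL}(\bf L,\bf c)\le 1$. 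Here $T(f_1,\dots,f_m)$ is viewed as a function on a one-point space, with constants $M=1$ throughout. Nonnegativity is harmless, because the bound for complex $f_j$ follows from the bound for $|f_j|$.

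For the reverse inequality, test with $f_j=1_{\{0\}}$, which gives $\operatorname{BL}(\bf L,\bf c)\ge 1$. The main obstacle I expect is the bookkeeping in the interpolation step, which needs two things. First, the convex-combination iteration must preserve the constant $1$. This is clear, since $1^{1-\theta}1^\theta=1$ and one interpolates along a segment to one extreme point at a time. Second, the endpoint cases $c_j=0$, i.e.\ $p_j=\infty$, must be admissible in Theorem~\ref{thm:ml_interp}; they are, since the theorem allows exponents equal to $+\infty$.
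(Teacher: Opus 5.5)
Your proposal is correct and follows the paper's proof. You apply Proposition~\ref{prop:wit_imp_bdd} at each extreme point of $\operatorname{Poly}(\mathbf L,\mathcal A)$, where the folio definition supplies a sub-witness of type $1$--$4$, and then extend to the whole polytope by multilinear interpolation, Theorem~\ref{thm:ml_interp}.

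Your abandoned $\operatorname{Wit}_5$ route would also have worked, though it reduces to the same interpolation argument. If $\mathbf c=\sum_i\theta_i\mathbf c^{(i)}$ with all $\theta_i>0$, then $\{j:c_j\neq 0\}\supseteq\{j:c^{(1)}_j\neq 0\}$, so nondegeneracy at $\mathbf c$ is inherited from any extreme point.
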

\begin{proof}
    For $\bf c\in\partial^*\operatorname{Poly}(\bf L,\cal A)$, the result follows from Proposition~\ref{prop:wit_imp_bdd}. For all other $\bf c$, we use multilinear interpolation, Theorem~\ref{thm:ml_interp}.
\end{proof}

\begin{proposition}[Transference of folios]\label{prop:fol_transfer}
    Let $\F,\F'$ be fields and $\bf L,\bf L'$ be $m$-tuples of surjective linear maps out of $\F$- and $\F'$-vector spaces $V$ and $V'$, respectively, each of dimension $n$; we assume $\mathrm{rank}_\F(L_j)=\mathrm{rank}_{\F'}(L_j')$, for each $j$. Suppose $\cal A$ is a folio for $\bf L$, $\cal B$ is a set of subspaces of $V'$, and $\phi:\cal A\to\cal B$ is a bijection such that: for each $V_\tau,V_\beta\in\cal A$,
    \begin{enumerate}
        \item If $V_\tau\subseteq V_\beta$, then $\phi(V_\tau)\subseteq\phi(V_\beta)$.
        \item $\dim V_\tau=\dim\phi(V_\tau)$.
        \item For each $1\leq j\leq m$, $\dim_\F L_j(V_\tau)=\dim_{\F'} L_j'(\phi(V_\tau))$.
        \item For each tuple $J\subseteq\{1,\ldots,m\}$,
        \begin{equation*}
            \dim_\F\left(\bigcap_{j\in J}\ker L_j\right)=\dim_{\F'}\left(\bigcap_{j\in J}\ker L_j'\right)
        \end{equation*}
    \end{enumerate}
    Then $\cal B$ is a folio for $\bf L'$.
\end{proposition}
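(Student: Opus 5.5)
The plan is to argue by strong induction on $n=\dim V=\dim V'$, proving simultaneously two statements: the proposition itself, and the auxiliary claim that if $\cal A\subseteq\cal F$ is a witness in $\operatorname{Wit}_i(\bf c,\bf L)$ for some $1\leq i\leq 5$, and $\phi$ satisfies hypotheses 1--4 on $\cal F$, then $\phi(\cal A)\in\operatorname{Wit}(\bf c,\bf L')$. Note first that $\operatorname{Poly}(\bf L,\cal A)=\operatorname{Poly}(\bf L',\phi(\cal A))$ for any $\cal A\subseteq\cal F$. The polytope is cut out by $0\leq c_j\leq 1$, by the scaling equation (which involves only $n$ and $\mathrm{rank}(L_j)=\mathrm{rank}(L_j')$), and by the inequalities attached to each $V_\tau$; these depend only on $\dim V_\tau$ and $\dim L_jV_\tau$, and these numbers are preserved by hypotheses 2 and 3. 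Moreover $V\in\cal F$ by Lemma~\ref{lem:wit_has_top}, and hypothesis 2 forces $\phi(V)=V'$. So the nonemptiness requirement transfers, and the extreme points of the two polytopes coincide. It therefore suffices to take $\bf c\in\partial^*\operatorname{Poly}(\bf L,\cal F)$ with a witness $\cal A\subseteq\cal F$, $\cal A\in\operatorname{Wit}_i(\bf c,\bf L)$ for some $i\leq 4$, and show that $\phi(\cal A)\in\operatorname{Wit}_i(\bf c,\bf L')$.

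The emptiness conditions in the witness definition transfer directly. These are $\bf c\in[0,1]^m$, the scaling identity with ranks, and the nondegeneracy condition $\bigcap_{c_j\neq0}\ker L_j=0$, which is preserved by hypothesis 4 with $J=\{j:c_j\neq 0\}$. The cases $i=1,2,3$ then transfer immediately, since $\phi(\{V\})=\{V'\}$ and the conditions $n=1$, $m=1$ and $\bf c\in\{0,1\}^m$ do not depend on the field.

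For $i=4$, write $\cal A=\cal A_W\cup\cal B_W$ with $W$ a critical subspace. I must first check that $W\in\cal A$. This should follow from Lemma~\ref{lem:wit_has_top} applied to $\cal A_W\in\operatorname{Wit}(\bf c,\bf L|_W)$, which contains its top space $W$. Then $W'=\phi(W)$ is critical for $\bf L'$ by hypotheses 2 and 3. By hypothesis 1, $\phi$ maps subspaces of $W$ in $\cal A$ to subspaces of $W'$, and superspaces of $W$ to superspaces of $W'$. I then apply the inductive hypothesis to the restricted data $\bf L|_W,\bf L'|_{W'}$ and to the quotients $\bf L|_{V/W},\bf L'|_{V'/W'}$. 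For the quotients the relevant dimensions transfer by subtraction: $\dim(V_\tau/W)$ and $\dim L_j(V_\tau)/L_j(W)$ are both determined by data preserved by $\phi$.

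The main obstacle is hypothesis 4 for the sub- and quotient data: intersections of kernels of $L_j|_W$ or of the induced quotient maps are not obviously controlled by the given data. I would first try to bypass this. Within $\operatorname{Wit}_4$, criticality of $W$ together with the rank inequalities already available in the witness plausibly forces nondegeneracy of both factors whenever $\bf c$ lies in the polytope. Concretely, a nonzero common kernel on $W$ would give a subspace of $W$ violating the strict inequality, contradicting $\bf c\in\operatorname{Poly}(\bf L,\cal A)$ for a subspace in $\cal A$. If this fails, I would strengthen the induction to require kernel-dimension equality only for $J=\{j:c_j\neq0\}$ and derive that from criticality. Finally, case $i=5$ within the induction reduces to the folio statement in the same dimension, applied to the subfolio $\cal A$. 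This is legitimate because the definition of a folio only ever calls witnesses with $i\leq 4$, whose recursion strictly lowers the dimension.
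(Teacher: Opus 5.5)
Your outline is the paper's proof. It has the same ingredients: equal polytopes and extreme points, $\phi(V)=V'$, the emptiness conditions and the cases $i\le 3$ transferred directly, and for $\operatorname{Wit}_4$ the facts that $W\in\cal A'$ by Lemma~\ref{lem:wit_has_top}, that criticality transfers, and induction on $n$ for the two factors. But the obstacle you flag is a genuine gap, and your bypass does not close it.

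For the sub-datum there is nothing to prove at $J_{\bf c}=\{j:c_j\neq 0\}$, since $W'\cap\bigcap_{j\in J_{\bf c}}\ker L_j'=\bf 0$ already. For the quotient, the common kernel of the induced maps is $U'/W'$ with $U'=\bigcap_{j\in J_{\bf c}}(W'+\ker L_j')$. If $U'\supsetneq W'$, then $L_j'U'=L_j'W'$ for $j\in J_{\bf c}$, and criticality gives $\dim U'>\sum_j c_j\dim L_j'U'$. But $U'$ is built from $\bf L'$ and need not be $\phi$ of anything in $\cal A$, so $\bf c\in\operatorname{Poly}(\bf L',\cal B)$ gives no contradiction. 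To conclude you would need $\bf c\in\cal P(\bf L')$, which is exactly what the folio property is supposed to deliver, so the argument is circular.

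Nor is this quantity a function of the data in hypotheses 1--4. Take $\F=\F'$, $n=3$, $W=\langle e_1\rangle$, $L_1=L_1'\colon(x,y,z)\mapsto(x,y)$, $L_2\colon(x,y,z)\mapsto(x,z)$ and $L_2'\colon(x,y,z)\mapsto(y,z-x)$. Every number in hypotheses 2--4 for $\{W,\F^3\}$ agrees. Yet the induced maps on $\F^3/W$ have trivial common kernel for $\bf L$ and common kernel $\langle e_1,e_3\rangle/W$ for $\bf L'$.

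Your fallback of restricting to $J=J_{\bf c}$ does not rescue the induction either. The factor witnesses $\cal C$ or $\cal D$ may lie in $\operatorname{Wit}_5$, i.e.\ be folios of the factor data. Transferring them means checking $\operatorname{Wit}_3$ at $\{0,1\}$-valued extreme points $\bf c^*$ of the factor polytopes, which requires nondegeneracy of the factor maps for $J=\operatorname{supp}(\bf c^*)$. Deeper in the recursion, the same is needed for subquotients $V_\beta/V_\tau$ of members of $\cal A$. Criticality at $\bf c$ says nothing about any of these.

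What closes the induction is the relative form of hypothesis 4: for all $V_\tau\subseteq V_\beta$ in $\cal A\cup\{\bf 0\}$ (with $\phi(\bf 0)=\bf 0$) and all $J\subseteq\{1,\ldots,m\}$,
\[
\dim_\F\Big(V_\beta\cap\bigcap_{j\in J}L_j^{-1}(L_jV_\tau)\Big)=\dim_{\F'}\Big(\phi(V_\beta)\cap\bigcap_{j\in J}(L_j')^{-1}\big(L_j'\phi(V_\tau)\big)\Big).
\]
This is exactly the kernel data of the subquotient datum on $V_\beta/V_\tau$; nondegeneracy there means the left side equals $\dim V_\tau$. It passes to the sub- and quotient data of any $W\in\cal A$, and with it every clause of $\operatorname{Wit}_1$--$\operatorname{Wit}_5$ transfers. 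In Theorem~\ref{thm:rank_transfer} it costs only finitely many further minors in $\Delta$.

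Note that the paper's own proof follows your route and at this step just says ``by induction on the parameter $n$,'' without verifying hypothesis 4 for the factors. So it does not supply the missing argument.
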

An important aspect of this proposition is that the two vector spaces are over different fields. Thus, we will be able to eventually perform a transference procedure to use Brascamp--Lieb analysis over one field to conclude Brascamp--Lieb inequalities over another.

\begin{proof}
    Clearly $\operatorname{Poly}(\bf L,\cal A)=\operatorname{Poly}(\bf L',\cal B)$. Let $\bf c\in\partial^*\operatorname{Poly}(\bf L',\cal B)$; thus, $\bf c\in\partial^*\operatorname{Poly}(\bf L,\cal A)$. Thus, there is a subcollection $\cal A'\subseteq\cal A$ such that $\cal A'\in\operatorname{Wit}_i(\bf c,\bf L)$, for some $1\leq i\leq 4$. We will demonstrate that $\cal B'=\{\phi(V_\tau):V_\tau\in\cal A'\}\in\operatorname{Wit}_i(\bf c,\bf L)$. Note in passing that $\phi(V)=V'$.

    We work through the definition of a witness. Since $\operatorname{Wit}(\bf c,\bf L)\neq\emptyset$, we have $\bf c\in[0,1]^m$ and $n=\sum_jc_j\mathrm{rank}_\F(L_j)=\sum_jc_j\mathrm{rank}_{\F'}(L_j')$. Also,
    \begin{equation*}
        \bigcap_{\substack{1\leq j\leq m\\c_j\neq 0}}\ker L_j=0,
    \end{equation*}
    and
    \begin{equation*}
        0=\dim_\F\left(\bigcap_{\substack{1\leq j\leq m\\c_j\neq 0}}\ker L_j\right)=\dim_{\F'}\left(\bigcap_{\substack{1\leq j\leq m\\c_j\neq 0}}\ker L_j'\right),
    \end{equation*}
    so that
    \begin{equation*}
        \bigcap_{\substack{1\leq j\leq m\\c_j\neq 0}}\ker L_j'=0.
    \end{equation*}

    Suppose $\cal A'\in\operatorname{Wit}_i(\bf c,\bf L)$ for some $1\leq i\leq 3$. Then $\cal A'=\{V\}$, so $\cal B'=\{V'\}$. In each of $1\leq i\leq 3$, it follows that $\cal B'\in\operatorname{Wit}_i(\bf c,\bf L)$.

    Finally, suppose that $\cal A'\in\operatorname{Wit}_4(\bf c,\bf L)$. Then there is a critical subspace $\bf 0\neq W\subsetneq V$ such that
    \begin{equation*}
        \cal A'=\cal C\cup\cal D,\quad\cal C\in\operatorname{Wit}\big(\bf c,\left.\bf L\right|_W\big)\text{ and }\cal D\in\operatorname{Wit}\big(\bf c,\left.\bf L\right|_{V/W}\big).
    \end{equation*}
    By Lemma~\ref{lem:wit_has_top}, $W\in\cal C$, so $W\in\cal A'$. Let $W'=\phi(W)\in\cal B'$. Then, since
    \begin{equation*}
        \dim_\F W=\sum_{j=1}^mc_j\dim_\F(L_jW),
    \end{equation*}
    we have
    \begin{equation*}
        \dim_{\F'}(W')=\sum_{j=1}^m c_j\dim_{\F'}(L_j'W').
    \end{equation*}
    So, $W'$ is critical for $(\bf c,\bf L')$. Write $\cal C'=\{\phi(V_\tau):V_\tau\in\cal C\}$, $\cal D'=\{\phi(V_\tau):V_\tau\in\cal D\}$. Then $\cal B'=\cal C'\cup\cal D'$, and for every $V_\tau'\in\cal C'$ we have $V_\tau'\subseteq W'$, and similarly for every $V_\tau'\in\cal D'$ we have $V_\tau'\supseteq W'$. Finally, by induction on the parameter $n$, we may assume that $\cal C'\in\operatorname{Wit}\big(\bf c,\left.\bf L'\right|_W\big)$ and $\cal D'\in\operatorname{Wit}\big(\bf c,\left.\bf L'\right|_{V/W}\big)$. Thus, $\cal B'\in\operatorname{Wit}_4(\bf c,\bf L')$.

    Thus, we have found that for every $\bf c\in\partial^*\operatorname{Poly}(\bf L,\cal B)$, there is a subcollection $\cal B'\subseteq\cal B$ such that $\cal B'\in\operatorname{Wit}_i(\bf c,\bf L')$, for some $1\leq i\leq 4$. That is, $\cal B\in\operatorname{Folio}(\bf L')$.

\end{proof}

\begin{theorem}[Witnesses exist over $\Q$]\label{thm:wit_exist}
    Suppose $\F$ is characteristic zero, and suppose that $\bf c\in[0,1]^m$ and $n=\sum_jc_j\mathrm{rank}(L_j)$ and $\operatorname{BL}(\bf L,\bf c)<+\infty$. Then $\operatorname{Wit}(\bf c,\bf L)\neq\emptyset$.
\end{theorem}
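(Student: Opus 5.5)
Our plan is to argue by strong induction on $n=\dim_\F V$, and within fixed $n$ by induction on the number of indices $j$ with $c_j\in(0,1)$, mirroring the decision procedure of~\cite{christ2015discrete}. First we dispose of the trivial bullet points. The hypotheses give $\bf c\in[0,1]^m$ and the scaling identity. Finiteness of $\operatorname{BL}(\bf L,\bf c)$ with counting measure forces $\bigcap_{c_j\neq 0}\ker L_j=0$: otherwise the fiber over $0$ is a nonzero $\F$-subspace, which is infinite since $\mathrm{char}\,\F=0$, and testing $f_j=1_{\{0\}}$ gives $+\infty$. If $n=1$, $m=1$, or $\bf c\in\{0,1\}^m$, then $\{V\}$ lies in $\operatorname{Wit}_1$, $\operatorname{Wit}_2$, or $\operatorname{Wit}_3$ respectively, and we are done. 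We may therefore assume $n,m\ge2$ and that some $c_j\in(0,1)$.

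Next we use that over characteristic zero (hence infinite $\F$) finiteness of $\operatorname{BL}(\bf L,\bf c)$ implies the rank conditions $\dim W\le\sum_jc_j\dim L_jW$ for all $W$. This follows from the scaling argument of Theorem~\ref{thm:christ_disc_suff}, or from a direct test on sets $E=\{\sum_i t_iw_i:\, t_i\in\{1,\dots,N\}\}\subseteq W$ with $N\to\infty$ (this is where $\mathrm{char}\,\F=0$ is needed, so that these sets have $\asymp N^{\dim W}$ elements while their images have $\lesssim N^{\dim L_jW}$ elements). Hence $\bf c\in\cal P(\bf L)$.

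\emph{Case 1: there is a critical subspace $\bf 0\neq W\subsetneq V$.} We restrict to $\bf L|_W$ and pass to the quotient $\bf L|_{V/W}$. Both inherit the rank and scaling conditions by the standard factorization computation, which is the same as in the proof that witnesses have good polytopes. By the characteristic-zero direction of Theorem~\ref{thm:christ_disc_suff} (or Proposition~\ref{prop:wit_imp_bdd}-type reasoning), both have finite BL constants. By induction on dimension they have witnesses $\cal A,\cal B$, and then $\cal A\cup\cal B\in\operatorname{Wit}_4(\bf c,\bf L)$. One subtlety is that the nondegeneracy and surjectivity conventions for the factor data must be checked; surjectivity can be arranged by replacing codomains with images.

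\emph{Case 2: no critical proper subspace (simple $\bf c$).} Here we aim to build a folio $\cal F$ with $\bf c\in\operatorname{Poly}(\bf L,\cal F)$, which places $\cal F$ in $\operatorname{Wit}_5$. Take $\cal F$ to consist of $V$ together with all subspaces $W$ that are critical for some extreme point of $\cal P(\bf L)$. The set $\cal P(\bf L)$ is cut out by finitely many inequalities, since the dimension vectors take finitely many values, so we can pick one representative subspace per distinct inequality; then $\operatorname{Poly}(\bf L,\cal F)=\cal P(\bf L)\ni\bf c$. For each $\bf c^*\in\partial^*\cal P(\bf L)$, Proposition~\ref{prop:christ_crit_exist} gives either $\bf c^*\in\{0,1\}^m$, handled by $\operatorname{Wit}_3$, or a critical subspace, handled by Case 1 applied at $\bf c^*$, which produces a $\operatorname{Wit}_4$ witness. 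We then enlarge $\cal F$ by the finitely many subspaces used in these witnesses. Adding subspaces does not change $\operatorname{Poly}$, because each added subspace yields an inequality valid on $\cal P(\bf L)$.

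The main obstacle is the well-foundedness in Case 2. The Case 1 witness at $\bf c^*$ recursively invokes witnesses for lower-dimensional data, which may themselves use $\operatorname{Wit}_5$. This is acceptable since the dimension strictly drops, but it requires stating the induction as ``for all data of dimension $<n$ and all $\bf c$ in their polytope'' rather than for a single $\bf c$. A second point to check is that the lower-dimensional factors at extreme points again satisfy the finiteness hypothesis, which again comes from Theorem~\ref{thm:christ_disc_suff} in characteristic zero.
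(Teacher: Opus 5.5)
Your proposal is correct and is essentially the paper's proof. Points $\mathbf c\in\{0,1\}^m$ get the $\operatorname{Wit}_3$ witness $\{V\}$. Extreme points of $\mathcal P(\mathbf L)$ outside $\{0,1\}^m$ (and, in your Case 1, any $\mathbf c$ with a proper critical subspace) get a $\operatorname{Wit}_4$ witness from Proposition~\ref{prop:christ_crit_exist} and induction on dimension. The remaining $\mathbf c$ get a $\operatorname{Wit}_5$ witness from the folio $\mathcal A_0\cup\mathcal A_1\cup\cdots\cup\mathcal A_N$ assembled from the extreme-point witnesses.

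The main difference is how finiteness of the factor data is obtained. The paper argues directly that $\operatorname{BL}(\mathbf L|_W,\mathbf c),\operatorname{BL}(\mathbf L|_{V/W},\mathbf c)\leq\operatorname{BL}(\mathbf L,\mathbf c)$, by restricting test functions, respectively summing over coset representatives. This is shorter than your route through inherited rank conditions and Theorem~\ref{thm:christ_disc_suff}.

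Two small slips. Your box test $E$ for the rank conditions is only valid over $\Q$: over $\R$ the $\Z$-span of the $L_jw_i$ can have rank larger than $\dim L_jW$. And the test $f_j=1_{\{0\}}$ for the kernel condition only controls $\bigcap_{\text{all } j}\ker L_j$, so it should use $\ell^\infty$-normalized functions for the indices with $c_j=0$; this step is redundant anyway once the rank conditions are known.
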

\begin{proof}
    By the assumptions and Theorem~\ref{thm:christ_disc_suff}, we see that $\bf c\in\cal P(\bf L)$. In particular, we have the conditions
    \begin{equation*}
        \bf c\in[0,1]^m,
    \end{equation*}
    \begin{equation*}
        n=\sum_{j=1}^mc_j\mathrm{rank}(L_j),
    \end{equation*}
    \begin{equation*}
        \bigcap_{\substack{1\leq j\leq m\\c_j\neq 0}}\ker L_j=0.
    \end{equation*}
    If $n=1$ or $m=1$, then $\operatorname{Wit}(\bf c,\bf L)\neq\emptyset$. Otherwise, we take $n,m\geq 2$. If $\bf c\in\{0,1\}^m$, then $\operatorname{Wit}_3(\bf c,\bf L)\neq\emptyset$, so we may assume that $c_j\in(0,1)$ for at least one $j$.

    Take now $\bf c\in[0,1]^m\setminus\{0,1\}^m$. First, we handle the case that $\bf c\in\partial^*\cal P(\bf L)$. By Proposition~\ref{prop:christ_crit_exist}, there is a critical subspace $\bf 0\neq W\subsetneq V$. We would like to show that
    \begin{equation*}
        \operatorname{BL}\big(\left.\bf L\right|_{W},\bf c\big),\operatorname{BL}\big(\left.\bf L\right|_{V/W},\bf c\big)<+\infty.
    \end{equation*}
    We consider the first. For any $f_1,\ldots,f_m$ nonnegative and finitely-supported on the $L_jW$,
    \begin{equation*}
        \sum_{w\in W}\prod_{j=1}^mf_j(L_jw)=\sum_{x\in V}\prod_{j=1}^mf_j(L_jx)\leq\operatorname{BL}(\bf L,\bf c)\prod_{j=1}^m\|f_j\|_{\ell^{1/c_j}},
    \end{equation*}
    so that $\operatorname{BL}\big(\left.\bf L\right|_{W},\bf c\big)\leq\operatorname{BL}(\bf L,\bf c)<+\infty$. Similarly, if $f_1,\ldots,f_m$ are nonnegative and finitely-supported on the $L_jV/L_jW$, then for $\cal E$ a complete set of representatives of $V/W$ in $V$,
    \begin{equation*}
        \sum_{a\in V/W}\prod_{j=1}^mf_j(L_ja+L_jW)=\sum_{x\in\cal E}\prod_{j=1}^mf_j(L_jx)\leq\operatorname{BL}(\bf L,\bf c)\prod_{j=1}^m\|f_j\|_{\ell^{1/c_j}},
    \end{equation*}
    so that $\operatorname{BL}\big(\left.\bf L\right|_{V/W},\bf c\big)\leq\operatorname{BL}(\bf L,\bf c)<+\infty$.
    
    By induction on $n$, we may conclude that $\operatorname{Wit}\big(\bf c,\left.\bf L\right|_W\big)\neq\emptyset$ and $\operatorname{Wit}\big(\bf c,\left.\bf L\right|_{V/W}\big)\neq\emptyset$. Thus, by the definition of $\operatorname{Wit}_4(\bf c,\bf L)$, we have that $\operatorname{Wit}_4(\bf c,\bf L)\neq\emptyset$.

    Finally, we handle the case that $\bf c\not\in\partial^*\cal P(\bf L)$. We may assume that the result is true for each element of $\partial^*\cal P(\bf L)$. Let $\cal A_1,\ldots,\cal A_N$ be a corresponding list of witnesses, and suppose that $\cal A_0$ is a finite set of subspaces of $V$ such that $\operatorname{Poly}(\bf L,\cal A_0)=\cal P(\bf L)$. Then $\cal F:=\cal A_0\cup\cal A_1\cup\cdots\cup\cal A_N\in\operatorname{Folio}(\bf L)$: clearly $\operatorname{Poly}(\bf L,\cal F)=\cal P(\bf L)$, so $\partial^*\operatorname{Poly}(\bf L,\cal F)=\partial^*\cal P(\bf L)$, so at every extreme point of $\operatorname{Poly}(\bf L,\cal F)$ we have a subset of $\cal F$ which is a witness. Furthermore, since $\bf c\in\cal P(\bf L)$, we have $\bf c\in\operatorname{Poly}(\bf L,\cal F)$. Thus, $\cal F\in\operatorname{Wit}_5(\bf c,\bf L)$, and we are done.

\end{proof}

\begin{corollary}[Folios exist over $\Q$]\label{cor:fol_exist}
    Let $\F$ be characteristic zero, and suppose that $\bf c\in[0,1]^m$ is such that $n=\sum_jc_j\mathrm{rank}(L_j)$ and $\operatorname{BL}(\bf L,\bf c)<+\infty$. Then $\operatorname{Folio}(\bf L)\neq\emptyset$.
\end{corollary}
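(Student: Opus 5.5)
The corollary follows quickly from Theorem~\ref{thm:wit_exist}. The plan is to produce a witness for $(\bf c,\bf L)$ and then extract a folio from it. Under the stated hypotheses, Theorem~\ref{thm:wit_exist} gives some $\cal A\in\operatorname{Wit}(\bf c,\bf L)$. I would then split into cases according to which $\operatorname{Wit}_i(\bf c,\bf L)$ contains $\cal A$.

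\textbf{Case $i=5$.} Here $\cal A\in\operatorname{Folio}(\bf L)$ by definition, so we are done.

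\textbf{Cases $1\le i\le 4$.} I would not try to show directly that $\cal A$ is itself a folio, since $\operatorname{Poly}(\bf L,\cal A)$ may have extreme points not witnessed by subfamilies of $\cal A$. Instead I would rerun the final step of the proof of Theorem~\ref{thm:wit_exist}, which never actually used that $\bf c$ was interior.

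\begin{itemize}
\item By Theorem~\ref{thm:christ_disc_suff}, $\bf c\in\cal P(\bf L)$, so $\cal P(\bf L)\neq\emptyset$.
\item By the remark following the definition of $\cal P(\bf L)$, there is a finite family $\cal A_0$ of subspaces with $\operatorname{Poly}(\bf L,\cal A_0)=\cal P(\bf L)$. I would adjoin $V$ to $\cal A_0$ if necessary.
\item $\cal P(\bf L)$ is a nonempty compact polytope, so it has finitely many extreme points $\bf c^1,\ldots,\bf c^N$.
\item Each $\bf c^k$ lies in $\cal P(\bf L)$. By Theorem~\ref{thm:christ_disc_suff} (the equivalence (a)$\iff$(d)), this gives $\operatorname{BL}(\bf L,\bf c^k)=1<+\infty$. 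Since the scaling equality is part of the definition of $\cal P(\bf L)$, Theorem~\ref{thm:wit_exist} applies at each $\bf c^k$ and produces witnesses $\cal A_k$.
\end{itemize}

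The first point to check is that these $\cal A_k$ witness in the restricted sense $1\le i\le 4$, as the folio definition requires. At an extreme point $\bf c^k\notin\{0,1\}^m$, the proof of Theorem~\ref{thm:wit_exist} goes through Proposition~\ref{prop:christ_crit_exist} and lands in $\operatorname{Wit}_4$, not $\operatorname{Wit}_5$. At a point of $\{0,1\}^m$ it lands in $\operatorname{Wit}_3$, and when $n=1$ or $m=1$ it lands in $\operatorname{Wit}_1$ or $\operatorname{Wit}_2$.

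Now set $\cal F=\cal A_0\cup\cal A_1\cup\cdots\cup\cal A_N$. I would verify two things.

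\begin{itemize}
\item $\operatorname{Poly}(\bf L,\cal F)=\cal P(\bf L)$. The inclusion $\subseteq$ holds because $\cal F\supseteq\cal A_0$. The inclusion $\supseteq$ holds because every member of $\cal F$ is a subspace of $V$, and every point of $\cal P(\bf L)$ satisfies the rank inequality for every subspace. In particular $\operatorname{Poly}(\bf L,\cal F)\neq\emptyset$.
\item Consequently $\partial^*\operatorname{Poly}(\bf L,\cal F)=\{\bf c^1,\ldots,\bf c^N\}$, and each $\bf c^k$ has the subfamily $\cal A_k\subseteq\cal F$ in some $\operatorname{Wit}_i$ with $i\le 4$.
\end{itemize}

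Hence $\cal F\in\operatorname{Folio}(\bf L)$.

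The main obstacle is the bookkeeping that the witnesses at extreme points are of type $1$–$4$, not merely of type $5$. The definition of witness as a union allows type $5$, and a type-$5$ witness would not qualify for the folio condition. I would handle this by noting explicitly that the branch of the proof of Theorem~\ref{thm:wit_exist} for $\bf c\in\partial^*\cal P(\bf L)$ constructs a type-$4$ witness, with types $1$–$3$ in the degenerate cases. The only other point to note is the minor one that folios must consist of nonzero subspaces, which holds since witnesses contain only subspaces arising as $V$ or critical subspaces $\bf 0\neq W$.
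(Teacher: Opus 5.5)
Your proposal is correct and follows essentially the paper's proof: take a finite family $\cal A_0$ with $\operatorname{Poly}(\bf L,\cal A_0)=\cal P(\bf L)$, adjoin a witness at each extreme point of $\cal P(\bf L)$, and note that the union still cuts out exactly $\cal P(\bf L)$. You also explicitly check that these extreme-point witnesses are of type $1$--$4$, as the folio definition requires, which fills in a step the paper leaves implicit; your opening case split on a witness at $\bf c$ itself is unnecessary, since the construction never uses that witness.
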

\begin{proof}
    By Theorem~\ref{thm:christ_disc_suff}, under the first two hypotheses on $\bf c$, we have $\bf c\in\cal P(\bf L)$ if and only if $\operatorname{BL}(\bf L,\bf c)<+\infty$. So, by our assumption, $\cal P(\bf L)$ is nonempty. Then $\partial^*\cal P(\bf L)$ is a finite set, and for each $\bf c\in\partial^*\cal P(\bf L)$ we have that $\operatorname{BL}(\bf L,\bf c)<+\infty$, and hence $\operatorname{Wit}(\bf c,\bf L)$ is nonempty. Let $\cal A_1,\ldots,\cal A_N$ be a choice of witness for each $\bf c\in\partial^*\cal P(\bf L)$, $\cal A_0$ be a set of subspaces such that $\cal P(\bf L)=\operatorname{Poly}(\bf L,\cal A_0)$, and choose $\cal A=\cal A_0\cup\cal A_1\cup\cdots\cup\cal A_N$.

    We claim that $\cal A\in\operatorname{Folio}(\bf L)$. Let $\bf c\in\partial^*\operatorname{Poly}(\bf L,\cal A)$ be arbitrary. Since $\operatorname{Poly}(\bf L,\cal A_0)=\cal P(\bf L)$ and $\operatorname{Poly}(\bf L,\cal A_i)\supseteq\cal P(\bf L)$ for every $1\leq i\leq N$, it follows that $\operatorname{Poly}(\bf L,\cal A)=\cal P(\bf L)$; thus, $\bf c\in\partial^*\cal P(\bf L)$ as well. Then $\cal A_i\in\operatorname{Wit}(\bf c,\bf L)$ for some $i$; since $\cal A_i\subseteq\cal A$, we are done.
\end{proof}

\begin{corollary}[Folios characterize boundedness over $\Q$]\label{cor:fol_poly}
    Let $\F$ be characteristic zero, and suppose $\cal A\in\operatorname{Folio}(\bf L)$. Then $\cal P(\bf L)=\operatorname{Poly}(\bf L,\cal A)$.
\end{corollary}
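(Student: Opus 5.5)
We prove the two inclusions $\cal P(\bf L)\supseteq\operatorname{Poly}(\bf L,\cal A)$ and $\cal P(\bf L)\subseteq\operatorname{Poly}(\bf L,\cal A)$ separately.

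\emph{The inclusion $\operatorname{Poly}(\bf L,\cal A)\subseteq\cal P(\bf L)$.} This direction does not need characteristic zero. Let $\bf c\in\operatorname{Poly}(\bf L,\cal A)$. By the Corollary following Proposition~\ref{prop:wit_imp_bdd}, which states that a folio gives $\operatorname{BL}(\bf L,\bf c)=1$ for all $\bf c$ in its polytope, we have $\operatorname{BL}(\bf L,\bf c)=1<+\infty$ over $\F$ with counting measure. Since $\bf c\in\operatorname{Poly}(\bf L,\cal A)$, it also satisfies $\bf c\in[0,1]^m$ and the scaling equality $n=\sum_jc_j\operatorname{rank}(L_j)$. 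Because $\F$ has characteristic zero, $V$ is a torsion-free discrete abelian group. Theorem~\ref{thm:christ_disc_suff} ((b)$\Rightarrow$(a)) then gives $\operatorname{rank}(H)\le\sum_jc_j\operatorname{rank}(L_jH)$ for every subgroup $H\le V$.

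To pass from subgroups to subspaces, take $H=W$ a subspace. The required identity is $\operatorname{rank}_\Z(W)=\dim_\F W$, which holds when $\F$ is countable (e.g.\ $\F=\Q$, the case of interest). For general $\F$ of characteristic zero the identity fails: an $\F$-subspace is a $\Q$-vector space of possibly infinite rank. In that case I would replace the last step with a direct argument. Test the inequality $\operatorname{BL}(\bf L,\bf c)\le1$ on $f_j=1_{L_j(\Omega)}$, where $\Omega=\{\sum_i a_iw_i : a_i\in\{0,\dots,N-1\}\}$ is a box in $W$ built on a basis $(w_i)$ of $W$. Then $\#\Omega=N^{\dim W}$ and $\#L_j(\Omega)\le N^{\dim L_jW}$, since $L_j(\Omega)$ is contained in a box of the same kind in $L_jW$. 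Letting $N\to\infty$ gives the rank inequality directly, so characteristic zero is used only to make the box have exactly $N^{\dim W}$ distinct elements. Either way $\bf c\in\cal P(\bf L)$.

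\emph{The inclusion $\cal P(\bf L)\subseteq\operatorname{Poly}(\bf L,\cal A)$.} This is immediate. Every member of $\cal A$ is a subspace of $V$, so each inequality defining $\operatorname{Poly}(\bf L,\cal A)$ is among those defining $\cal P(\bf L)$. The remaining constraints, namely $0\le c_j\le1$ and the scaling equality $n=\sum_jc_j\dim(L_jV)=\sum_jc_j\operatorname{rank}(L_j)$, are common to both polytopes.

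The only delicate point is the subgroup-versus-subspace rank issue in the first inclusion. I would handle it with the box argument above, which avoids citing Theorem~\ref{thm:christ_disc_suff} when $\F$ is uncountable.
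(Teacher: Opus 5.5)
Your core argument is the paper's. The inclusion $\cal P(\bf L)\subseteq\operatorname{Poly}(\bf L,\cal A)$ is trivial, and for the converse you get $\operatorname{BL}(\bf L,\bf c)=1$ from the folio and feed it into Theorem~\ref{thm:christ_disc_suff}. The paper applies Proposition~\ref{prop:wit_imp_bdd} only at extreme points of $\operatorname{Poly}(\bf L,\cal A)$ and then uses convexity of $\cal P(\bf L)$; your use of the interpolated corollary is equivalent. For $\F=\Q$, which is the only case used later (Theorem~\ref{thm:rank_transfer}), taking $H=W$ completes the proof. You are also right that the paper's one-line appeal to Theorem~\ref{thm:christ_disc_suff} tacitly identifies $\Z$-rank with $\F$-dimension. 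Your extension beyond $\Q$, however, fails at two points. First, $\operatorname{rank}_\Z(W)=\dim_\F W$ holds only for $\F=\Q$, not for all countable $\F$. Over a number field of degree $d$, both sides of the rank inequality are multiplied by $d$, so $H=W$ still works. But over any field of infinite degree over $\Q$ (countable ones such as $\Q(t)$ or $\overline{\Q}$ included), $\operatorname{rank}_\Z(W)=\infty$ and $H=W$ yields nothing.

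Second, and decisively, the box estimate is false. Write $k=\dim W$ and $k_j=\dim L_jW$. The vectors $L_jw_1,\ldots,L_jw_k$ span a $k_j$-dimensional space, but the $\F$-linear relations among them have arbitrary coefficients. So $L_j(\Omega)$ is a rank-$k$ progression, not a box on a basis of $L_jW$. For $\F=\R$, $W=\operatorname{span}(\bf e_1,\bf e_2)\subseteq\R^3$ and $L(x,y,z)=x+\sqrt2\,y$, one gets $L(\Omega)=\{a+b\sqrt2:0\leq a,b<N\}$, which has $N^2$ elements although $\dim LW=1$. (Over $\Q$ one does get $\#L_j(\Omega)\leq CN^{k_j}$, which would suffice, but there the box is not needed.) A better choice of basis of $W$ does not help in general. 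Take the functionals $x$, $y$, $x+\sqrt2\,y$, $x+\sqrt3\,y$ on a plane. Making all four images of size $O(N)$ would require a M\"obius map sending $0,\infty,\sqrt2,\sqrt3$ into $\mathbb P^1(\Q)$, which is impossible since their cross-ratio $\sqrt{2/3}$ is irrational.

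For general $\F$ of characteristic zero you need a growth argument inside the finitely generated field $K\subseteq\F$ generated by the coordinates of the $w_i$ and the entries of the $L_j$. Take a transcendence basis $t=(t_1,\ldots,t_r)$ and a primitive element $\theta$ integral over $\Z[t]$. Let $O_D$ be the $\Z$-module of elements $\sum_{s<[K:\Q(t)]}f_s(t)\theta^s$ with $f_s\in\Z[t]$ of degree at most $D$, and set $H_D=O_Dw_1+\cdots+O_Dw_k$. Then $\operatorname{rank}H_D=k\operatorname{rank}O_D$. Also $L_jH_D\subseteq q^{-1}(O_{D+e}v_1+\cdots+O_{D+e}v_{k_j})$, for a basis $v_1,\ldots,v_{k_j}$ of $L_jW$ chosen among the $L_jw_i$ and some fixed $q\in\Z[t]$, $e\in\N$. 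Hence $\operatorname{rank}L_jH_D\leq(1+o(1))\,k_j\operatorname{rank}O_D$ as $D\to\infty$, and condition (a) of Theorem~\ref{thm:christ_disc_suff} yields $k\leq\sum_jc_jk_j$. Alternatively, state the corollary only for $\F=\Q$ (or number fields), which is all the paper uses.
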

\begin{proof}
    Note that $\cal P(\bf L)\subseteq\operatorname{Poly}(\bf L,\cal A)$ is trivial. It suffices to show that $\partial^*\operatorname{Poly}(\bf L,\cal A)\subseteq\cal P(\bf L)$. For each $\bf c\in\partial^*\operatorname{Poly}(\bf L,\cal A)$, there is a witness $\cal B\subseteq\cal A$ for $(\bf c,\bf L)$, so $\operatorname{BL}(\bf L,\bf c)=1$. By Theorem~\ref{thm:christ_disc_suff}, $\bf c\in\cal P(\bf L)$, and we are done.
\end{proof}

\subsection{Applying the transference of folios}\label{subsec:transfer}

We may now show our main transference result.

\begin{theorem}[Finding a good prime]\label{thm:rank_transfer}
    Suppose $\bf L=(L_j)_{1\leq j\leq m}$, $L_j\in\operatorname{Mat}_{n_j\times n}(\Z)$, with $\mathrm{rank}(L_j)=n_j$. Suppose $\alpha\geq 0$ is arbitrary. Write $\cal P$ for the set of $(q_1^{-1},\ldots,q_m^{-1})\in[0,1]^m$ such that we have the rank and scaling conditions
    \begin{equation*}
        n=\sum_{j=1}^m\frac{n_j}{q_j},
    \end{equation*}
    \begin{equation*}
        \mathrm{rank}(H)\leq-\alpha+\sum_{j=1}^m\frac{\mathrm{rank}(L_jH)}{q_j},\quad\forall H\leq\Z^n\text{  subgroup of $0<\mathrm{rank}(H)\leq n-1$}.
    \end{equation*}
    Then there is a positive integer $\Delta=\Delta(\bf L)\in\N$, which can be determined algorithmically from $\bf L$, such that the following holds. For every prime number $p$ such that $p\nmid\Delta$, we have that the $\F_p$-linear maps $\widetilde{L}_j=L_j\,\,\text{mod $p$}$ satisfy
    \begin{equation*}
        \mathrm{dim}_{\F_p}\big(\widetilde{L}_j\F_p^n\big)=n_j,\,\,\forall j,
    \end{equation*}
    \begin{equation*}
        \mathrm{dim}_{\F_p}(V)\leq -\frac{\alpha}{n-1}+\sum_{j=1}^m\frac{\dim_{\F_p}\big(\widetilde{L}_jV\big)}{q_j},\quad \forall \bf 0\neq V\subsetneq\F_p^n\,\,\text{strict subspace},
    \end{equation*}
    for all $(q_1^{-1},\ldots,q_m^{-1})\in\cal P$.
\end{theorem}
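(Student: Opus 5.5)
The plan is to reduce to the rational case via the folio machinery of Subsection~\ref{subsec:folio_witness}, and then read off $\Delta$ from finitely many minors. First, note that $\mathrm{rank}(H)=\dim_\Q(\Q H)$ and $\mathrm{rank}(L_jH)=\dim_\Q(L_j\Q H)$. Hence the hypotheses on $\cal P$ are exactly the rank/scaling conditions (with margin $\alpha$) for $\bf L$ regarded as $\Q$-linear maps. If $\cal P=\emptyset$ there is nothing to prove beyond the surjectivity claim, so assume $\cal P\neq\emptyset$. Then $\cal P\subseteq\cal P(\bf L)$ over $\Q$, and Theorem~\ref{thm:christ_disc_suff} gives $\operatorname{BL}(\bf L,\bf c)<+\infty$ for $\bf c\in\cal P$. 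So Corollary~\ref{cor:fol_exist} produces a folio $\cal A=\{V_\tau\}_\tau$ of rational subspaces, and Corollary~\ref{cor:fol_poly} gives $\cal P(\bf L)=\operatorname{Poly}(\bf L,\cal A)$. Everything here is algorithmic: run the procedure of~\cite{christ2015discrete}, enumerating rational subspaces until a folio is certified.

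Next I would define $\Delta$. For each $V_\tau$, replace it by the saturated lattice $H_\tau=V_\tau\cap\Z^n$ and pick an integer basis matrix $B_\tau$. Then $\Delta$ is the product of the following integers:
\begin{enumerate}
\item a nonzero maximal minor of each $B_\tau$ (automatically $\pm1$-divisible after saturation, but I include it for safety);
\item for each $j$, a nonzero $n_j\times n_j$ minor of $L_j$;
\item for each $\tau,j$, a nonzero $r\times r$ minor of $L_jB_\tau$ with $r=\mathrm{rank}(L_jB_\tau)$;
\item for each $J\subseteq\{1,\ldots,m\}$, a nonzero maximal-rank minor of the stacked matrix $(L_j)_{j\in J}$.
\end{enumerate}
Reduction mod $p$ can only lower ranks, and $p\nmid\Delta$ prevents any drop. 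So for $p\nmid\Delta$ the reductions $\widetilde V_\tau=H_\tau\otimes\F_p$ satisfy hypotheses 1--4 of Proposition~\ref{prop:fol_transfer}:
\begin{itemize}
\item inclusions are preserved because saturated lattices nest;
\item dimensions are preserved by the minors in items 1 and 3;
\item kernel-intersection dimensions are preserved by the minors in item 4.
\end{itemize}
The minors in item 2 also give $\dim\widetilde L_j\F_p^n=n_j$. Hence $\cal B=\{\widetilde V_\tau\}$ is a folio for $\widetilde{\bf L}$, and $\operatorname{Poly}(\widetilde{\bf L},\cal B)=\operatorname{Poly}(\bf L,\cal A)=\cal P(\bf L)$.

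By the corollary following Proposition~\ref{prop:wit_imp_bdd}, $\operatorname{BL}(\widetilde{\bf L},\bf c)=1$ for all such $\bf c$. Testing against $f_j=1_{\widetilde L_jV}$ for a subspace $V\subseteq\F_p^n$ gives $p^{\dim V}\leq\prod_j p^{c_j\dim\widetilde L_jV}$. So every $\F_p$-rank inequality (with $\alpha=0$) holds on all of $\cal P(\bf L)\supseteq\cal P$.

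The main obstacle is upgrading this to the margin $\alpha/(n-1)$ on $\cal P$. For a fixed $\F_p$-subspace $V$, the functional $g_V(\bf c)=\sum_jc_j\dim\widetilde L_jV-\dim V$ is affine and nonnegative on the polytope $\cal P(\bf L)$. That polytope is cut out by the box constraints, the scaling equation, and the folio inequalities $g_{V_\tau}\geq0$. By Farkas' lemma I will write
\begin{equation*}
g_V=\sum_\tau\lambda_\tau g_{V_\tau}+\mu\Big(\sum_jc_jn_j-n\Big)+\sum_j\big(\nu_jc_j+\nu_j'(1-c_j)\big),
\end{equation*}
with $\lambda_\tau,\nu_j,\nu'_j\geq0$. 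On $\cal P$ each $g_{V_\tau}\geq\alpha$ for proper $V_\tau$, so $g_V\geq\alpha\sum_{\tau\text{ proper}}\lambda_\tau$ there. It then remains to show $\sum_{\text{proper}}\lambda_\tau\geq 1/(n-1)$ whenever the box terms alone cannot account for $g_V$.

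I would try to get this by comparing constant terms and $\bf c$-coefficients, using that all data are integers bounded by $n$ and $-\dim V\geq-(n-1)$. If that bookkeeping fails, the fallback is to enlarge the folio. I would add, at each vertex of $\cal P$, witnesses for the shifted polytope, so that the transferred $\F_p$ inequalities are themselves among the images of rational subspaces carrying margin $\alpha$. I would also enlarge $\Delta$ by the corresponding minors.
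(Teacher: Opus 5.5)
Your $\alpha=0$ argument is essentially the paper's: a rational folio from Theorem~\ref{thm:christ_disc_suff} and Corollaries~\ref{cor:fol_exist}--\ref{cor:fol_poly}, then $\Delta$ built from minors, then Proposition~\ref{prop:fol_transfer}, interpolation, and testing with $1_{\widetilde L_jV}$. Using saturated lattices $V_\tau\cap\Z^n$ to preserve inclusions is a tidy substitute for the paper's common denominator $Q$.

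The gap is the upgrade to margin $\alpha/(n-1)$, which you leave as ``bookkeeping, or else enlarge the folio.'' No argument can close it, because for $\alpha>0$ the statement is false. Take $n=m=4$ and $n_j=2$. For $x=(x_0,x_1,x_2,x_3)$ put $L_1x=(x_2,x_3)$, $L_2x=(x_0,x_1)$, $L_3x=(x_0-x_2,x_1-x_3)$ and $L_4x=(x_0-x_3,x_1+x_2)$.
\begin{itemize}
\item The kernels are four pairwise-skew planes in $\Q^4$.
\item A $2$-plane meeting $\ker L_1$ and $\ker L_2$ nontrivially is $\mathrm{span}\{(a,b,0,0),(0,0,c,d)\}$. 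Meeting $\ker L_3$ forces $(c,d)\parallel(a,b)$, and then meeting $\ker L_4$ forces $a^2+b^2=0$.
\item So the only $2$-planes meeting all four kernels nontrivially are $\mathrm{span}\{(1,\pm i,0,0),(0,0,1,\pm i)\}$, and neither is rational.
\item Consequently every rational $0\neq W\subsetneq\Q^4$ satisfies $\sum_j\frac12\dim L_jW-\dim W\geq\frac12$. So $\bf c=(\frac12,\frac12,\frac12,\frac12)\in\cal P$ with $\alpha=\frac12$.
\item For every prime $p\equiv 1\pmod 4$, the subspace $V=\mathrm{span}_{\F_p}\{(1,i,0,0),(0,0,1,i)\}$ with $i^2=-1$ has $\dim V=2$ and $\dim\widetilde L_jV=1$ for all $j$.
\item Hence $\sum_jc_j\dim\widetilde L_jV-\dim V=0<\frac16=\frac{\alpha}{n-1}$. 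Infinitely many primes fail, so no $\Delta$ exists.
\end{itemize}

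In your Farkas decomposition this $V$ has $g_V=\frac12\big(\sum_jc_jn_j-n\big)$. All of $\lambda_\tau,\nu_j,\nu'_j$ vanish and the scaling multiplier $\mu$ carries everything, so the bookkeeping cannot succeed. Enlarging the folio cannot help either: the profile $(2;1,1,1,1)$ is realized by no rational subspace. $V$ descends from the conjugate $\Q(i)$-transversals, and since $g_V\equiv0$ on the scaling hyperplane it does not change the $\alpha=0$ polytope, so no polytope comparison can detect it.

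The paper's own proof of this step gets $\dim_{\F_p}V=\sum_\tau\lambda_\tau\dim V_\tau$ by applying Farkas' lemma to the rows of $\mathbb M$ alone. That treats the inequality as if it held on the whole cone $\{\mathbb My\leq 0\}$, when it is only known on the scaling slice. The multiplier $\mu$ you correctly kept is exactly what that step drops.

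What is actually provable is the $\alpha=0$ case, which is the only case the paper invokes later, in the proof of Theorem~\ref{thm:local_integer_nonlinear_bl}. A margin version would need the rank hypothesis for $\overline{\Q}$- or $\C$-subspaces, in the spirit of Lemma~\ref{lem:BL_nullstellensatz}, since rational witnesses cannot see $\F_p$-subspaces coming from non-rational algebraic ones.
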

\begin{proof}
    We do not focus on the algorithmic aspect;~\cite{christ2015discrete} shows that the polytope $\cal P$ may be determined by an algorithm, and our constructions of witnesses and folios is intended to mimic their analysis.
    
    We first suppose that $\alpha=0$. Write $L_j$ also for the induced maps $\Q^n\to\Q^{n_j}$; note that they are surjective. If $V\subseteq\Q^n$ is a $\Q$-subspace of dimension $k$, then (writing $u_1,\ldots,u_k$ for a basis for $V$ whose entries are integers)
    \begin{equation*}
        \dim(L_jV)=\mathrm{rank}\begin{bmatrix}
            \,L_ju_1 & \cdots & L_ju_k\,
        \end{bmatrix}=\mathrm{rank}\big(L_j\big(\Z\langle u_1,\ldots,u_k\rangle\big)\big),
    \end{equation*}
    where $\Z\langle u_1,\ldots,u_k\rangle\leq\Z^n$ is the subgroup of rank $k$ spanned by the $u_i$. Thus, the vector $\bf c:=(q_1^{-1},\ldots,q_m^{-1})\in\cal P(\bf L)$, where in the latter we use the field $\F=\Q$. Thus $\operatorname{Wit}(\bf c,\bf L)\neq\emptyset$ by Theorem~\ref{thm:christ_disc_suff} and Corollary~\ref{cor:fol_exist}, so $\operatorname{Folio}(\bf L)\neq\emptyset$. Choose $\cal A\in\operatorname{Folio}(\bf L)$. By Corollary~\ref{cor:fol_poly}, $\cal P(\bf L)=\operatorname{Poly}(\bf L,\cal A)$.

    Write $\cal A=\{V_\tau\}_\tau$; we will freely assume that $\beta\neq\tau$ implies $V_\beta\neq V_\tau$. For each $\tau$, write $u_{1,\tau},\ldots,u_{k_\tau,\tau}$ for a $\Q$-basis for the space $V_\tau$ whose entries are integers. Write
    \begin{equation*}
        M_\tau=\begin{bmatrix}
            \,u_{1,\tau} & \cdots & u_{k_\tau,\tau}\,
        \end{bmatrix},\quad M_{j,\tau}=L_j.M_\tau.
    \end{equation*}
    For each $\tau$ and each $j$, write $k_{j,\tau}$ for $\dim_{\Q}(L_jV_\tau)$; thus, $\mathrm{rank}(M_{j,\tau})=k_{j,\tau}$. Similarly, we have written $k_\tau=\dim_\Q(V_\tau)=\mathrm{rank}\big([\,u_{1,\tau}\,\cdots\, u_{k_\tau,\tau}\,]\big)$. Choose $d_\tau$ a nonzero $(k_\tau\times k_\tau)$-minor of $M_\tau$, $d_{j,\tau}$ a nonzero $(k_{j,\tau}\times k_{j,\tau})$-minor of $M_{j,\tau}$. Write
    \begin{equation*}
        d=\mathrm{lcm}_{j,\tau}(d_{j,\tau}),\quad f=\mathrm{lcm}_\tau(d_\tau).
    \end{equation*}
    Next, since $\mathrm{rank}(L_j)=n_j$, there is a nonvanishing $(n_j\times n_j)$-minor $\delta_j\in\Z$; we write
    \begin{equation*}
        \delta=\mathrm{lcm}_j(\delta_j).
    \end{equation*}
    Finally, for each $J\subseteq\{1,\ldots,m\}$, we choose a $\Q$-basis $\{w_{\iota,J}\}_{1\leq\iota\leq \kappa_J}$ with integer entries for the $\Q$-subspace $\bigcap_{j\in J}\ker L_j$. Choose a nonvanishing $(\kappa_J\times \kappa_J)$-minor $\rho_J$ of the matrix $[\,w_{1,J}\,\cdots\, w_{\kappa_J,J}\,]$, and write
    \begin{equation*}
        \rho=\mathrm{lcm}_{J}(\rho_J).
    \end{equation*}
    Next, consider any containment $V_\tau\subsetneq V_\beta$. Then each $u_{i,\tau}\in V_\beta$, which is to say that 
    \begin{equation*}
        u_{i,\tau}=\sum_{\ell=1}^{k_\beta}c_{i\ell}^{\tau\leftarrow\beta}u_{\ell,\beta}
    \end{equation*}
    for some choices of $c_{i\ell}^{\tau\leftarrow\beta}\in\Q$. If we write $\tau\prec\beta$ exactly when $V_\tau\subsetneq V_\beta$, then we write $Q$ for the least member of $\N$ such that $Qc_{i\ell}^{\tau\leftarrow\beta}\in\Z$ for all $\tau\prec\beta$, all $1\leq i\leq k_\tau$, and all $1\leq\ell\leq k_\beta$. It follows that $Q \Z\langle u_{1,\tau},\ldots,u_{k_\tau,\tau}\rangle\subseteq\Z\langle u_{1,\beta},\ldots,u_{k_\beta,\beta}\rangle$, whenever $V_\tau\subseteq V_\beta$.
    
    Finally, we write
    \begin{equation*}
        \Delta=\mathrm{lcm}(d,f,\delta,\rho,Q).
    \end{equation*}
    
    We choose $p$ to be any prime such that $p\nmid \Delta$. Write $\F=\Q,\F'=\F_p$, $V'=\F_p^n$,
    \begin{equation*}
        \phi(V_\tau)=V_\tau'\quad\text{where}\quad V_\tau'=\Z\langle u_{1,\tau},\ldots,u_{k_\tau,\tau}\rangle+p\Z^n\subseteq(\Z/p\Z)^n=\F_p^n,
    \end{equation*}
    and $\widetilde L_j$ for the reduction of $L_j$ mod $p$. We claim that the hypotheses of Proposition~\ref{prop:fol_transfer} hold true. First, note that, if $V_\tau\subseteq V_\beta$, we have $Qu_{i,\tau}\in\Z\langle u_{1,\beta},\ldots, u_{k_\beta,\beta}\rangle$ for each $i$, so that $Qu_{i,\tau}+p\Z^n\in\phi(V_\beta)$. Since $p\nmid Q$, we conclude that $u_{i,\tau}+p\Z^n\in\phi(V_\beta)$. Since $\phi(V_\tau)$ is spanned by the $u_{i,\tau}+p\Z^n$, we conclude that $\phi(V_\tau)\subseteq\phi(V_\beta)$.
    
    Next, for each $\tau$, since $p\nmid f$, we have $p\nmid d_\tau$, so the $\F_p$-valued matrix
    \begin{equation*}
        \begin{bmatrix}
            \,u_{1,\tau}+p\Z^n & \cdots & u_{k_\tau,\tau}+p\Z^n\,
        \end{bmatrix}
    \end{equation*}
    has a nonvanishing minor in $\F_p$, hence $V_\tau'$ has $\F_p$-dimension $k_\tau$. Similarly, since $p\nmid d_{j,\tau}$ for each $j$ and each $\tau$, we have that $M_{j,\tau}+p\Z^n$ has nonvanishing $(k_{j,\tau}\times k_{j,\tau})$-minor in $\F_p$, so $\widetilde L_j(V_\tau')$ has $\F_p$-dimension equal to $k_{j,\tau}=\dim_\Q(L_jV_\tau)$. Next, since $p\nmid\delta_j$ for each $j$, we see that $\widetilde L_j$ has a nonvanishing $(n_j\times n_j)$-minor in $\F_p$, so $\mathrm{dim}_{\F_p}(\widetilde L_j\F_p^n)=\dim_\Q(L_j\Q^n)$. Finally, for each subset $J\subseteq\{1,\ldots,m\}$, since $p\nmid\rho_J$ we see that $[\,w_{1,J}\,\cdots\,w_{\kappa_J,J}\,]$ has a nonvanishing $(\kappa_J\times\kappa_J)$-minor in $\F_p$, so
    \begin{equation*}
        \dim_{\F_p}\left(\bigcap_{j\in J}\ker L_j'\right)=\kappa_J=\dim_\Q\left(\bigcap_{j\in J}\ker L_j\right).
    \end{equation*}

    Thus, we are in a position to apply Proposition~\ref{prop:fol_transfer}. Since $\cal A=\{V_\tau\}_\tau$ is a folio for $\bf L$, we conclude that $\cal B=\{\phi(V_\tau)\}_\tau$ is a folio for $\widetilde{\bf L}$. For any $\bf c^*\in\partial^*\mathrm{Poly}(\widetilde{\bf L},\cal B)$, there is some $\cal B'\subseteq\cal B$ for which $\cal B'\in\operatorname{Wit}(\bf c,\widetilde{\bf L})$, and hence (by Proposition~\ref{prop:wit_imp_bdd}) $\operatorname{BL}(\widetilde{\bf L},\bf c^*)=1$. On the other hand, we certainly have $\operatorname{Poly}(\widetilde{\bf L},\cal B)=\operatorname{Poly}(\bf L,\cal B)$, so $\partial^*\operatorname{Poly}(\widetilde{\bf L},\cal B')=\partial^*\operatorname{Poly}(\bf L,\cal B)$, while by Corollary~\ref{cor:fol_poly} we have $\cal P(\bf L)=\operatorname{Poly}(\bf L,\cal B)$. In particular, since $\bf c\in\cal P(\bf L)$, we have $\bf c\in\mathrm{conv}\big(\partial^*\operatorname{Poly}(\widetilde{\bf L},\cal B')\big)$. By multilinear interpolation, we must have $\operatorname{BL}(\widetilde{\bf L},\bf c)=1$. Finally, testing $\operatorname{BL}(\widetilde{\bf L},\bf c)$ with functions of the form $1_{\widetilde L_jV}$, the rank condition immediately follows.

    Finally, we indicate how to get to the $\alpha>0$ case. Suppose $p$ is a prime such that we have the $\F_p$-rank conditions corresponding to $\alpha=0$. If $\widetilde{\cal P}$ denotes the polytope defined by the full set of rank and scaling conditions on the $\widetilde{L}_j$ (with $\alpha=0$), then we have demonstrated that $\cal P\subseteq\widetilde{\cal P}$. Thus, for each $\bf 0\neq V\subsetneq\F_p^n$ subspace, the tuple
    \begin{equation*}
        \big(\dim_{\F_p}(V),\dim_{\F_p}(\widetilde{L}_1V),\ldots,\dim_{\F_p}(\widetilde{L}_mV)\big)=(k,k_1,\ldots,k_m),
    \end{equation*}
    has the property that
    \begin{equation*}
        (k,k_1,\ldots,k_m)\cdot(1,-q_1^{-1},\ldots,-q_m^{-1})\leq 0,\quad\forall (q_1^{-1},\ldots,q_m^{-1})\in\cal P.
    \end{equation*}
    On the other hand, $\cal P$ is characterized by a finite set of rank and scaling conditions; that is, there are finitely many strict subspaces $\bf 0\neq V_\tau\subsetneq\Q^n$ such that
    \begin{equation*}
        (q_1^{-1},\ldots,q_m^{-1})\in\cal P\quad\iff\quad\sum_{j=1}^m\frac{n_j}{q_j}=n\,\,\text{and}\,\,\dim V_\tau\leq\sum_{j=1}^m\frac{\dim(L_jV_\tau)}{q_j},\quad\forall\tau.
    \end{equation*}
    Thus, if $\mathbb M$ is the matrix whose $\tau$th row is $(\dim V_\tau,\dim(L_1V_\tau),\ldots,\dim(L_mV_\tau))$, we have
    \begin{equation*}
        (q_1^{-1},\ldots,q_m^{-1})\in\cal P\quad\iff\quad\sum_{j=1}^m\frac{n_j}{q_j}=n\,\,\text{and}\,\,\mathbb M.\begin{bmatrix}
            1\\-q_1^{-1}\\\vdots\\-q_m^{-1}
        \end{bmatrix}\leq 0,
    \end{equation*}
    where ``$\bf v\leq 0$'' means $\bf v_\tau\leq 0$ for all $\tau$. By Farkas' lemma (\cite{rockafellar1997convex} Corollary 22.3.1), $(k,k_1,\ldots,k_m)$ is a nonnegative linear combination of the rows of $\mathbb M$:
    \begin{equation*}
        \exists \lambda_1,\ldots,\lambda_\tau\in\R_{\geq 0}\,\,\mathrm{s.t.}\,\,\dim_{\F_p}(V)=\sum_\tau\lambda_\tau\dim(V_\tau)\quad\text{and}\quad\dim_{\F_p}(\widetilde{L}_jV)=\sum_\tau\lambda_\tau\dim(L_jV_\tau).
    \end{equation*}
    Finally, since for our particular $(q_1^{-1},\ldots,q_m^{-1})$ we have
    \begin{equation*}
        \dim V_\tau\leq-\alpha+\sum_{j=1}^m\frac{\dim(L_jV_\tau)}{q_j},\quad\forall\tau,
    \end{equation*}
    we may multiply through by $\lambda_\tau$ and sum to obtain
    \begin{equation*}
        \dim_{\F_p}(V)\leq(-\alpha)\Big(\sum_\tau\lambda_\tau\Big)+\sum_{j=1}^m\frac{\dim_{\F_p}(\widetilde{L}_jV)}{q_j}.
    \end{equation*}
    Since $1\leq\dim_{\F_p}(V)=\sum_\tau\lambda_\tau\dim(V_\tau)\leq(n-1)\sum_\tau\lambda_\tau$, we conclude that
    \begin{equation*}
        \dim_{\F_p}(V)\leq-\frac{\alpha}{n-1}+\sum_{j=1}^m\frac{\dim_{\F_p}(\widetilde{L}_jV)}{q_j},
    \end{equation*}
    as was to be shown.
\end{proof}

\section{Nonlinear H\"older--Brascamp--Lieb inequalities over the integers: sufficient conditions}\label{sec:nonlin_suff}

In this section, we prove our main result for nonlinear Brascamp--Lieb inequalities over the integers, Theorem~\ref{thm:int_by_padic}. We re-state the result for convenience.

\begin{theorem}[Discrete nonlinear Brascamp--Lieb inequality, local version]\label{thm:local_integer_nonlinear_bl}
    Suppose $\bf P=(P_j)_{1\leq j\leq m}$ is a vector of polynomial maps with integer coefficients, $P_j:\Z^n\to\Z^{n_j}$. Suppose $1\leq q_1,\ldots,q_m\leq+\infty$ and $a\in\Z^n$ are such that we have the submersion, scaling, and rank conditions
    \begin{equation*}
        \mathrm{rank}(dP_j(a))=n_j,\quad 1\leq j\leq m,
    \end{equation*}
    \begin{equation*}
        n=\sum_{j=1}^m\frac{n_j}{q_j},
    \end{equation*}
    \begin{equation*}
        \mathrm{rank}(V)\leq\sum_{j=1}^m\frac{\mathrm{rank}\big(dP_j(a)(V)\big)}{q_j},\quad\forall V\leq\Z^n.
    \end{equation*}
    Then there is a positive integer $\Delta_a$ such that the following holds. If $p\nmid\Delta_a$, then for any $(f_j)_{1\leq j\leq m}$, $f_j:\Z^{n_j}\to\C$, we have
    \begin{equation*}
        \left|\sum_{x\in a+p\Z^n}\prod_{j=1}^mf_j(P_j(x))\right|\leq\prod_{j=1}^m\|f_j\|_{\ell^{q_j}(\Z^{n_j})}.
    \end{equation*}
    Moreover, $\Delta_a$ may be determined algorithmically from $d\bf P(a)$.
\end{theorem}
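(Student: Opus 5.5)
The plan is to embed the problem in $\Z_p^n$, apply the local non-Archimedean theory on the ball $a+p\Z_p^n$ with $\K=\Q_p$, $\varpi=p$ and $c_j=1/q_j$, and use the hypothesis $p\nmid\Delta_a$ to make every constant in that theory equal to $1$.

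\emph{Reduction to a $p$-adic inequality.} Since $|\sum\prod f_j\circ P_j|\leq\sum\prod|f_j|\circ P_j$, it suffices to treat $f_j\geq 0$. By monotone convergence it suffices to take each $f_j$ finitely supported and to bound the sum over an arbitrary finite set $S\subseteq a+p\Z^n$. Choose $K$ so large that three things hold:
\begin{itemize}
\item the balls $x+p^K\Z_p^n$, $x\in S$, are pairwise disjoint;
\item the balls $b+p^K\Z_p^{n_j}$, $b\in\operatorname{supp}f_j$, are pairwise disjoint;
\item $P_j(x+p^K\Z_p^n)\subseteq P_j(x)+p^K\Z_p^{n_j}$. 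This holds for every $K$, since integer polynomials are $1$-Lipschitz on $\Z_p^n$.
\end{itemize}
Set $F_j=p^{Kn_j}\sum_b f_j(b)^{q_j}1_{b+p^K\Z_p^{n_j}}$, which is Schwartz--Bruhat with $\int F_j=\|f_j\|_{\ell^{q_j}}^{q_j}$. Then
\begin{equation*}
\int_{a+p\Z_p^n}\prod_j F_j^{c_j}\circ P_j\;\geq\; p^{-Kn}\sum_{x\in S}\prod_j p^{Kn_jc_j}f_j(P_j(x)) \;=\;\sum_{x\in S}\prod_j f_j(P_j(x)),
\end{equation*}
where the last equality uses the scaling condition. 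So everything reduces to proving $\operatorname{BL}(\left.\bf P\right|_{a+p\Z_p^n},\bf c)\leq 1$.

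\emph{Linear step.} Take $\Delta_a$ to be the integer $\Delta(d\bf P(a))$ of Theorem~\ref{thm:rank_transfer} (with $\alpha=0$), possibly multiplied by finitely many further integers computable from $d\bf P(a)$. For $x\in a+p\Z_p^n$ we have $dP_j(x)\equiv dP_j(a)\pmod p$, so every $dP_j(x)$ has entries in $\Z_p$. By Theorem~\ref{thm:rank_transfer}, the reduction $\widetilde{L}_j$ of $dP_j(x)$ is surjective over $\F_p$ and satisfies the $\F_p$ rank conditions. I claim this forces $\operatorname{BL}(d\bf P(x),\bf c)=1$, extremized by $\cal O_n$.
\begin{itemize}
\item Surjectivity mod $p$ gives $dP_j(x)(\cal O_n)=\cal O_{n_j}$.
\item The $\F_p$ rank condition gives $\bigcap_j\ker\widetilde{L}_j=0$. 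This yields both $\bigcap_j dP_j(x)^{-1}(\cal O_{n_j})=\cal O_n$ and $\|d\bf P(x)v\|=\|v\|$. So the datum is already in normal form, and $\operatorname{BL}_{\mathrm{grp}}(d\bf P(x),\bf c;\cal O_n)=1$.
\item By Theorem~\ref{thm:module_reduction} together with the proof of Theorem~\ref{thm:main_extr_functional}, it suffices to bound $\operatorname{BL}_{\mathrm{grp}}$. By Corollary~\ref{cor:subg_bd_by_subsp}, it suffices to bound plates $\cal O_n\cap(V+p^e\cal O_n)$.
\item For a plate, take an isometric basis of $V$. It reduces to an $\F_p$-subspace $\bar V$ of the same dimension $k$. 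Each $dP_j(x)U$ has a $k_j\times k_j$ minor which is a $p$-adic unit, with $k_j=\dim\widetilde{L}_j\bar V$. So Lemma~\ref{lem:lb_plate} applies with $\eps=1$ and gives plate value $\leq p^{-e(n-k-\sum_j c_j(n_j-k_j))}\leq 1$, using the scaling condition and the $\F_p$ rank condition.
\end{itemize}

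\emph{Nonlinear step.} I would not invoke Theorem~\ref{thm:nonlin_local_bdd} as a black box, since its $\delta$ might be smaller than $p$. Instead I would rerun Lemma~\ref{lem:nonlin_ball_ineq} and Corollary~\ref{cor:nonlin_quant_ineq} directly with
\begin{equation*}
M=\cal O_n,\quad \rho=\gamma=\tau=h=1,\quad \delta=p .
\end{equation*}
The required Taylor estimate is automatic for integer polynomials: for $x,u\in a+p\Z_p^n$,
\begin{equation*}
P_j(x)-P_j(u)-dP_j(u)(x-u)\in\|x-u\|^2\,\Z_p^{n_j},
\end{equation*}
and $\|x-u\|^2\leq|p|\,\|x-u\|$. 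Since $\|d\bf P(u)v\|=\|v\|$, the bi-Lipschitz condition in the corollary's proof holds with $h=1$ and $\kappa=1$. The same estimate at every finer scale $u+p^{\iota}\Z_p^n$ lets the iteration proceed all the way down. I then iterate until the $F_j$ are constant at the current scale, and finish with $\cal Z\leq 1$ and $\sup_x\operatorname{BL}(d\bf P(x),\bf c)=1$.

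The main obstacle is this last bookkeeping: checking that every constant in Corollary~\ref{cor:nonlin_quant_ineq} ($\rho,\gamma,h,\kappa$) really can be taken to be a unit. The only additional integers placed into $\Delta_a$ should be minors of $d\bf P(a)$, for instance a nonzero $n\times n$ minor of the stacked matrix $d\bf P(a)$ to guarantee injectivity mod $p$ directly.
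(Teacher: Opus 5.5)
Your proposal is correct and follows essentially the same route as the paper. The paper likewise takes $\Delta_a$ from Theorem~\ref{thm:rank_transfer}, reduces $dP_j(u)$ mod $p$ to get unit minors (its ``$\Phi_0(V)=1$''), and concludes $\operatorname{BL}(d\mathbf P(u),\mathbf c)=1$ with extremizer $\Z_p^n$. It then runs the nonlinear-to-linear reduction at scale $\delta=p$ using the integer Taylor estimate, and transfers to $\Z^n$ via indicators of $p^K$-balls. Your direct $\alpha=0$ plate bound, your check that $\rho,\gamma,\tau,h,\kappa$ are all units, and your finite-$S$ discretization just spell out steps the paper only asserts.
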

\begin{proof}
    Let $\Delta_1=\Delta(d\bf P(a))$ from Theorem~\ref{thm:rank_transfer}. Since each $dP_j(a)$ is full rank, there is a nonvanishing $n_j\times n_j$ minor $d_j\in\Z$. Let $\Delta_2=\mathrm{lcm}(d_1,\ldots,d_m)$, and $\Delta_a=\mathrm{lcm}(\Delta_1,\Delta_2)$. Then, for any $u\in a+\Z_p^n$, we have for each $j$,
    \begin{equation*}
        dP_j(u)\in dP_j(a)+p\mathrm{Mat}_{n_j\times n}(\Z_p),
    \end{equation*}
    because the $P_j$ are polynomials with integer coefficients. In particular, $\widetilde{dP_j}(u)=\widetilde{dP_j}(a)$, where the tilde denotes reducing mod $p\Z_p^n$ to obtain an $\F_p$-linear map from $\F_p^n$ to $\F_p^{n_j}$. By Theorem~\ref{thm:rank_transfer}, we have
    \begin{equation*}
        \mathrm{dim}_{\F_p}\big(\widetilde{dP_j}(u)(\F_p^n)\big)=n_j,\,\,\forall j,
    \end{equation*}
    \begin{equation*}
        \mathrm{dim}_{\F_p}(V)\leq\sum_{j=1}^m\frac{\dim_{\F_p}\big(\widetilde{dP_j}(u)(V)\big)}{q_j},\quad \forall V\subseteq\F_p^n\,\,\text{strict subspace}.
    \end{equation*}
    We demonstrate that the compression factor $\Phi_0(V)=1$ for all $\Q_p$-subspaces $V\subsetneq\Q_p^n$. To prove this, write $k=\dim V$. Let $U=[u_1,\ldots,u_k]$ be an $n\times k$ matrix whose columns form an isometric basis for $V$. Thus, $\widetilde{U}=U+p\Z_p^n$ represents a $k$-dimensional $\F_p$-subspace $V'$ of $\F_p^n$. By the previous display, we know that the quantities
    \begin{equation*}
        k_j=\dim_{\F_p}(\widetilde{dP_j}(u)(V'))
    \end{equation*}
    satisfy
    \begin{equation*}
        \dim_{\F_p}(V)\leq\sum_{j=1}^m\frac{k_j}{q_j}.
    \end{equation*}
    From the definition of the $k_j$, $dP_j(u).U$ (which has $\Z_p$ entries) possesses a $k_j\times k_j$ submatrix $R$ whose reduction modulo $p\Z_p$ is invertible over $\F_p$. Thus, $|\det R|_p=1$. It follows directly that the compression factor $\Phi_0(V)=1$.
    
    By Theorem~\ref{thm:lin_sub_extr}, $\operatorname{BL}_{\mathrm{grp}}(d\bf P(u),(q_j^{-1})_j)$ is finite and is extremized by a regular `subgroup of eccentricity $1$; that is, by $\Z_p^n$. In other words,
    \begin{equation*}
        \operatorname{BL}(d\bf P(u),(q_j^{-1})_j)=\frac{1}{\prod_{j=1}^m\mu_{n_j}(dP_j(u)(\Z_p^n))^{1/q_j}}.
    \end{equation*}
    Since $p\nmid\Delta_2$, each $dP_j(u)$ is full rank modulo $p$. Thus, $\widetilde{dP_j}(u)(\F_p^n)=\F_p^{n_j}$. It follows that $dP_j(u)(\Z_p^n)=\Z_p^{n_j}$; thus, we have the critical
    \begin{equation}\label{eq:unit_BL}
        \operatorname{BL}(d\bf P(u),(q_j^{-1})_j)=1,\quad\forall u\in a+p\Z_p^n.
    \end{equation}
    
    By Theorem~\ref{thm:nonlin_local_bdd}, there a $\delta\in\K^\times$ depending on $\bf P$ and $(q_j)_j$ such that, for any $u\in a+p\Z_p^n$,
    \begin{equation*}
        \operatorname{BL}\big(\left.\bf P\right|_{u+\delta\Z_p^n},(q_j^{-1})_j\big)\leq\operatorname{BL}(d\bf P(u),(q_j^{-1})_j)=1.
    \end{equation*}
    Combining the facts that (a) $\bf P$ is a contraction over the $p$-adics, (b) $\Z_p^n$ has eccentricity $1$, (c) the $P_j$ are submersions mod $p$ at $u$, and (d) the fact that $\widetilde{d\bf P}(u)$ is injective (via the mod $p$ rank condition), and (e) the general property of integer polynomial maps that $x,y\in u+p\Z_p^n$ implies $P_j(y)-P_j(x)-dP_j(x).(y-x)\in p^2\Z_p^{n_j}$, we conclude that it suffices to choose $\delta=p$.

    We now demonstrate the desired discrete functional bound using the non-Archimedean Brascamp--Lieb functional. Suppose first that each $f_j$ is finitely-supported and nonnegative. Choose $N\in\N$ such that, for any $1\leq j\leq m$ and any $a,b\in\mathrm{supp}(f_j)$, we have $\|a-b\|>p^{-N}$. Write then
    \begin{equation*}
        g_j=\sum_{a\in\mathrm{supp}(f_j)}f_j(a)1_{a+p^N\Z_p^{n_j}}.
    \end{equation*}
    Note that $\|g_j\|_{L^q}=p^{-Nn_j/q_j}\|f_j\|_{\ell^{q_j}}$. On the other hand,
    \begin{equation*}
        \int_{u+p\Z_p^n}\prod_{j=1}^mg_j(P_j(x))d\mu_n(x)=\sum_{a_j\in\mathrm{supp}(f_j)}\mu_{n}(\{x\in u+p\Z_p^n:P_j(x)\in a_j+p^N\Z_p^{n_j}\,\,\forall j\})\prod_{j=1}^mf_j(a_j).
    \end{equation*}
    If $x\in\Z^n$ has the property that $P_j(x)=a_j$, then $P_j(y)\in a_j+p^N\Z_p^{n_j}$ for all $y\in x+p^N\Z_p^n$. Thus,
    \begin{equation*}
        \begin{split}
        \sum_{a_j\in\mathrm{supp}(f_j)}&\mu_{n}(\{x\in u+p\Z_p^n:P_j(x)\in a_j+p^N\Z_p^{n_j}\,\,\forall j\})\prod_{j=1}^mf_j(a_j)\\
        &\geq\sum_{a_j\in\mathrm{supp}(f_j)}\sum_{\substack{x\in u+p\Z^n\\P_j(x)=a_j\,\,\forall j}}p^{-Nn}\prod_{j=1}^mf_j(a_j)=p^{-Nn}\sum_{x\in u+p\Z^n}\prod_{j=1}^mf_j(P_j(x)).
        \end{split}
    \end{equation*}
    Applying~\eqref{eq:unit_BL}, we conclude that
    \begin{equation*}
        \begin{split}
            \sum_{x\in u+p\Z^n}\prod_{j=1}^mf_j(P_j(x))&\leq p^{Nn}\int_{u+p\Z_p^n}\prod_{j=1}^mg_j(P_j(x))d\mu_n(x)\\
            &\leq p^{Nn}\prod_{j=1}^m\|g_j\|_{L^{q_j}(\Z_p^{n_j})}\\
            &\leq\prod_{j=1}^m\|f_j\|_{\ell^{q_j}(\Z^{n_j})}.
        \end{split}
    \end{equation*}
    Finally, we may freely allow the $f_j$ to be complex valued and non-finitely-supported by standard arguments. The result follows.

\end{proof}

Note that Theorem~\ref{thm:local_integer_nonlinear_bl} is a ``local'' Brascamp--Lieb inequality; that is, it controls functionals which have been localized to certain sublattices $a+p\Z^n$. It is tempting to attempt to ``globalize'' this result, possibly at the cost of a constant. Indeed, each individual lattice $a+p\Z^n$ occupies a positive proportion of the full lattice $\Z^n$, and in many cases we will be able to find a good $p$ for every single choice of $a$. Nevertheless, our methods do not appear adequate to handle such global bounds, at the current level of generality.

\begin{example}[Enemy scenario for globalizing bounds]\label{ex:local_to_global_enemy}
    For each $a\in\Z$, choose a distinct prime $p_a$. For $N\in\N$, choose $\Omega_N\subseteq\Z$ such that $\#\Omega_N=N$ and $\#(\Omega\cap(a+p_a))\in\{0,1\}$ for all $a\in\Z$. For this $\Omega_N$, note that we have the ``local Brascamp--Lieb inequality:''
    \begin{equation*}
        \#(\Omega_N\cap (a+p_a\Z))\leq \#(\Omega_N\cap(a+p_a\Z))^{1/2}.
    \end{equation*}
    However, it is clearly not possible to globalize: the inequality
    \begin{equation*}
        \#\Omega_N\lesssim \#\Omega_N^{1/2}
    \end{equation*}
    fails for large $N$; nor is this remedied by ``moving away from the endpoint'' to
    \begin{equation*}
        \#\Omega_N\lesssim \#\Omega_N^{c},\quad c\in(1/2,1).
    \end{equation*}
\end{example}
Thus, if the lattices used in our local bounds have distinct primes, no global bound may be possible. On the other hand, by the Chinese remainder theorem, the only way to cover $\Z^n$ by finitely-many sets of the form $a+p\Z^n$ is to at least have one prime $p$ for which all cosets $\Z^n/p\Z^n$ are represented in the corresponding $a$'s. Thus, a version of Theorem~\ref{thm:local_integer_nonlinear_bl} would be needed where a single prime $p$ has the property that $p\nmid\Delta_a$, for all $a\in\Z^n$. Unfortunately, just from the assumption about integer points, it does not appear possible to make the corresponding conclusion about $\F_p$-points; this would roughly has the structure of a statement of the form ``if the Diophantine equation has no $\Z$-solution, then it has no $\F_p$-solution for large enough $p$'' -- which is generally valid only in the r\'egime where the Hasse principle succeeds. Since the Hasse principle only holds for certain sorts of equations, it seems unlikely that Theorem~\ref{thm:local_integer_nonlinear_bl} can be globalized in its full generality without different methods.

We now turn to some additional hypotheses which would suffice to prove a global bound for the discrete nonlinear Brascamp--Lieb functional. The critical input will be the following lemma. It requires two results. The first is Hilbert's Nullstellensatz, and the second is the fact that finite extensions of fields $K/\Q$ admit ``traces,'' which here we take to be $\Q$-linear maps $\mathrm{Tr}_{K/\Q}:K\to\Q$ which fix $\Q\subseteq K$. Hilbert's Nullstellensatz can be found in any standard commutative algebra text, e.g.~\cite{rotman2010advanced,atiyah2018introduction}.

\begin{lemma}[Brascamp--Lieb Nullstellensatz]\label{lem:BL_nullstellensatz}
    Suppose that $\bf P=(P_j)_{1\leq j\leq m}$ is a vector of polynomial maps with $\Z$-coefficients, $P_j:\Z^n\to\Z^{n_j}$. Suppose $1\leq q_1,\ldots,q_m\leq+\infty$ are such that~\eqref{eq:intro_scaling} holds. Suppose further that we have the submersion and rank conditions
    \begin{equation*}
        \mathrm{rank}(dP_j(z))=n_j,\quad\forall z\in \C^n,
    \end{equation*}
    \begin{equation*}
        \dim_\C(V)\leq\sum_{j=1}^m\frac{\dim_\C(dP_j(z)(V))}{q_j},\quad\forall z\in\C^n,\,\, V\leq\C^n.
    \end{equation*}
    Then there is $\Delta=\Delta(\bf P)\in\N$ such that the following holds. If $p\in\N$ is a prime such that $p\nmid\Delta$, then the submersion and rank conditions descend to $\F_p^n$:
    \begin{equation*}
        \mathrm{rank}_{\F_p}\big(\widetilde{dP_j}(a)\big)=n_j,\quad\forall a\in\F_p^n,
    \end{equation*}
    \begin{equation*}
        \dim_{\F_p}(V)\leq\sum_{j=1}^m\frac{\dim_{\F_p}\big(\widetilde{dP_j}(a)(V)\big)}{q_j},\quad\forall a\in\F_p^n,\,\,V\leq\F_p^n.
    \end{equation*}
    Moreover, $\Delta$ may be determined by an algorithm from  $\bf P$.
\end{lemma}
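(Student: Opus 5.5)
The plan is to recast each hypothesis as the non-existence of solutions to an explicit system of polynomial equations with integer coefficients, and then to use the Nullstellensatz together with a denominator-clearing argument to show that the same system has no solutions over $\overline{\F_p}$ for all primes outside an explicit finite set.

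For the submersion condition, consider the ideal $I_j\subseteq\Z[x_1,\ldots,x_n]$ generated by all $n_j\times n_j$ minors of $dP_j(x)$. By hypothesis these minors have no common zero in $\C^n$, so the weak Nullstellensatz over $\Q$ gives $1\in I_j\otimes\Q$. Hence there are integer polynomials $g_\mu$ and a positive integer $D_j$ with
\[
D_j=\sum_\mu g_\mu\cdot(\text{minors}).
\]
If $p\nmid D_j$, reducing mod $p$ shows that the minors have no common zero over $\overline{\F_p}$, and in particular none over $\F_p$. Therefore $\widetilde{dP_j}(a)$ has rank $n_j$ for every $a\in\F_p^n$.

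For the rank condition, I would argue as follows. For each fixed pattern $(k;k_1,\ldots,k_m)$ violating $k\leq\sum_j k_j/q_j$ with $1\leq k\leq n-1$ (finitely many such patterns), the set of pairs $(z,U)$ is cut out by integer polynomial equations in the variables $z$ and the entries of an $n\times k$ matrix $U$. The conditions are that $U$ has rank $k$, which I would make explicit by adding a variable $t$ and the equation $t\cdot\det(\text{some }k\times k\text{ minor})=1$ for each choice of minor, and that all $(k_j+1)$-minors of $dP_j(z)U$ vanish. It suffices to forbid $\dim dP_j(z)V\leq k_j$ for violating patterns, since the condition is monotone in the $k_j$. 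The hypothesis says this variety has no $\C$-points. The Nullstellensatz then again produces an integer $D$ such that no $\overline{\F_p}$-points exist when $p\nmid D$. Any failing pair $(a,V)$ over $\F_p$ would give such a point, with $U$ a basis matrix of $V$ and $k_j=\dim\widetilde{dP_j}(a)V$, a contradiction. Taking $\Delta$ to be the lcm of all the $D$'s finishes the argument.

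For the algorithmic claim, Gröbner bases over $\Q$ compute the certificates $1=\sum g_\mu h_\mu$, and clearing denominators gives $D$. The trace maps mentioned in the paper would be the tool for descending a certificate found over a number field to one over $\Q$, by applying $\mathrm{Tr}$ to the coefficients. This step is unnecessary if one works with Gröbner bases over $\Q$ directly, because ideal membership is field-independent.

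The main obstacle is encoding ``rank exactly/at least $k$'' with the Rabinowitsch trick so that the hypothesis over $\C$ really does translate into emptiness of the variety. One also has to check that reducing the certificate mod $p$ preserves the meaning of the equations, in particular that the determinant encoding of the rank of $U$ still enforces that $U$ has $\F_p$-rank $k$. This holds because the reduced certificate is a polynomial identity over $\F_p$ in which the left-hand side $D$ is a unit.
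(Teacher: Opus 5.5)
Your proposal is correct and is essentially the paper's argument: encode the submersion and rank conditions as the absence of complex zeros of integer polynomial systems, extract a Nullstellensatz certificate with rational coefficients, clear denominators, and reduce modulo every prime $p\nmid\Delta$. The differences are only technical. You use the weak Nullstellensatz over $\Q$ with a Rabinowitsch variable, which must be read as one system per choice of $k\times k$ minor of $U$, not all minors at once. The paper instead uses the strong form $\Psi_i^{r_i}\in(\text{products of }k_j\times k_j\text{ minors})$ together with a $\Q$-linear retraction from a number field. Your per-$j$ ideal of $n_j\times n_j$ minors of $dP_j$ is the right object for the submersion step; the paper's write-up uses the $n\times n$ minors of the stacked $d\mathbf{P}$, which only encode injectivity of $d\mathbf{P}$.
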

\begin{proof}
    The ``effectively'' claim will not be carefully explored; the main input that is needed is an ``effective Nullstellensatz,'' which reduces the problem of finding the various $g_\iota$ and $h_{k,\bf A}^{\tau,i}$ utilized below to a finite-dimensional linear algebra problem. One adequate result for this purpose is~\cite{kollar1988sharp}, Theorem 1.1. The restriction to rational numbers (rather than computable, say) is just to ensure that inequalities of the sort $1\leq\frac{1}{q_1}+\frac{1}{q_2}$, e.g., can be tested.

    We first handle the submersion condition. Write $N=n_1+\ldots+n_m$. For $z\in\C^n$, we write
    \begin{equation*}
        \Phi(z)=\Big(\mathrm{Minor}_{J}(d\bf P(z))\Big)_{J\in\binom{[N]}{n}}.
    \end{equation*}
    That is, $\Phi(z)$ is the list of $(n\times n)$ minors of $d\bf P(z)$. In particular, $\Phi$ is a polynomial map
    \begin{equation*}
        \Phi:\C^n\to\C^{\binom{N}{n}},
    \end{equation*}
    and $\Phi$ has integer coefficients. Since $d\bf P(z)$ is always full rank, $\Phi(z)\neq 0$ for all $z\in\C^n$. Write $(f_\iota)_{1\leq\iota\leq\binom{N}{n}}$ for the columns of $\Phi$; thus, each $f_\iota$ is a polynomial in $\Z[x_1,\ldots,x_n]$. Since $\Phi$ is nonvanishing, the $f_\iota$ have no simultaneous zero. By Hilbert's Nullstellensatz (\cite{rotman2010advanced}, Theorem B-6.12), there are $(g_\iota)_{1\leq\iota\leq\binom{N}{n}}$ polynomials in $\C[x_1,\ldots,x_n]$ for which
    \begin{equation*}
        1=\sum_{\iota}f_\iota g_\iota.
    \end{equation*}
    Let $K$ be the smallest field extension of $\Q$ containing all the coefficients of the $g_\iota$. Certainly $[K:\Q]<+\infty$, and hence there exists a field trace $\mathrm{Tr}_{K/\Q}:K\to\Q$, i.e. a $\Q$-linear map from the finite-dimensional $\Q$-vector space $K$ which pointwise fixes $\Q\subseteq K$. For each $\iota$, write $h_\iota$ for the polynomial obtained by applying $\mathrm{Tr}_{K/\Q}$ to every coefficient of $g_\iota$. Thus, for any $(a_1,\ldots,a_n)\in\Q^n$ we have
    \begin{equation*}
        \mathrm{Tr}_{K/\Q}[g_\iota(a_1,\ldots,a_n)]=h_\iota(a_1,\ldots,a_n).
    \end{equation*}
    In particular, for each $a_1,\ldots,a_n\in\Q$ we have the identity
    \begin{equation*}
        1=\sum_{\iota}f_\iota(a_1,\ldots,a_n)h_\iota(a_1,\ldots,a_n).
    \end{equation*}
    Thus, we have that $1=\sum_\iota f_\iota h_\iota$ as elements of $\Q[x_1,\ldots,x_n]$. Clearing denominators, we find $\Delta\in\N$ such that
    \begin{equation*}
        \Delta'\equiv\sum_\iota f_\iota\tilde h_\iota,
    \end{equation*}
    where the $\tilde h_\iota=\Delta' h_\iota\in\Z[x_1,\ldots,x_n]$.

    Now, suppose $p\nmid \Delta'$. Then, for each $x_1,\ldots,x_n\in\Z$, there is some $\iota$ such that $f_\iota(x_1,\ldots,x_n)\neq 0$ mod $p$. In particular, $\Phi$ is non-vanishing mod $p$, for every choice of inputs $x_1,\ldots,x_n\in\Z$. Thus, for any $p\nmid\Delta'$ and any $a\in\Z^n$, the matrix $dP_j(a)$ has an $n_j\times n_j$ minor which is nonzero modulo $p$. Thus, the submersion condition descends to $\F_p$, for any such $p$.

    Next, for $1\leq k\leq n-1$, write $\mathscr F_k$ for the set of tuples $(k,k_1,\ldots,k_m)\in\Z_{\geq 0}^{m+1}$ such that $0\leq k_j\leq \min(k,n_j)$, and such that
    \begin{equation*}
        k\leq\sum_{j=1}^m\frac{k_j}{q_j}.
    \end{equation*}
    For every $z\in \C^n$ and $u_1,\ldots,u_k\in \C^n$ and $\tau\in\mathscr{F}_k$, write $\Phi_k^\tau(z;u_1,\ldots,u_k)=(\Phi_{k1A_1}\cdots\Phi_{kmA_m})_{A_1,\ldots,A_m}$, where $\Phi_{kjA_j}^\tau(z;u_1,\ldots,u_k)$ is the minor of $dP_j(z).[u_1,\ldots,u_k]$ corresponding to the $k_j\times k_j$ submatrix $A_j$, and we range over all possible choices of submatrix for all $1\leq j\leq m$. Thus, $\Phi_k^\tau=0$ exactly when $\mathrm{rank}(dP_j(z).[u_1,\ldots,u_k])<k_j$ for some $j$. Thus, the vector $\Phi_k=(\Phi_k^\tau)_{\tau\in\mathscr{F}_k}$ vanishes exactly when the rank condition
    \begin{equation*}
        k\leq\sum_{j=1}^m\frac{\mathrm{rank}\big(dP_j(z).[u_1,\ldots,u_k]\big)}{q_j}
    \end{equation*}
    is violated. Note carefully that we have not assumed that the $u_1,\ldots,u_k$ are linearly independent; thus, we will need to relate the vanishing of $\Phi_k$ to the linear dependence of the $u_1,\ldots,u_k$. As such, we define $\Psi(u_1,\ldots,u_k)$ to be the vector of $k\times k$ minors of $[u_1,\ldots,u_k]$. It follows that, by the rank condition, $\Phi=(\Phi_k^\tau)_{\substack{1\leq k\leq n-1\\\tau\in\mathscr{F}_k}}(z;u)$ vanishes only when $\Psi(u)$ vanishes.

    Thus, we are in a position to apply Hilbert's Nullstellensatz. Because the entries of $\Phi$ have a simultaneous zero only when the entries of $\Psi$ have a simultaneous zero, we have for each $i$, there are $h_{k,\bf A}^{\tau,i}\in\C[z;u]$ and $r_i\in\N$ such that
    \begin{equation*}
        \Psi_i^{r_i}=\sum_{\substack{\tau\\\bf A=(A_1,\ldots,A_m)}}\Phi_{k1A_1}\cdots\Phi_{kmA_m}h_{k,\bf A}^{\tau,i}.
    \end{equation*}
    Write now $K$ for the smallest field extension of $\Q$ which contains all the coefficients of the (finitely-many!) $h_{k,\bf A}^{\tau,i}$. Thus, $[K:\Q]<+\infty$, and hence there is a field trace $\mathrm{Tr}_{K/\Q}$, as before. Write $\Gamma^{\tau,i}_{k,\bf A}\in\Q[z;u]$ for the polynomial obtained by applying $\mathrm{Tr}_{K/\Q}$ to each coefficient of $h_{k,\bf A}^{\tau,i}$. It follows, as before, that
    \begin{equation*}
        \Psi_i^{r_i}=\sum_{\substack{\tau\\\bf A=(A_1,\ldots,A_m)}}\Phi_{k1A_1}\cdots\Phi_{kmA_m}\Gamma_{k,\bf A}^{\tau,i}.
    \end{equation*}
    Clearing denominators, we find $\Delta_{ik}\in\Z\setminus\{0\}$ such that
    \begin{equation*}
        \Delta_{ik}\Psi_i^{r_i}=\sum_{\substack{\tau\\\bf A=(A_1,\ldots,A_m)}}\Phi_{k1A_1}\cdots\Phi_{kmA_m}\tilde\Gamma_{k,\bf A}^{\tau,i},
    \end{equation*}
    where $\tilde\Gamma_{k,\bf A}^{\tau,i}=\Delta_{ik}\Gamma_{k,\bf A}^{\tau,i}$ may now be assumed to belong to $\Z[z;u]$.

    Finally, suppose $p$ is a prime such that $p\nmid\Delta_{ik}$ for all $i,k$. Then, if $z\in\Z^n$ and $u_1,\ldots,u_k\in\Z^n$ represent a $k$-dimensional subspace in $\F_p^n$, we get that $\Delta_{ik}\Psi_i^{r_i}\not\equiv 0$ mod $p$, and hence
    \begin{equation*}
        \Phi_{k1A_1}\cdots\Phi_{kmA_m}(z;u)\not\equiv 0\quad\text{mod $p$},
    \end{equation*}
    for some choice of $\tau$ and $A_1,\ldots,A_m$. In particular, for each choice of $z$ and $u_1,\ldots,u_k$ as above, we get that $\Phi\not\equiv 0$ mod $p$. Put another way, at every $z\in\Z^n$, the rank conditions descend mod $p$.

    Finally, writing
    \begin{equation*}
        \Delta=\mathrm{lcm}(\Delta',\{\Delta_{ik}\}_{i,k}),
    \end{equation*}
    we see that the submersion and rank conditions descend to any $p\nmid\Delta$, as claimed.
\end{proof}

We proceed to showing global boundedness of the discrete nonlinear Brascamp--Lieb functional. One quick way to see that the hypothesis implies \textit{some} bound, would be to use the fact that $\Q_p$ embeds in $\C$, for each $p$; thus, by Theorem~\ref{thm:nonlin_local_bdd} and compactness of $\Z_p$, we can patch together finitely many local bounds to obtain a global bound. However, because of the appeal to compactness, the constant is not effective; our approach gives some information on the constant, and is suggestive of possible generalizations.

\begin{theorem}[Discrete nonlinear Brascamp--Lieb inequality, global version]\label{thm:global_integer_nonlinear_bl}
    Suppose $\bf P=(P_j)_{1\leq j\leq m}$ is a vector of polynomial maps with integer coefficients, $P_j:\Z^n\to\Z^{n_j}$. Suppose $1\leq q_1,\ldots,q_m\leq+\infty$ are such the scaling condition~\eqref{eq:intro_scaling} holds. For every $z\in\C^n$, we assume the submersion and rank conditions
    \begin{equation*}
        \mathrm{rank}(dP_j(z))=n_j,\quad 1\leq j\leq m,
    \end{equation*}
    \begin{equation*}
        \dim_\C(V)\leq\sum_{j=1}^m\frac{\dim_\C\big(dP_j(z)(V)\big)}{q_j},\quad\forall V\leq\C^n.
    \end{equation*}
    Then, for some $C\geq 1$, we have the functional inequality
    \begin{equation*}
        \left|\sum_{x\in\Z^n}\prod_{j=1}^mf_j(P_j(x))\right|\leq C\prod_{j=1}^m\|f_j\|_{\ell^{q_j}(\Z^{n_j})},
    \end{equation*}
    uniformly over $f_j\in\ell^{q_j}(\Z^{n_j})$. Moreover, the constant $C$ is effective.
\end{theorem}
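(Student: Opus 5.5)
The plan is to choose a single good prime via Lemma~\ref{lem:BL_nullstellensatz}, and then cover $\Z^n$ by the $p^n$ cosets $a+p\Z^n$, $a\in\{0,\ldots,p-1\}^n$, applying the local machinery on each coset. Let $\Delta=\Delta(\bf P)$ be the integer from Lemma~\ref{lem:BL_nullstellensatz}, and fix the least prime $p$ with $p\nmid\Delta$; this choice is effective, since $\Delta$ is algorithmically determined. Then for every $a\in\Z^n$, reduced to $\F_p^n$, the $\F_p$-submersion and rank conditions hold for $\widetilde{d\bf P}(a)$.

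Next, I would rerun the proof of Theorem~\ref{thm:local_integer_nonlinear_bl} at each $a$, but with this $p$ fixed in place of $\Delta_a$. That proof used $p\nmid\Delta_a$ only to obtain two facts: (i) each $\widetilde{dP_j}(a)$ is surjective over $\F_p$, and (ii) the $\F_p$-rank conditions hold for $\widetilde{d\bf P}(a)$. Both are now supplied by Lemma~\ref{lem:BL_nullstellensatz}, uniformly in $a$.

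For $u\in a+p\Z_p^n$, reduction mod $p$ gives $\widetilde{dP_j}(u)=\widetilde{dP_j}(a)$. The compression-factor argument then yields $\Phi_0(V)=1$ for every subspace $V$, so Theorem~\ref{thm:lin_sub_extr} gives the extremizer $\Z_p^n$. Surjectivity mod $p$ gives $dP_j(u)(\Z_p^n)=\Z_p^{n_j}$, and hence $\operatorname{BL}(d\bf P(u),(q_j^{-1})_j)=1$ on $a+p\Z_p^n$.

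Theorem~\ref{thm:nonlin_local_bdd} applies with $\delta=p$, as argued there. This uses the integer-polynomial Taylor estimate $P_j(y)-P_j(x)-dP_j(x)(y-x)\in p^2\Z_p^{n_j}$ for $x,y\in u+p\Z_p^n$, together with eccentricity $1$, $r=1$, and $c=1$ from the mod-$p$ injectivity of $\widetilde{d\bf P}$. The discretization argument in that proof, via the functions $g_j=\sum f_j(b)1_{b+p^N\Z_p^{n_j}}$, then gives
\begin{equation*}
\left|\sum_{x\in a+p\Z^n}\prod_{j=1}^mf_j(P_j(x))\right|\leq\prod_{j=1}^m\|f_j\|_{\ell^{q_j}(\Z^{n_j})}
\end{equation*}
for every $a\in\Z^n$.

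Summing over the $p^n$ residue classes and using the triangle inequality gives the global bound with $C=p^n$. This constant is effective, since $p$ is the least prime not dividing the effectively computable $\Delta(\bf P)$. One can extend from finitely supported nonnegative $f_j$ to general complex $f_j\in\ell^{q_j}$ by the same standard density argument used in Theorem~\ref{thm:local_integer_nonlinear_bl}.

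The main obstacle is checking that nothing in the proof of Theorem~\ref{thm:local_integer_nonlinear_bl} secretly depended on the specific $\Delta_a$ from Theorem~\ref{thm:rank_transfer}, rather than only on the $\F_p$ rank and submersion conditions at $\widetilde{d\bf P}(a)$. In particular, I would verify that the choice $\delta=p$ in Theorem~\ref{thm:nonlin_local_bdd} needs only the $\F_p$ data, and not any $\alpha$-simplicity margin. Here one uses that $\Phi_0\equiv1$ forces $I=\emptyset$ in the eccentricity bound. If a margin were needed, I would instead invoke the $\alpha$-version of the transfer, though Lemma~\ref{lem:BL_nullstellensatz} as stated gives only $\alpha=0$.
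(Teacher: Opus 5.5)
Your proposal is correct and is essentially the paper's proof: fix one prime $p\nmid\Delta(\bf P)$ from Lemma~\ref{lem:BL_nullstellensatz}, rerun the argument of Theorem~\ref{thm:local_integer_nonlinear_bl} on each coset $a+p\Z^n$ using only the resulting $\F_p$ submersion and rank conditions, and sum over the $p^n$ cosets, which gives $C=p^n$. One small correction to your last paragraph: $\Phi_0\equiv 1$ does not force $I=\emptyset$ (the set $I$ comes from the critical flag, and e.g.\ simple data with $n\geq 2$ have $I=\{1\}$), but no margin is needed anyway, since with $\alpha=0$, $\eps=1$, and the normal form supplied by surjectivity and injectivity mod $p$, Corollary~\ref{cor:subg_bd_by_subsp} and Lemma~\ref{lem:lb_plate} already give $\operatorname{BL}_{\mathrm{grp}}(d{\bf P}(u),(q_j^{-1})_j;U)\leq 1=\operatorname{BL}_{\mathrm{grp}}(d{\bf P}(u),(q_j^{-1})_j;\Z_p^n)$ for every regular subgroup $U$.
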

\begin{proof}
    
    Let $\Delta$ be as in Lemma~\ref{lem:BL_nullstellensatz}. Fix any $p\nmid\Delta$; thus, for each $a\in\Z^n$ and each $V\leq\F_p^n$, we have the submersion and rank conditions
    \begin{equation*}
        \mathrm{rank}\big(\widetilde{dP_j}(a)\big)=n_j,\quad 1\leq j\leq m,
    \end{equation*}
    \begin{equation*}
        \dim_{\F_p}(V)\leq\sum_{j=1}^m\frac{\dim_{\F_p}\big(\widetilde{dP_j}(a)(V)\big)}{q_j}.
    \end{equation*}
    Following the argument of Theorem~\ref{thm:local_integer_nonlinear_bl}, we obtain the local bound
    \begin{equation*}
        \left|\sum_{x\in a+p\Z^n}\prod_{j=1}^mf_j(P_j(x))\right|\leq \prod_{j=1}^m\|f_j\|_{\ell^{q_j}(\Z^{n_j})},\quad\forall a\in\Z^n.
    \end{equation*}
    Summing over representatives $a$ of $\Z^n/p\Z^n$, the desired inequality follows.
    
\end{proof}

Studying the proof of Lemma~\ref{lem:BL_nullstellensatz}, one can obtain a similar global bound using only assumptions about the ideals generated by various minors of the $dP_j$. We leave open the question of the best possible formulation.

It is very tempting at this point to discuss the subject of extremizers for Theorem~\ref{thm:local_integer_nonlinear_bl}; indeed, Corollary~\ref{cor:nonlin_quant_ineq} does a great deal of heavy lifting towards such a result. The only ingredient that is missing is to provide an adequate \emph{functional} Brascamp--Lieb extremizer theory over finite fields, in the spirit of Theorem~\ref{thm:lin_sub_extr}. It might be possible to perform a modification of the heat-flow argument in~\cite{bennett2008brascamp}, but this has proved difficult; similarly, the methods in~\cite{christ2013optimal}, essentially reproduced in subsection~\ref{subsec:bl_module} above, may provide some leverage. We leave this question for future consideration.

\section{Applications}\label{sec:applications}

In this section, we recall some well-known applications and special cases of Brascamp--Lieb inequalities, and connect these to our results. In each case, we opt for the ``local'' version; one may obtain ``global'' versions with stronger assumptions, per Theorem~\ref{thm:global_integer_nonlinear_bl}.

\begin{theorem}[Subadditivity of the entropy]\label{thm:entropy}
    Suppose $P_j:\Z^n\to\Z^{n_j}$ is a polynomial map with integer coefficients, for each $1\leq j\leq m$. Suppose $a\in\Z^n$ and $c_1,\ldots,c_m\in[0,1]^m$ are such that
    \begin{equation*}
        \mathrm{rank}(dP_j(a))=n_j,\quad 1\leq j\leq m,
    \end{equation*}
    \begin{equation*}
        n=\sum_{j=1}^m c_jn_j,
    \end{equation*}
    \begin{equation*}
        \mathrm{rank}(V)\leq\sum_{j=1}^mc_j\mathrm{rank}(dP_j(a).(V)),\quad\forall V\leq\Z^n.
    \end{equation*}
    Then, for all but finitely-many primes $p$, the following holds. If $f:a+p\Z^n\to[0,1]$ is a probability density, and
    \begin{equation*}
        f_j(z)=\sum_{\substack{x\in a+p\Z^n\\P_j(x)=z}}f(x),
    \end{equation*}
    then
    \begin{equation*}
        \sum_{j=1}^mc_j\sum_{z\in\Z^{n_j}}f_j(z)\ln(f_j(z))\leq\sum_{x\in a+p\Z^n}f(x)\ln(f(x)).
    \end{equation*}
\end{theorem}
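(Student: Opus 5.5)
The plan is to derive the entropy inequality from the local Brascamp--Lieb bound of Theorem~\ref{thm:local_integer_nonlinear_bl} by the standard duality between Brascamp--Lieb inequalities and entropy subadditivity, i.e.\ the Gibbs variational principle. Set $q_j=1/c_j$, where $q_j=+\infty$ when $c_j=0$. The hypotheses of the present theorem are then exactly those of Theorem~\ref{thm:local_integer_nonlinear_bl}. So for every prime $p\nmid\Delta_a$, which excludes only finitely many primes, we have
\begin{equation*}
\sum_{x\in a+p\Z^n}\prod_{j=1}^m g_j(P_j(x))\leq\prod_{j=1}^m\|g_j\|_{\ell^{1/c_j}(\Z^{n_j})}
\end{equation*}
for all nonnegative $g_j$.

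The first step is to reduce to finitely supported $f$ with $f>0$ on its support $S$. The general case then follows by truncating and renormalizing, and passing to the limit by monotone convergence, using $t\ln t\leq 0$ on $[0,1]$. By Gibbs' inequality, for any nonnegative function $h$ on $a+p\Z^n$ we have
\begin{equation*}
\sum_x f(x)\ln h(x)\leq \sum_x f(x)\ln f(x)+\ln\Big(\sum_{x} h(x)\Big).
\end{equation*}

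The second step applies this with the test functions $h=\prod_j g_j\circ P_j$, where $g_j=f_j^{c_j}$ on the support of $f_j$ and $g_j=0$ elsewhere. Since $c_j\ln g_j=c_j^2\ln f_j$, this test function does not produce the right weights directly, so I would instead take $g_j=f_j^{c_j}$ raised so that $\|g_j\|_{\ell^{1/c_j}}^{1/c_j}=\sum_z f_j(z)=1$. With this choice $\ln h(x)=\sum_j c_j\ln f_j(P_j(x))$, and the Brascamp--Lieb bound gives $\sum_x h(x)\leq\prod_j(\sum_z f_j(z))^{c_j}=1$.

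The third step evaluates the left side of Gibbs' inequality by grouping over the fibers of $P_j$:
\begin{equation*}
\sum_x f(x)\,c_j\ln f_j(P_j(x))=c_j\sum_z f_j(z)\ln f_j(z).
\end{equation*}
This holds because $f_j(z)$ is exactly the mass of $f$ on the fiber $\{P_j=z\}$. Summing over $j$ and using $\ln\sum_x h(x)\leq 0$ gives
\begin{equation*}
\sum_j c_j\sum_z f_j\ln f_j\leq\sum_x f\ln f,
\end{equation*}
which is the claim.

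The steps where care is needed are the degenerate cases. When $c_j=0$, the function $g_j$ is $1_{\mathrm{supp} f_j}$ with $\ell^\infty$ norm $1$, and it contributes nothing to either side. When $f_j(z)=0$, the corresponding fibers carry no mass of $f$, so $h$ vanishing there is harmless. When $f$ is not finitely supported, the sum $\sum_x f\ln f$ may diverge to $-\infty$; the inequality is then trivial only if the left side is also controlled, so the approximation argument in the first step should be done with care, either truncating to $f1_{S_N}/\|f1_{S_N}\|_1$ and using lower semicontinuity, or simply interpreting the statement for finitely supported $f$. I expect this limiting argument to be the only real obstacle; the core inequality is immediate from Theorem~\ref{thm:local_integer_nonlinear_bl} with sharp constant $1$.
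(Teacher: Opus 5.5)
Your argument is correct and is the paper's own route: the paper simply combines Theorem~\ref{thm:int_by_padic} with Theorem 2.1 of \cite{carlen2009subadditivity}, and your Gibbs computation is exactly the Brascamp--Lieb $\Rightarrow$ entropy direction of that duality. Your first choice $g_j=f_j^{c_j}$ was already the right one, since the $\ell^{q_j}$ form of the bound puts no exponent $c_j$ on $g_j$, so the detour about $c_j^2$ is unnecessary.

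For infinitely supported $f$ you need no truncation. On $\mathrm{supp}\, f$ one has $0<h\le 1$ and $f\ln h\le f\ln f+h-f$. Summing this pointwise bound, using $\sum_x h\le 1$ and Tonelli for the fiber grouping, gives the inequality directly in $[-\infty,0]$. By contrast, semicontinuity along truncations controls the $\sum f\ln f$ side but not the marginal terms, so that route as you sketched it would not close.
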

\begin{proof}
    Combine Theorem~\ref{thm:int_by_padic} with~\cite{carlen2009subadditivity}, Theorem 2.1.
\end{proof}

\begin{theorem}[Discrete nonlinear multilinear Kakeya]\label{thm:ml_kakeya}
    Suppose $P_j:\Z^n\to\Z^{n-1}$ is a polynomial map with integer coefficients, for each $1\leq j\leq n$. Suppose $a\in\Z^n$ is such that the following holds. Each of $dP_j(a)$ has rank $n-1$, and for some $\Q$-linearly independent $v_1,\ldots,v_n\in\Z^n$ we have $dP_j(a).v_j=0$. Then, for all but finitely-many primes $p$, the following holds. If $f_1,\ldots,f_n\in\ell^{n-1}(\Z^{n-1})$, then
    \begin{equation*}
        \left|\sum_{x\in a+p\Z^n}\prod_{j=1}^nf_j(P_j(x))\right|\leq\prod_{j=1}^n\|f_{j}\|_{\ell^{n-1}(\Z^{n-1})}.
    \end{equation*}
\end{theorem}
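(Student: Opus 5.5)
The plan is to deduce this as a special case of Theorem~\ref{thm:int_by_padic} (equivalently Theorem~\ref{thm:local_integer_nonlinear_bl}) with $m=n$, $n_j=n-1$ and $q_j=n-1$ for every $j$. The scaling condition is immediate: $\sum_{j=1}^n\frac{n-1}{n-1}=n$. The submersion condition $\mathrm{rank}(dP_j(a))=n-1$ is a hypothesis. Once the rank condition is verified at $a$, Theorem~\ref{thm:int_by_padic} supplies $\Delta_a$. The conclusion then holds for every prime $p\nmid\Delta_a$, which excludes only finitely many primes.

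The main step is the rank condition
\[
\mathrm{rank}(V)\leq\frac{1}{n-1}\sum_{j=1}^n\mathrm{rank}\big(dP_j(a)(V)\big)\quad\text{for all } V\leq\Z^n.
\]
It suffices to prove the corresponding statement for $\Q$-subspaces $W=V\otimes\Q$ of dimension $k$. Since $\ker dP_j(a)$ is one-dimensional, it equals $\Q v_j$, so $\dim dP_j(a)(W)=k-1$ if $v_j\in W$ and $k$ otherwise. The $v_j$ are linearly independent, so at most $k$ of them lie in $W$. This gives
\[
\sum_{j=1}^n\dim dP_j(a)(W)\geq nk-k=(n-1)k,
\]
which is exactly the required inequality. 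The case $V=\Z^n$ is also covered, since then $k=n$ and the bound reads $n(n-1)\geq n(n-1)$.

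The only point that needs care is that the ranks of subgroups agree with the $\Q$-dimensions of the spans, so that the integer rank condition in Theorem~\ref{thm:int_by_padic} really is the computation above; this is routine. I do not expect any genuine obstacle. The real work, transferring the rank conditions to $\F_p$ and running the local $p$-adic argument, is already contained in Theorem~\ref{thm:rank_transfer} and Theorem~\ref{thm:local_integer_nonlinear_bl}. For complex-valued $f_j$, the theorem as stated already covers them.
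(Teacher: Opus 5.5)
Your proof is correct, and it takes a more direct route than the paper's.

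The paper first notes that there are invertible rational matrices $A$, $A_j$ with $A_j\circ dP_j(a)\circ A^{-1}=\pi_j$, where $\pi_j$ forgets the $j$th coordinate. It then enlarges the exceptional set of primes so that $A$ and the $A_j$ reduce to invertible matrices mod $p$. From this it concludes that the reduced linearization is $\F_p$-equivalent to the Loomis--Whitney datum, and it reruns the proof strategy of Theorem~\ref{thm:int_by_padic}. Along the way it also invokes that theorem's $\Delta_a$, which already presupposes the rank condition you check.

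You instead verify the rank condition directly. Since $\ker dP_j(a)=\Q v_j$, a $k$-dimensional $W$ has $\dim dP_j(a)(W)$ equal to $k-1$ or $k$, and at most $k$ of the independent $v_j$ lie in $W$. You then apply Theorem~\ref{thm:int_by_padic} as a black box.

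The two arguments carry the same content. Your kernel count is exactly the check that the Loomis--Whitney datum satisfies the rank condition, transported through the change of variables the paper writes down.

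What each buys:
\begin{itemize}
\item Your version avoids the auxiliary matrices $A$, $A_j$ and the extra primes dividing their denominators and determinants. The excluded primes are then just those dividing $\Delta_a$.
\item The paper's version makes explicit that the mod-$p$ linearized datum at $a$ is the Loomis--Whitney datum. This explains structurally why the constant is exactly $1$, but it is not needed for the stated bound.
\end{itemize}
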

\begin{proof}
    It is easy to see that there exist $A:\Q^n\to\Q^n$ and $A_j:\Q^{n-1}\to\Q^{n-1}$ invertible such that $A_j\circ d\bf P(a)\circ A^{-1}=\pi_j$, where $\pi_j:\Q^n\to\Q^{n-1}$ forgets the $j$th coordinate. Let $\Delta\in\N$ be such that $A$ and each of the $A_j$ have entries whose denominators do not contain any factors of $\Delta$, and such that $\det(A),\det(A_j)$ has $p$-valuation $0$ for every $p\nmid\Delta$. In particular, $A,A_j$ descend to invertible matrices over $\F_p$, whenever $p\nmid\Delta$. If $p\nmid\Delta_a$ as well, where $\Delta_a$ is obtained by applying the data to Theorem~\ref{thm:int_by_padic}, then we conclude that $(\widetilde{d\bf P}(a),(\frac{1}{n-1})_{1\leq j\leq n})$ is $\F_p$-equivalent to the Loomis--Whitney data $((\pi_j)_j,(\frac{1}{n-1})_j)$ over $\F_p^n$. The result then comes by following the proof strategy of Theorem~\ref{thm:int_by_padic} above.
\end{proof}

\begin{corollary}
    Let $(P_j)_{1\leq j\leq n}$ and $a\in\Z^n$ be as in Theorem~\ref{thm:ml_kakeya}. Then, for all but finitely-many primes $p$, the following holds. If $A_1,\ldots,A_n\subseteq\Z^{n-1}$ are finite, then
    \begin{equation*}
        \#\Big((a+p\Z^n)\cap\bigcap_{j=1}^n P_j^{-1}(A_j)\Big)\leq\prod_{j=1}^n(\# A_j)^{\frac{1}{n-1}}.
    \end{equation*}
\end{corollary}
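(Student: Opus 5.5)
The plan is to reduce the corollary to Theorem~\ref{thm:ml_kakeya} by testing it on indicator functions. Fix the finite sets $A_1,\ldots,A_n\subseteq\Z^{n-1}$ and put $f_j=1_{A_j}$. Each $f_j$ is finitely supported, hence lies in $\ell^{n-1}(\Z^{n-1})$, and
\begin{equation*}
    \|f_j\|_{\ell^{n-1}(\Z^{n-1})}=(\#A_j)^{\frac{1}{n-1}}.
\end{equation*}
We take the cofinite set of admissible primes to be exactly the one supplied by Theorem~\ref{thm:ml_kakeya} for the data $(P_j)_{1\leq j\leq n}$ and $a$. That set depends only on $d\bf P(a)$ through $\Delta_a$ and the conjugating matrices $A,A_j$, so it is independent of the sets $A_j$. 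This independence is what allows a single exceptional set of primes to serve all finite $A_1,\ldots,A_n$ at once.

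Next we identify the sum. For $x\in a+p\Z^n$,
\begin{equation*}
    \prod_{j=1}^n f_j(P_j(x))=\prod_{j=1}^n 1_{A_j}(P_j(x))=1_{\bigcap_{j}P_j^{-1}(A_j)}(x).
\end{equation*}
Summing over $x\in a+p\Z^n$ gives precisely
\begin{equation*}
    \#\Big((a+p\Z^n)\cap\bigcap_{j=1}^n P_j^{-1}(A_j)\Big).
\end{equation*}
This sum is nonnegative, so the absolute value in Theorem~\ref{thm:ml_kakeya} may be dropped.

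Applying Theorem~\ref{thm:ml_kakeya} then bounds this count by $\prod_{j}(\#A_j)^{1/(n-1)}$, which is the claim. One point needs checking: a priori the left side could be infinite. It is not, because Theorem~\ref{thm:ml_kakeya} is stated for arbitrary $f_j\in\ell^{n-1}$. If one instead proves it only for finitely supported functions and passes to the general case by monotone limits, the bound still covers nonnegative indicators. Either way the count is finite and obeys the estimate, so the argument is complete.
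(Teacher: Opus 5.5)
Your proposal is correct and matches the paper's proof, which simply applies Theorem~\ref{thm:ml_kakeya} to $f_j=1_{A_j}$. Your extra remarks, that the cofinite set of primes does not depend on the sets $A_j$ and that the count is finite, are accurate but not needed beyond that one-line substitution.
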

\begin{proof}
    Apply Theorem~\ref{thm:ml_kakeya} to $f_j=1_{A_j}$.
\end{proof}

\begin{theorem}[Rank-one Brascamp--Lieb]
    Let $p_1,\ldots,p_m\in\Z[x_1,\ldots,x_n]$ and $a\in\Z^n$ be such that the following holds. Write $u_j=dp_j(a)\in\Z^n$. Suppose that
    \begin{equation*}
        \bf c\in\mathrm{conv}\big\{1_I:I\subseteq\{1,\ldots,m\}\,\,\text{and}\,\,(u_i)_{i\in I}\,\,\text{are a basis for $\Q^n$}\big\}\subseteq[0,1]^m.
    \end{equation*}
    Then, for all but finitely-many primes $p$, the following holds. If $f_1,\ldots,f_m$ are finitely-supported over $\Z$, then
    \begin{equation*}
        \left|\sum_{x\in a+p\Z^n}\prod_{j=1}^mf_j(p_j(x))\right|\leq\prod_{j=1}^m\|f_j\|_{\ell^{1/c_j}(\Z)}.
    \end{equation*}
\end{theorem}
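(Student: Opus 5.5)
The plan is to reduce the rank-one statement to Theorem~\ref{thm:int_by_padic} (equivalently Theorem~\ref{thm:local_integer_nonlinear_bl}) with $n_j=1$ and $q_j=1/c_j$. We must therefore check the submersion, scaling and rank hypotheses at $a$ for the linear data $u_j=dp_j(a)$. One caveat: the statement says nothing about what happens if some $u_j=0$. If $c_j>0$, then some basis index set $I$ contains $j$, which forces $u_j\neq 0$. If $c_j=0$, we use zero-contraction and discard $p_j$; the factor $\|f_j\|_{\ell^\infty}$ on the right-hand side absorbs it, since $|f_j(p_j(x))|\le\|f_j\|_{\ell^\infty}$. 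So we may assume every $u_j\neq 0$, i.e.\ $\mathrm{rank}(dp_j(a))=1$.

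For the scaling condition, each vertex $1_I$ of the convex hull satisfies $\sum_j (1_I)_j\cdot 1=\#I=n$, because $(u_i)_{i\in I}$ is a basis of $\Q^n$. Averaging over vertices gives $\sum_j c_j=n$, which is exactly~\eqref{eq:intro_scaling} with $n_j=1$.

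For the rank condition, write $\mathbf c=\sum_I\lambda_I 1_I$ as a convex combination and fix a subgroup $V\leq\Z^n$. Here $\mathrm{rank}(u_j\cdot V)=1$ if $u_j\notin V^\perp$ (over $\Q$) and $0$ otherwise. For each basis $I$, the number of $i\in I$ with $u_i\notin V^\perp$ is at least $\dim V$. Indeed, the $u_i$ with $u_i\in V^\perp$ are linearly independent vectors in a space of dimension $n-\dim V$, so at most $n-\dim V$ of the $n$ elements of $I$ lie there. Hence $\sum_j (1_I)_j\,\mathrm{rank}(u_j\cdot V)\ge \mathrm{rank}(V)$ for every $I$. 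Averaging with the weights $\lambda_I$ gives $\mathrm{rank}(V)\le\sum_j c_j\,\mathrm{rank}(dp_j(a)(V))$. This is the classical matroid-polytope argument for rank-one Brascamp--Lieb, and it is the step needing the most care, since the rank condition must hold for all $V$ at once; linearity of the inequality in $\mathbf c$ is what makes the averaging legitimate.

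With all hypotheses verified, Theorem~\ref{thm:int_by_padic} yields $\Delta_a$ such that the bound holds with constant $1$ for every prime $p\nmid\Delta_a$. This excludes only finitely many primes. The finitely-supported complex $f_j$ are covered directly by that theorem. If $c_j=0$ indices were removed, we reinsert them via the trivial $\ell^\infty$ bound noted above.
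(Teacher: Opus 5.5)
Your proposal is correct and follows the paper's proof: zero-contraction to ensure every $u_j\neq 0$, the scaling identity $\sum_j c_j=n$ from $\#I=n$, the count showing at most $n-\dim V$ basis vectors annihilate $V$, averaging (equivalently, convexity of the rank/scaling polytope), and then Theorem~\ref{thm:int_by_padic}. Your explicit $\ell^\infty$ bound for removing indices with $c_j=0$ is a small but useful addition. The paper states zero-contraction only for the non-Archimedean functional and does not spell it out in the discrete setting.
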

\begin{proof}
    Let $\mathscr{C}\subseteq[0,1]^m$ be the convex hull in question. If $u_j=0$, then $c_j=0$ for all $\bf c\in\mathscr C$; by zero-contraction, we may assume that $u_j\neq 0$ for all $j$, so that the $p_j$ are all submersions at $a$. For any $\bf c\in\mathscr{C}$, we have $\sum_{j=1}^m c_j=n$. If $\bf c=1_I$ with $I\subseteq\{1,\ldots,m\}$ is such that $\{u_i\}_{i\in I}$ is a basis of $\Q^n$, then for any subspace $V\subseteq \Q^n$ of dimension $0<k<n$, there are at most $n-k$-many $i\in I$ such that $u_i$ is orthogonal to $V$. Thus,
    \begin{equation*}
        \dim V\leq \sum_{i\in I}\dim(dp_i(a).(V))=\sum_{j=1}^mc_j\dim(dp_j(a).(V)).
    \end{equation*}
    Finally, since the polytope $\cal P$ cut out by the rank and scaling conditions is convex, we conclude that $\mathscr{C}\subseteq\cal P$. The result then follows from Theorem~\ref{thm:local_integer_nonlinear_bl}.
\end{proof}

For an integrable function $w:(\R/\Z)^k$ and $a\in\Z^n$, we write $$\hat{w}(a)=\int_{(\R/\Z)^k}w(x)\exp(-2\pi ia\cdot x)dx.$$ For a summable function $\eta:\Z^n\to\C$ and $\xi\in(\R/\Z)^n$, we write $$\eta^\vee(\xi)=\sum_{x\in\Z^n}\eta(x)e(x\cdot\xi).$$

The following inequality may be useful for interpolating against decay-type bounds on oscillatory integrals that use a weaker Lebesgue exponent; c.f.~\cite{bennett2010finite}, Proposition 3.1.
\begin{theorem}[Weighted restriction estimate]
    Let $P_j:\Z^n\to\Z^{n_j}$ be a polynomial map with integer coefficients, $1\leq q_1,\ldots,q_m\leq+\infty$ be a tuple of exponents, $a\in\Z^n$ such that $\mathrm{rank}(dP_j(a))=n_j$ for all $j$,
    \begin{equation*}
        n=\sum_{j=1}^m\frac{n_j}{q_j},
    \end{equation*}
    \begin{equation*}
        \mathrm{rank}(V)\leq\sum_{j=1}^m\frac{\mathrm{rank}(dP_j(a).(V))}{q_j},\quad\forall V\leq \Z^n.
    \end{equation*}
    Then, for all but finitely-many primes $p$, the following holds. Suppose $c:a+p\Z^n\to\C$ is finitely-supported. Then, for any weights $w_j\in L^{q_j}((\R/\Z)^{n_j})$, the exponential sum
    \begin{equation*}
        g(\xi_1,\ldots,\xi_m)=\sum_{x\in a+p\Z^n}c(x)\exp(2\pi i(\xi_1 P_1(x)+\ldots+\xi_m P_m(x)))
    \end{equation*}
    satisfies
    \begin{equation*}
        \left|\int_{(\R/\Z)^{n_1}\times\cdots\times(\R/\Z)^{n_m}}\Big[\prod_{j=1}^mw_j(\xi_j)\Big]g(\xi_1,\ldots,\xi_m)d\xi_1\wedge\cdots\wedge d\xi_m\right|\leq\|c\|_\infty\cdot\prod_{j=1}^m\|\hat w_j\|_{\ell^{q_j}(\Z^{n_j})}.
    \end{equation*}
\end{theorem}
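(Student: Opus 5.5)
The plan is to reduce the weighted restriction estimate to Theorem~\ref{thm:local_integer_nonlinear_bl} by Fourier inversion. I take $p\nmid\Delta_a$, where $\Delta_a$ comes from Theorem~\ref{thm:local_integer_nonlinear_bl}; all but finitely many primes qualify. Since $c$ is finitely supported, $g$ is a trigonometric polynomial, so the sum and integral may be interchanged. This gives
\begin{equation*}
\int\Big[\prod_{j=1}^m w_j(\xi_j)\Big]g(\xi_1,\ldots,\xi_m)\,d\xi=\sum_{x\in a+p\Z^n}c(x)\prod_{j=1}^m\int_{(\R/\Z)^{n_j}}w_j(\xi_j)\exp(2\pi i\,\xi_j\cdot P_j(x))\,d\xi_j .
\end{equation*}
Each inner integral equals $\hat w_j(-P_j(x))$ under the stated convention for $\hat w$. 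Setting $f_j(z)=\hat w_j(-z)$ on $\Z^{n_j}$, the left-hand side becomes $\sum_{x\in a+p\Z^n}c(x)\prod_j f_j(P_j(x))$. Reflection $z\mapsto -z$ preserves $\ell^{q_j}$ norms, so $\|f_j\|_{\ell^{q_j}}=\|\hat w_j\|_{\ell^{q_j}}$.

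Next I bound $|c(x)|\le\|c\|_\infty$ and pass absolute values inside the sum. This gives the bound
\begin{equation*}
\|c\|_\infty\sum_{x\in a+p\Z^n}\prod_{j=1}^m|f_j|(P_j(x)).
\end{equation*}
Applying Theorem~\ref{thm:local_integer_nonlinear_bl} to the nonnegative functions $|f_j|$ yields $\|c\|_\infty\prod_j\|\hat w_j\|_{\ell^{q_j}}$, which is the claim. The hypotheses of that theorem are exactly those assumed here: submersion at $a$, the scaling line, and the rank conditions over $\Z$.

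The main obstacle is that $\hat w_j$ need not lie in $\ell^{q_j}$, and the sum over $x$ could involve convergence issues. If $\|\hat w_j\|_{\ell^{q_j}}=\infty$ for some $j$, the inequality is trivial, since $\|\hat w_j\|_{\ell^{q_j}}>0$ whenever $w_j\ne0$, and both sides vanish if some $w_j=0$. Otherwise the $f_j$ lie in $\ell^{q_j}$. The sum over $x$ is finite because $c$ is finitely supported, and $w_j\in L^{q_j}\subseteq L^1$ on the torus, so each $\hat w_j$ is a bounded function. Theorem~\ref{thm:local_integer_nonlinear_bl} is stated for arbitrary $f_j$ via its density extension. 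To be safe, I would apply it to the truncations $|f_j|1_{\{|z|\le R\}}$. These truncations agree with $|f_j|$ on the finitely many points $P_j(x)$ with $x\in\mathrm{supp}\,c$ once $R$ is large, and their norms are at most $\|f_j\|_{\ell^{q_j}}$.
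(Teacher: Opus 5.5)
Your proof is correct and follows the paper's route. The paper pairs $\bar w_1\otimes\cdots\otimes\bar w_m$ with $\overline{\mu^\vee}$, where $\mu=\sum_x c(x)\delta_{\mathbf P(x)}$, and uses Parseval to rewrite this as $\sum_x \bar c(x)\prod_j\widehat{\bar w}_j(P_j(x))$; this is exactly your interchange of the finite sum with the integral, after which the paper invokes Theorem~\ref{thm:int_by_padic}. Your explicit use of $|c(x)|\le\|c\|_\infty$ and your truncation to finitely supported $f_j$ fill in steps the paper leaves implicit; the paper's final display even omits the factor $\|c\|_\infty$.
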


\begin{proof}
    Write $\mu=\sum_{x\in a+p\Z^n}c(x)\delta_{\bf P(x)}$. Then
    \begin{equation*}
        \begin{split}
            \sum_{x\in a+p\Z^n}\bar c(x)\prod_{j=1}^m\widehat{\bar w}_j(P_j(x))&=\langle \widehat{\bar w}_1\otimes\cdots\otimes \widehat{\bar w}_m,\mu\rangle_{\Z^{n_1}\times\cdots\times\Z^{n_m}}\\
            &=\langle \bar w_1\otimes\cdots\otimes \bar w_m,\mu^\vee\rangle_{(\R/\Z)^{n_1}\times\cdots\times(\R/\Z)^{n_m}}\\
            &=\int_{(\R/\Z)^{n_1}\times\cdots\times(\R/\Z)^{n_m}}\Big[\prod_{j=1}^m \bar w_j(\xi_j)\Big]\overline{\mu^\vee}(\xi_1,\ldots,\xi_m)d\xi_1\wedge\cdots\wedge d\xi_m,
        \end{split}
    \end{equation*}
    whereas
    \begin{equation*}
        \mu^\vee(\xi_1,\ldots,\xi_m)=g(\xi_1,\ldots,\xi_m).
    \end{equation*}
    Thus, applying Theorem~\ref{thm:int_by_padic},
    \begin{equation*}
        \left|\int_{\mathbb T^{n_1}\times\cdots\times\mathbb{T}^{n_m}}\Big[\prod_{j=1}^m w_j(\xi_j)\Big]g(\xi_1,\ldots,\xi_m)d\xi_1\wedge\cdots\wedge d\xi_m\right|\leq\prod_{j=1}^m\big\|\hat w_j\big\|_{\ell^{q_j}(\Z^{n_j})}.
    \end{equation*}
\end{proof}

  \section{Appendix}\label{sec:appendix}

  This appendix serves to provide the basic technical theory of $C^k$ maps in the context of non-Archimedean local fields. Our definition is a strict generalization of the one in~\cite{schikhof1985}; the latter is primarily concerned with one-variable functions, and only briefly touches on the multivariate theory. Our definitions and results are equivalent to those of~\cite{schikhof1985} in the univariate case.

  We adopt the following combinatorial notations, when $n\in\N$ and $\alpha,\beta\in\Z_{\geq 0}^n$ and $x\in\K^n$.
  \begin{itemize}
    \item $[n]=\{1,\ldots,n\}$.
    \item $\bf e_i=(0,\ldots,0,1,0,\ldots,0)$ is the $i$th standard basis vector in $\Z^n$.
    \item For $c=0\in\K$, and $k=0\in\Z_{\geq 0}$, we write $c^k=1\in\K$.
    \item $x^\alpha=\prod_{j=1}^nx_j^{\alpha_j}$.
    \item $\binom{\alpha}{\beta}=\prod_{j=1}^n\binom{\alpha_j}{\beta_j}$ is a product of ordinary binomial coefficients. In particular, if $\beta_j>\alpha_j$ for some $j$, then $\binom{\alpha}{\beta}=0$.
    \item If $\beta_i\leq\alpha_i$ for all $i$, then we write $\beta\leq\alpha$. If $\beta\leq\alpha$ and $\beta\neq\alpha$, then we write $\beta<\alpha$.
  \end{itemize}

  \definition[Newton quotients]\label{def:newton} Suppose $x\in\K^n$ and $\gamma\in\K^\times$,and suppose $F:x +\delta\cal O_n\to\K^m$ is a function. For $\alpha\in\Z_{\geq 0}^n$, we define the $\alpha$-Newton quotient $\Phi^\alpha F$ by
  \begin{equation*}
      \Phi^\alpha F:\big((x_1+\delta\cal O_1)^{\alpha_1+1}\times\cdots\times(x_n+\delta\cal O_1)^{\alpha_n+1}\big)\setminus\nabla\to\K^m,
  \end{equation*}
  \begin{equation*}
      \Phi^\alpha F(z^1,\ldots,z^n)=\sum_{\substack{(i_1,\ldots,i_n)\\1\leq i_\tau\leq\alpha_\tau+1}}\left[\prod_{\tau=1}^n\prod_{\substack{1\leq j_\tau\leq\alpha_{\tau}+1\\
      j_\tau\neq i_\tau}}\big(z_{i_\tau}^\tau-z_{j_\tau}^\tau\big)^{-1}\right]F\big(z_{i_1}^1,\ldots,z_{i_n}^n\big).
  \end{equation*}
  Here $\nabla$ is a suitable ``fat diagonal:'' $(z^1,\ldots,z^n)\in\K^{\alpha_1+1}\times\cdots\times\K^{\alpha_n+1}$ belongs to $\nabla$ precisely when
  \begin{equation*}
      z^\tau_i=z_j^\tau\quad\text{for some}\quad 1\leq i,j\leq\alpha_\tau+1,\quad i\neq j.
  \end{equation*}

  \definition[Non-Archimedean $C^\alpha$ maps]:\label{def:Calpha} Suppose $U\subseteq\K^n$ is nonempty and open, $\alpha\in\Z_{\geq 0}^n$, and suppose
  \begin{equation*}
      F=(f_1,\ldots,f_m):U\to\K^m
  \end{equation*}
  is a map. We say that $F$ is $C^\alpha$ if, for each $\beta\leq\alpha$ and each $x+\delta\cal O_n\subseteq U$, the Newton quotient $\Phi^\beta F$ extends to a continuous function on
  \begin{equation*}
      (x_1+\delta\cal O_1)^{\beta_1+1}\times\cdots\times(x_n+\delta\cal O_1)^{\beta_n+1}\to\K^m.
  \end{equation*}
  When $F$ is $C^\alpha$, we write
  \begin{equation*}
      D^\alpha F(x)=\Phi^\alpha F\big(\overbrace{(x_1,\ldots,x_{1})}^{\alpha_1+1}\,,\,\ldots\,,\,\overbrace{(x_n,\ldots,x_n)}^{\alpha_n+1}\big).
  \end{equation*}

  \definition[Non-Archimedean $C^k$ maps]\label{def:Ck} Suppose $U\subseteq\K^n$ is nonempty and open, $k\geq 1$, and suppose
  \begin{equation*}
      F=(f_1,\ldots,f_m):U\to\K^m
  \end{equation*}
  is a map. We say that $F$ is $C^k$ if, for each $\alpha\in\Z_{\geq 0}^n$ with $|\alpha|=k$, it is the case that $F$ is $C^\alpha$. In this case, we write
  \begin{equation*}
      dF(x)=[D^{\bf e_i}f_j(x)]_{j,i}.
  \end{equation*}
  If $F$ is $C^k$, then we write
  \begin{equation*}
      \|F\|_{C^k(U)}=\max_{u+\delta\cal O_n\subseteq U}\max_{0\leq|\alpha|\leq k}\sup\big\{\|\Phi^\alpha F(z^1,\ldots,z^n)\|:z^i\in (x_i+\delta\cal O_1)^{\alpha_i+1}\,\,\forall i\big\}.
  \end{equation*}

    \begin{lemma}[Basic properties of $\Phi^\alpha$]\label{lem:phi_basic}
        The following properties hold for $\Phi^\alpha$, $\alpha\in\Z_{\geq 0}^{n}$.
        \begin{enumerate}[label=(\alph*)]
            \item $\Phi^\alpha[af+bg]=a\Phi^\alpha [f]+b\Phi^\alpha[g]$.
            \item For each $1\leq i\leq n$ and each $1\leq j<k\leq\alpha_i+1$,
            \begin{equation*}
                \begin{split}
                &\Phi^\alpha f(z^1,\ldots,(z_1^i,\ldots,z_j^i,\ldots,z_k^i,\ldots,z_{\alpha_i+1}^i),\ldots,z^n)\\
                &\quad=\Phi^\alpha f(z^1,\ldots,(z_1^i,\ldots,z_k^i,\ldots,z_j^i,\ldots,z_{\alpha_i+1}^i),\ldots,z^n)
                \end{split}
            \end{equation*}
            \item
            \begin{equation*}
                \begin{split}
                    &\Phi^{\alpha+\bf e_i}f(z^1,\ldots,(z_j^i)_{1\leq j\leq\alpha_i+2},\ldots,z^n)\\
                    &=\frac{\Phi^\alpha f(z^1,\ldots,(z_1^i,z_3^i,\ldots,z_{\alpha_i+2}^i),\ldots,z^n)-\Phi^\alpha f(z^1,\ldots,(z^i_2,z_3^i,\ldots,z_{\alpha_i+2}^i),\ldots,z^n)}{z_1^i-z_2^i}
                \end{split}
            \end{equation*}
            \item If $f:\K^n\to\K$ takes the form of $f(x)=f_1(x_1)\cdots f_n(x_n)$, $f_j:\K\to\K$, then we have
            \begin{equation*}
                \Phi^\alpha f(z^1,\ldots,z^n)=\prod_{j=1}^n\Phi^{\alpha_j}f_j(z^j).
            \end{equation*}
            \item If $\beta_i<\alpha_i$ for some $i$, then $\Phi^\alpha[x^\beta]\equiv 0$.
            \item If $f,g:\K^n\to\K$, then
            \begin{equation*}
                \Phi^\alpha(fg)(z^1,\ldots,z^n)=\sum_{0\leq\beta\leq\alpha}\Phi^\beta f\Big(\big(z_j^i\big)_{\substack{1\leq i\leq n\\1\leq j\leq\beta_i+1}}\Big)\Phi^{\alpha-\beta}f\Big(\big(z_j^i\big)_{\substack{1\leq i\leq n\\\beta_{i}+1\leq j\leq\alpha_{i}+1}}\Big)
            \end{equation*}
            \item $\Phi^\alpha[x^\alpha]\equiv 1$.
        \end{enumerate}
    \end{lemma}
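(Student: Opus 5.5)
The plan is to verify each item of Lemma~\ref{lem:phi_basic} directly from Definition~\ref{def:newton}. The starting observation is that $\Phi^\alpha F$ is the iterated one-variable divided difference. In the variable block $z^\tau$ it acts on the function $z\mapsto F(\ldots,z,\ldots)$ by the classical Lagrange-form divided difference
\begin{equation*}
[z_1,\ldots,z_{k+1}]g=\sum_{i}\prod_{j\neq i}(z_i-z_j)^{-1}g(z_i).
\end{equation*}
These operators in different coordinates commute and are applied independently. Items (a), (b) and (d) then follow at once. For (a), the expression is linear in $F$. For (b), swapping $z^i_j$ and $z^i_k$ just permutes the terms of the sum. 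For (d), when $F$ is a product of one-variable functions, the sum over $(i_1,\ldots,i_n)$ factors as a product of one-variable sums.

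Next I will prove (c), the recursion. It concerns only the $i$-th block, so by the commutation remark it reduces to the one-variable identity
\begin{equation*}
[z_1,\ldots,z_{k+2}]g=\frac{[z_1,z_3,\ldots,z_{k+2}]g-[z_2,z_3,\ldots,z_{k+2}]g}{z_1-z_2}.
\end{equation*}
I will verify this by comparing coefficients of each $g(z_\ell)$.
\begin{itemize}
\item For $\ell\ge3$, the coefficient on the right is
\begin{equation*}
\prod_{j\ge3,\,j\ne\ell}(z_\ell-z_j)^{-1}\cdot\frac{(z_\ell-z_1)^{-1}-(z_\ell-z_2)^{-1}}{z_1-z_2},
\end{equation*}
and the last fraction equals $(z_\ell-z_1)^{-1}(z_\ell-z_2)^{-1}$.
\item For $\ell=1$ and $\ell=2$, the coefficient matches immediately.
\end{itemize}
This is purely algebraic, so it holds over any field.

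For (e) and (g), I use (d) to reduce to one variable, where the claims become $[z_1,\ldots,z_{k+1}]x^b=0$ for $b<k$ and $[z_1,\ldots,z_{k+1}]x^k=1$. I will prove these by induction on $k$ using (c). The base case is $k=0$, where $[z_1]x^b=z_1^b$. For the induction step, note that $[z_1,\ldots,z_{k+1}]x^b$ is the complete homogeneous symmetric polynomial $h_{b-k}(z_1,\ldots,z_{k+1})$. This follows from (c) and the identity $h_r(z_1,\ldots)-h_r(z_2,\ldots)=(z_1-z_2)h_{r-1}(z_1,z_2,\ldots)$. Hence the value is $0$ when $b<k$ and $1$ when $b=k$. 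For (e) in several variables, one factor vanishes as soon as $\beta_i<\alpha_i$, so the whole product vanishes.

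Item (f) is the Leibniz rule for divided differences, which I expect to be the main obstacle. I will first reduce it to one variable: $\Phi^\alpha$ is the composition of the one-variable operators in each block, and the product rule in each block yields a sum over $\beta_i$. Composing these gives the multi-index sum, with the first $\beta_i+1$ nodes feeding $f$ and the nodes $\beta_i+1,\ldots,\alpha_i+1$ feeding $g$. I read the second $\Phi^{\alpha-\beta}f$ in the statement as $\Phi^{\alpha-\beta}g$, which is evidently the intent.

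The one-variable identity is
\begin{equation*}
[z_1,\ldots,z_{k+1}](fg)=\sum_{b=0}^k[z_1,\ldots,z_{b+1}]f\cdot[z_{b+1},\ldots,z_{k+1}]g.
\end{equation*}
I will prove it by induction on $k$ using (c). Apply (c) to the left side. By (b) I may reorder nodes freely, and I choose the removed pair to be $z_1,z_2$ for $f$ and $z_k,z_{k+1}$ for $g$ in a telescoping fashion. This is the classical Steffensen argument: write
\begin{equation*}
f(x)g(x)=f(x)\bigl(g(x)-g(z_{k+1})\bigr)+f(x)g(z_{k+1})
\end{equation*}
and induct on the number of nodes. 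The care needed is in bookkeeping the node sets so that the overlapping node $z_{b+1}$ appears correctly. If this indexing becomes messy, I will instead verify the identity on the spanning family of monomials $f=x^a$, $g=x^c$ via the $h_r$ formula, together with a polynomial-interpolation density argument. That argument is valid because both sides depend only on the values of $f$ and $g$ at finitely many nodes, which polynomials can interpolate.
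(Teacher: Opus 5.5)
Your proposal is correct and follows essentially the paper's route: (a), (b), (d) come straight from the Lagrange-form definition, (c) uses the same partial-fraction identity $(z_1-z_2)^{-1}\bigl[(z_\ell-z_1)^{-1}-(z_\ell-z_2)^{-1}\bigr]=(z_\ell-z_1)^{-1}(z_\ell-z_2)^{-1}$, and (e), (f) reduce to one variable, where the paper simply cites the one-variable Leibniz rule from Schikhof's Lemma 29.2(v) that you propose to prove (your reading of the second factor as $\Phi^{\alpha-\beta}g$ is right). The only real deviation is (g): you obtain it directly from $[z_1,\ldots,z_{k+1}]x^b=h_{b-k}(z_1,\ldots,z_{k+1})$, which bypasses (f), whereas the paper derives it inductively from (e) and (f); either works.
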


    \begin{proof}
    \begin{enumerate}[label=(\alph*):]
        \item Immediate.
        \item Follows from identities of the form $\frac{f(a)-f(b)}{a-b}=\frac{f(b)-f(a)}{b-a}$.
        \item Follows from Definition~\ref{def:newton} and the identity
        \begin{equation*}
            (z_{1}^i-z_{2}^i)^{-1}\big[(z_j^i-z_1^i)^{-1}-(z_j^i-z_2^i)^{-1}\big]=(z_j^i-z_1^i)^{-1}(z_j^i-z_2^i)^{-1}.
        \end{equation*}
        \item Quickly follows from Definition~\ref{def:newton}.
        \item By (d), we may assume $n=1$. The result then follows by (c) and an easy inductive argument.
        \item By (c), we may assume $n=1$. The result then follows by~\cite{schikhof1985}, Lemma 29.2(v).
        \item Inductive consequence of (e) and (f).
    \end{enumerate}
    \end{proof}

    \begin{lemma}
        Suppose $\alpha,\beta\in\Z_{\geq 0}^n$. Then $x^\beta$ is $C^k$ for every $k\in\N$, and
        \begin{equation*}
            D^\alpha\big[x^\beta\big]=\binom{\alpha}{\beta}x^{\beta-\alpha}.
        \end{equation*}
    \end{lemma}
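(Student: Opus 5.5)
The plan is to reduce to one variable via the product rule for Newton quotients, and then to identify the one-variable Newton quotients of a monomial explicitly as polynomials. Polynomials are continuous everywhere, so the $C^k$ property follows at once. The value on the diagonal is then a count of monomials. One remark on notation: the formula must be read with the binomial coefficient $\prod_i\binom{\beta_i}{\alpha_i}$. That is, it counts how $\alpha$ sits inside $\beta$, in accordance with Lemma~\ref{lem:phi_basic}(e), which forces vanishing when $\beta_i<\alpha_i$. This integer is interpreted in $\K$, so in positive characteristic it is reduced accordingly.

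\textbf{Step 1 (reduction to $n=1$).} Since $x^\beta=\prod_{j}x_j^{\beta_j}$ is a product of one-variable functions, Lemma~\ref{lem:phi_basic}(d) gives
\begin{equation*}
\Phi^\alpha[x^\beta](z^1,\ldots,z^n)=\prod_{j=1}^n\Phi^{\alpha_j}[x^{\beta_j}](z^j).
\end{equation*}
It therefore suffices to understand $\Phi^a[x^b]$ for integers $a,b\geq 0$.

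\textbf{Step 2 (one-variable formula).} I claim that, off the diagonal,
\begin{equation*}
\Phi^a[x^b](z_1,\ldots,z_{a+1})=h_{b-a}(z_1,\ldots,z_{a+1}),
\end{equation*}
where $h_d$ is the complete homogeneous symmetric polynomial of degree $d$, with the convention $h_d=0$ for $d<0$ and $h_0=1$. I will prove this by induction on $a$.

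For $a=0$ the claim is trivial, since $\Phi^0[x^b](z_1)=z_1^b$.

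For the inductive step I use the recursion of Lemma~\ref{lem:phi_basic}(c) together with the elementary identity
\begin{equation*}
h_d(z_1,z_3,\ldots,z_{a+2})-h_d(z_2,z_3,\ldots,z_{a+2})=(z_1-z_2)\,h_{d-1}(z_1,z_2,\ldots,z_{a+2}).
\end{equation*}
This identity holds because $h_d(y,w)=\sum_{i}y^i h_{d-i}(w)$, and $(y_1^i-y_2^i)/(y_1-y_2)=h_{i-1}(y_1,y_2)$. The identity also covers the case $b<a$, where both sides vanish, consistent with Lemma~\ref{lem:phi_basic}(e).

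\textbf{Step 3 (conclusion).} Combining Steps 1 and 2, each $\Phi^\gamma[x^\beta]$ coincides off $\nabla$ with a polynomial. It therefore extends continuously to the whole product of balls $\prod_i(x_i+\delta\cal O_1)^{\gamma_i+1}$, for every $\gamma$ and every ball. Hence $x^\beta$ is $C^\alpha$ for all $\alpha$, and so $C^k$ for every $k$.

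Evaluating at the diagonal point $(x_i,\ldots,x_i)$ gives $h_{b-a}(x,\ldots,x)=N\,x^{b-a}$, where $N$ is the number of monomials of degree $b-a$ in $a+1$ variables. We have $N=\binom{(b-a)+a}{a}=\binom{b}{a}$. Multiplying over the coordinates yields $D^\alpha[x^\beta]=\prod_i\binom{\beta_i}{\alpha_i}x^{\beta-\alpha}$, which is $0$ whenever some $\beta_i<\alpha_i$.

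The only genuinely computational point is the symmetric-function identity in Step 2. Everything else is bookkeeping with Lemma~\ref{lem:phi_basic}, so I expect no real obstacle beyond stating the correct reading of the binomial coefficient.
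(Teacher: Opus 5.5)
Your proof is correct and follows the paper's route: the paper also reduces to one variable via Lemma~\ref{lem:phi_basic}(d) and then cites Schikhof's Lemma 29.2(v) for the one-variable computation, which you instead derive directly from the recursion (c) and the identity for complete homogeneous symmetric polynomials. You are also right that the coefficient must be read as $\prod_i\binom{\beta_i}{\alpha_i}$: read literally, $\binom{\alpha}{\beta}$ would give $D^{1}[x^2]=0$ instead of $2x$, and would contradict Lemma~\ref{lem:phi_basic}(e).
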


    \begin{remark}
        In particular, note that $D^\alpha D^{\alpha'}\neq D^{\alpha+\alpha'}$ in general.
    \end{remark}

    \begin{proof}
        Use Lemma~\ref{lem:phi_basic}(d) and~\cite{schikhof1985}, Lemma 29.2(v).
    \end{proof}

    \begin{lemma}\label{lem:comm_deriv}
        Suppose $\alpha,\beta\in\Z_{\geq 0}^n$ have disjoint support: for each $1\leq i\leq n$, $\alpha_i\beta_i=0$. Then $\Phi^{\alpha}\Phi^\beta f=\Phi^{\alpha+\beta}f=\Phi^\beta\Phi^\alpha f$. In particular, if $f$ is $C^{|\alpha|+|\beta|}$, then $D^\alpha D^\beta f=D^{\alpha+\beta}f=D^\beta D^\alpha f$.
    \end{lemma}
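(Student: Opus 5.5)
The approach is to reduce everything to the single-variable mixed Newton quotient structure via Definition~\ref{def:newton}, using that when $\alpha$ and $\beta$ have disjoint supports, the variables they act on are disjoint blocks. Write $S=\mathrm{supp}(\alpha)$, $T=\mathrm{supp}(\beta)$, $S\cap T=\emptyset$.

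First we make precise what $\Phi^\alpha\Phi^\beta f$ means. For coordinates $i\notin S$ we have $\alpha_i=0$, so $\Phi^\alpha$ takes a single point $z^i=(z^i_1)$ in those coordinates, with empty inner product and no summation. Thus $\Phi^\beta f$ is a function of blocks $w^i\in\K^{\beta_i+1}$. Since $\alpha_i=0$ on $T$, $\Phi^\alpha$ treats the $T$-blocks as frozen parameters, and it acts on the coordinates in $S$, where $\beta$ took a single point. So $\Phi^\alpha\Phi^\beta f$ is naturally a function of $(z^i)_{i\in S}$, $(w^i)_{i\in T}$ and single points elsewhere, which is exactly the domain of $\Phi^{\alpha+\beta}f$ since $(\alpha+\beta)_i=\alpha_i$ on $S$ and $\beta_i$ on $T$.

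Next we expand. By Definition~\ref{def:newton},
\begin{equation*}
\Phi^\alpha\Phi^\beta f=\sum_{(i_\tau)_{\tau\in S}}\Big[\prod_{\tau\in S}\prod_{j_\tau\neq i_\tau}(z^\tau_{i_\tau}-z^\tau_{j_\tau})^{-1}\Big]\sum_{(i_\sigma)_{\sigma\in T}}\Big[\prod_{\sigma\in T}\prod_{j_\sigma\neq i_\sigma}(w^\sigma_{i_\sigma}-w^\sigma_{j_\sigma})^{-1}\Big]f(\ldots).
\end{equation*}
The argument of $f$ puts $z^\tau_{i_\tau}$ in slot $\tau\in S$, $w^\sigma_{i_\sigma}$ in slot $\sigma\in T$, and the fixed points elsewhere. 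The coefficients factor as a product over $\tau\in S\cup T$, so this double sum is literally the defining sum of $\Phi^{\alpha+\beta}f$. This uses only finite sums, multiplication and commutativity in $\K$; it is valid off the fat diagonal $\nabla$. The symmetric computation gives $\Phi^\beta\Phi^\alpha f=\Phi^{\alpha+\beta}f$. An alternative route is an induction on $|\alpha|$ using Lemma~\ref{lem:phi_basic}(c), but the direct expansion is cleaner.

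Finally, for the derivative statement, assume $f$ is $C^{|\alpha|+|\beta|}$. Then $\Phi^{\alpha+\beta}f$ extends continuously to the full product of balls, including $\nabla$. Since $\Phi^\alpha\Phi^\beta f$ agrees with it off $\nabla$, and the complement of $\nabla$ is dense, the continuous extensions coincide. Evaluating at the diagonal point $(x,\ldots,x)$ gives $D^{\alpha+\beta}f(x)$.

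The main obstacle is making sense of $D^\alpha D^\beta f$ itself. We need $D^\beta f$ to be $C^\alpha$, and $D^\alpha$ of it to be the diagonal value of $\Phi^\alpha$ applied to $D^\beta f$. Here $D^\beta f(y)=\lim\Phi^\beta f$ as the $T$-blocks collapse to $y_T$. For $\Phi^\alpha[D^\beta f]$ at off-diagonal $S$-points, linearity (Lemma~\ref{lem:phi_basic}(a)) and the finiteness of the sum in $\Phi^\alpha$ let us pass the $T$-limit inside. This gives
\begin{equation*}
\Phi^\alpha[D^\beta f](z)=\lim_{w\to\mathrm{diag}}\Phi^{\alpha+\beta}f(z,w)=\Phi^{\alpha+\beta}f(z,\mathrm{diag}(y_T)),
\end{equation*}
where the last equality uses the continuous extension of $\Phi^{\alpha+\beta}f$. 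That extension is jointly continuous, so $\Phi^\alpha[D^\beta f]$ extends continuously as well. The same argument applies to $\Phi^{\alpha'}$ for every $\alpha'\le\alpha$, using $C^{|\alpha'|+|\beta|}$, which follows from $C^{|\alpha|+|\beta|}$ (or one can assume $C^{\alpha'+\beta}$ directly). Hence $D^\beta f\in C^\alpha$ and $D^\alpha D^\beta f=D^{\alpha+\beta}f$; by symmetry the same holds with $\alpha$ and $\beta$ exchanged.
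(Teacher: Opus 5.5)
Your proof is correct, but it takes a different route from the paper's. The paper's proof is the one-liner ``apply Lemma~\ref{lem:phi_basic}(c) repeatedly.'' That is, it builds $\Phi^\alpha$ and $\Phi^{\alpha+\beta}$ up one index $\mathbf{e}_i$ at a time through the divided-difference recursion. Because each such $i\in\mathrm{supp}(\alpha)$ lies outside $\mathrm{supp}(\beta)$, the $i$th difference quotient acts identically on $\Phi^\alpha\Phi^\beta f$ and on $\Phi^{\alpha+\beta}f$, and the identity propagates inductively from $\alpha=0$. You instead unfold Definition~\ref{def:newton} once and observe that the weights factor over coordinates, so the iterated sum is literally the defining sum of $\Phi^{\alpha+\beta}f$; this is the same mechanism as part (d) of that lemma. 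Your version avoids the induction and makes visible exactly where disjointness of supports is used, namely that the blocks acted on by the two operators never overlap. The paper's version instead reuses the recursion (c), which it needs elsewhere anyway.

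More substantively, your last paragraph supplies something the paper's one-line proof leaves implicit. The identity of Newton quotients only shows that $\Phi^\alpha\Phi^\beta f$ and $\Phi^{\alpha+\beta}f$ agree off $\nabla$, whereas $D^\alpha D^\beta f$ is defined by applying $\Phi^\alpha$ to the already-extended function $D^\beta f$. You interchange the finite sum in $\Phi^\alpha$ with the limit defining $D^\beta f$. This identifies $\Phi^\alpha[D^\beta f]$ with the restriction of the continuous extension of $\Phi^{\alpha+\beta}f$ to the slice where the $\mathrm{supp}(\beta)$-blocks are collapsed. That is exactly what is needed to conclude both that $D^\beta f$ is $C^\alpha$ and that $D^\alpha D^\beta f=D^{\alpha+\beta}f$. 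The step uses only $\alpha'+\beta\le\alpha+\beta$ for $\alpha'\le\alpha$, together with density of the complement of $\nabla$, which holds because $\K$ is non-discrete.
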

    \begin{proof}
        Just apply Lemma~\ref{lem:phi_basic}(c) repeatedly.
    \end{proof}

    \begin{lemma}[Non-Archimedean Taylor approximation]
        Let $k\geq 1$. Suppose $F:\cal O_n\to\K^m$ is $C^k$. Then there are continuous functions $E_\alpha:\cal O_n^2\to\K^m$, vanishing on the diagonal, such that for all $x,y\in\cal O_n$,
        \begin{equation*}
            F(y)=F(x)+\sum_{0<|\alpha|\leq k}D^\alpha F(x)(y-x)^\alpha+\sum_{|\alpha|=k}(y-x)^\alpha E_\alpha(x,y).
        \end{equation*}
    \end{lemma}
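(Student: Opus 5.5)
The plan is to prove the Taylor formula by induction on $k$, building it from one-variable divided-difference identities applied one coordinate at a time. The basic tool is Lemma~\ref{lem:phi_basic}(c): if a function is $C^{\bf e_i}$, its Newton quotient $\Phi^{\bf e_i}$ extends continuously to the diagonal. This gives the exact identity
\begin{equation*}
f(\ldots,y_i,\ldots)=f(\ldots,x_i,\ldots)+(y_i-x_i)\,\Phi^{\bf e_i}f(\ldots,(x_i,y_i),\ldots),
\end{equation*}
valid for all $x_i,y_i$, including $x_i=y_i$, once the continuous extension is used.

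For $k=1$, move from $x$ to $y$ one coordinate at a time along the path $x^{(0)}=x,\ x^{(1)}=(y_1,x_2,\ldots,x_n),\ldots,\ x^{(n)}=y$. The identity above gives
\begin{equation*}
F(y)-F(x)=\sum_i (y_i-x_i)\,\Phi^{\bf e_i}F\big(x^{(i-1)};(x_i,y_i)\big).
\end{equation*}
Now set
\begin{equation*}
E_{\bf e_i}(x,y)=\Phi^{\bf e_i}F\big(x^{(i-1)};(x_i,y_i)\big)-D^{\bf e_i}F(x).
\end{equation*}
This is continuous on $\cal O_n^2$ because the extended Newton quotient is continuous on the whole product of balls (Definition~\ref{def:Calpha}). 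It vanishes on the diagonal because at $y=x$ the argument collapses to $(x;(x_i,x_i))$, where by definition the value is $D^{\bf e_i}F(x)$.

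For general $k$, iterate the one-variable expansion to higher order, writing each coordinate step as a Newton interpolation formula with all interpolation nodes equal to $x_i$:
\begin{equation*}
g(y_i)=\sum_{r<k}\Phi^{r}g(x_i,\ldots,x_i)(y_i-x_i)^r+(y_i-x_i)^k\,\Phi^{k}g(x_i,\ldots,x_i,y_i).
\end{equation*}
This follows by repeatedly applying Lemma~\ref{lem:phi_basic}(c) with repeated nodes, which is legitimate by continuity of the extensions. Apply it successively in each coordinate, expanding the coefficient functions (themselves Newton quotients in the earlier variables, and again continuous) in the remaining variables. Lemma~\ref{lem:comm_deriv} is used to recombine the mixed quotients into $\Phi^\alpha F$, since disjoint-support quotients commute. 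Evaluating these mixed quotients with all nodes at $x$ gives exactly $D^\alpha F(x)$ as coefficients; each comes from a monomial $(y-x)^\alpha$ with $|\alpha|\le k$. Any term of total degree exceeding $k$ is regrouped as $(y-x)^\alpha\cdot(\text{bounded continuous})$ with $|\alpha|=k$ and absorbed into the $E_\alpha$.

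The remainder terms have the shape $(y-x)^\alpha$ times a Newton quotient in which some nodes equal $y$-coordinates, with $|\alpha|=k$. Subtracting $D^\alpha F(x)$ from such a quotient gives a continuous function vanishing at $y=x$, and that difference is $E_\alpha$.

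The main obstacle is the bookkeeping in the multivariate iteration. The coefficient functions produced when expanding in coordinate $i$ depend on the already-moved coordinates $y_1,\ldots,y_{i-1}$, so re-expanding them must be done carefully and must stay within order $\le k$. This works only because $C^k$ guarantees $C^\alpha$ for all $|\alpha|=k$ and hence for all $\beta\le\alpha$. I would first check that the path-based expansion in the $n=2$ case produces exactly the coefficients $D^\alpha F(x)$, and then argue by induction on $n$.
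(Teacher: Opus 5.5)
Your proposal is correct and follows essentially the same route as the paper: the one-variable identity $g(y)=\sum_{r<k}D^rg(x)(y-x)^r+(y-x)^k\Phi^kg(x,\ldots,x,y)$ is obtained from Lemma~\ref{lem:phi_basic}(c) with repeated nodes and continuity of the extensions. It is then applied coordinatewise through the intermediate points $(y_{<i},x_{\ge i})$, with each coefficient $D^{r\mathbf{e}_i}F$ re-expanded to order $k-r$ in the remaining coordinates and the mixed quotients merged via Lemma~\ref{lem:comm_deriv}. As in the paper, joint continuity of $E_\alpha$ in $(x,y)$ comes from keeping each remainder in the explicit form (Newton quotient of $F$ at nodes depending continuously on $x,y$) minus $D^\alpha F(x)$, which your final paragraph does, rather than from an abstract induction hypothesis on $n$ applied to parameter-dependent coefficient functions.
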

    \begin{proof}
        We adopt the convention that $\Phi^0F=D^0F=F$. Suppose first that $n=1$. Then, since
        \begin{equation*}
            (y-x)\Phi^1 F(y,x)=-F(x)+F(y)
        \end{equation*}
        and, for $\alpha\geq 2$,
        \begin{equation*}
            (y-x)\Phi^{\alpha}F\big((y,\overbrace{x,\ldots,x}^\alpha)\big)=-D^{\alpha-1}F(x)+\Phi^{\alpha-1}F\big((y,\overbrace{x,\ldots,x}^{\alpha-1})\big),
        \end{equation*}
        we obtain by induction
        \begin{equation*}
            (y-x)^\alpha\Phi^{\alpha}F\big((y,\overbrace{x,\ldots,x}^\alpha)\big)=-\sum_{\beta=0}^{\alpha-1}D^\beta F(x)(y-x)^\beta+F(y).
        \end{equation*}
        Rearranging, we obtain the desired
        \begin{equation*}
            F(y)=\sum_{\beta=0}^\alpha D^\beta F(x)(y-x)^\beta+(y-x)^\alpha\Big[\Phi^\alpha F\big(y,\overbrace{x,\ldots,x}^\alpha\big)-D^\alpha F(x)\Big].
        \end{equation*}

        Now, we use this to obtain the general $n$ case. By applying the one-dimensional case along the $n$th variable, we first obtain
        \begin{equation*}
        \begin{split}
            F(y)&=\sum_{\beta_n=0}^kD^{\beta_n\bf e_n}F(y_{<n},x_n)(y_n-x_n)^{\beta_n}\\
            &+(y_n-x_n)^{k}\Big[\Phi^{k\bf e_n}F\big(y_{<n},(y_n,\overbrace{x_n,\ldots,x_n}^k)\big)-D^{k\bf e_n}F(y_{<n},x_n)\Big].
        \end{split}
        \end{equation*}
        For each $0\leq\beta_n\leq k$, we may apply the $n=1$ case again to obtain
        \begin{equation*}
            \begin{split}
            &\hspace{10em}D^{\beta_n\bf e_n}F(y_{<n},x_n)(y_n-x_n)^{\beta_n}\\
            &=\sum_{\beta_{n-1}=0}^{k-\beta_n}D^{\beta_{n-1}\bf e_{n-1}+\beta_n\bf e_n}F(y_{<n-1},x_{n-1},x_n)(y_{n-1}-x_{n-1})^{\beta_{n-1}}(y_n-x_n)^{\beta_n}\\
            &\hspace{3em}+(y_{n-1}-x_{n-1})^{k-\beta_n}(y_n-x_n)^{\beta_n}\\
                &\quad\times\Big[\Phi^{(k-\beta_{n})\bf e_{n-1}}D^{\beta_n\bf e_n} F\big(y_{<n-1},(y_{n-1},\overbrace{x_{n-1},\ldots,x_{n-1}}^{k-\beta_n}),x_n\big)-D^{(k-\beta_n)\bf e_{n-1}+\beta_n\bf e_n}F(y_{<n-1},x_{n-1},x_n)\Big].
            \end{split}
        \end{equation*}
        Continuing in this manner, we obtain an expansion of the form
        \begin{equation*}
            \begin{split}
            F(y)&=\sum_{|\alpha|\leq k} D^\alpha F(x)(y-x)^\alpha\\
            &+\sum_{|\alpha|=k}(y-x)^\alpha\Big[\Phi^{\alpha}F\big(y_{<t(\alpha)},(y_{t(\alpha)},\overbrace{x_{t(\alpha)},\ldots,x_{t(\alpha)}}^{\alpha_{t(\alpha)}}),x_{>t(\alpha)}\big)-D^\alpha F\big(y_{<t(\alpha)},x_{\geq t(\alpha)}\big)\Big];
            \end{split}
        \end{equation*}
        here, $t(\alpha)=\max\{1\leq j\leq n:\sum_{i=j}^n\alpha_i=k\}$. Finally, by the definition of a $C^k$ function, the bracketed expression is continuous in $x,y$ and vanishes along $x=y$.

    \end{proof}

    \begin{lemma}
        Suppose $\K^n\supseteq U\overset F\longrightarrow V\subseteq\K^m$ and $G:V\to\K^p$ are $C^k$ maps. Then $G\circ F$ is $C^k$.
    \end{lemma}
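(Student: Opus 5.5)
The plan is to prove the chain rule for $C^k$ maps directly from the Newton-quotient characterisation: I will show that every Newton quotient $\Phi^\alpha(G\circ F)$ with $|\alpha|\le k$ is a polynomial expression in Newton quotients of $G$ and of the components of $F$, evaluated at points that depend continuously on the arguments. Once that is available, continuous extendability of $\Phi^\alpha(G\circ F)$ to the whole box, fat diagonal included, follows from continuous extendability of each factor. The argument is an induction on $|\alpha|$ that imitates Schikhof's one-variable proof (\cite{schikhof1985}, \S 29), with Lemma~\ref{lem:phi_basic} as the algebraic engine.

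\textbf{Base case $|\alpha|=1$.} Take $\alpha=\bf e_i$. For $z\neq w$ in the $i$th coordinate, I will telescope $G(F(\ldots,z,\ldots))-G(F(\ldots,w,\ldots))$ through the intermediate points of $\K^m$ in which the coordinates of $F$ are switched one at a time. This gives
\begin{equation*}
    \Phi^{\bf e_i}(G\circ F)(z,w)=\sum_{l=1}^m\Phi^{\bf e_l}G(\xi_l,\eta_l)\,\Phi^{\bf e_i}F_l(z,w),
\end{equation*}
where $\xi_l,\eta_l$ denote the $l$th coordinates of $F$ at the two endpoints, and the other coordinates of $G$'s argument are mixed values of $F$; all of these are continuous in $(z,w)$. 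When $\xi_l=\eta_l$, the convention is to read the $l$th term as $D^{\bf e_l}G$ at the mixed point times $\Phi^{\bf e_i}F_l(z,w)$; the identity still holds because both sides of the telescoping step vanish for that $l$. Since $\Phi^{\bf e_l}G$ extends continuously (and agrees with $D^{\bf e_l}G$ on its own diagonal), and $\Phi^{\bf e_i}F_l$ does too, the right-hand side extends continuously. This also yields the formula $d(G\circ F)=dG(F)\,dF$.

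\textbf{Inductive step.} For $|\alpha|=k\geq 2$, write $\alpha=\alpha'+\bf e_i$ and use Lemma~\ref{lem:phi_basic}(c), which expresses $\Phi^\alpha$ as a divided difference of $\Phi^{\alpha'}$ in the $i$th block. By the inductive hypothesis applied to $\Phi^{\alpha'}(G\circ F)$, this quantity is a sum of products of factors $\Phi^\gamma G(\text{continuous arguments})$ and $\Phi^{\beta}F_l(\ldots)$ with $|\gamma|,|\beta|\le k-1$. Taking a divided difference of such a product is governed by the Leibniz rule, Lemma~\ref{lem:phi_basic}(f). Each resulting term is one of two kinds: a divided difference of an $F$-factor, which is again a Newton quotient of $F$ of order at most $k$, or a divided difference of a $G$-factor composed with a continuous function of the arguments. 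The second kind is handled by applying the base-case telescoping argument again, with $\Phi^\gamma G$ playing the role of $G$ and the argument map playing the role of $F$. That raises the order of $G$ by one and produces a first-order divided difference of the argument map.

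\textbf{Main obstacle.} The hard part is the bookkeeping: the argument maps must themselves have continuous first divided differences, and every $G$-quotient that appears must have order at most $k$. I would strengthen the induction hypothesis so that it also records the following: each argument map is built from $F$-values, and its first divided differences are polynomial in Newton quotients of $F$ of order at most $k-|\gamma|$. Under that hypothesis the telescoping step remains legitimate and the order count closes. If that bookkeeping becomes unwieldy, my fallback is to reduce to one variable at a time via Lemma~\ref{lem:comm_deriv} and the product structure in Lemma~\ref{lem:phi_basic}(d), and then invoke Schikhof's one-variable chain rule for $C^k$ functions, treating the remaining variables as continuous parameters.
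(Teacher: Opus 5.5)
Your proposal is correct and is essentially the paper's proof: a telescoping first-order chain rule, then an induction in which each new divided difference is pushed through products by the Leibniz rule and through the $G$-factor by the base-case formula, so that $\Phi^\alpha(G\circ F)$ becomes a finite sum of polynomials in Newton quotients of $F$ times a single $\Phi^\gamma G$ evaluated at tuples of $F$-values, which visibly extends continuously. The paper writes this out only for $n=p=1$, after reducing to $U=\cal O_1$, $V=\cal O_m$ (you should make that reduction too, since it is what keeps your intermediate ``mixed'' points inside $V$); and the bookkeeping is lighter than you fear, because every slot of a $G$-argument is a single $F_l$ evaluated at a point assembled from the variables, so differencing a $G$-factor only ever produces first-order quotients $\Phi^{\bf e_i}F_l$.
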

    \begin{proof}
        For simplicity, we restrict attention to $n=p=1$. We may assume that $U=\cal O_1$, $V=\cal O_m$. With a bit of algebra, one obtains
        \begin{equation*}
            \Phi^1(G\circ F)(x,y)=\sum_{j=1}^m\Phi^1 F_j(x,y)\Phi^{\bf e_j}G(F_{<j}(x),(F_j(x),F_j(y)),F_{>j}(y)).
        \end{equation*}
        We claim inductively that
        \begin{equation*}
            \Phi^\ell (G\circ F)(x_1,\ldots,x_{\ell+1})=\sum_{\mathrm{finite}}\mathrm{Poly}(\Phi^{j}F_\iota(x_{I_j})_{j\leq\ell})\Phi^{\alpha} G(\pi(F_i(x_k)));
        \end{equation*}
        here, we have a finite sum over terms taking the form $\mathrm{Poly}(\Phi^jF_\iota(x_{I_j}))\cdot\Phi^\alpha G(\pi(F_i(x_k)))$, where the first factor is a polynomial in expressions of the form $\Phi^j F_\iota$, $1\leq\iota\leq m$ and $j\leq\ell$, evaluated along some appropriately-sized sub-tuple $x_{I_j}\subseteq(x_1,\ldots,x_{\ell+1})$, and the second factor is the evaluation of $\Phi^\alpha G$, where $\alpha\in\Z_{\geq 0}^m$ has $|\alpha|\leq \ell$, along an appropriate selection $\pi$ of various $F_i(x_k)$, $1\leq i\leq m$ and $1\leq k\leq\ell+1$. Indeed, applying $\Phi^1$ to both sides, and using the base case, we conclude. Now, since each $\Phi^j F_\iota$ extends to be continuous on all of $\cal O^k_1$, the first factor extends to be continuous over $(x_1,\ldots,x_{\ell+1})\in\cal O_1^{\ell+1}$. Similarly, since $\Phi^\alpha G$ extends continuously to all of $\cal O_1^{\alpha_1+1}\times\cdots\times\cal O_1^{\alpha_m+1}$, and the $F_i$ are continuous, we conclude that the second factor has a continuous extension. Multiplying and adding, the result follows.
    \end{proof}

    \begin{theorem}[Weak non-Archimedean Sard's theorem]\label{thm:sard}
      Let $F:\cal O_n\to\K^m$ be a $C^k$ map, for some $k\in\N$. Suppose $r\in\Z_{\geq 0}$ is such that $r\leq m-\frac{n-m}{k-1}$, and $x+\delta\cal O_n\subseteq\cal O_n$ is such that $dF$ has rank $\leq r$ over $y\in x+\delta\cal O_n$. Then $\left.F\right|_{x+\delta\cal O_n}$ has null image.
  \end{theorem}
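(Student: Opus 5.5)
Since $dF$ has rank at most $r$ on the whole ball, the image should be covered by few thin pieces, and counting them shows the image is null. I would run a non-Archimedean Morse--Sard covering argument at scale $|\varpi|^{e}$ and let $e\to\infty$. After rescaling I may take $x+\delta\cal O_n=\cal O_n$. I would partition $\cal O_n$ into the $|\varpi|^{-en}$ cosets $y+\varpi^e\cal O_n$.

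On each coset I would apply the non-Archimedean Taylor approximation lemma at order $k$ about the centre $y$. This writes $F(y+z)-F(y)$ as $dF(y).z$ plus higher terms $D^\alpha F(y)z^\alpha$ with $2\le|\alpha|\le k$, plus a remainder $o(|\varpi|^{ek})$. The linear part lands in $dF(y)(\varpi^e\cal O_n)$, which is contained in an $r$-dimensional subspace. So at first order the image of the coset sits in an $r$-dimensional plate of width $|\varpi|^{e}$ that is thin in the other $m-r$ directions. The aim is to show the image lies in a set of measure about $|\varpi|^{er}\cdot|\varpi|^{ek(m-r)}$: width $|\varpi|^e$ along $r$ directions and thickness $|\varpi|^{ek}$ transversally. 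Summing over the $|\varpi|^{-en}$ cosets would then give total measure at most
\[
|\varpi|^{e(r+k(m-r)-n)}.
\]
This tends to zero once $r+k(m-r)>n$, that is, $(k-1)(m-r)>n-m$. That condition is essentially the stated hypothesis $r\le m-\frac{n-m}{k-1}$; the boundary equality case needs the little-$o$ remainder to supply the extra decay.

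The main obstacle is the transverse estimate. Rank $\le r$ of $dF$ near $y$ does not by itself make the quadratic and higher Taylor terms vanish in the normal directions. Over the reals one uses the constant rank theorem or Kneser--Glaeser arguments for this. Here I would first project onto the cokernel: choose an isometric splitting of $\K^m$, via Smith normal form (Lemma~\ref{lem:smith}), adapted to $dF(y)$, and let $\pi$ be the projection onto the $m-r$ normal coordinates. I would then show inductively that $D^\alpha(\pi\circ F)(y)$ is controlled for $2\le|\alpha|\le k$. The tool is the difference-quotient recursion in Lemma~\ref{lem:phi_basic}(c): the higher Newton quotients are differences of lower ones. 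Because $dF$ has rank $\le r$ at \emph{every} point of the ball, the normal components of $dF$ near $y$ are themselves controlled by how fast the image plane of $dF$ moves.

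If this direct route fails, I would reduce to the case $r$ equal to the generic (maximal) rank on a smaller ball. There the rank is locally constant and one can straighten $F$ using a non-Archimedean implicit function argument built from the $C^1$ contraction estimates used in Section~\ref{sec:nonlin_to_lin}. The lower-rank locus would be handled by induction on $r$, since it is a closed set on which the same counting applies. The exponent bookkeeping here is also where the constraint relating $r$ and $k$ is consumed, so I would verify it carefully.
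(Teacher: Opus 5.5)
Your counting (total measure $|\varpi|^{e(r+k(m-r)-n)}$, with the little-$o$ remainder handling the equality case) matches the paper's final step. The gap is the transverse estimate, which is the whole content of the theorem: you only state it as an aim, and the mechanism you propose for it targets the wrong quantity. The normal components $D^\alpha(\pi\circ F)(y)$, $2\le|\alpha|\le k$, at the centre are curvature terms, and they need not be small even at constant rank. Take $F(x)=(x_1,x_1^2)$ on $\cal O_4\subseteq\Q_p^4$, so $n=4$, $m=2$, the rank is identically $1$, and the hypothesis holds for every $k\ge 3$. Projecting onto the normal of the tangent line at $F(y)$ gives $\pi F(y+z)=z_1^2-y_1^2$. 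So every coset image bends away from its tangent line by exactly $|\varpi|^{2e}$, and the flat-plate count is $|\varpi|^{-4e}\cdot|\varpi|^{e}\cdot|\varpi|^{2e}\to\infty$, although the image is a null parabola. The per-coset bound $|\varpi|^{er}\,o(|\varpi|^{ek})^{m-r}$ is attainable, but only around a curved piece. What you need is flatness along kernel directions of the \emph{domain}: coordinates $x=(x',x'')\in\K^r\times\K^{n-r}$ in which $D^\alpha F\equiv0$ whenever $\alpha''\neq0$ and $|\alpha|\le k$. Then each coset image lies within $o(|\varpi|^{ek})$ of the $r$-dimensional piece $x'\mapsto F(x',x_0'')$. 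A nonsingular $r\times r$ minor exhibits this piece as a Lipschitz graph over $r$ output coordinates, and Fubini bounds the thickened graph. This domain-side route is the paper's. It normalizes $dF(0)$, equates the Taylor expansions of $F(y+\gamma)$ about $0$ and about $y$ for $\gamma\in\{0\}^r\times\delta\cal O_{n-r}$, and takes Newton quotients in $\gamma$ at well-separated nodes to kill the coefficients with $\alpha''\neq0$ before counting. (The paper simply asserts that $dF(y)$ annihilates $\{0\}^r\times\K^{n-r}$ for every $y$; in general that requires a straightening such as the one in your fallback.)

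Any correct version of this flatness step must use $\mathrm{char}\,\K=0$, and nothing in your plan does. The recursion (c) of Lemma~\ref{lem:phi_basic} plus the rank hypothesis cannot suffice, because the statement fails in positive characteristic. Over $\K=\F_{2^s}\llparen t\rrparen$, the polynomial $F(u,v)=u^2+tv^2$ is $C^k$ for every $k$ and has $dF=(2u,2tv)\equiv0$. So $n=2$, $m=1$, $r=0$ and any $k\ge2$ meet the hypotheses on $\cal O_2$. Yet splitting $w\in\F_{2^s}[[t]]$ into its even and odd parts (Frobenius is bijective on $\F_{2^s}$) shows $F(\cal O_2)=\cal O_1$. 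Since $D^{2\bf e_1}F\equiv1$ there, the example also defeats the paper's vanishing step. In characteristic zero, the missing input is the identity $D^{\beta}(D^{\bf e_i}F)=(\beta_i+1)\,D^{\beta+\bf e_i}F$, the multivariate form of $j!\,D^jf=f^{(j)}$. It propagates $D^{\bf e_i}F\equiv0$ to every $D^\alpha F$ with $\alpha_i>0$; you also need a straightening that stays in $C^k$. Finally, your fallback's induction on $r$ does not reach the lower-rank locus. That locus is closed but typically has empty interior (e.g.\ $\{x_1=0\}$ for $F(x)=(x_1^2,0)$ on $\cal O_2$). So the hypothesis ``rank $\le r-1$ on a ball'' is unavailable there, and a genuine Morse--Sard/Federer-type estimate at those points is required. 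The paper's ``shrinking the domain and possibly redefining $r$'' passes over the same point.
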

  \begin{proof}
     We may freely assume that $x=0$ and $F(0)=0$. Shrinking the domain further and possibly redefining $r$, we may assume that $dF(y)$ has rank exactly $r$ in $\delta\cal O_n$. Lastly, we may take
     \begin{equation*}
         dF(0)=\begin{bmatrix}
             I_r & 0_{r\times (n-r)}\\0_{(m-r)\times r} & 0_{(m-r)\times(n-r)}
         \end{bmatrix}.
     \end{equation*}
     For $y\in\delta\cal O_n$ with $\delta$ sufficiently small, we will have that the top-left $(r\times r)$ minor of $dF(y)$ is nonsingular. Since $dF(y)$ is of rank $r$, it follows that the last $m-r$ columns of $dF(y)$ are identically equal to $0$ over $y\in\delta\cal O_n$.

     Now, we compare two Taylor expansions. Select first $y=(\delta,\ldots,\delta)\in\delta\cal O_n$, and assume that $\gamma=\delta\sigma\in\{0\}^r\times\delta\cal O_{n-r}$ is such that $\gamma_j=0$ whenever $j\leq r$. For one, we have
     \begin{equation*}
         F(y+\gamma)=dF(0).(y+\gamma)+\sum_{1<|\alpha|\leq k}(y+\gamma)^\alpha D^\alpha F(0)+\sum_{|\alpha|=k}(y+\gamma)^\alpha E_\alpha(0,y+\gamma),
     \end{equation*}
     and for another we have
     \begin{equation*}
         F(y+\gamma)=F(y)+dF(y).\gamma+\sum_{1<|\alpha|\leq k}\gamma^\alpha D^\alpha F(y)+\sum_{|\alpha|=k}\gamma^\alpha E_\alpha(y,y+\gamma).
     \end{equation*}
     Note, however, that since $\gamma$ is supported in the indices $j>r$, and $dF(y),dF(0)$ are zero on the corresponding block, we get $dF(y).\gamma=0=dF(0).\gamma$. Thus, after equating both formulae for $F(y+\gamma)$, we obtain
     \begin{equation}\label{eq:taylor_compare}
        \begin{split}
         F(y)-dF(0).y&=\sum_{1<|\alpha|\leq k}\Big[(y+\gamma)^\alpha D^\alpha F(0)-\gamma^\alpha D^\alpha F(y)\Big]\\
         &+\sum_{|\alpha|=k}\Big[(y+\gamma)^\alpha E_\alpha(0,y+\gamma)-\gamma^\alpha E_\alpha(y,y+\gamma)\Big].
         \end{split}
     \end{equation}
     For $1<\ell\leq k$ and $\gamma\in\{0\}^r\times\delta\cal O_{n-r}$, we write
     \begin{equation*}
         Q_\ell(\gamma)=\sum_{|\alpha|=\ell}\Big[(y+\gamma)^\alpha D^\alpha F(0)-\gamma^\alpha D^\alpha F(y)\Big].
     \end{equation*}
     We would like to show that $D^\alpha F(0)=0$ whenever $\alpha_i>0$ for some $i>r$. For the sake of contradiction, suppose that $\ell$ is minimal such that $y^{\alpha'}D^\alpha F(0)-1_{\alpha'=0}D^\alpha F(y)\neq 0$, for some $\alpha=\alpha'+\alpha''$, $\alpha'\in\Z_{\geq 0}^r\times\{0\}^{n-r}$ and $\alpha''\in\{0\}^r\times\Z_{\geq 0}^{n-r}$, where $\alpha''\neq 0$ and $|\alpha|=\ell$. We have
     \begin{equation*}
         \Phi^{\alpha''}_\gamma[Q_\ell]\equiv y^{\alpha'} D^{\alpha}F(0)-1_{\alpha'=0}D^\alpha F(y).
     \end{equation*}
     Write $\frak e_i\in\N$ to be the least integer greater than $\frac{\log (\alpha_i+1)}{\log(|\varpi|^{-1})}$. In particular, we see that $\alpha_i+1\leq\#(\cal O_1/\varpi^{\frak e_i}\cal O_1)$ for all $i$. Correspondingly, we select for each $r<i\leq n$ elements $\gamma^i_1,\ldots,\gamma^i_{\alpha_i+1}$ to represent distinct cosets in $\delta\cal O_1/\delta\varpi^{\frak e_i}\cal O_1$. In particular,
     \begin{equation*}
         |\gamma^i_j-\gamma^i_{j'}|\geq|\delta||\varpi|^{\frak e_i},\quad \forall 1\leq j\neq j'\leq\alpha_i+1.
     \end{equation*}
     Utilizing Lemma~\ref{lem:phi_basic}(c), we see that $\Phi^{\alpha''}[Q_{\ell}]$ is a sum of difference quotients of the form
     \begin{equation*}
         \frac{Q_{\ell}(\gamma)-Q_{\ell}(\gamma')}{\prod(\gamma_j^i-\gamma_{j'}^i)},
     \end{equation*}
     where $\gamma,\gamma'\in\{0\}^r\times\delta\cal O_{n-r}$ differ by one entry. Thus, for some such $\gamma,\gamma'$:
     \begin{equation*}
         \|Q_{\ell}(\gamma)-Q_{\ell}(\gamma')\|\geq|\delta|^\ell\|D^\alpha F(0)-1_{\alpha'=0}D^\alpha F(y)\||\varpi|^{\frak e_{r+1}\alpha_{r+1}+\ldots+\frak e_n\alpha_n}.
     \end{equation*}
     On the other hand, for $\ell<\rho\leq k$,
     \begin{equation*}
         \|Q_\rho(\gamma)\|,\|Q_\rho(\gamma')\|\leq|\delta|^\rho\|F\|_{C^k},
     \end{equation*}
     so that $\|Q_\rho(\gamma)-Q_\rho(\gamma')\|<\|Q_\ell(\gamma)-Q_\ell(\gamma')\|$ when $\delta$ is sufficiently small. Similarly, if $|\beta|=k$,
     \begin{equation*}
         \|(y+\gamma)^\beta E_\beta(0,y+\gamma)-\gamma^\beta E(y,y+\beta)\|\leq o_{\delta\to 0}(1)\times|\delta|^k,
     \end{equation*}
     and similarly for $\gamma'$. It follows that the right-hand side of~\eqref{eq:taylor_compare} is non-constant in $\gamma$, contradicting the constancy on the left-hand side. Thus, we must have $y^\alpha D^\alpha F(0)=1_{\alpha'=0}D^\alpha F(y)$, for all $1<|\alpha|\leq k$ such that $\alpha''\neq 0$. It follows that $D^\alpha F(0)=0$ whenever $1<|\alpha|\leq k$ and $\alpha',\alpha''$ are both nonzero.

     To handle the remaining case of $\alpha'=0,\alpha''\neq 0$, we consider the Taylor expansions in~\eqref{eq:taylor_compare} where we now choose $y=\big(\overbrace{0,\ldots,0}^r\,,\,\overbrace{\delta,\ldots,\delta}^{n-r}\big)$. The only nonzero terms arise from $\beta'=0$. Again, for $1<\ell\leq k$ we consider
     \begin{equation*}
         P_\ell(\gamma)=\sum_{\substack{\beta''\in\{0\}^r\times\Z_{\geq 0}^{n-r}\\|\beta''|=\ell}}\Big[(y+\gamma)^{\beta''}D^{\beta''}F(0)-\gamma^{\beta''}D^{\beta''}F(y)\Big].
     \end{equation*}
     By the same argument as before, if $y^{\alpha''}D^{\alpha''}F(0)\neq 0$ then the minimal-order such term has more variation than the higher-order terms, contradicting the constancy of the left-hand side of~\eqref{eq:taylor_compare}. Thus, $D^{\alpha''}F(0)=0$ for all $\alpha''\in\{0\}^r\times\Z_{\geq 0}^{n-r}$ with $|\alpha''|\leq k$. Putting the two results together, we obtain that $D^\alpha F(0)=0$ unless $\alpha''=0$. Finally, $dF(y)$ is zero in the last $n-r$ coordinates as well, for all $y\in\delta\cal O_n$, and the role of $0$ in the above was arbitrary, we may conclude that $D^\alpha F(y)=0$ whenever $\alpha''\neq 0$ and $y\in\delta\cal O_n$.
      
      Suppose then that $\eps\leq|\delta|$ and $y,y'\in \delta\cal O_n$ are such that $\|y-y'\|\leq\eps$. Using the Taylor expansion around $y$, we obtain that
      \begin{equation*}
          |F_j(y)-F_j(y')|=o_{\eps\to 0}(\eps^k),\quad\forall j>r.
      \end{equation*}
      On the other hand, if $j\leq r$,
      \begin{equation*}
          |F_j(y)-F_j(y)|\lesssim\eps.
      \end{equation*}
      Thus, for fixed $y$, the image of the $\eps$-neighborhood of $y$ under $F$ has volume $o_{\eps\to 0}(\eps^{r+k(m-r)})$. On the other hand, the volume of $x+\delta\cal O_n$ is $|\delta|^n=\eps^n(|\delta|\eps^{-1})^n$. Comparing both sides, we conclude that the image of $\left.F\right|_{x+\delta\cal O_n}$ has volume $o_{\eps\to 0}(1)$.
  \end{proof}

  \begin{remark}
      It is tempting to prove a few classical theorems of differential topology in our setting: the constant rank theorem, the inverse function theorem, a more literal translation of Sard's theorem, etc. However, several difficulties arise. The most prominent of these is the fact that an ``infinitesimally-constant'' $C^k$ function need not be constant; that is, $dF\equiv 0$ does not imply any sort of local constancy of $F$, even under strong regularity assumptions. Another problem is the fact that $D^2\neq D^1\circ D^1$, so arguments by induction on the number of derivatives often become problematic. For the latter problem, assumptions on the characteristic of the field $\K$ may be helpful. Rather than exploring these, we opt for the theoretical minimum needed in this manuscript and invite the interested reader to develop this topic further.
  \end{remark}

\printbibliography

\end{document}